\numberwithin{equation}{section}
\theoremstyle{plain}
\newtheorem{theorem}{Theorem}[section]
\newtheorem{proposition}[theorem]{Proposition}
\newtheorem{lemma}[theorem]{Lemma}
\newtheorem{corollary}[theorem]{Corollary}
\theoremstyle{definition}
\newtheorem{definition}[theorem]{Definition}
\newtheorem{example}[theorem]{Example}
\theoremstyle{remark}
\newtheorem{remark}[theorem]{Remark}
\par\begin{tabular}{rcl}}%
\newcommand{\ds}{\displaystyle}
\newcommand{\defeq}{:=}
\newcommand{\ip}[1]{\langle #1 \rangle}
\newcommand{\into}{\hookrightarrow}
\newcommand{\onto}{\twoheadrightarrow}
\newcommand{\thus}{\Rightarrow}
\newcommand*\slot{\mathpalette\slot@{.5}}
\newcommand*\slot@[2]{\mathbin{\vcenter{\hbox{\scalebox{#2}{$\m@th#1\bullet$}}}}}
\newcommand{\lie}{\mathfrak}
\newcommand{\CC}{\mathbb{C}}
\newcommand{\NN}{\mathbb{N}}
\newcommand{\TT}{\mathbb{T}}
\newcommand{\QQ}{\mathbb{Q}}
\newcommand{\ZZ}{\mathbb{Z}}
\newcommand{\cA}{\mathcal{A}}
\newcommand{\cB}{\mathcal{B}}
\newcommand{\cF}{\mathcal{F}}
\newcommand{\cL}{\mathcal{L}}
\newcommand{\cO}{\mathcal{O}}
\newcommand{\cU}{\mathcal{U}}
\newcommand{\bfA}{\mathbf{A}}
\newcommand{\SL}{\mathrm{SL}}
\newcommand{\SU}{\mathrm{SU}}
\newcommand{\bP}{\mathbf{P}}		
\newcommand{\bQ}{\mathbf{Q}}		
\DeclareMathOperator{\Hom}{Hom}
\DeclareMathOperator{\res}{res}
\DeclareMathOperator{\Res}{Res}
\DeclareMathOperator{\id}{id}
\DeclareMathOperator{\ev}{ev}
\DeclareMathOperator{\Vect}{span}
\newcommand{\param}{s}
\newcommand{\Uqg}{\cU_q(\lie{g})}
\newcommand{\Uqk}{\cU_q(\lie{k})}
\newcommand{\Uqh}{\cU_q(\lie{h})}
\DeclareMathOperator{\wt}{wt}
\newcommand{\Rhat}{\hat{R}}
\newcommand{\braid}{\sigma} 
\newcommand{\Cartan}{\eta}  
\newcommand{\rightend}{\mathsf{R}}
\newcommand{\leftend}{\mathsf{L}}
\newcommand{\OqG}{\cO_q[G]}
\newcommand{\OqAOG}{\cO_q^{\bfA_0}[G]}
\newcommand{\OqK}{\cO_q[K]}
\newcommand{\OT}{\cO[T]}
\newcommand{\OqAOK}{\cO_q^{\bfA_0}[K]}
\newcommand{\OGq}{\cO[G_q]}
\newcommand{\OKq}{\cO[K_q]}
\newcommand{\OKO}{\cO[K_0]}
\newcommand{\OYSq}{\cO[Y_{\rootset,q}]}
\newcommand{\OXSq}{\cO[X_{\rootset,q}]}
\newcommand{\OYSO}{\cO[Y_{\rootset,0}]}
\newcommand{\OXSO}{\cO[X_{\rootset,0}]}
\newcommand{\mult}{\mathsf{M}}
\newcommand{\simpleroots}{{\boldsymbol{\Delta}}}
\newcommand{\dominant}{\bP^+}
\newcommand{\roots}{{\boldsymbol{\Phi}}}
\newcommand{\fundweights}{{\boldsymbol{\Pi}}}
\newcommand{\rootset}{S}
\newcommand{\pid}{{\mathbf{A}_0}}
\newcommand{\pidinfty}{\mathbf{A}_\infty}
\newcommand{\laurent}{\mathbf{A}}
\newcommand{\lattice}{\cL}
\newcommand{\kasE}{\tilde{E}}
\newcommand{\kasF}{\tilde{F}}
\newcommand{\genf}{\mathsf{f}}
\newcommand{\genv}{\mathsf{v}}
\newcommand{\cpa}{\mathsf{f}}
\newcommand{\cpb}{\mathsf{g}}
\newcommand{\edge}{S}
\newcommand{\rank}{r}
\newcommand{\Cset}{\mathsf{C}}
\newcommand{\Sset}{\mathsf{S}}
\newcommand{\hgraph}{\Lambda_{\lie{g}, \Cset}}
\newcommand{\dmap}{\mathsf{d}}
\newcommand{\NC}{N}
\newcommand{\Vset}{\hgraph^0}
\newcommand{\KPalg}{\mathrm{KP}_\CC(\hgraph)}
\newcommand{\hgraphstd}{\Lambda_{\lie{g}}}
\newcommand{\Vsetstd}{\hgraphstd^0}
\newcommand{\KPalgstd}{\mathrm{KP}_\CC(\hgraphstd)}
\newcommand{\KPiso}{\Phi}
\newcommand{\CalgG}{\cA_{\lie{g}, \Cset}}
\newcommand{\CalgK}{\cA_{\lie{k}, \Cset}}
\newcommand{\CalgGpi}{\cA_{\lie{g}, \fundweights}}
\newcommand{\CalgGstd}{\cA_{\lie{g}}}
\newcommand{\CalgKstd}{\cA_{\lie{k}}}
\newcommand{\rmap}{\mathsf{r}}
\newcommand{\smap}{\mathsf{s}}
\newcommand{\counit}{\epsilon}
\newcommand{\hit}{\triangleright}
\newcommand{\hitby}{\triangleleft}
\newcommand{\SoibH}{\mathsf{H}}
\newcommand{\Soib}{\tilde\pi}
\newcommand{\SoibT}{\pi}
\newcommand{\repT}{\chi}
\newcommand{\rept}{\chi_t}
\newcommand{\inclusion}{\iota}
\newcommand{\projection}{\phi}
\newcommand{\Ktilde}{\widetilde{K}}
\newcommand{\OqKtilde}{\cO_q[\Ktilde]}
\newcommand{\qrange}{(0, \infty) \setminus\{1\}}
\newcommand{\OqGP}{\cO_q[G/N^+]}
\newcommand{\OqGM}{\cO_q[G/N^-]}
\newcommand{\OqGAOP}{\cO^{\mathbf{A}_0}_q[G/N^+]}
\newcommand{\OqGAOM}{\cO^{\mathbf{A}_0}_q[G/N^-]}
\newcommand{\OqKAO}{\cO^{\mathbf{A}_0}_q[K]}
\begin{document}

\title[Crystal limits of quantum groups as higher-rank graph algebras]{Crystal limits of compact semisimple quantum groups as higher-rank graph algebras}

\author{Marco Matassa}
\address{OsloMet – Oslo Metropolitan University, Oslo, Norway}
\email{marco.matassa@oslomet.no}

\author{Robert Yuncken}
\address{Institut Elie Cartan de Lorraine -- Universit\'e de Lorraine
 -- 3 rue Augustin Fresnel, 57000 Metz, France}
\email{robert.yuncken@univ-lorraine.fr}

\date{}

\begin{abstract}
Let $\OqK$ denote the quantized coordinate ring over the field $\CC(q)$ of rational functions corresponding to a compact semisimple Lie group $K$, equipped with its $*$-structure.  
Let $\pid\subset\CC(q)$ denote the subring of regular functions at $q = 0$.
We introduce an $\pid$-subalgebra $\OqAOK \subset \OqK$ which is stable with respect to the $*$-structure, and which has the following properties with respect to the crystal limit $q \to 0$.

The specialization of $\OqK$ at each $q\in\qrange$ admits a faithful $*$-repre\-sentation $\pi_q$ on a fixed Hilbert space, a result due to Soibelman.  We show that for every element $a \in \OqAOK$, the family of operators $\pi_q(a)$ admits a norm-limit as $q\to0$.  These limits define a $*$-representation $\pi_0$ of $\OqAOK$. We show that the resulting $*$-algebra $\OKO=\pi_0(\OqAOK)$ is a Kumjian-Pask algebra, in the sense of Aranda Pino, Clark, an Huef and Raeburn.  We give an explicit description of the underlying higher-rank graph in terms of crystal basis theory.  As a consequence, we obtain a continuous field of $C^*$-algebras $(C(K_q))_{q\in[0,\infty]}$, where the fibres at $q = 0$ and $\infty$ are explicitly defined higher-rank graph algebras.
\end{abstract}

\subjclass{Primary: 20G42, Secondary: 46L67, 17B3}
\keywords{quantum groups, crystal basis, quantized coordiante rings, graph algebras, higher-rank graph algebras}

\maketitle

\tableofcontents

\section{Introduction}

\subsection{Algebraic and analytic crystal limits}
Let $K$ be a compact semisimple Lie group, $G$  its complex form.
The quantized coordinate ring on $G$ can be constructed in two different flavours.
In the algebraic approach we define a $\CC(q)$-algebra $\OqG$ with $q$ an indeterminate, while in the analytic approach we define a $\CC$-algebra $\OGq$, or a $*$-algebra $\OKq$ if we are interested in the compact form, with $q \in \qrange$ being a numerical value.
In this paper, we study these algebras in the crystal limit $q \to 0$ (or $q \to \infty$).  This is defined quite differently in the two communities.

In the algebraic world, the $q=0$ limit is realized by the theory of crystal bases, due to Kashiwara \cite{Kashiwara:crystal1, Kashiwara:crystal2} and Lusztig \cite{Lusztig:canonical}.
Beginning with the quantized enveloping algebra $\Uqg$ of $\lie{g}=\lie{k}_\CC$ over $\CC(q)$ or $\QQ(q)$, one replaces the quantized Lie algebra generators $E_i$, $F_i$ by algebraic renormalizations $\kasE_i$, $\kasF_i$, called the Kashiwara operators, and then localizes the finite-dimensional modules at $q=0$.  The resulting localized modules admit nice ``crystal'' bases with an elegant combinatorial structure.  
This approach was used by \cite{Kashiwara:global, LeclercThibon, Iglesias:bitableaux} to define a crystal basis for the quantized coordinate rings $\OqG$.

In the analytical world, one is interested in the continuous fields of quantized coordinate rings, in the $C^*$-algebraic sense of Woronowicz \linebreak \cite{Woronowicz:SUq2, Woronowicz:pseudogroups} and Vaksman-Soibelman \cite{VakSoi:SUq2, Soibelman}, as well as the subalgebras of continuous functions on quantized flag varieties.  Here, we have a handful of results, in particular due to Hong and Szymanski \cite{HonSzy:spheres, HonSzy:lens}, which show that in certain examples ($\mathrm{SU}_q(2)$, quantum projective spaces, quantum spheres and quantum lens spaces), the continuous field of $C^*$-algebras over $(0,\infty)$ extends to $[0,\infty]$, with fibres at the boundaries given by graph $C^*$-algebras.\footnote{In fact, their results are stronger, showing that their quantized function algebras are isomorphic for all values of $q\neq1$, although only in the $q=0$ limit do the graph algebra generators coincide with matrix coefficients for the quantum group.}
A recent study of the $q=0$ limit of $C(\SU_q(n))$ has also been undertaken in \cite{GirPal}, although without comparing the results to graph algebras.
In fact, it is known that groups with rank larger than one do not admit a description as a graph algebra, see the remarks at the end of the introduction of \cite{HonSzy:spheres}.

To overcome this difficulty, one can look at the more general concept of higher-rank graph algebras, due to Kumjian and Pask \cite{KumPas}.
Roughly speaking, a higher-rank graph is a graph whose edges are classed into $\NC$ colours, and which is equipped with an equivalence relation on paths such that, for instance, a path of two edges of colours red-blue can always be replaced by an equivalent path of colours blue-red with the same start and end points.    In this way, paths $e$ in the graph are equipped with a \emph{degree}, or \emph{coloured length}, $\dmap(e)\in\NN^\NC$.  For the precise definition, see \cref{sec:k-graph_algebra}.

In a recent advance, Olof Giselsson \cite{Giselsson:SU3} has shown that $C(SU_q(3))$ is isomorphic to the $C^*$-algebra of a $2$-graph, which is reproduced in \cref{fig:graph-SU3}.

\begin{figure}[h]
\centering
    
\begin{tikzpicture}[
vertex/.style = {align=center, inner sep=2pt},
Rarr/.style = {->, red},
Barr/.style = {->, blue, dotted},
shadow/.style = {white, line width=3pt},
Rloop/.style = {->, red, out=165, in=195, loop},
Bloop/.style = {->, blue, out=15, in=-15, loop, dotted}
]
\node (v1) at ( 0, 0) [vertex] {$\bullet$};
\node (v2) at (-2,-1) [vertex] {$\bullet$};
\node (v3) at ( 2,-1) [vertex] {$\bullet$};
\node (v4) at (-2,-2) [vertex] {$\bullet$};
\node (v5) at ( 2,-2) [vertex] {$\bullet$};
\node (v6) at ( 0,-3) [vertex] {$\bullet$};

\draw [Rloop] (v1) edge (v1);
\draw [Bloop] (v1) edge (v1);
\draw [Barr] (v2)--(v1);
\draw [Rarr] (v3)--(v1);

\draw [Rloop] (v2) edge (v2);
\draw [Bloop] (v2) edge (v2);
\draw [Rarr, transform canvas={xshift=-0.5em}] (v4)--(v2);
\draw [Barr] (v4)--(v2);
\draw [Rarr] (v5)--(v2);
\draw [Rarr] (v6)--(v2);

\draw [Rloop] (v3) edge (v3);
\draw [Bloop] (v3) edge (v3);
\draw [shadow] (v4)--(v3);
\draw [Barr] (v4)--(v3);
\draw [shadow] (v6)--(v3);
\draw [Barr] (v6)--(v3);
\draw [Rarr] (v5)--(v3);
\draw [Barr, transform canvas={xshift=0.5em}] (v5)--(v3);

\draw [Rloop] (v4) edge (v4);
\draw [Bloop] (v4) edge (v4);
\draw [Rarr] (v6)--(v4);
\draw [Barr] (v6)--(v5);

\draw [Rloop] (v5) edge (v5);
\draw [Bloop] (v5) edge (v5);

\draw [Rloop] (v6) edge (v6);
\draw [Bloop] (v6) edge (v6);
\end{tikzpicture}
    
\caption{Giselsson's $2$-graph for $SU(3)$. Note that this is not just the Bruhat graph for $SU(3)$---there are additional edges.}
\label{fig:graph-SU3}
\end{figure}
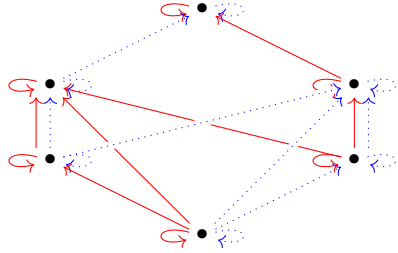

The main result of this article is that the coordinate ring of any quantized compact semisimple Lie group admits a $q = 0$ limit which is a higher-rank graph algebra.
The construction of the higher-rank graph, which is summarized below and makes use of the theory of crystal bases, will make clear the link between the algebraic and analytic approaches to the crystal limit.

The generators of the $C^*$-algebra of our higher-rank graphs can be represented by tensor products of shift operators. 
They have a much simpler algebra structure than the algebras $C(K_q)$, even at $q=1$, since giving an explicit formula for a product of matrix coefficients requires the use of Clebsch-Gordan formulas, whereas the product rules for graph algebras are purely combinatorial.  In this way, our results can be seen as a non-commutative geometric counterpart of the simplifications afforded by crystal basis theory in the algebraic context.

\subsection{Statement of main results}

The quantum groups $\OGq$ with $q$ specialized in $\qrange$ can be represented faithfully on a Hilbert space using the work of Soibelman \cite{Soibelman}.  By composing this with the specialization map\footnote{The specialization map is only partially defined, since not all elements of $\OqG$ can be specialized at any $q\in\qrange$.  We will ignore this detail in the introduction.} $\OqG \to \OGq$, we obtain for every value $q\in\qrange$ a representation $\SoibT_q$ of $\OqG$ on the Hilbert space $\SoibH=\ell^2(\NN)^{\otimes l} \otimes L^2(T)$, where $l$ is the length of the longest word in the Weyl group of $K$ and $T$ is the maximal torus of $K$.

Using the theory of crystal bases, we can easily define a subring $\OqAOG$ of the $\CC(q)$-algebra $\OqG$, consisting of elements that can be "specialized at $q = 0$" (see for instance \cite[\S6]{Iglesias:bitableaux} for this construction, where it is denoted $\widetilde{\cF}$).
We begin by proving the following analytic result, which is probably known to experts.

\begin{theorem}
For any $u \in \OqAOG$, the one-parameter family of operators $\SoibT_q(u)$ admits a well-defined norm limit $\SoibT_0(u)$ as $q \to 0$.
\end{theorem}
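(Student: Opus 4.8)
The plan is to exploit Soibelman's explicit realization of $\SoibT_q$ as a tensor product of elementary representations precomposed with an iterated coproduct, thereby reducing the norm convergence of $\SoibT_q(u)$ to a uniform estimate on the matrix entries of the elementary $\cO_q[\SL(2)]$-factors. Fix a reduced expression $w_0 = s_{i_1}\cdots s_{i_l}$ for the longest Weyl group element. Then Soibelman's construction gives
\[
  \SoibT_q = \bigl(\pi_{s_{i_1},q}\otimes\cdots\otimes\pi_{s_{i_l},q}\otimes\chi_t\bigr)\circ\Delta^{(l)},
\]
where $\Delta^{(l)}\colon \OqG\to\OqG^{\otimes(l+1)}$ is the iterated coproduct, each $\pi_{s_i,q}$ is the elementary representation on $\ell^2(\NN)$ obtained by restricting to the quantum subgroup $\cO_q[\SL(2)]$ attached to the simple root $\alpha_i$ and taking its standard weighted-shift representation, and $\chi_t$ is the torus part, acting on $L^2(T)$ by a multiplication operator whose dependence on $q$ is trivial. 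Since $\OqAOG$ is, by construction, the $\pid$-span of the crystal matrix coefficients $c^{V_\lambda}_{b,b'}$, and any element involves only finitely many of these with coefficients in $\pid$ (which converge at $q=0$), it suffices to prove norm convergence of $\SoibT_q(c^{V_\lambda}_{b,b'})$ for a single crystal matrix coefficient.

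Applying the iterated coproduct in a fixed weight basis of $V_\lambda$ expands $\SoibT_q(c^{V_\lambda}_{b,b'})$ as a sum---of cardinality at most $(\dim V_\lambda)^{l}$, hence independent of $q$---of elementary tensors $\pi_{s_{i_1},q}(c^{V_\lambda}_{b,v_1})\otimes\cdots\otimes\pi_{s_{i_l},q}(c^{V_\lambda}_{v_{l-1},v_l})\otimes\chi_t(c^{V_\lambda}_{v_l,b'})$. Norm convergence of each tensor factor, together with the uniform boundedness that convergence automatically provides, yields norm convergence of each elementary tensor (by telescoping, using $\|A\otimes B\| = \|A\|\,\|B\|$) and hence of the finite sum. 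As the torus factor is constant in $q$, the problem reduces to a single statement: for each simple root $\alpha_i$ and each pair of weight vectors $v,v'$ of $V_\lambda$, the operator $\pi_{s_i,q}(c^{V_\lambda}_{v,v'})$ converges in norm on $\ell^2(\NN)$ as $q\to0$.

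This last statement is the analytic heart of the argument and is essentially an $\SU_q(2)$ computation. Restricting $V_\lambda$ to $\cO_q[\SL(2)]$ and decomposing into irreducibles shows that $\pi_{s_i,q}(c^{V_\lambda}_{v,v'})$ is a finite sum of standard $\SU_q(2)$ matrix coefficients. In the orthonormal basis $(e_n)_{n\in\NN}$ each of these is a weighted shift $e_n\mapsto w_n(q)\,e_{n+d}$ supported on a single off-diagonal $d$, with $|d|$ bounded by the diameter of the weight diagram of $V_\lambda$ and hence independent of $q$; the weights $w_n(q)$ are explicit products of powers $q^{cn+c'}$ and ratios of $q$-factorials $(q^2;q^2)_n$ (for instance the defining coefficients act by $e_n\mapsto\sqrt{1-q^{2n}}\,e_{n-1}$ and $e_n\mapsto q^n e_n$). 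The regularity-at-$q=0$ condition built into $\OqAOG$ is precisely what guarantees that every such weight is regular at $q=0$. The key estimate is that $w_n(q)$ converges \emph{uniformly in $n$}: a factor $q^{cn+c'}$ with $c>0$ satisfies $\sup_{n\ge0}q^{cn+c'}\le q^{c'}\to0$, a factor with $c=0$ is constant, and $1\ge(q^2;q^2)_n\ge(q^2;q^2)_\infty\to1$ forces $(q^2;q^2)_n\to1$ uniformly in $n$. Since a weighted shift on a fixed off-diagonal has operator norm $\sup_n|w_n|$, the pointwise limits $w_n(0)$ define a bounded weighted shift $\pi_{s_i,0}(c^{V_\lambda}_{v,v'})$ and
\[
  \bigl\|\pi_{s_i,q}(c^{V_\lambda}_{v,v'})-\pi_{s_i,0}(c^{V_\lambda}_{v,v'})\bigr\|
  \le \sum_{d}\sup_{n}\bigl|w_n^{(d)}(q)-w_n^{(d)}(0)\bigr|\xrightarrow{q\to0}0,
\]
the sum running over the finitely many relevant off-diagonals. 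With the reductions above this proves the theorem.

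The main obstacle is to obtain norm convergence rather than mere strong convergence. Strong convergence of $\SoibT_q(u)$ is almost immediate from the entrywise convergence of the shift weights, but the norm statement requires this convergence to be uniform in the position index $n$ over the infinite basis, and simultaneously across the finitely many off-diagonals carrying the operator. This uniformity is exactly what the crystal normalization defining $\OqAOG$ supplies: a generic matrix coefficient of $\OqG$ has weights with poles at $q=0$, and even after clearing denominators the uniform-in-$n$ control fails, so that the norm limit would not exist. Making precise the two quantitative inputs---that the Soibelman operators are supported on a $q$-independent finite set of multi-diagonals, and that each multi-diagonal converges uniformly via the $q$-factorial estimate above---is therefore the technical crux.
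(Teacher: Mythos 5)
Your proposal is correct, and its skeleton coincides with the paper's proof of \cref{thm:G_limit}: both use the Soibelman factorization $\SoibT_q = (\Soib_{i_1, q} \otimes \cdots \otimes \Soib_{i_l, q} \otimes \repT)\circ\Delta^{(l)}$, the stability $\Delta(\OqAOG)\subseteq\OqAOG\otimes\OqAOG$ (valid because the iterated coproduct of a crystal-lattice matrix coefficient expands over a lattice basis and its dual, giving finitely many terms, uniformly in $q$), and a reduction to the rank-one tensor factors, with the torus factor being $q$-independent on the basis coefficients. Where you genuinely differ is in the rank-one step. The paper argues algebraically: in \cref{thm:SL2_limit} it computes the norm limits of the four fundamental matrix coefficients directly from \cref{def:SU2-rep}, invokes the fact that these generate $\cO_q^{\pid}[\SL(2)]$ as an $\pid$-algebra to conclude existence of limits for all elements by multiplicativity, and only then derives the explicit weighted-shift formula for $u^m_{ij}$ from crystal tensor-product combinatorics. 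You instead prove rank-one norm convergence for an arbitrary matrix coefficient by direct analysis: finitely many off-diagonals with $q$-independent support, plus uniform-in-$n$ convergence of the shift weights via $q$-Pochhammer estimates. Your route buys a self-contained quantitative argument with an explicit modulus of convergence and makes transparent why norm (not merely strong) convergence holds; the paper's route avoids all weight computations at the price of relying on the algebraic generation statement for the $\pid$-form.

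One step you assert but should justify: the claim that ``the regularity-at-$q=0$ condition built into $\OqAOG$'' guarantees regularity of the shift weights \emph{after} restriction to the quantum subgroup $\SU_{q_i}(2)$. The $\pid$-form $\OqAOG$ is defined via $\Uqg$-crystal lattices, so regularity of the rank-one weights is not literally part of the definition: decomposing $V(\lambda)|_{\cU_{q_i}(\lie{sl}_2)}$ into irreducibles involves a change of basis whose entries could a priori be singular at $q=0$, in which case your uniform estimates would not apply. What closes this is the observation—stated explicitly in the paper's proof—that a crystal lattice $\cL$ for a $\Uqg$-module is also a crystal lattice for each rank-one restriction (the Kashiwara operators $\kasE_i,\kasF_i$ preserve $\cL$, and the weight decomposition refines), so that $\res^{K_q}_{S_i}$ maps $\OqAOG$ into $\cO_{q_i}^{\pid}[\SL(2)]$ and the change-of-basis matrices lie in $GL$ over $\pid$. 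With that sentence added, your argument is complete.
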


However, an important subtlety arises when we introduce the $*$-struc\-ture.  One can define a $*$-structure on the $\CC(q)$ algebra $\OqG$ which specializes to the usual $*$-structure on $\OGq$ when $q \in \qrange$, and which defines the real form $\OKq$.  However, the $\pid$-subalgebra $\OqAOG$ is not stable under the $*$, see \cref{ex:SU2-star}, so the algebra $\OqAOG/q\OqAOG$ at $q=0$ doesn't inherit a $*$-structure.

To resolve this issue, we define a new $\pid$-algebra $\OqAOK \subset \OqG$ which is stable under the $*$.  Essentially, $\OqAOK$ is just the $\pid$-algebra generated by $\OqAOG$ and $\OqAOG^*$, although for technical reasons we define it slightly differently, see \cref{def:OqAOK}.  
The limit $\SoibT_0$ of the Soibelman representations extends to $\OqAOK$, and this
allows us to define a $*$-subalgebra $\OKO=\pi_0(\OqAOK)$ of $\cB(\SoibH)$.
Our main task is to investigate the structure of this $*$-algebra.  

For this, a major role is played by crystal basis theory, as follows.  Given a dominant integral weight $\lambda \in \dominant$, we write $V(\lambda)$ for the simple $\Uqg$-module of highest weight $\lambda$, and $\cB(\lambda)$ for the associated crystal.  Recall that if $\mu, \mu' \in \dominant$ are dominant integral weights with $\mu + \mu'= \lambda$, then there is a unique non-trivial morphism of crystals $\inclusion_{\mu, \mu'}: \cB(\lambda) \to \cB(\mu) \otimes \cB(\mu')$.   The image of this inclusion is called the \emph{Cartan component} of the tensor product $\cB(\mu) \otimes \cB(\mu')$, namely the unique irreducible component of highest weight $\mu + \mu'$.

\begin{definition}
Let $\lambda, \mu \in \dominant$ with $\mu \leq \lambda$. The \emph{$\mu$-right end} of a crystal element $b \in \cB(\lambda)$ is the element $\rightend(b) = b'' \in \cB(\mu)$ determined by $\inclusion_{\lambda-\mu,\mu}: b \mapsto b'\otimes b''$.  

If $\fundweights = (\varpi_1, \cdots, \varpi_\rank)$ is the family of fundamental weights of $K$ and $\lambda\geq\rho =\sum_i\varpi_i$, then we write
\[
 \rightend_\fundweights(b) = \big(\rightend_{\varpi_1}(b), \cdots, \rightend_{\varpi_\rank}(b) \big)
\]
for the family of \emph{fundamental right ends} of $b \in \cB(\lambda)$.
\end{definition}

We can now define the higher-rank graph associated to the crystal limit of $\OKq$.  We identify the set of dominant integral weights $\dominant$ with the monoid $\NN^\rank$ by identifying $(n_i) \in \NN^\rank$ with $\sum_i n_i \varpi_i$.  

\begin{theorem}
\label{prop:intro-hgraph}
We can define an $\rank$-graph $\hgraphstd$ as follows.  The vertex set is
\[
 \Vsetstd = \{ \rightend_\fundweights(b) \mid b \in \cB(\rho) \}
\]
namely the fundamental right ends of elements of $\cB(\rho)$.
The paths are given by pairs $(v, b) \in \Vsetstd \times \cB(\lambda)$ where $v = \rightend_\fundweights(c)$ is such that $c \otimes b$ is in the Cartan component of $\cB(\rho) \otimes \cB(\lambda)$. The range and source maps are given by
\begin{align*}
    & \smap(v,b) = v,
    && \rmap(v,b) = \rightend_\fundweights(c\otimes b).
\end{align*}
\end{theorem}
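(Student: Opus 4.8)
The plan is to realize $\hgraphstd$ as a countable category carrying a degree functor $\dmap$ to $\NN^\rank$ with the Kumjian--Pask unique factorization property, so I must first supply the data left implicit in the statement. I would take the degree of a path to be $\dmap(v,b) = (n_i) \in \NN^\rank$, where $b \in \cB(\lambda)$ and $\lambda = \sum_i n_i\varpi_i$; the identity at a vertex $v$ is the degree-zero path $(v,b_0)$, with $b_0$ the unique element of the trivial crystal $\cB(0)$, and indeed $\rmap(v,b_0) = \rightend_\fundweights(c\otimes b_0) = \rightend_\fundweights(c) = v$. For composition, given paths $(v,b)$ and $(v',b')$ with $b\in\cB(\lambda)$, $b'\in\cB(\lambda')$ and $\smap(v,b) = \rmap(v',b')$, i.e.\ $v = \rmap(v',b')$, I would set
\[
 (v,b)\,(v',b') = (v',\beta), \qquad \inclusion_{\lambda',\lambda}(\beta) = b'\otimes b,
\]
where $\beta \in \cB(\lambda'+\lambda)$ is the preimage under the Cartan inclusion; the order matches the range map, which tensors the path datum on the right of the running state $c$. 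Countability is immediate since $\dominant \cong \NN^\rank$ and each $\cB(\lambda)$ is finite.

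The two well-definedness points I would address before the axioms are that $\rmap$ is independent of the representative $c$ of $v$, and that the composite is again a path. Both rest on what I expect to be the central crystal-theoretic input of the paper, a \emph{locality} property: for $c$ of sufficiently dominant highest weight, whether $c\otimes b$ lies in the Cartan component of its tensor product, as well as the value of $\rightend_\fundweights(c\otimes b)$, depends on $c$ only through $\rightend_\fundweights(c)$. Granting this, $c$ may be replaced by any representative of $v$, so $\rmap$ is well defined (and, if $\rightend_\fundweights\colon\cB(\rho)\to\prod_i\cB(\varpi_i)$ is injective, $c$ is in fact uniquely determined by $v$). To see that $(v',\beta)$ is a path I would invoke the associativity of Cartan components, namely the identity
\[
 (\id\otimes\inclusion_{\lambda',\lambda})\circ\inclusion_{\rho,\,\lambda'+\lambda}
 = (\inclusion_{\rho,\lambda'}\otimes\id)\circ\inclusion_{\rho+\lambda',\,\lambda},
\]
which holds by uniqueness of the nontrivial crystal morphism into $\cB(\rho)\otimes\cB(\lambda')\otimes\cB(\lambda)$. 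Combined with the path conditions for $(v,b)$ and $(v',b')$ and the locality property, this places $c'\otimes b'\otimes b$ in the Cartan component of the triple tensor (with $\rightend_\fundweights(c')=v'$), whence $c'\otimes\beta$ lies in the Cartan component of $\cB(\rho)\otimes\cB(\lambda'+\lambda)$.

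The category axioms then follow formally. Associativity of composition reflects the associativity of crystal tensor products together with uniqueness of the iterated Cartan inclusion, so the two bracketings of a triple composite produce the same element $\beta$; the degree-zero paths act as units by the computation above; and $\dmap$ is a functor since $\beta\in\cB(\lambda'+\lambda)$ gives $\dmap(pp') = \dmap(p)+\dmap(p')$, with $\dmap^{-1}(0)$ consisting exactly of the vertices (as $\cB(0)$ is a singleton). For the factorization property, given a path $(v,b)$ with $b\in\cB(\lambda)$ and a splitting $\lambda = \mu+\mu'$ in $\dominant$, I would apply the inclusion $\inclusion_{\mu',\mu}\colon\cB(\lambda)\to\cB(\mu')\otimes\cB(\mu)$ and write $\inclusion_{\mu',\mu}(b) = a'\otimes a$ with $a = \rightend_\mu(b)$. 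This yields the candidate factors $\beta = (v,a')$ of degree $\mu'$ and $\alpha = (\rmap(v,a'),a)$ of degree $\mu$, and one checks as in the previous paragraph—using the prefix property of Cartan components (a prefix of a Cartan-component tensor again lies in a Cartan component) and the locality property—that both are genuine paths and that $\alpha\beta = (v,b)$. Uniqueness is forced: source-matching pins the source of $\beta$ to $v$, the composition formula gives $\inclusion_{\mu',\mu}(b)$ as the tensor of the two crystal data, and injectivity of $\inclusion_{\mu',\mu}$ recovers $a'$ and $a$ uniquely, after which $\alpha$'s source is determined as $\rmap(v,a')$.

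I expect the main obstacle to be establishing the locality property together with the associativity and prefix stability of Cartan components in the combinatorial language of right ends. These are precisely the inputs that make the path condition depend only on the pair $(v,b)$, that render $\rmap$ and composition coherent, and that deliver both existence and uniqueness in the factorization property; once they are in hand, the verification of the $\rank$-graph axioms is essentially formal.
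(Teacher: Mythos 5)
Your categorical skeleton coincides, almost step for step, with the paper's own verification in \cref{thm:hgraph}: your composition $(v,b)(v',b') = (v',\beta)$ with $\inclusion_{\lambda',\lambda}(\beta)=b'\otimes b$ is exactly the paper's law \eqref{eq:composition} written in the opposite order convention, with $\beta = \projection(b'\otimes b)$ the image under the projection onto the Cartan component; the identities, the functoriality of $\dmap$, and your existence-and-uniqueness argument for the factorization property via the unique splitting $\inclusion_{\mu',\mu}(b)=a'\otimes a$ are the paper's arguments as well. Your uniqueness argument is complete, and the ``associativity of Cartan components'' identity you invoke does hold, by rigidity of crystal morphisms from a connected crystal into a multiplicity-one isotypic component, which is how the paper uses it implicitly.

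The genuine gap is that the whole construction is conditional on the ``locality property,'' which you state precisely but do not prove --- and this is where the mathematical content of the theorem lies, as the paper itself signals in the remark following the statement (``it is part of the proposition that the above definitions depend only on the vertex $v$''). Without it, the range map is not even well defined. In the paper this is \cref{lem:right_end_of_a_path} together with \cref{prop:right_end_independence}, and neither is a formal consequence of uniqueness of crystal morphisms: the nontrivial direction is that the componentwise conditions ``$c_i\otimes b$ lies in the Cartan component for each fundamental right end $c_i$ of $c$'' already force $c\otimes b$ into the Cartan component of $\cB(\mu)\otimes\cB(\lambda)$ for \emph{every} representative $c\in\cB(\mu)$ with $\mu\geq\rho$. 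The paper proves this by embedding $c$ into a long tensor product of fundamental crystals and propagating non-vanishing through compositions of Cartan braidings, using \cref{prop:rightend-sigma} and \cref{thm:longest_word}, which rest in turn on the existence of the Cartan braiding as the $q\to 0$ limit of the rescaled braiding $q^{(\lambda,\lambda')}\Rhat$ (\cref{thm:braiding_limit}) and its braid and hexagon relations; the same machinery delivers your ``prefix stability'' and the substitution rule $\rightend_\mu(c\otimes b')=\rightend_\mu(\rightend_\mu(c)\otimes b')$ of \cref{prop:right_end_independence}. So you have correctly isolated exactly what must be proven, but deferred rather than supplied the decisive step. A minor further point: your parenthetical hope that $\rightend_\fundweights$ is injective on $\cB(\rho)$ fails in general --- already for $\lie{g}=\lie{sl}_3$ one has $\rightend_\fundweights(a_2\otimes b_1)=\rightend_\fundweights(a_3\otimes b_1)=(a_2,b_1)$, see \cref{ex:graph-su3} --- which is precisely why independence of the representative $c$ cannot be sidestepped and must be proven.
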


\begin{remark}
It is part of the proposition that the above definitions depend only on the vertex $v\in\Vsetstd$ and not on the choice of crystal element $c\in\cB(\rho)$ which represents it.
\end{remark}

Our main theorem is the following.  We write $\KPalgstd$ for the Kumjian-Pask algebra of $\hgraphstd$ in the sense of \cite{Aranda:Kumjian-Pask}, and $C^*(\hgraphstd)$ for the higher rank graph $C^*$-algebra in the sense of \cite{KumPas}.

\begin{theorem}
  Let $K$ be a compact, connected, simply connected semisimple Lie group, with complexified Lie algebra $\lie{g}=\lie{k}_\CC$, and let $\hgraphstd$ be the higher rank graph from \cref{prop:intro-hgraph}.  There is an isomorphism of $*$-algebras $\OKO\cong\KPalgstd$.  Its $C^*$-closure is $C(K_0)\cong C^*(\hgraphstd)$.
\end{theorem}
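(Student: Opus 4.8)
The plan is to realize the graph-algebra generators as explicit operators inside $\OKO\subset\cB(\SoibH)$, to use the universal property of the Kumjian--Pask algebra to obtain a $*$-homomorphism $\Phi\colon\KPalgstd\to\OKO$, and then to prove that $\Phi$ is bijective: surjectivity by a crystal-combinatorial generation argument, and injectivity by the graded-uniqueness theorem. The passage to $C^*$-closures will then follow from the gauge-invariant uniqueness theorem for higher-rank graph $C^*$-algebras.

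First I would construct a Kumjian--Pask $\hgraphstd$-family inside $\OKO$. For each vertex $v\in\Vsetstd$ one expects a projection $p_v$, and for each coloured edge $e$ a partial isometry $s_e$, realized as tensor products of unilateral shift operators on the factors $\ell^2(\NN)$ together with a diagonal operator on $L^2(T)$, following the pattern announced in the introduction. The essential point, which relies on the norm-convergence result, is to check that each $p_v$ and $s_e$ genuinely lies in $\OKO=\SoibT_0(\OqAOK)$, i.e.\ is the $q\to0$ limit of $\SoibT_q$ applied to a suitable element of $\OqAOK$; the natural candidates are limits of matrix coefficients indexed by the fundamental crystals $\cB(\varpi_i)$, whose $q=0$ action on the standard basis of $\SoibH$ is precisely the shift/projection pattern dictated by the right-end maps $\rightend_\fundweights$. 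With these operators in hand I would verify the defining relations of a Kumjian--Pask family---mutual orthogonality of the $p_v$, the Cuntz--Krieger relations $s_e^*s_e=p_{\smap(e)}$ and $p_v=\sum_{\rmap(e)=v}s_es_e^*$, and the factorization relations encoding commutation of differently coloured edges---directly from the combinatorics of Cartan components and the uniqueness of crystal morphisms recorded in \cref{prop:intro-hgraph}. By \cite{Aranda:Kumjian-Pask} this yields the homomorphism $\Phi$.

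For surjectivity I would show that $\OKO$ is generated as a $*$-algebra by the image of $\Phi$. Since $\OqAOK$ is generated by the specializable matrix coefficients and their adjoints, it suffices to express the limit $\SoibT_0$ of each such matrix coefficient as a finite $*$-polynomial in the $s_e$ and $p_v$. This is where crystal basis theory does the real work: the $q\to0$ limit of a matrix coefficient of $V(\lambda)$ acts on $\SoibH$ as a combinatorial operator on crystal data, and the decomposition of $\cB(\lambda)$ along iterated Cartan components---together with the description of paths as pairs $(v,b)$ in \cref{prop:intro-hgraph}---lets one factor this operator into a sum over paths, each summand being a product of fundamental generators. Reassembling these products recovers exactly the standard spanning set of the Kumjian--Pask algebra.

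Injectivity is the crux, and I would obtain it from the graded-uniqueness theorem. The right-translation action of the maximal torus on $\OKq$ survives the crystal limit and, using that $K$ is simply connected so that the fundamental weights give an identification of the character lattice of $T$ with $\ZZ^\rank$, equips $\OKO$ with a $\ZZ^\rank$-grading under which $\Phi$ is graded, sending a path generator to a homogeneous element of the matching weight; the vertex projections, being nonzero projections on $\SoibH$, satisfy $p_v\neq0$ for every $v\in\Vsetstd$. The graded-uniqueness theorem of \cite{Aranda:Kumjian-Pask} then forces $\Phi$ to be injective, giving the $*$-algebra isomorphism $\OKO\cong\KPalgstd$. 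Because this isomorphism is implemented by bounded operators on $\SoibH$, it extends to the norm closures, and the gauge-invariant uniqueness theorem \cite{KumPas}---applied via the corresponding $\TT^\rank$-gauge action---identifies the closure with $C^*(\hgraphstd)$, yielding $C(K_0)\cong C^*(\hgraphstd)$. I expect the principal difficulties to be twofold: verifying that the concrete shift operators actually belong to $\OKO$, which is a convergence-plus-crystal computation rather than a formal one, and carrying out the surjectivity factorization uniformly across all $\lambda$, since $\hgraphstd$ contains edges beyond the Bruhat graph and the combinatorics of iterated right ends must be controlled carefully.
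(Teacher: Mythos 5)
Your proposal follows essentially the same route as the paper's proof: a Kumjian--Pask family built from the $q\to0$ limits of the matrix coefficients $\genf^\lambda_i$, $\genv^\lambda_i$ (so membership in $\OKO$ is automatic rather than a separate convergence check), the relations (KP1)--(KP4) verified through the combinatorics of Cartan components and right ends, surjectivity via the path decomposition $\SoibT_0(\genv^\lambda_i)=\sum_v \KPiso(s_{(v,b)})$, injectivity via the graded-uniqueness theorem of \cite{Aranda:Kumjian-Pask} using the torus grading (which the paper notes is grading-\emph{reversing}, so one takes the opposite grading on $\OKO$), and the $C^*$-statement via gauge-invariant uniqueness; the paper merely interposes the universal crystal algebra $\CalgKstd$ as organizational bookkeeping between $\KPalgstd$ and $\OKO$. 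The one step you assert without justification---that the vertex projections have nonzero image in $\cB(\SoibH)$---is precisely where the paper does nontrivial work (\cref{lem:f_v_nonzero}, using string patterns and the fact that the $q=0$ operators have only $0/1$ matrix entries so no cancellation can occur), and your explicit shift-operator description of the generators would supply essentially that same computation.
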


In fact, we obtain a much more general result, see \cref{thm:Soibelman_faithful}.  Firstly, we may allow $K$ to be non-simply connected.  Secondly, we consider the quantized coordinate ring of the canonical torus bundle $Y_\rootset \to X_\rootset$ over any generalized flag variety for $K$ associated to a set $\rootset\subseteq\simpleroots$ of simple roots, see \cref{sec:flag_manifolds} for the notation.  The coordinate ring $\cO[Y_\rootset]$ is generated by matrix coefficients for simple $\lie{g}$-modules with highest weights in a submonoid $\dominant_{K,\rootset} \subseteq \dominant$.  We write  $\Cset = (\vartheta_1, \cdots, \vartheta_\NC)$ for the set of dominant weights which generate $\bP_{K,\rootset}$.  The quantized coordinate rings of $Y_\rootset$ and $X_\rootset$ admit analytic limits at $q=0$.

We prove that by replacing $\fundweights$ by $\Cset$ in the definition of $\hgraphstd$, we obtain another higher-rank graph $\hgraph$ and show that we have an isomorphism $\cO[Y_{\rootset,0}] \cong \KPalg$.  The gauge-invariant subalgebra is then $\cO[X_{\rootset,0}]\cong \KPalg_0$.  These isomorphisms extend naturally to the $C^*$-algebras.

The results can be summarized in the following way.

\begin{theorem}
\label{thm:continuous_field}
  Let $K$ be a compact connected semisimple Lie group, and let $Y_\rootset$ be the canonical torus bundle over the generalized flag variety $X_\rootset$ associated to a set $\rootset\subset\simpleroots$ of simple roots.
  There is a continuous field of $C^*$-algebras $C(Y_{\rootset,\bullet})$ over $[0,\infty]$ whose fibres are
  \begin{equation*}
      C(Y_{\rootset,q}) \cong 
      \begin{cases}
       C(Y_{\rootset}), & q=1,\\
       C(Y_{\rootset,q}), & q\in\qrange\\
       C^*(\hgraph), &q\in\{0,\infty\}.
      \end{cases}
  \end{equation*}
  where $\hgraph$ is the  higher rank graph described above (and spelled out in \cref{thm:hgraph}).
\end{theorem}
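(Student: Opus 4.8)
The plan is to deduce \cref{thm:continuous_field} by assembling the analytic and structural results already established into a continuous field, via the standard criterion that a fibrewise-dense linear space of sections whose pointwise norms vary continuously determines a unique continuous field. Since continuity of the norm function $q \mapsto \|\cdot\|_q$ is a local property on $[0,\infty]$, it is enough to exhibit the field over a compact subinterval containing the interior and over one-sided neighbourhoods of the two endpoints, and then to check that the local continuity structures agree on overlaps so that they patch. The fibres are dictated by three regimes: for $q \in \qrange$, Soibelman's faithful representation $\SoibT_q$ identifies $C(Y_{\rootset,q})$ with the norm closure of $\SoibT_q(\OqAOK)$ in $\cB(\SoibH)$ (more precisely of the $\pid$-form adapted to $Y_\rootset$, generated by the matrix coefficients with highest weights in the relevant submonoid); at $q = 1$ the specialisation of those matrix coefficients yields the commutative algebra $C(Y_\rootset)$; and at $q \in \{0,\infty\}$ the fibre is $C^*(\hgraph)$ by \cref{thm:Soibelman_faithful}.

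First I would treat the interior. For a point $q_0 \in (0,\infty)$ with $q_0 \neq 1$, Soibelman's representation is faithful on a neighbourhood, so the fibre norm is computed by $\SoibT_q$; the generating operators $\SoibT_q(u)$ are built from shift operators and continuous functions of the number operators whose coefficients depend continuously on $q$, and one checks that these are norm-continuous (not merely strongly continuous) families. Hence $q \mapsto \|\SoibT_q(u)\|$ is continuous near $q_0$. At $q_0 = 1$ the Soibelman representation is no longer faithful and the fibre degenerates to the commutative $C(Y_\rootset)$; continuity of the fibre norm across $q = 1$ is the classical limit of the continuous field of compact quantum groups, which I would invoke as a (by now standard) fact, using an integral form over $\CC[q,q^{-1}]$ to furnish sections that specialise without poles on all of $(0,\infty)$. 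Together these give continuity of the norm functions on the whole open interval.

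Next, the endpoint $q = 0$. On a neighbourhood $(0,\varepsilon)$ we have $q \neq 1$, so $\SoibT_q$ is faithful and the fibre norm equals $\|\SoibT_q(\cdot)\|$. The norm-limit theorem proved above gives, for every $u \in \OqAOK$, that $\SoibT_q(u) \to \SoibT_0(u)$ in operator norm as $q \to 0$; consequently the section $q \mapsto \SoibT_q(u)$ extends norm-continuously to $q = 0$ and $q \mapsto \|\SoibT_q(u)\|$ extends continuously with value $\|\SoibT_0(u)\|$. By \cref{thm:Soibelman_faithful} the boundary values $\SoibT_0(\OqAOK)$ are dense in the fibre $C^*(\hgraph)$, so the continuity structure remains fibrewise dense at $q = 0$. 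The $\OqAOK$-sections used here, being regular at $q = 0$, overlap the interior integral-form sections on $(0,\varepsilon)$ and agree with them wherever both are defined (both being specialisations of common elements of $\OqK$), so the two local structures patch; the possible poles of $\OqAOK$ at $q = 1$ are harmless since they lie outside this neighbourhood. This extends the field continuously to $[0,\varepsilon)$.

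For the endpoint $q = \infty$ I would use the symmetry $q \leftrightarrow q^{-1}$: at numerical parameters there is a $*$-isomorphism $C(K_q) \cong C(K_{q^{-1}})$ intertwining $\SoibT_q$ with $\SoibT_{q^{-1}}$ up to unitary equivalence, so the fibre at $\infty$ is isomorphic to the fibre at $0$, namely $C^*(\hgraph)$, and continuity at $\infty$ follows from continuity at $0$ under the homeomorphism $q \mapsto q^{-1}$ of $[0,\infty]$ that exchanges the endpoints (the crystal is the same combinatorial object for the lower and upper limits, so the graph $\hgraph$ is unchanged). Since the local continuity structures agree on overlaps, they assemble into a global one on $[0,\infty]$; applying the continuous-field criterion yields the field with the stated fibres. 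The main obstacle is the bookkeeping at the seams: reconciling the faithful Soibelman picture near the endpoints with the loss of faithfulness and the commutative fibre at $q = 1$, and with the merely partial nature of specialisation (elements of $\OqAOK$ may have poles at $q = 1$), so that a single continuity structure that is dense in every fibre and has globally continuous norm can be exhibited. The underlying analytic point --- norm-continuity, rather than mere strong continuity, of the Soibelman sections --- is what drives both the interior estimate and the norm-limit theorem, and is the crux of the argument.
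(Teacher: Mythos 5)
Your proposal is correct and follows essentially the same route as the paper: invoke the well-known continuous field of $C^*$-algebras over $(0,\infty)$ (including $q=1$) with matrix coefficients as generating sections, extend it to $q=0$ using the norm-limit theorem for sections coming from $\OqAOK$ together with \cref{thm:Soibelman_faithful} (more precisely \cref{thm:Soibelman_faithful_flag} for general $Y_\rootset$) to identify the boundary fibre as $C^*(\hgraph)$, extend to $q=\infty$ via the standard isomorphism $\OKq\cong\cO[K_{q^{-1}}]$, and finally restrict to the subalgebras generated by the matrix coefficients with highest weights in $\dominant_{K,\rootset}$. The only cosmetic slip is your statement that the Soibelman representation is ``no longer faithful'' at $q=1$ --- it is simply not defined there, which is why the paper (like you, in effect) handles the interior by citing the known continuity of the field rather than re-deriving norm-continuity of the shift-operator sections across $q=1$.
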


\begin{remark}
The $C^*$-algebras $C(K_q)$ are known to be abstractly isomorphic for all $q\in\qrange$, see \cite{Giselsson}.  It is not yet known if they are also isomorphic to the graph algebra $C(K_0)$, other than for the cases $K=\SU_q(2)$ and $\SU_q(3)$, the latter due again to Giselsson \cite{Giselsson}, and a few examples in the case of homogeneous spaces, due to Hong and Szymanski \cite{HonSzy:spheres}.
\end{remark}

\subsection{Structure of the paper}

In \cref{sec:background} we review some background material on quantized enveloping algebras and crystal bases.
In \cref{sec:coordinate_ring_def} we review some standard material on quantized coordinate rings, as well as defining the $\pid$-form $\OqAOK$ of $\OqK$.
In \cref{sec:coordinate_rings} we show that the elements of $\OqAOK$ admit analytic limits in terms of the Soibelman representation. This allows us to define the $*$-algebra $\OKO$, which is our main object of study.
In \cref{sec:Cartan-braiding} we introduce the notion of Cartan braiding, which is the crystal limit of a certain rescaled version of the braiding in the category of finite-dimensional $\Uqg$-modules.
This allows us to obtain various relations holding in $\OKO$.
In \cref{sec:properties-Cartan} we study the Cartan braiding in more detail: we show that it satisfies the hexagon and braid relations, and gives a partial action of the symmetric group on tensor products.
We also introduce the notion of right end of a crystal.
In \cref{sec:k-graph_algebra} we discuss higher-rank graph and their corresponding algebras. We prove, using crystal bases, that one can associate a higher-rank graph $\hgraphstd$ of rank $r$ to any complex semisimple Lie algebra $\lie{g}$ of rank $r$ (in fact, we prove this in a more general setting).
In \cref{sec:crystal-algebra} we introduce the crystal algebra $\CalgKstd$ as a useful tool to study the relation between $\OKO$ and $\KPalgstd$, the $*$-algebra associated to the higher-rank graph $\hgraphstd$.
The main result here is that we have surjective $*$-homomorphisms $\KPalgstd \to \CalgKstd$ and  $\CalgKstd \to \OKO$.
In \cref{sec:crystal-limit} we show that the three $*$-algebras
$\KPalgstd$, $\CalgKstd$ and $\OKO$ are all $*$-isomorphic, which gives our main result about the structure of the crystal limit $\OKO$.
Finally in \cref{sec:further-properties} we discuss some further properties of the higher-rank graphs $\hgraphstd$, mainly related to the role of the Weyl groups, as well as discussing some explicit examples.

\subsection*{Acknowledgements}

We would like to thank Olof Giselsson for sharing with us the example of $\SU_q(3)$ as a $2$-graph algebra, as well as comparing his own approach with ours.
We would also like to thank Sergey Neshveyev for discussions concerning continuous fields of $C^*$-algebras in the context of compact quantum groups.

\section{Background material}
\label{sec:background}

\subsection{Ground fields and specialization}

As mentioned before, we will consider the quantized enveloping algebra $\Uqg$ and quantized coordinate ring $\OqG$ in two settings: 1) over the field of rational functions $\CC(q)$ with $q$ an indeterminate; 2) over $\CC$ with $q \in \qrange$.

We write $\CC[q]$ for the polynomial ring in $q$, so that $\CC(q)$ is its field of fractions. We consider the three subrings
\begin{align}
\nonumber
\pid &:= \{ g / h : g, h \in \CC[q], \ h(0) \neq 0 \}, \\
\label{eq:base_rings}
\pidinfty &:= \{ g / h : g, h \in \CC[q], \ h(\infty) \neq 0 \},\\
\nonumber 
\laurent &:= \CC[q,q^{-1}].
\end{align}

For every fixed $q\in[0,\infty]$, there is a partially defined \emph{specialization map}
\(
  \ev_{q} : \CC(q) \to \CC
\)
given by evaluation at $q$.  We apologize for the reuse of $q$ as both formal parameter and specialized value $q\in[0,\infty]$.  The rings $\pid$, $\pidinfty$, $\laurent$ contain the elements which can be evaluated at $0$, at $\infty$ and at all $q\in\CC^\times$, respectively.

\subsection{Quantized enveloping algebras}
\label{sec:Uqg}

Let $\lie{g}$ be a complex semisimple Lie algebra and fix a Cartan subalgebra $\lie{h}$. Write $\lie{k}$ for the compact real form of $\lie{g}$ and put $\lie{t} = \lie{k} \cap \lie{h}$. The connected, simply connected Lie groups associated to $\lie{g}$ and $\lie{k}$ are denoted by $G$ and $K$.  The maximal torus of $K$ is $T \cong \TT^\rank$, where $\rank$ denotes the rank.

We write $\roots$ for the set of roots of $\lie{g}$ and 
$\simpleroots = \{\alpha_1, \cdots, \alpha_\rank\}$ for a choice of simple roots.
The root lattice will be denoted by $\bQ$ and the lattice of integral weights by $\bP$, generated by the fundamental weights $\fundweights = \{\varpi_1, \cdots, \varpi_\rank\}$.  The abelian semigroup $\dominant = \NN \cdot \fundweights \cong \NN^\rank$ of dominant weights will play an important role in the higher-rank graph to be defined later.
We put $\rho=\sum_{i=1}^\rank \varpi_i$ as usual.
We write $(\slot,\slot)$ for the invariant bilinear form on $\lie{h}^*$ such that the short roots $\alpha$ satisfy $(\alpha,\alpha)=2$.

The quantized enveloping algebra $\Uqg$ is the Hopf algebra with generators $E_i$, $F_i$ and $K_i^{\pm1}$ for $i = 1, \cdots, \rank$, and with the standard algebra relations that can be found for instance in \cite[\S6.1.2]{KliSch}, or \cite[\S{}4.3]{Jantzen} %
with $K_i=K_{\alpha_i}$.
Again, we may consider this either over the field $\CC(q)$ or over $\CC$ with $q\in(0,\infty)\setminus\{1\}$ a fixed parameter.
There are different possible choices for the coproduct.
The one we consider is more or less standard in the context of crystal bases\footnote{For the operator algebraist, it is equivalent to coproduct from \cite{NesTus:book}, but the opposite of the coproduct from \cite{KliSch, VoiYun:CQG}.}
and is given by
\begin{equation}
\label{eq:coproduct}
\Delta(K_i) = K_i \otimes K_i, \quad
\Delta(E_i) = E_i \otimes K_i^{-1} + 1 \otimes E_i, \quad
\Delta(F_i) = F_i \otimes 1 + K_i \otimes F_i.
\end{equation}
We write $\Uqh$ for the Hopf subalgebra generated by the $K_i^{\pm1}$.

We also consider a $*$-structure on $\Uqg$ which corresponds to the compact real form $\lie{k}$ of $\lie{g}$.
It acts on the generators by
\begin{align}
\label{eq:star-structure}
K_i^* = K_i, &&
E_i^* = q_i F_i K_i^{-1}, &&
F_i^* = q_i^{-1} K_i E_i,
\end{align}
and $q^* = q$ when we work over $\CC(q)$.  Here $q_i = q^{(\alpha_i, \alpha_i) / 2}$.  
We write $\Uqk$ for $\Uqg$ equipped with this $*$-structure.
We note that, since we are putting $q^* = q$, the specialization map at $q\in\CC^\times$ is a $*$-morphism only when $q$ is real.

\begin{remark}
We warn the reader that this is not the most common choice for the $*$-structure in the operator algebraic picture. It is a convenient choice when working with crystal bases because the formulas \eqref{eq:star-structure} coincide with those of the standard anti-automorphism of $\Uqg$ called $\tau_1$ in \cite[\S9.20 (3)]{Jantzen}, except that our involution $*$ is extended $\CC$-antilinearly.
\end{remark}

We write $V(\lambda)$ for the irreducible finite-dimensional integrable (\emph{i.e.}, type 1) $\Uqg$-module of highest weight $\lambda \in \dominant$.   Again, this can be constructed either in the algebraic setting as a module over $\CC(q)$, or in the analytic setting as a $\CC$-vector space.  If necessary, we will denote the latter by $V(\lambda)_q$ with $q\in\qrange$  a fixed parameter, but generally we will suppress the subscript and interpret the $\Uqg$-module $V(\lambda)$ according to context.  Once again, there is a partially defined specialization map $\ev_q:V(\lambda) \to V(\lambda)_q$ for each $q\in\qrange$.

There is a unique $\CC(q)$-valued inner product on $V(\lambda)$ such that the $\Uqg$-action is a $*$-representation and the highest weight vector $v_\lambda$ has norm $1$, see \cite[Lemma 9.20 c)]{Jantzen}.  For convenience, we will refer to this as the \emph{standard inner product} on $V(\lambda)$.

\subsection{Crystal bases}

In this section we briefly recall various facts associated with Kashiwara's theory of crystal bases \cite{Kashiwara:crystal1, Kashiwara:crystal2}.
Our main references are the textbooks \cite{Hong-Kang} and \cite{Jantzen}.

We need the Kashiwara operators $\kasE_i$ and $\kasF_i$, which are algebraic renormalizations of $E_i$ and $F_i$ with well-defined limits at $q = 0$.  
They play a similar role to the operator phases of $E_i$ and $F_i$ in the $C^*$-algebraic world.
To define them, we first introduce the divided powers
\begin{align*}
  E_i^{(k)} & := E_i^k/[k]_{q_i}!, &
  F_i^{(k)} & := F_i^k/[k]_{q_i}!,
\end{align*}
where $[k]_q := \frac{q^k - q^{-k}}{q - q^{-1}}$ and $[k]_q! := [k]_q [k-1]_q \cdots [1]_q$. 
The Kashiwara operators are defined by imposing that, for any weight vector $u_0$ in an integrable $\Uqg$-module $V$ with $E_iu_0=0$, we have
\begin{align*}
  \kasE_i :F_i^{(k)}u_0 &\mapsto F_i^{(k-1)}u_0, &
  \kasF_i :F_i^{(k)}u_0 &\mapsto F_i^{(k+1)}u_0,
\end{align*}
with the convention $F_i^{(-1)} u_0 = 0$.

We can now define crystal bases.

\begin{definition}
A \emph{crystal lattice} in $V$ is an $\pid$-submodule $\cL$ of $V$ such that:
\begin{enumerate}
\item $\cL$ is finitely generated over $\pid$ and generates $V$ as a vector space over $\CC(q)$,
\item $\cL = \bigoplus_{\mu \in \bP} \cL_\mu$, where $\cL_\mu = \cL \cap V_\mu$,
\item $\kasE_i \cL \subset \cL$ and $\kasF_i \cL \subset \cL$ for all $i$.
\end{enumerate}
\end{definition}

\begin{definition}
A \emph{crystal basis} of $V$ is a pair $(\cL, \cB)$, where $\cL$ is a crystal lattice and $\cB$ is a $\CC$-basis of the quotient $\cL / q \cL$ such that:
\begin{enumerate}
\item $\cB = \bigcup_\mu \cB_\mu$, where $\cB_\mu = \cB \cap (\cL_\mu / q \cL_\mu)$,
\item $\tilde{E}_i \cB \subset \cB \sqcup \{0\}$ and $\tilde{F}_i \cB \subset \cB \sqcup \{0\}$ for all $i$,
\item for any $b, b^\prime \in \cB$ and $i$ we have $b = \tilde{E}_i b^\prime \Longleftrightarrow b^\prime = \tilde{F}_i b$.
\end{enumerate}
\end{definition}

In this way, we replace the action of $\Uqg$ on a finite-dimensional module $V$ by a purely combinatorial action of the Kashiwara operators $\kasE_i,\kasF_i$ on the finite set $\cB\sqcup\{0\}$.
Kashiwara proved that every finite-dimen\-sion\-al integrable $\Uqg$-module admits a crystal basis, which is unique up to isomorphism.  We denote the crystal basis of the simple module $V(\lambda)$ by $(\cL(\lambda), \cB(\lambda))$.

The crystal basis is orthonormal in the following sense, compare \cite[Lemma 5.1.6]{Hong-Kang}, \cite[Theorem 9.25]{Jantzen}.

\begin{lemma}
\label{lem:orthogonality}
  Fix $\lambda \in \dominant$. Let $\{v_1, \cdots, v_d\} \subset \cL(\lambda)$ be a family of lifts for the crystal basis $\cB(\lambda) = \{b_1, \cdots, b_d\}$ of $V(\lambda)$. Let $\ip{\slot,\slot}$ be the standard inner product on $V(\lambda)$.  Then $\ip{v_a,v_b} \in \delta_{ab} + q \pid$.
  
  As a consequence, for any $u,v\in\cL(\lambda)$ we have $\ip{u,v} \in \pid$.
\end{lemma}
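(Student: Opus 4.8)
The plan is to treat the two assertions separately: the almost-orthonormality $\ip{v_a,v_b}\in\delta_{ab}+q\pid$ is in essence the classical statement of Kashiwara, recorded as \cite[Theorem 9.25]{Jantzen} and \cite[Lemma 5.1.6]{Hong-Kang}, while the closing consequence $\ip{u,v}\in\pid$ for arbitrary $u,v\in\cL(\lambda)$ is a short deduction from it via Nakayama's lemma.

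For the first part, recall that the standard inner product turns the $\Uqg$-action into a $*$-representation, so $\ip{Xu,v}=\ip{u,X^*v}$ for all $X\in\Uqg$; in particular each $K_i$ is self-adjoint. Distinct weight spaces are eigenspaces of the self-adjoint operators $K_i$ for distinct scalars, hence mutually orthogonal, so it suffices to control the inner products of crystal lifts lying in a common weight space. Following Kashiwara, I would reduce to the $\mathfrak{sl}_2$-strings: fix $i$ and a weight vector $u_0$ with $E_iu_0=0$ and $K_iu_0=q_i^m u_0$, so that the string is spanned by the $F_i^{(k)}u_0$, $0\le k\le m$, which have pairwise distinct weights and are therefore orthogonal. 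Using the standard identity $E_iF_i^{(k)}u_0=[m-k+1]_{q_i}F_i^{(k-1)}u_0$ together with $F_i^*=q_i^{-1}K_iE_i$, one obtains the recursion
\[
 \ip{F_i^{(k)}u_0,\,F_i^{(k)}u_0}=\frac{q_i^{m-2k+1}[m-k+1]_{q_i}}{[k]_{q_i}}\,\ip{F_i^{(k-1)}u_0,\,F_i^{(k-1)}u_0}.
\]
Writing $[j]_{q_i}=q_i^{-(j-1)}(1-q_i^{2j})/(1-q_i^2)$ shows that the scalar factor simplifies to $(1-q_i^{2(m-k+1)})/(1-q_i^{2k})$, which lies in $1+q\pid$ because $q_i$ is a positive power of $q$ and the denominator is a unit of $\pid$. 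Thus each diagonal inner product stays in $(1+q\pid)\ip{u_0,u_0}$.

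Feeding this local estimate into Kashiwara's induction over weights upgrades it to the global statement $\ip{v_a,v_b}\in\delta_{ab}+q\pid$ for lifts of the whole crystal basis; this inductive step, rather than the string computation, is where the real work lies, and I would invoke \cite[Theorem 9.25]{Jantzen}, verifying only that the present $*$-structure \eqref{eq:star-structure} produces exactly the string factors above. The main obstacle is therefore not the explicit $\mathfrak{sl}_2$ calculation but matching our nonstandard ($\tau_1$-type) $*$-structure to the contravariant form used in the references.

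For the consequence, I first note that $\pid$ is a discrete valuation ring with uniformizer $q$, so the finitely generated $\pid$-module $\cL(\lambda)$, being torsion-free inside $V(\lambda)$, is free; since $\cB(\lambda)$ is a $\CC$-basis of $\cL(\lambda)/q\cL(\lambda)$, Nakayama's lemma shows that any family of lifts $\{v_1,\dots,v_d\}$ is in fact an $\pid$-basis of $\cL(\lambda)$. Given $u,v\in\cL(\lambda)$ we may therefore write $u=\sum_a c_a v_a$ and $v=\sum_b d_b v_b$ with $c_a,d_b\in\pid$. The conjugation on $\CC(q)$ fixing $q$ and conjugating complex scalars preserves $\pid$, so expanding by (sesqui)linearity writes $\ip{u,v}$ as an $\pid$-linear combination of the $\ip{v_a,v_b}$, each of which lies in $\delta_{ab}+q\pid\subseteq\pid$. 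As $\pid$ is a ring, the sum lies in $\pid$, proving $\ip{u,v}\in\pid$.
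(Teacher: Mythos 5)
Your proposal is correct and takes essentially the same route as the paper, which offers no proof of this lemma beyond the pointer to \cite[Lemma 5.1.6]{Hong-Kang} and \cite[Theorem 9.25]{Jantzen}: like the paper, you delegate the genuinely hard inductive step to those references, and your $\mathfrak{sl}_2$-string factor $(1-q_i^{2(m-k+1)})/(1-q_i^{2k})\in 1+q\pid$ is exactly the right compatibility check between the $*$-structure \eqref{eq:star-structure} and the contravariant ($\tau_1$-type) form used there. The only remark worth adding is that the cited theorems already assert $\ip{u,v}\in\pid$ for all $u,v\in\cL(\lambda)$ directly (from which the lift statement follows since two lifts of the same crystal element differ by an element of $q\cL(\lambda)$), so your Nakayama/freeness deduction of the consequence, while perfectly correct, is a slightly longer route than necessary.
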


From an analytic point of view, \cref{lem:orthogonality} means that for any $u, v \in \cL(\lambda)$, the limit of the specialized inner products
\[
  \lim_{q\to 0} \ev_q\ip{u,v}
\]
exists.  Note that $\ev_q\ip{u,v}$ does not necessarily exist for all $q>0$, but since $\ip{u,v}\in\pid$, it at least exists in some neighbourhood of $0$.
We will use similar observations frequently in this work, and as above will use the notation $\lim_{q\to0}$ when strictly speaking we mean $\lim_{q \to 0^+}$.

\subsection{Tensor products}

A major feature of crystal bases is the simplicity of their tensor products, and this will play a major role in what follows.

Let $V_1$ and $V_2$ be finite-dimensional integrable $\Uqg$-modules with crystal bases $(\cL_1, \cB_1)$ and $(\cL_2, \cB_2)$ respectively.
Then $(\cL_1 \otimes_{\pid} \cL_2, \cB_1 \times \cB_2)$ is a crystal basis of $V_1 \otimes V_2$, see \cite[Theorem 4.4.1]{Hong-Kang}.
We will be lazy and write $\cL_1\otimes\cL_2$ to mean the crystal lattice $\cL_1\otimes_\pid\cL_2$.

To describe the action of the Kashiwara operators we first define
\[
\varepsilon_i(b) := \max \left\{ k \in \mathbb{N}_0 : \kasE_i^k b \neq 0 \right\}, \quad
\varphi_i(b) := \max \left\{ k \in \mathbb{N}_0 : \kasF_i^k b \neq 0 \right\}.
\]
Then the action on the tensor product is given by
\begin{align}
\label{eq:tensor_E}
\kasE_i (b \otimes b') &= \begin{cases}
\kasE_i b \otimes b' & \varphi_i(b) \geq \varepsilon_i(b') \\
b \otimes \kasE_i b' & \varphi_i(b) < \varepsilon_i(b')
\end{cases}, 
\\
\label{eq:tensor_F}
\kasF_i (b \otimes b') &= \begin{cases}
\kasF_i b \otimes b' & \varphi_i(b) > \varepsilon_i(b') \\
b \otimes \kasF_i b' & \varphi_i(b) \leq \varepsilon_i(b')
\end{cases}.
\end{align}
Here, as is customary, we denote the element $(b_1, b_2) \in \cB_1 \times \cB_2$ by $b_1 \otimes b_2$, and we use the conventions $b_1 \otimes 0 = 0 \otimes b_2 = 0$. 


\section{The quantized coordinate ring}
\label{sec:coordinate_ring_def}

\subsection{Definitions}

Let $V$ be a finite-dimensional integrable $\Uqg$-module over $\CC(q)$ and let $V^*=\Hom_{\CC(q)}(V,\CC(q))$ be the dual module.
A \emph{matrix coefficient} for the module $V$ is a $\CC(q)$-linear map $c^V_{f, v}$ on $\Uqg$ of the form
\begin{equation}
\label{eq:matrix_coefficient}
 c^V_{f, v}(X) := f(X v), \quad v \in V,\ f \in V^*.
\end{equation}
The \emph{quantized coordinate ring} $\OqG$ is the subspace of $\Hom_{\CC(q)}(\Uqg,\CC(q))$ spanned by the matrix coefficients of finite-dimensional integrable modules.
This is a Hopf algebra with operations obtained from those of $\Uqg$ by duality.  Here, we use the algebraists' convention of a non-skew pairing:
\begin{align*}
    &(\Delta(X), a\otimes b) = (X,ab),
    &(X\otimes Y, \Delta(a)) = (XY, a).
\end{align*}
Explicitly, this corresponds to the matrix coproduct on matrix coefficients:
\[
 \Delta(c^V_{f,v}) = \sum_i c^V_{f,v_i} \otimes c^V_{f^i,v},
\]
where $\{v_i\}$ and $\{f_i\}$ are any basis and dual basis for $V$ and $V^*$, respectively.
We denote the left and right regular representations of $\Uqg$ on $\OqG$ by
\begin{align}
    \label{eq:regular_repns}
    X\hit c^V_{f,v} &= c^V_{f,Xv}, &
    c^V_{f,v} \hitby X &= c^V_{f\circ X,v}
\end{align}

We also have a version of the quantized coordinate ring defined over $\CC$ for a fixed complex parameter $q \in \qrange$. This algebra will be denoted $\OGq$, with apologies for the subtle difference in notation.  For every $q \in \qrange$ we have a partially defined specialization map
\[
 \ev_q : \OqG \to \OGq,
\]
induced by evaluation at $q$.

In \cite{Kashiwara:global} Kashiwara introduced a crystal lattice for $\OqG$ consisting of finite sums of matrix coefficients $c^V_{f,v}$ where $v$ belongs to a crystal lattice $\cL$ for $V$ and $f$ belongs to $\cL^* = \Hom_\pid(\cL,\pid) \subset V^*$.
We denote this $\pid$-form by $\OqAOG$.
This algebra was also studied by Iglesias in \cite{Iglesias:bitableaux}, where he uses the notation $\widetilde{\cF}$.

\subsection{Star structure and compact integral form}
\label{sec:compact_form}

The $*$-structure \eqref{eq:star-structure} on $\Uqg$ induces a corresponding $*$-structure on $\OqG$ and on each $\OGq$ by the formula $(X,a^*) = \overline{(S(X)^*,a)}$.
When equipped with this $*$-structure, we denote the $*$-algebras $\OqG$ and $\OGq$ by $\OqK$ and $\OKq$, respectively.   Again, we caution that $q^* = q$, and so the specialization map \linebreak $\OqK\to\OKq$ is a $*$-morphism only when $q$ is real.  

As mentioned in the introduction, the $\pid$-form $\OqAOG$ is not stable under the $*$.  This can be seen in the following example.

\begin{example}
\label{ex:SU2-star}
Let $V$ be the fundamental representation of $\cU_q(\lie{sl}_2)$ over $\CC(q)$. Let $v_1$ be the highest weight vector for $V$ and $v_2 = F v_1$.  Let $\{f^1,f^2\}\in V^*$ be the dual basis for $V^*$.  The subspace of $\OqAOG$ corresponding to the fundamental representation is spanned by the matrix coefficients $u^V_{ij} = c^V_{f^i,v_j}$ with $i,j \in \{1,2\}$.

In this particular case, $\{v_1,v_2\}$ is an orthonormal basis for $V$, so upon specialization at $q\in\qrange$, the matrix coefficients $u^V_{ij}$ coincide with the traditional generators for the operator algebraists' $*$-algebra $\OKq$, which in Woronowicz's notation \cite{Woronowicz:SUq2} are denoted
\begin{align*}
    \alpha &= u^V_{11} , &
    -q\gamma^* &= u^V_{12} , \\
    \gamma &= u^V_{21} , &
    \alpha^* &= u^V_{22} .
\end{align*}
We see that $u^{V*}_{21} = -q^{-1}u^V_{12}$, which does not belong to $\OqAOG$.
\end{example}

Because of this, 
we cannot simply equip the algebra $\OqAOG$ with the $*$ operation.  Instead, we proceed as follows.

For each simple module $V(\lambda)$ fix a lift of the crystal basis to a basis of weight vectors $\{v_i\}_i \subset \cL(\lambda)$, with dual basis $\{f^i\}_i \subset V(\lambda)^*$.
We impose that $v_1 = v_\lambda$ is the highest weight vector of $V(\lambda)$, and so $f^1 = f^{-\lambda}$ is the lowest weight element.

With this notation, we define the matrix coefficients
\begin{equation}
\label{eq:genf_genv}
\genf^\lambda_i := c^{V(\lambda)}_{f^i, v_\lambda}, \quad
\genv^\lambda_i := S(c^{V(\lambda)}_{f^{-\lambda}, v_i}).
\end{equation}
We also note that $\genv^\lambda_i = c^{V(\lambda)^*}_{\tilde{v}_i, f^{-\lambda}}$, where $\{\tilde{v_i}\}_i \subset V(\lambda)^{**}$ is the dual basis to $\{f^i\}_i$.

Following Joseph \cite[\S9.1.6]{Joseph:book}, let $\OqGP$ and $\OqGM$ be the $\CC(q)$-subalgebras of $\OqG$ generated by the matrix coefficients $\genf^\lambda_i$ and $\genv^\lambda_i$, respectively (for all $\lambda\in\dominant$ and $i=1, \cdots, \dim V(\lambda)$).  Joseph uses the notation $R_q[G/N^+]$ and $R_q[G/N^-]$.
The multiplication map $\OqGP \otimes_{\CC(q)} \OqGM \to \OqG$ is surjective, see \cite[Proposition 9.2.2]{Joseph:book}. 

We now define $\pid$-forms of these subalgebras.

\begin{definition}
\label{def:OqAOK}
We define:
\begin{itemize}
    \item $\OqGAOP$ as the $\pid$-subalgebra generated by the elements $\genf^\lambda_i$,
    \item $\OqGAOM$ as the $\pid$-subalgebra generated by the elements $\genv^\lambda_i$,
    \item $\OqKAO$ as the $\pid$-subalgebra generated by the elements $\genf^\lambda_i$ and $\genv^\lambda_i$.
\end{itemize}
\end{definition}

Thus, while $\OqK$ and $\OqG$ denote the same $\CC(q)$-algebra with or without the $*$-structure, we stress that $\OqAOK$ and $\OqAOG$ are not the same $\pid$-subalgebra.  They represent different $q=0$ limits of the families of algebras $\OKq=\OGq$.
The notation $\OqKAO$ suggests that this algebra is closed under the $*$-structure, which is not immediately obvious from its definition.
However this is true, as shown in the next result.

\begin{proposition}
\label{prop:v_f_adjoints}
We have $(\OqGAOP)^* = \OqGAOM$. Moreover
\[
\begin{split}
(\genf^\lambda_i)^* & \equiv \genv^\lambda_i \mod q \OqGAOM, \\
(\genv^\lambda_i)^* & \equiv \genf^\lambda_i \mod q \OqGAOP.
\end{split}
\]
\end{proposition}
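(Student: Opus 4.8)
The plan is to compute the $*$-image of each generator $\genf^\lambda_i$ explicitly and show it is a $\pid$-linear combination of the generators $\genv^\lambda_k$ of $\OqGAOM$, with coefficient matrix congruent to the identity modulo $q$. First I would establish a general formula for the adjoint of a matrix coefficient. Fixing the convention that the standard inner product $\ip{\slot,\slot}$ on $V=V(\lambda)$ is linear in the first and conjugate-linear in the second argument, so that $\ip{Xu,w}=\ip{u,X^*w}$, and that the bar on $\CC(q)$ fixes $q$, a direct manipulation of the defining relation $(X,a^*)=\overline{(S(X)^*,a)}$ gives
\[
 (c^{V}_{f,v})^* = S\!\left(c^{V}_{\ip{\slot,v},\,\tilde f}\right),
\]
where $\tilde f\in V$ is the Riesz vector determined by $f=\ip{\slot,\tilde f}$. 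The derivation uses only the $*$-representation property of $\ip{\slot,\slot}$ together with conjugate-symmetry, and is routine once the conventions are fixed.

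Next I would specialize this to $\genf^\lambda_i=c^{V}_{f^i,v_\lambda}$. The crucial observation is that $v_\lambda=v_1$ is a weight vector of the (multiplicity-one) top weight, and that distinct weight spaces are orthogonal because each $K_i$ is self-adjoint; hence $\ip{v_m,v_\lambda}=0$ for all $m\neq 1$, so that $\ip{\slot,v_\lambda}=G_{11}f^1$ \emph{exactly}, where $G_{ab}:=\ip{v_a,v_b}$ and $f^1=f^{-\lambda}$. This collapse is what keeps the answer inside $\OqGAOM$: writing $\tilde{f^i}=\sum_k\overline{(G^{-1})_{ki}}\,v_k$ and using linearity in each slot, the formula becomes
\[
 (\genf^\lambda_i)^* = G_{11}\sum_k \overline{(G^{-1})_{ki}}\;S\!\left(c^{V}_{f^1,v_k}\right) = \sum_k c_{ki}\,\genv^\lambda_k,
 \qquad c_{ki}:=G_{11}\,\overline{(G^{-1})_{ki}}.
\]
By \cref{lem:orthogonality} we have $G\in I+q\,\mathrm{Mat}(\pid)$, so $G_{11}$ is a unit with $G_{11}\equiv 1$ and $(G^{-1})_{ki}\equiv\delta_{ki}$ modulo $q$; thus $c_{ki}\in\pid$ with $c_{ki}\equiv\delta_{ki}\bmod q$. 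In particular $(\genf^\lambda_i)^*\in\OqGAOM$ and $(\genf^\lambda_i)^*\equiv\genv^\lambda_i\bmod q\OqGAOM$, which is the first congruence.

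Finally I would extract the remaining claims from the same relation. The matrix $C=(c_{ki})$ lies in $\mathrm{Mat}(\pid)$ and is congruent to the identity modulo $q$, hence invertible over the local ring $\pid$; inverting it expresses each generator as $\genv^\lambda_k=\sum_i(C^{-1})_{ik}(\genf^\lambda_i)^*$, so $\genv^\lambda_k\in(\OqGAOP)^*$ and thus $\OqGAOM\subseteq(\OqGAOP)^*$. Together with the inclusion $(\OqGAOP)^*\subseteq\OqGAOM$ from the previous paragraph this yields $(\OqGAOP)^*=\OqGAOM$. Applying $*$ to the inversion and using $(C^{-1})_{ik}\equiv\delta_{ik}\bmod q$ gives $(\genv^\lambda_k)^*=\sum_i\overline{(C^{-1})_{ik}}\,\genf^\lambda_i\equiv\genf^\lambda_k\bmod q\OqGAOP$, the second congruence. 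The main obstacle is not computational but conceptual: getting the adjoint formula correct through the stack of conventions (antipode, the antilinearity of $\ip{\slot,\slot}$, and the bar with $\bar q=q$), and recognizing that the weight-orthogonality of the highest weight vector is precisely what prevents the unwanted coefficients $S(c^{V}_{f^j,v_k})$ with $j\neq 1$ from appearing — without this collapse the $*$-image would fall outside $\OqGAOM$.
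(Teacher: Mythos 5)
Your proof is correct and takes essentially the same route as the paper: an explicit Gram-matrix formula for the adjoint of a matrix coefficient, the weight-orthogonality collapse $\ip{\slot,v_\lambda}=G_{11}f^1$ (the paper's observation $G^1_k=\delta_{1k}$), and \cref{lem:orthogonality} to conclude that the coefficient matrix is $\equiv I \bmod q\pid$. The only (harmless) divergence is at the end: where the paper handles $(\genv^\lambda_i)^*$ by a symmetric computation, you invert the coefficient matrix $C\equiv I\bmod q$ over the local ring $\pid$, which is equally valid and yields the reverse inclusion and the second congruence in one stroke.
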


\begin{proof}
Let $\{v_i\}_i$ be the lift of the crystal basis of $V(\lambda)$ as above.
Let $(\cdot, \cdot)$ be the unique inner product on $V(\lambda)$ invariant under $*$ such that $(v_1, v_1) = 1$.
Let $G^i_j = (v_i, v_j)$ be the Gram matrix for the basis $\{v_i\}_i$.
Some linear algebra shows that we have the identity
\[
(c^{V(\lambda)}_{f^i, v_j})^* = \sum_{k, l} (G^{-1})^l_i G^j_k S(c^{V(\lambda)}_{f^k, v_l}).
\]
Now consider the case of $\genf^\lambda_i = c^{V(\lambda)}_{f^i, v_1}$.
We have $G^1_k = (v_1, v_k) = \delta_{1 k}$, since different weight spaces are orthogonal under the inner product. Then we get
\[
(\genf^\lambda_i)^* = \sum_l (G^{-1})^l_i \genv^\lambda_l.
\]
Recall from \cref{lem:orthogonality} that the basis $\{v_i\}_i$ becomes orthonormal at $q = 0$, which implies that $G^i_j$ and $(G^{-1})^i_j$ are equal to $\delta_{i j}$ modulo terms in $q \pid$.
Thus we obtain the desired result for $(\genf^\lambda_i)^*$.  The result for $(\genv^\lambda_i)^*$ is proven similarly, and it follows that $(\OqGAOP)^* = \OqGAOM$.
\end{proof}

\begin{remark}
It is not clear from the definition above whether $\OqAOG \subset \OqAOK$, although we expect this to be the case.  Answering this question would require a more careful study of the crystal lattice structure.  Since we don't need this property here, we won't discuss it further.
\end{remark}

\subsection{The gauge action}
\label{sec:gauge_action}

For the moment, we continue to suppose that $K$ is simply connected, with rank $\rank$. That is, we have $T \cong \TT^\rank$ for the maximal torus and $\bP \cong \hat{T} \cong \ZZ^\rank$ for the lattice of integral weights.
The quantized coordinate ring $\OqK$ is $\ZZ^\rank$-graded as follows.  

\begin{definition}
\label{def:Zr-grading}
\label{def:gauge_action}
For any $\mu \in \bP$, we write $\OqK_{\mu}$ for the span of all matrix coefficients $c^V_{f, v}$ where $v \in V$ is a vector of weight $\mu$, so that $\OqK = \bigoplus_{\mu \in \bP} \OqK_\mu$ is a $\bP$-graded algebra.  
\end{definition}

The $\bP$-grading corresponds to the gauge action of the torus subgroup $T$,
\begin{equation}
  z\cdot c^V_{f,v} = z^\mu c^V_{f,v} \quad \text{for } c^V_{f,v}\in\OqK_\mu, 
\end{equation}
where $z^\mu = e^{(\mu,\log(z))}$ denotes the evaluation at $z\in T$ of the unitary character associated to $\mu\in\bP$.  Geometrically, this corresponds to the realization of $K$ as a principal $T$-bundle over the full flag variety $X=K/T$, and this language is extended by analogy to the quantum case.  The gauge-invariant subalgebra of $\OqK$ is then the quantized coordinate ring of the flag variety.

\subsection{Flag manifolds and non-simply connected groups}
\label{sec:flag_manifolds}

We finish this preliminary section with the structure theory of the quantized coordinate rings of general flag varieties.   This material could be skipped on a first reading.  We follow the conventions of Stokman \cite{Stokman:quantum_orbit_method}.

Let $\lie{b}_+$ denote the Borel subalgebra of $\lie{g}$ generated by $\lie{h}$ and the elements $E_1,\cdots,E_\rank$.  The standard parabolic subalgebras are indexed by subsets of simple roots $\rootset\subseteq\simpleroots$.  Specifically, we denote by $\lie{p}_\rootset \supseteq \lie{b}_+$ the standard parabolic which contains $F_i$ if and only if $\alpha_i\in\rootset$. The Levi factor $\lie{l}_\rootset$ of $\lie{p}_\rootset$ is generated by $\lie{h}$ and those $E_i,F_i$ with $\alpha_i\in\rootset$.
The associated Lie subgroups of $G$ will always be denoted by the corresponding uppercase letters.  We write $X_\rootset = G/P_\rootset$ for the corresponding flag variety.

Passing to the compact form, we have $X_\rootset = K/K_\rootset$ where $\lie{k}_\rootset = \lie{k} \cap \lie{p}_\rootset$.  The Lie subalgebra $\lie{k}_\rootset$ decomposes as 
\[
 \lie{k}_\rootset = \lie{k}^0_\rootset \oplus \lie{z}_\rootset,
\]
where $\lie{z}_\rootset$ is the centre of $\lie{k}_\rootset$ and $\lie{k}^0_\rootset$ is semisimple.  Then $Y_\rootset = K/K^0_\rootset$ is a principal torus bundle over the flag variety $X_\rootset$ with fibres $Z_\rootset \cong \TT^\NC$, where $\NC=\rank-|\rootset|$.

For instance, when $\rootset=\emptyset$ we have $K_\emptyset = T$ and $K^0_\emptyset=\{1\}$, so the torus bundle $Y_\emptyset\to X_\emptyset$ is the principal bundle $K\to K/T$ with fibres $Z_\emptyset = T\cong\TT^\rank$.  More generally, the torus group $Z_\rootset$ identifies canonically with the dual torus of the abelian group $\bP_{\rootset^c}\subseteq \bP$ generated by the fundamental weights $\varpi_i$ with $i\notin\rootset$.

All of the above can be quantized.  
We shall not spell out all the details here, but refer directly to  \cite{Stokman:quantum_orbit_method}.  Following our above notation, we write $\OXSq$ and $\OYSq$ for what Stokman \cite{Stokman:quantum_orbit_method} calls $\CC_q[U/K_\rootset]$ and $\CC_q[U/K^0_\rootset]$ in Definitions 2.3(a) and Section 4, namely
\begin{align*}
    \OYSq 
    & = \{ a\in\OKq \mid T \hit a = \epsilon(T)a \text{ for all } T\in \cU_q(\lie{k}^0_\rootset)\},\\
    \OXSq 
    & = \{ a\in\OKq \mid T \hit a = \epsilon(T)a \text{ for all } T\in \cU_q(\lie{k}_\rootset)\}.
\end{align*}

The following is due to Stokman \cite{Stokman:quantum_orbit_method}.  
We retain the notation $\genf^\lambda_i = c^{V(\lambda)}_{f, v_1^\lambda}$, $\genv^\lambda_i = c^{V(\lambda)^*}_{\tilde{v}, f^1_\lambda}$ for the generators of $\OqGP$ and $\OqGM$ from Equation \eqref{eq:genf_genv}.
We denote the specialized versions of these elements by the same symbols, for simplicity.

\begin{theorem}
\label{thm:flag_generators}
  Let $\bP_\rootset^+$ denote the set of dominant weights which are non-negative integral linear combinations of the fundamental weights $\varpi_k$ with $\alpha_k \in \simpleroots \setminus \rootset$.
  The quantized coordinate ring $\OYSq$ of the torus bundle $Y_\rootset\to X_\rootset$ is generated as an algebra by the matrix coefficients
  \[
   \left\{ \genf^\lambda_i, \genv^\lambda_i \mid \lambda\in\dominant_\rootset \right\}.
  \]
  This algebra is stable under the gauge action of $T$ defined in \cref{sec:gauge_action}, and the gauge-invariant subalgebra is $\OXSq$.
\end{theorem}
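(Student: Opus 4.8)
The plan is to identify the subalgebra of $\OKq$ generated by the matrix coefficients $\{\genf^\lambda_i, \genv^\lambda_i \mid \lambda \in \dominant_\rootset\}$ with Stokman's algebra $\OYSq$, and separately identify its gauge-invariant part with $\OXSq$. Since the statement attributes these facts to Stokman, the task is really to translate his description into our notation for the generators. The first thing I would do is unwind the definitions: $\OYSq$ consists of matrix coefficients annihilated (on the appropriate side) by the augmentation on $\cU_q(\lie{k}^0_\rootset)$, and $\OXSq$ similarly for $\cU_q(\lie{k}_\rootset)$. So I must show that a matrix coefficient $c^V_{f,v}$ lies in $\OYSq$ precisely when it is built from highest/lowest weight data for modules $V(\lambda)$ with $\lambda \in \dominant_\rootset$.

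Concretely, I would first establish the \emph{inclusion} $\subseteq$: each generator $\genf^\lambda_i = c^{V(\lambda)}_{f^i, v_\lambda}$ and $\genv^\lambda_i = c^{V(\lambda)^*}_{\tilde v_i, f^{-\lambda}}$ is invariant under the right (resp.\ left) action of $\cU_q(\lie{k}^0_\rootset)$. The key point is that $v_\lambda$ is the highest weight vector, so for $\lambda \in \dominant_\rootset$ the weight $\lambda$ is orthogonal to all roots $\alpha_i \in \rootset$; hence $v_\lambda$ is a highest weight vector for the Levi subalgebra $\lie{l}_\rootset$ as well, and in fact generates a trivial $\cU_q(\lie{k}^0_\rootset)$-representation. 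This forces $E_i v_\lambda = F_i v_\lambda = 0$ for $\alpha_i \in \rootset$, which combined with the $K_i$-action being trivial (by the orthogonality $(\lambda, \alpha_i)=0$) gives the required augmentation-invariance. I would verify this using the explicit action \eqref{eq:regular_repns} of the regular representations on matrix coefficients.

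For the \emph{reverse} inclusion $\supseteq$, I would invoke the Peter--Weyl decomposition of $\OKq$ into isotypic blocks $\bigoplus_\lambda V(\lambda)^* \otimes V(\lambda)$, and show that the $\cU_q(\lie{k}^0_\rootset)$-invariant vectors in each block are exactly those arising from the trivial $\lie{k}^0_\rootset$-isotypic component, which is nonzero and one-dimensional precisely when $\lambda \in \dominant_\rootset$. At this stage the surjectivity of the multiplication map $\OqGP \otimes \OqGM \to \OqG$ (cited from \cite[Proposition 9.2.2]{Joseph:book}) lets me conclude that products of the $\genf$'s and $\genv$'s span all of $\OYSq$. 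The gauge-invariance statement and the identification of the invariant subalgebra with $\OXSq$ follow by the same argument applied to the full Levi $\cU_q(\lie{k}_\rootset)$ rather than its semisimple part $\cU_q(\lie{k}^0_\rootset)$: the extra center $\lie{z}_\rootset$ is exactly the infinitesimal generator of the torus $Z_\rootset$, so imposing its augmentation-invariance on top of $\lie{k}^0_\rootset$ is precisely the gauge condition.

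\textbf{The main obstacle} I anticipate is the reverse inclusion, specifically showing that \emph{every} $\cU_q(\lie{k}^0_\rootset)$-invariant element is generated by the chosen highest/lowest weight matrix coefficients rather than merely spanned by arbitrary matrix coefficients with invariant vectors. This requires controlling how the invariant vectors in $V(\lambda)$ relate to the cyclic generation of $\OqGP$ and $\OqGM$, and may need a branching argument for the restriction of $V(\lambda)$ to the Levi quantum subgroup. Since the result is Stokman's, I would ultimately prefer to cite \cite{Stokman:quantum_orbit_method} for the hard direction and restrict my own argument to verifying that his generators coincide with our $\genf^\lambda_i, \genv^\lambda_i$, which is the genuinely notational content of the theorem as stated here.
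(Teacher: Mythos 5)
Your fallback position --- cite Stokman for the hard direction and only match up generators --- is in fact exactly the paper's proof: it invokes \cite[Theorem 4.1]{Stokman:quantum_orbit_method}, which proves the \emph{stronger} statement that $\OYSq$ is generated by the matrix coefficients with $\lambda$ running over the fundamental weights $\varpi_k$, $\alpha_k\notin\rootset$, and then disposes of the gauge claim in one line by observing that $\OXSq$ is precisely the $\Uqh$-invariant (equivalently, gauge-invariant) part of $\OYSq$. Your closing observation about the centre $\lie{z}_\rootset$ generating the torus matches this. Your verification of the inclusion $\subseteq$ is also sound: for $\lambda\in\bP_\rootset^+$ one has $(\lambda,\alpha_i^\vee)=0$ for $\alpha_i\in\rootset$, hence $E_iv_\lambda=F_iv_\lambda=0$ and $K_iv_\lambda=v_\lambda$, and dually for $f^{-\lambda}$ (though note both families of generators must be checked against the \emph{same} action $\hit$ of \eqref{eq:regular_repns}, which acts on the second leg; your ``right resp.\ left'' phrasing is a slip).

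The self-contained route you sketch for the reverse inclusion, however, contains a genuine error. It is false that the trivial $\cU_q(\lie{k}^0_\rootset)$-isotypic component of $V(\lambda)$ is nonzero (let alone one-dimensional) precisely when $\lambda\in\bP_\rootset^+$. Take $\lie{g}=\lie{sl}_3$ and $\rootset=\{\alpha_1\}$, so $\bP_\rootset^+=\NN\varpi_2$: the module $V(\varpi_1)$ restricted to the Levi contains a one-dimensional trivial $\lie{k}^0_\rootset$-component (classically $\CC^3|_{SU(2)}=\CC^2\oplus\CC$), so $\OYSq$ contains matrix coefficients from the Peter--Weyl block of $\lambda=\varpi_1\notin\bP_\rootset^+$; multiplicities larger than one also occur for general $\lambda$. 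Thus the invariant subalgebra is spanned by $V(\lambda)^*\otimes V(\lambda)^{\lie{k}^0_\rootset}$ over \emph{all} $\lambda$ with invariant vectors, and the real content of the theorem is that this much larger space is nevertheless generated as an algebra by the listed highest/lowest weight coefficients. The citation of \cite[Proposition 9.2.2]{Joseph:book} cannot close this gap: surjectivity of $\OqGP\otimes\OqGM\to\OqG$ expresses an arbitrary element as a sum of products of generators, but gives no control ensuring that an \emph{invariant} element is a sum of products of the \emph{invariant} generators $\genf^\lambda_i,\genv^\lambda_i$ with $\lambda\in\bP_\rootset^+$. That step --- essentially a branching/generation argument for the restriction to the Levi quantum subgroup --- is precisely what Stokman proves, so your instinct to cite him for it is the correct (and, here, necessary) move.
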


\begin{proof}
 In fact, Stokman \cite[Theorem 4.1]{Stokman:quantum_orbit_method} proves that $\OYSq$ is generated by $c^{V(\lambda)}_{f, v_1^\lambda}$ and $c^{V(\lambda)^*}_{\tilde{v}, f^1_\lambda}$ with $\lambda = \varpi_k$ for $\alpha_k \notin \rootset$, so our first claim is in fact weaker.  The second claim follows immediately upon observing that $\OXSq$ consists of those elements of $\OYSq$ which are invariant under the action of $\Uqh$.
\end{proof}

\begin{remark}
The algebra $\OYSq$ is already invariant under the action of $K_i$ for $\alpha_i\in\rootset$, so the gauge action in \cref{thm:flag_generators} is determined entirely by the action of the generators $K_i$ with $\alpha_i\notin\rootset$.  Thus the relevant gauge action on $\OYSq$ reduces to an action of $\TT^\NC$ with $\NC=\rank-|\rootset|$.
\end{remark}

Next we consider the case of a connected but not simply connected compact semisimple Lie group $K$.
Let $\lie{k}$ be the Lie algebra of $K$.  Let $\Ktilde$ be the universal cover of $K$, and $\bQ$, $\bP$ be the root and weight lattices of $\lie{k}$.  Then there is a lattice $\bP_K\subset\lie{h}^*$ with $\bQ \subseteq \bP_K \subseteq \bP$ such that the irreducible representations of $K$ are precisely those of the simply connected group $\Ktilde$ with highest weights in $\bP_K^+ = \bP_K \cap \dominant$.  As a consequence we have 
\begin{equation}
\label{eq:OqKtilde}
 \OqK = \Vect \left\{\left. c^{V(\lambda)}_{f, v} \right| \lambda\in\bP_K^+,~ f\in V(\lambda)^*, ~ v\in V(\lambda) \right\} \quad \subseteq \OqKtilde,
\end{equation}
as a $*$-subalgebra.

We can then define generalized flag varieties $X=K/K_\rootset$ and their torus bundles $Y=K/K^0_\rootset$ for the non simply connected group $K$.  Combining the above with Stokman's result, \cref{thm:flag_generators}, we get the following.

\begin{theorem}
\label{thm:flag_generators2}
  Let $K$ be a compact semisimple Lie group, not necessarily simply connected.  The quantized coordinate ring $\OYSq$ of the torus bundle $Y_\rootset\to X_\rootset$ is generated by the matrix coefficients 
  \[
   \left\{ \genf^\lambda_i, \genv^\lambda_i \mid \lambda\in\dominant_{K,\rootset} \right\}.
  \]
  where $\bP_{K,\rootset}^+ := \bP_K^+ \cap \bP_\rootset^+$.
  The gauge-invariant subalgebra of $\OYSq$ is $\OXSq$.
\end{theorem}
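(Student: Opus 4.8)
The plan is to deduce the statement from the simply connected case, \cref{thm:flag_generators}, applied to the universal cover $\Ktilde$, reading off which generators survive by means of the gauge grading and the embedding \eqref{eq:OqKtilde}. The first step is the reduction $\OYSq = \OqK\cap\widetilde{B}$, where $\widetilde{B}\subseteq\OqKtilde$ denotes the torus-bundle coordinate ring of $\Ktilde$, i.e. the algebra of $\cU_q(\lie{k}^0_\rootset)$-invariants for the left regular action $\hit$. This is immediate once one notes that $\hit$ acts only on the right vector of a matrix coefficient, hence leaves the underlying module $V(\lambda)$ unchanged and preserves the subspace $\OqK$ cut out by \eqref{eq:OqKtilde}; so the defining condition $T\hit a=\epsilon(T)a$ may be imposed inside $\OqK$ or inside $\OqKtilde$ interchangeably. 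The listed generators then lie in $\OYSq$: for $\lambda\in\bP_{K,\rootset}^+$ the coefficients $\genf^\lambda_i,\genv^\lambda_i$ lie in $\OqK$ because $\lambda\in\bP_K^+$, and in $\widetilde{B}$ because $\lambda\in\bP_\rootset^+$ by \cref{thm:flag_generators}. Writing $A$ for the subalgebra they generate, the content of the theorem is the reverse inclusion $A\supseteq\OYSq$.

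For this I would argue by gauge degree. Since every weight of $V(\lambda)$ lies in $\lambda+\bQ$ and $\bQ\subseteq\bP_K$, equation \eqref{eq:OqKtilde} identifies $\OqK$ with the sum of the homogeneous components $(\OqKtilde)_\mu$ with $\mu\in\bP_K$ (equivalently, $\OqK=\OqKtilde^{Z}$ for the finite central subgroup $Z$ with $K=\Ktilde/Z$). As $\widetilde{B}$ is gauge stable, this gives $\OYSq=\bigoplus_{\mu}\widetilde{B}_\mu$ with $\mu$ ranging over $\bP_K$; moreover the right vector of any coefficient in $\widetilde B$ is $\cU_q(\lie{k}^0_\rootset)$-invariant and therefore has weight orthogonal to $\rootset$, so these degrees in fact lie in $\bP_{K,\rootset}$, the subgroup generated by $\Cset=(\vartheta_1,\cdots,\vartheta_\NC)$. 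By \cref{thm:flag_generators} a homogeneous element of $\widetilde B$ is a sum of monomials in $\genf^{\lambda_a}_{i_a},\genv^{\nu_b}_{j_b}$ with $\lambda_a,\nu_b\in\bP_\rootset^+$, and each such monomial is a single matrix coefficient of the module $\bigotimes_a V(\lambda_a)\otimes\bigotimes_b V(\nu_b)^*$.

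The main obstacle is that the individual weights $\lambda_a,\nu_b$ may lie in $\bP_\rootset^+\setminus\bP_K$, so one cannot invoke \cref{thm:flag_generators} directly to re-express a $\bP_K$-homogeneous element using only $\bP_{K,\rootset}^+$-generators; this is a genuine phenomenon, in the same way that $\CC[x]^{\ZZ/2}=\CC[x^2]$ requires the degree-two generator rather than $x$. I would resolve it representation-theoretically. Decomposing the tensor product above into irreducible constituents $V(\nu)$, each satisfies $\nu\in\textstyle\sum_a\lambda_a-\sum_b\nu_b+\bQ\subseteq\bP_K$, so $\nu\in\bP_K^+$; one is thus reduced to showing that the invariant-vector matrix coefficients of every such $V(\nu)$ lie in $A$. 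This I would establish by induction on $\nu$ using that $\bP_{K,\rootset}^+$ is finitely generated by $\Cset$: any $V(\nu)$ with $\nu\in\bP_K^+$ admitting a $\cU_q(\lie{k}^0_\rootset)$-invariant vector of weight in $\bP_{K,\rootset}$ appears as a Cartan, or lower, constituent of a tensor product of the modules $V(\vartheta_k)$ and their duals, and the associated coefficients factor as products of the generators $\genf^{\vartheta_k}_i,\genv^{\vartheta_k}_i$. The delicate point—the technical heart of the argument—is to check that it is the invariant vectors, and not merely the highest weight vectors, that are reproduced at each stage of these tensor decompositions; this is exactly the structural content of \cref{thm:flag_generators}, now bookkept modulo $\bP_K$.

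Finally, the claim about the gauge-invariant subalgebra follows verbatim as in \cref{thm:flag_generators}: the elements of $\OXSq$ are precisely those of $\OYSq$ on which the remaining torus generators $K_i$ with $\alpha_i\notin\rootset$ act trivially, which is the gauge-invariant (degree-zero) part of $\OYSq$.
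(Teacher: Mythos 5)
You are right to be suspicious here: the paper itself offers essentially no proof of \cref{thm:flag_generators2} (it is presented as an immediate combination of \eqref{eq:OqKtilde} with \cref{thm:flag_generators}), and your observation that this combination is \emph{not} immediate — Stokman's generators have weights in $\bP_\rootset^+$ which need not lie in $\bP_K$, exactly as in $\CC[x]^{\ZZ/2}=\CC[x^2]$ — is a genuine point that the paper glosses over. Your reductions up to that point are all correct: $\OYSq=\OqK\cap\widetilde{B}$; the identification $\OqK=\bigoplus_{\mu\in\bP_K}(\OqKtilde)_\mu$ (valid because the weights of $V(\lambda)$ lie in $\lambda+\bQ$, so an irreducible block has either all or none of its gauge degrees in $\bP_K$); and the reduction to showing that every coefficient $c^{V(\nu)}_{f,v}$ with $\nu\in\bP_K^+$ and $v$ a $\cU_q(\lie{k}^0_\rootset)$-invariant vector lies in the subalgebra $A$ generated by the listed elements. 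But at the step you yourself call the technical heart, the argument stops: the claim that each such $V(\nu)$ appears in a tensor product of the $V(\vartheta_k)$ and their duals \emph{and that the associated invariant-vector coefficients factor through the generators} is precisely the statement to be proved, and it is not ``exactly the structural content of \cref{thm:flag_generators}.'' Stokman's theorem expresses a given invariant coefficient as a polynomial in generators whose individual weights range over all of $\bP_\rootset^+$, with no control over regrouping that expression into $\bP_K$-admissible pieces; an induction ``on $\nu$'' is asserted but no inductive mechanism is given. So as written there is a genuine gap.

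The gap is repairable, and the repair is short, using relations already in the paper rather than new crystal-theoretic input. By the exchange relations of \cref{prop:exchange-relations} one can normally order any monomial, and since products of $\genf$'s (resp.\ $\genv$'s) with highest-weight (resp.\ lowest-weight) right vectors merge, $\Vect\{\genf^\lambda_i\genf^{\theta}_j\}_{i,j}=\Vect\{\genf^{\lambda+\theta}_k\}_k$, one gets $\OYSq(\Ktilde)=\sum_{\lambda,\mu\in\bP_\rootset^+}\Vect\{\genv^\lambda_i\genf^\mu_j\}$, where each product is gauge-homogeneous of degree $\mu-\lambda$; projecting to degrees in $\bP_K$ keeps exactly the pairs with $\mu-\lambda\in\bP_K$. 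Now use the unit relation $\sum_k \genv^{\theta}_k\genf^{\theta}_k=1$ from \cref{prop:exchange-relations}: for any $\theta\in\bP_\rootset^+$,
\[
\Vect\{\genv^{\lambda}_i\genf^{\mu}_j\}
=\Vect\Bigl\{\sum_k \genv^{\lambda}_i\genf^{\mu}_j\genv^{\theta}_k\genf^{\theta}_k\Bigr\}
\subseteq\Vect\{\genv^{\lambda+\theta}_{i'}\genf^{\mu+\theta}_{j'}\},
\]
after exchanging $\genf^{\mu}_j\genv^{\theta}_k$ and merging. Taking $\theta=(m-1)\lambda$, where $m$ is the exponent of the finite group $\bP/\bP_K$, gives $\lambda+\theta=m\lambda\in\bP_K$ and $\mu+\theta\equiv\mu-\lambda\in\bP_K\pmod{\bP_K}$, so both fattened weights lie in $\dominant_{K,\rootset}$ and the degree-$\bP_K$ part lands in $A$. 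With this lemma inserted at your ``delicate point,'' your plan goes through; the final paragraph on the gauge-invariant subalgebra is fine as it stands.
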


\section{The analytic limit of the quantized coordinate ring}
\label{sec:coordinate_rings}

\subsection{The analytic limit for \texorpdfstring{$SU_q(2)$}{SUq(2)}} 
\label{sec:SUq2_limit}

We identify the integral weights of $\lie{sl}(2)$ with the integers, $\bP=\ZZ$, so that $V(m)$ is the irreducible integral representation of dimension $m+1$.  Let%
\footnote{Unfortunately, this notation clashes with the  common convention of using $v^\lambda_1$ for the highest weight vector in $V(\lambda)$, which we have followed elsewhere in this article when $K\neq\mathrm{SU}(2)$.}
$v^m_0$ denote a highest weight vector of $V(m)$, and put $v^m_k = F^{(k)}v^m_0$.  
The $\pid$-span of these vectors is a crystal lattice $\cL(m)$ and their images $b^m_k\in\cL(m)/q\cL(m)$ define a crystal basis with crystal $\cB(m) = \{b^m_k \mid k = 0, \cdots, m\}$.

Let $\{f_m^k \mid k = 0, \cdots, m\}$ denote the basis of $V(m)^*$ dual to $\{v^m_k \mid k = 0, \cdots, m\}$.
We write $u^m_{ij} = c^{V(m)}_{f^i,v_j}$ for the associated matrix coefficients.  For the matrix coefficients of the fundamental representation we will use the operator algebraists' notation from \cite{Woronowicz:SUq2}, that is
\begin{align*}
    \alpha &= u^1_{00} , &
    -q\gamma^* &= u^1_{01} , \\
    \gamma &= u^1_{10} , &
    \alpha^* &= u^1_{11} .
\end{align*}
Note that the basis $\{v^m_k\}$ for $V(m)$ is generally only orthogonal, but in the case $m=1$ the basis $\{v^1_0,v^1_1\}$ is also orthonormal, so we do indeed recover Woronowicz's generators.
They satisfy the following relations
\[
\alpha \gamma = q \gamma \alpha, \quad
\alpha \gamma^* = q \gamma^* \alpha, \quad
\gamma \gamma^* = \gamma^* \gamma, \quad
\alpha^* \alpha + \gamma^* \gamma = 1, \quad
\alpha \alpha^* + q^2 \gamma \gamma^* = 1.
\]

\begin{definition}[Woronowicz \cite{Woronowicz:SUq2}, Vaksman-Soibelman \cite{VakSoi:SUq2}]
\label{def:SU2-rep}
  Fix $q \in \qrange$.  The \emph{standard representation} of the quantized coordinate ring $\cO[\SU_q(2)]$ is the $*$-representation $\Soib_q$ on $\ell^2(\NN) = \Vect\{e_0, e_1, \cdots\}$ determined by
  \begin{align*}
    \Soib_q(\alpha)e_n &= \sqrt{1-q^{2n}} e_{n-1}, &
    \Soib_q(\gamma)e_n &= q^ne_n, &
    &(n\geq0).
  \end{align*}
  We will use the same notation $\Soib_q$ for the partially defined representation of the algebraists' quantized coordinate ring $\Soib_q\circ\ev_q:\cO_q[\SU(2)] \to \cB(\ell^2(\NN))$.
\end{definition}

If $V$ is any finite-dimensional integrable $\Uqg$-module over the ground field $\CC(q)$ then any $v\in V$ can be specialized at all but finitely many $q\in(0,\infty)$, and likewise for any $f\in V^*$.  Therefore, 
for any given element $u\in\OqK$, the representation $\Soib_q(u)$ is well-defined for all sufficiently small $q > 0$, so it makes sense to ask about the existence of the limit of $\Soib_q(u)$ as $q\to0$.  The following result is essentially known, compare \cite{Woronowicz:SUq2, VakSoi:SUq2, HonSzy:spheres}.

\begin{theorem}
\label{thm:SL2_limit}
  For any $u\in \cO_q^{\bfA_0}[\SL(2)]$, the one-parameter family of operators $\Soib_q(u)\in\cB(\ell^2\NN)$ admits a norm limit as $q\to0$, which we denote by $\Soib_0(u)$.  
  Explicitly, if $m\in\NN$ and $i,j \in \{0, \cdots, m\}$, then 
  \begin{align*}
     \Soib_0(u^m_{ij}) = 
 \begin{cases}
  T^{j}  P_0 T^{*m-i}, & \text{if } i>j \\
  T^{i} T^{*m-i} , & \text{if } i=j \\
  0, & \text{if } i<j
 \end{cases}
  \end{align*}
  where $T$ is the right-shift operator and $P_0$ is the orthogonal projection onto $e_0$.
\end{theorem}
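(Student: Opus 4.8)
The plan is to reduce the whole statement to the fundamental representation, using the Cartan embedding $V(m)\hookrightarrow V(1)^{\otimes m}$ together with the crystal tensor product rule \eqref{eq:tensor_F}. First I would observe that $\cO_q^{\pid}[\SL(2)]$ is the $\pid$-span of the matrix coefficients $u^m_{ij}$, since $\cL(m)$ is the $\pid$-span of the $v^m_k$ and $\cL(m)^*$ the $\pid$-span of the $f^k_m$. As $\pid$ consists of functions regular at $0$, a finite $\pid$-linear combination admits a norm limit as soon as each $\Soib_q(u^m_{ij})$ does, so it suffices to treat a single $u^m_{ij}$.

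The second step expresses $u^m_{ij}$ through the fundamental coefficients. Let $\iota\colon V(m)\to V(1)^{\otimes m}$ be the isometric embedding onto the Cartan (highest-weight) component, normalised so that $\iota v^m_0 = v^1_0{}^{\otimes m}$. Being a $\Uqg$-morphism, $\iota$ commutes with the Kashiwara operators, so $\iota v^m_j = \iota\,\kasF^j v^m_0 = \kasF^j(v^1_0{}^{\otimes m})$, and the tensor product rule \eqref{eq:tensor_F} gives, modulo $q\,\cL(V(1)^{\otimes m})$,
\[
  \iota v^m_j \;\equiv\; \underbrace{v^1_1\otimes\cdots\otimes v^1_1}_{j}\,\otimes\,\underbrace{v^1_0\otimes\cdots\otimes v^1_0}_{m-j}.
\]
Using \cref{lem:orthogonality} to replace $f^i$ by $\ip{v^m_i,\slot}$ modulo $q\pid$, the isometry of $\iota$, and the multiplicativity $c^{V\otimes W}_{f\otimes g,\,v\otimes w}=c^{V}_{f,v}\,c^{W}_{g,w}$, I obtain
\[
  u^m_{ij} \;=\; \prod_{k=1}^{m} u^1_{\epsilon(k)\delta(k)} \;+\; r,
  \qquad r\in q\,\cO_q^{\pid}[\SL(2)],
\]
where $\epsilon(k)=1$ iff $k\le i$ and $\delta(k)=1$ iff $k\le j$. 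Since $\{v^1_0,v^1_1\}$ is orthonormal we have $\ip{v^1_a,\slot}=f^a$ exactly, so the leading tensor word really factors into a product of fundamental coefficients $u^1_{ab}\in\{\alpha,\gamma,-q\gamma^*,\alpha^*\}$; the remainder $r$ collects the $q$-corrections from the Gram matrix and from $\iota v^m_j-(\text{leading word})\in q\cL$.

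The third step computes the fundamental limits directly from \cref{def:SU2-rep}: $\Soib_q(\alpha)$ is the weighted shift $e_n\mapsto\sqrt{1-q^{2n}}\,e_{n-1}$ and $\Soib_q(\gamma)$ the diagonal operator $e_n\mapsto q^n e_n$, whose differences from $T^*$ and $P_0$ have operator norm at most $1-\sqrt{1-q^2}$ and $q$ respectively. Hence $\Soib_0(\alpha)=T^*$, $\Soib_0(\alpha^*)=T$, $\Soib_0(\gamma)=P_0$, and $\Soib_0(-q\gamma^*)=0$, all in norm. All fundamental coefficients act as contractions, so products of uniformly bounded, norm-convergent families converge in norm; the same tensor-word expansion shows $\Soib_q(r)$ is $q$ times a bounded operator and therefore vanishes. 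Evaluating the three cases then gives $(\alpha^*)^i\alpha^{m-i}\mapsto T^iT^{*m-i}$ for $i=j$, $(\alpha^*)^j\gamma^{i-j}\alpha^{m-i}\mapsto T^jP_0T^{*m-i}$ for $i>j$ (using $P_0^{\,i-j}=P_0$), and a product carrying the factor $q^{j-i}\to0$ for $i<j$, matching the claimed formulas.

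The only genuinely delicate points are bookkeeping: checking that $\iota$ is isometric and intertwines the Kashiwara operators, so that the crystal tensor rule pins down the surviving word for free, and verifying that every error term is uniformly $O(q)$ in \emph{norm} rather than merely strongly. Both follow from standard crystal basis facts together with the uniform weight estimates above, so the argument needs no new ideas beyond careful tracking of $q$-orders.
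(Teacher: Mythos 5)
Your proposal is correct and takes essentially the same route as the paper's proof: explicit norm limits for the fundamental coefficients $\alpha,\gamma,-q\gamma^*,\alpha^*$ from \cref{def:SU2-rep}, followed by the Cartan embedding $V(m)\into V(1)^{\otimes m}$ and the tensor rule \eqref{eq:tensor_F} to identify $u^m_{ij}$ with the corresponding word in fundamental coefficients modulo $q\,\cO_q^{\pid}[\SL(2)]$. The only cosmetic difference is that you reduce to single coefficients via the $\pid$-span description and an explicit $O(q)$ norm bound on the remainder, where the paper simply invokes that the fundamental matrix coefficients generate $\cO_q^{\bfA_0}[\SL(2)]$ as an $\pid$-algebra.
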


\begin{proof}
  For $m=0$, we have $\Soib_0(u^0_{00})=1$, and for $m=1$, the formulas in Definition \ref{def:SU2-rep} give the desired limits as $q\to0$:
  \begin{align*}
    \Soib_0(u^1_{00}) & = T^*, &
    \Soib_0(u^1_{01}) & = 0,\\
    \Soib_0(u^1_{10}) & = P_0, &
    \Soib_0(u^1_{11}) & = T. 
  \end{align*}
  Since the fundamental matrix coefficients generate $\cO_q^{\bfA_0}[\mathrm{SL}(2)]$ as an $\bfA_0$-algebra, the existence of the limit for all elements of $\cO_q^{\pid}[\mathrm{SL}(2)]$ follows immediately.  
  
  It remains to check the stated formula when $m>1$.
  There is a unique inclusion of $\Uqg$-modules $\inclusion_m:V(m) \into  V(1)^{\otimes m}$ which sends the highest weight vector $v^m_0$ to $v^1_0\otimes\cdots\otimes v^1_0$.  Equation \eqref{eq:tensor_F} shows that the Kashiwara operator $\kasF$ acts on $\cB(1)^{\otimes m}$ by
  \[
    \kasF: (b^1_1)^{\otimes k} \otimes (b^1_0)^{\otimes (m-k)} \mapsto  
    (b^1_1)^{\otimes (k+1)} \otimes (b^1_0)^{\otimes (m-k-1)},
  \]
  for $k = 0, \cdots, m - 1$, so it follows that
  \[
   \inclusion_m(v^m_k) = (v^1_1)^{\otimes k} \otimes (v^1_0)^{\otimes (m-k)} \mod q\cL(1)^{\otimes m}.
  \]
  Note that for every $i,j \in \{0,\ldots, m\}$ and every $a,b\in\NN$ we have
  \[
    \big( (f^1_1)^{\otimes i} \otimes (f^1_0)^{\otimes m-i} , \kasE^a\kasF^b (v^1_1)^{\otimes j} \otimes (v^1_0)^{\otimes (m-j)} \big)
    = \big( f^i_m , \kasE^a\kasF^b v^m_j) \mod q\pid,
  \]
  which implies that 
  \begin{align*}
  u^m_{ij} 
    & = \begin{cases}
            (u^1_{11})^{j} (u^1_{10})^{i-j} (u^1_{00})^{m-i} , & \text{if } i > j \\
           (u^1_{11})^{i} (u^1_{00})^{m-i} , & \text{if } i = j \\            
           (u^1_{11})^{i} (u^1_{01})^{j-i} (u^1_{00})^{m-j}  , & \text{if } i < j  
          \end{cases}
    \quad\mod q\cO_q^{\bfA_0}[\SL(2)].
\end{align*}
Now we can use the result for $m=1$ to deduce the general formula.
\end{proof}

This result immediately extends to our compact $\pid$-form $\cO_q^{\bfA_0}[\SU(2)]$ from \cref{def:OqAOK}, as follows.

\begin{corollary}
\label{cor:SU2_limit}
  For any $u\in \cO_q^{\bfA_0}[\SU(2)]$, the one-parameter family of operators $\Soib_q(u)\in\cB(\ell^2\NN)$ admits a norm limit as $q\to0$, which we denote by $\Soib_0(u)$.  
\end{corollary}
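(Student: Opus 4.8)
The plan is to bootstrap directly from \cref{thm:SL2_limit}, which already establishes the existence of the norm limit for every element of $\cO_q^{\bfA_0}[\SL(2)]=\OqAOG$. The only new content is that $\cO_q^{\bfA_0}[\SU(2)]=\OqAOK$ is, by \cref{def:OqAOK}, generated over $\pid$ by the two families $\genf^\lambda_i$ and $\genv^\lambda_i$, of which only the first manifestly lies in $\OqAOG$. So I would reduce the statement to two points: (a) that both families of generators have convergent images under $\Soib_q$, and (b) that convergence propagates from the generators to the whole $\pid$-algebra they generate. Concretely, I would argue that the set $\mathcal S=\{u\in\OqAOK \mid \Soib_q(u)\text{ has a norm limit as }q\to0^+\}$ is an $\pid$-subalgebra and then show it contains all the generators.

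For the generators $\genf^\lambda_i=c^{V(\lambda)}_{f^i,v_\lambda}$, I note that $v_\lambda$ is a highest weight vector in $\cL(\lambda)$ and $f^i$ lies in the dual lattice $\cL(\lambda)^*$, so $\OqGAOP\subseteq\OqAOG$ and $\Soib_q(\genf^\lambda_i)$ converges in norm by \cref{thm:SL2_limit}. For the generators $\genv^\lambda_i$ I would invoke \cref{prop:v_f_adjoints}: inverting the relation in its proof expresses each $\genv^\lambda_i$ as a $\pid$-linear combination $\sum_k c^k_i(q)\,(\genf^\lambda_k)^*$, whose coefficients are entries of the Gram matrix and hence regular at $q=0$ by \cref{lem:orthogonality}. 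Since we specialize along real $q\to0^+$, the map $\Soib_q\circ\ev_q$ is a $*$-representation, so $\Soib_q\big((\genf^\lambda_k)^*\big)=\Soib_q(\genf^\lambda_k)^*$; as the adjoint is isometric, these adjoints converge in norm, and since the coefficients $c^k_i(q)$ converge as $q\to0$, so does $\Soib_q(\genv^\lambda_i)$.

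Finally, to see that $\mathcal S$ is closed under the algebra operations: if $\Soib_q(a)$ and $\Soib_q(b)$ converge in norm they are in particular bounded, and then $\Soib_q(ab)=\Soib_q(a)\Soib_q(b)$ converges because $\|A_qB_q-AB\|\le\|A_q\|\,\|B_q-B\|+\|A_q-A\|\,\|B\|\to0$; likewise $\pid$-linear combinations converge since the scalar coefficients are regular at $q=0$. Thus $\mathcal S$ is an $\pid$-subalgebra containing every generator of $\OqAOK$, so $\mathcal S=\OqAOK$. The only genuinely delicate step is the treatment of the $\genv^\lambda_i$: one must ensure that passing from $\genv$ to $\genf^*$ uses coefficients that stay regular at $q=0$, which is exactly the content of \cref{lem:orthogonality}, and that the $*$-representation property is available in the limiting direction $q\to0^+$. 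Everything else is the routine stability of norm limits under products and $\pid$-scalar multiplication.
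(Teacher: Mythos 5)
Your proof is correct and takes essentially the same route as the paper: convergence of $\Soib_q(\genf^\lambda_i)$ comes from \cref{thm:SL2_limit}, convergence of $\Soib_q(\genv^\lambda_i)$ comes from \cref{prop:v_f_adjoints} together with the fact that $\Soib_q$ is a $*$-representation along real $q\to0^+$, and the limit then propagates to the whole $\pid$-algebra generated by these elements. Your write-up simply makes explicit what the paper leaves implicit, namely the inversion of the Gram-matrix relation (with coefficients regular at $q=0$ by \cref{lem:orthogonality}) and the routine stability of norm limits under products and $\pid$-linear combinations.
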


\begin{proof}
By \cref{thm:SL2_limit}, for every generator $\genf^\lambda_i$ of $\cO_q^\pid[\mathrm{SL}(2)/N^+]$, the operators $\Soib_q(\genv^\lambda_i)$ admit a norm-limit as $q\to0$. Since the $\Soib_q$ are $*$-re\-pre\-sent\-ations, \cref{prop:v_f_adjoints} shows that the same is true of the generators $\genv^\lambda_i$ of $\cO_q^\pid[\mathrm{SL}(2)/N^-]$.  The result follows.
\end{proof}

Note, for instance, that the matrix coefficient $\gamma^*=(u^1_{10})^* = -q^{-1}u^1_{01}$ does not belong to the $\pid$-form $\widetilde{\cF}=\cO_q^\pid[\mathrm{SL}(2)]$, but nonetheless it does admit an analytic limit at $q=0$, namely $\lim_{q\to0} \Soib_q(\gamma^*) = P_0$.

\subsection{The analytic limit in higher rank}

Next, we generalize the previous result to higher rank.
Let $K$ be a connected, simply connected compact semisimple Lie group.  Let $W$ denote its Weyl group.

For every $\alpha_i \in \simpleroots$ there is a Hopf $*$-morphism $\iota_i:\cU_{q_i}(\lie{su}(2)) \into \Uqk$ sending $E$ to $E_i$.  The dual map $\Res_{S_i}^{K_q}:=\iota_i^*:\OqK \onto \cO_{q_i}[\SU(2)] $ is called the \emph{restriction to the quantum subgroup $S_i\cong \SU_{q_i}(2)$} associated to the simple root $\alpha_i$.  It specializes at $q\in\qrange$ to a $*$-morphism $\Res_{S_i}^{K_q}:\OKq \onto \cO[\SU_{q_i}(2)] $.
This allows us to define a $*$-representation for each  $\alpha_i\in\simpleroots$ and each $q \in \qrange$ by
\begin{equation}
\label{eq:pi_i}
  \Soib_{q, i} := \Soib_q \circ \Res_{S_i}^{K_q}: \OKq \to \cB(\ell^2(\NN)).
\end{equation}
As previously, we will use the same notation for the partially defined representation of the $\CC(q)$-algebra $\OqK$ obtained by first specializing at $q$.

We also have a restriction morphism to the maximal torus subgroup $\res^{K_q}_T : \OKq \to \OT$ for each $q \in \qrange$. Explicitly,
\[
  \res^{K_q}_T : c^V_{f^i,v_j} \mapsto \delta_{ij} c^V_{f^i,v_i},
\]
where $\{v_i\}$ is a basis of weight vectors, $\{f^i\}$ a dual basis, and the right-hand side is understood as a matrix coefficient for $T\subset K$.
Composing this with the multiplication representation $\mult:\OT\to\cB(L^2(T))$ yields the $*$-representation
\begin{equation}
\label{eq:pi_T}
  \repT = \mult \circ\res^{K_q}_T : \OKq \to \cB(L^2(T)),
\end{equation}
as well as its partially defined analogue on $\OqK$.

\begin{definition}[Soibelman \cite{Soibelman}]
\label{def:big_cell_rep}
  Fix a decomposition $w_0 = s_{i_1} \cdots s_{i_l}$ of the longest word of $W$. We put $\SoibH = \ell^2(\NN)^{\otimes l} \otimes L^2(T)$.  
  For $q \in \qrange$, we define the \emph{Soibelman representation} to be
  \begin{align*}
   \SoibT_q := (\Soib_{i_1, q} \otimes \Soib_{i_2, q} \otimes \cdots \otimes \Soib_{i_l, q} \otimes \repT) \circ \Delta^{(l)} : \OKq &\to \cB(\SoibH).
  \end{align*}
  Again, we use the same notation for the partially defined representation of $\OqK$.
\end{definition}

\begin{remark}
Soibelman \cite{Soibelman} defined a family of irreducible $*$-represent\-ations indexed by the symplectic leaves of $K$.  The representation in \cref{def:big_cell_rep} corresponds to the direct integral over all symplectic leaves of maximal dimension.   It is faithful but not irreducible.  One can obtain an irreducible representation by replacing the multiplication representation of the torus $\repT$ by any character $\rept = \ev_{t}\circ\res^{K_q}_T$ where $\ev_t$ denotes evaluation at a point $t\in T$.  For a nice account of Soibelman's work, and generalizations, see \cite{NesTus:functions}.  
\end{remark}

\begin{theorem}
\label{thm:G_limit}
For any $u \in \OqAOG$, the one-parameter family of operators $\SoibT_q(u)$ admits a norm-limit as $q \to 0$, which we denote by $\SoibT_0(u)$.
\end{theorem}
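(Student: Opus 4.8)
The plan is to reduce everything to the rank-one result \cref{cor:SU2_limit} via the defining formula for the Soibelman representation in \cref{def:big_cell_rep}. Since $\OqAOG$ is the $\pid$-span of matrix coefficients $c^V_{f,v}$ with $v$ in a crystal lattice $\cL$ and $f \in \cL^*$, and since every element of $\pid$ converges as $q \to 0$, it suffices to prove norm-convergence of $\SoibT_q(c^V_{f^a, v_b})$, where $\{v_b\}$ is a fixed lift of the crystal basis $\cB$ to weight vectors of $\cL$ and $\{f^a\} \subset \cL^*$ is the dual basis. First I would expand this operator using the iterated coproduct,
\[
 \Delta^{(l)}(c^V_{f^a, v_b}) = \sum_{j_1,\dots,j_l} c^V_{f^a, v_{j_1}} \otimes c^V_{f^{j_1}, v_{j_2}} \otimes \cdots \otimes c^V_{f^{j_{l-1}}, v_{j_l}} \otimes c^V_{f^{j_l}, v_b},
\]
so that $\SoibT_q(c^V_{f^a, v_b})$ becomes a finite sum of elementary tensors of operators, one factor for each $\Soib_{i_k,q}$ and one for $\repT$.

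Next I would treat the three kinds of factors. The torus factor is the simplest: since $\res^{K_q}_T(c^V_{f^{j_l}, v_b}) = \delta_{j_l b}\, c^V_{f^b, v_b}$ is the unitary character $z \mapsto z^{\wt(v_b)}$, the operator $\repT(c^V_{f^{j_l}, v_b}) = \delta_{j_l b}\, \mult(z^{\wt(v_b)})$ is a fixed multiplication operator, \emph{independent of} $q$; in particular its limit is trivial and it forces $j_l = b$ in the sum. Each remaining factor has the form $\Soib_{i_k,q}(c^V_{f^{j_{k-1}}, v_{j_k}}) = \Soib_{q}\bigl(\Res_{S_{i_k}}^{K_q}(c^V_{f^{j_{k-1}}, v_{j_k}})\bigr)$. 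The crucial claim, which I would isolate as a lemma, is that $\Res_{S_i}^{K_q}$ maps $\OqAOG$ into the $\pid$-form $\cO_q^{\pid}[\SU(2)]$ of the rank-one quantized coordinate ring (with parameter $q_i$). Granting this, \cref{cor:SU2_limit} applies to each factor and gives a norm-limit as $q \to 0$.

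To prove the claim I would use the compatibility of crystal lattices with restriction to the $\lie{sl}_2$-subalgebra $\iota_i(\cU_{q_i}(\lie{sl}_2))$. By construction the Kashiwara operators $\kasE_i, \kasF_i$ \emph{are} the Kashiwara operators of this $\lie{sl}_2$-subalgebra, so condition (3) in the definition of a crystal lattice shows that $\cL$ is a crystal lattice for $\res_i V$, and $(\cL, \cB)$ a crystal basis for it. By uniqueness of $\lie{sl}_2$-crystal bases there is an isomorphism of $\cU_{q_i}(\lie{sl}_2)$-modules $\res_i V \cong \bigoplus_m V(m)^{\oplus n_m}$ carrying $\cL$ onto $\bigoplus_m \cL(m)^{\oplus n_m}$; transporting $v_{j_k} \in \cL$ and $f^{j_{k-1}} \in \cL^*$ through this isomorphism exhibits $\Res_{S_i}^{K_q}(c^V_{f^{j_{k-1}}, v_{j_k}})$ as a matrix coefficient with vector in a crystal lattice and covector in the dual crystal lattice, i.e.\ an element of $\cO_q^{\pid}[\SU(2)]$. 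Here I would note that $q_i = q^{(\alpha_i,\alpha_i)/2}$ is a positive integer power of $q$, so regularity at $q_i = 0$ is the same as regularity at $q = 0$.

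Finally I would assemble the pieces. Each factor in each elementary tensor converges in norm and is therefore uniformly bounded near $q = 0$, so each elementary tensor converges in norm (using $\|A_q \otimes B_q - A\otimes B\| \le \|A_q\|\,\|B_q - B\| + \|A_q - A\|\,\|B\|$ and induction on the number of factors), and the finite sum converges in norm; reinstating the $\pid$-coefficients, which converge in $\CC$, yields the limit $\SoibT_0(u)$ for general $u \in \OqAOG$. I expect the restriction lemma to be the main obstacle: one must verify carefully that the restriction of the global crystal lattice $\cL$ to the direction-$i$ $\lie{sl}_2$-strings is a direct sum of \emph{standard} $\lie{sl}_2$-crystal lattices over $\pid$, and not merely an abstract crystal lattice, since it is precisely this integral compatibility that keeps the restricted matrix coefficients inside the $\pid$-form where \cref{cor:SU2_limit} is available.
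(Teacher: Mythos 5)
Your proposal is correct and follows essentially the same route as the paper's proof: expand $\SoibT_q$ via the iterated coproduct, observe that a crystal lattice for $V$ is also an $\lie{sl}_2$-crystal lattice for the restriction to each quantum subgroup $S_i$, so that $\Res_{S_i}^{K_q}$ carries $\OqAOG$ into the rank-one $\pid$-form, and then apply the rank-one limit result factor by factor (your extra care about the restricted lattice being a direct sum of standard $\lie{sl}_2$-crystal lattices, via uniqueness of crystal bases, is exactly what the paper's one-line assertion compresses). One small correction: since your restriction lemma lands in $\cO_{q_i}^{\pid}[\SL(2)]$ (the Kashiwara form $\widetilde{\cF}$), the statement you need is \cref{thm:SL2_limit} rather than \cref{cor:SU2_limit}, which concerns the different compact $\pid$-form $\cO_q^{\pid}[\SU(2)]$.
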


\begin{proof}
  Firstly, observe that $\Delta:\OqAOG\to\OqAOG\otimes\OqAOG$.
  Next, note that a crystal lattice $\cL$ for a $\Uqk$-module $V$ is also a $\cU_q(\lie{su}_2)$-crystal lattice for the restriction of the module $V$ to $S_i$ for every $i = 1, \cdots, \rank$.  Therefore the maps $\res^{K_q}_{S_i}:\OqK \to \cO_{q_i}[\SU(2)]$ restrict to morphisms $\OqAOG \to \cO_{q_i}^{\bfA_0}[\SL(2)]$.  The result now follows from Theorem \ref{thm:SL2_limit} and the definition of the Soibelman representation.  
\end{proof}

By the same argument as in Corollary \ref{cor:SU2_limit}, we obtain limits for our compact $\pid$-form $\OqAOK$.

\begin{corollary}
\label{cor:K_limit}
For any $u \in \OqAOK$, the one-parameter family of operators $\SoibT_q(u)$ admits a norm-limit as $q \to 0$, which we denote by $\SoibT_0(u)$.
\end{corollary}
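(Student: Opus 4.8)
The plan is to deduce this corollary formally from \cref{thm:G_limit} together with the adjoint relations of \cref{prop:v_f_adjoints}, exactly mirroring the passage from \cref{thm:SL2_limit} to \cref{cor:SU2_limit}. By \cref{def:OqAOK}, the algebra $\OqAOK$ is generated over $\pid$ by the two families $\genf^\lambda_i$ and $\genv^\lambda_i$, so it suffices to produce norm-limits for $\SoibT_q$ applied to each generator and then to check that the class of elements admitting such limits is a $\pid$-subalgebra.

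First I would observe that each $\genf^\lambda_i = c^{V(\lambda)}_{f^i,v_\lambda}$ already lies in the Kashiwara form $\OqAOG$, since $v_\lambda\in\cL(\lambda)$ and $f^i\in\cL(\lambda)^*$. Hence \cref{thm:G_limit} applies verbatim and $\SoibT_q(\genf^\lambda_i)$ admits a norm-limit as $q\to0$. The family $\genv^\lambda_i$, by contrast, is not manifestly contained in $\OqAOG$ (the antipode need not preserve the Kashiwara lattice), so these generators must instead be handled through the $*$-structure.

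For the $\genv^\lambda_i$ I would use \cref{prop:v_f_adjoints}. Inverting the Gram-matrix relation $(\genf^\lambda_i)^*=\sum_l (G^{-1})^l_i\,\genv^\lambda_l$ yields $\genv^\lambda_k=\sum_i G^i_k\,(\genf^\lambda_i)^*$, a finite $\pid$-linear combination of the adjoints of the $\genf$'s. For real $q$ the representation $\SoibT_q$ is a $*$-morphism, so $\SoibT_q((\genf^\lambda_i)^*)=\SoibT_q(\genf^\lambda_i)^*$; since taking adjoints is an isometry, this family inherits a norm-limit from the previous step. The coefficients $G^i_k$ lie in $\pid$ and so converge as $q\to0$, whence the finite sum $\SoibT_q(\genv^\lambda_k)$ converges in norm as well.

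Finally I would close the argument by noting that the set of $u\in\OqAOK$ for which $\SoibT_q(u)$ admits a norm-limit is stable under the algebra operations: sums and $\pid$-scalar multiples are immediate, since elements of $\pid$ are regular at $q=0$, while products converge because a norm-convergent family is norm-bounded near $q=0$, so that $\SoibT_q(uu')=\SoibT_q(u)\SoibT_q(u')$ converges. As this $\pid$-subalgebra contains all the generators, it coincides with $\OqAOK$, and the assignment $u\mapsto\SoibT_0(u)$ is thereby defined on the whole algebra. I expect no serious obstacle here: all the analytic content is already carried by \cref{thm:G_limit} and \cref{prop:v_f_adjoints}, and the only point demanding a little care is the bookkeeping that limits pass through products, which follows from the local boundedness of the operator norms near $q=0$.
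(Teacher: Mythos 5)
Your proposal is correct and follows essentially the same route as the paper: the paper's proof of \cref{cor:K_limit} likewise reduces to \cref{thm:G_limit} for the generators $\genf^\lambda_i$, handles the $\genv^\lambda_i$ via \cref{prop:v_f_adjoints} and the fact that each $\SoibT_q$ is a $*$-representation, and then extends to all of $\OqAOK$ because the generators generate. Your explicit Gram-matrix inversion and the local norm-boundedness argument for products merely spell out bookkeeping that the paper leaves implicit.
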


\begin{definition}
\label{def:OK0}
We put $\OKO:=\SoibT_0(\OqAOK)$ and refer to this $*$-algebra as the crystal limit of $\OKq$.
\end{definition}

We make a similar definition for the $q=0$ limit of the quantized coordinate rings of flag varieties, following \cref{thm:flag_generators2}.  Note that $\OKO$ inherits a gauge action of the torus $T\cong\TT^\rank$ from that of $\OqK$.

\begin{definition}
\label{def:OXO}
With notation as in \cref{sec:flag_manifolds}, let $K$ be a compact connected semisimple Lie group, not necessarily simply connected, let $\rootset\subset\simpleroots$ be a set of simple roots, and let $Y_\rootset$ be the corresponding principal torus bundle over the flag variety $X_\rootset$.  We define $\OYSO$ to be the subalgebra of $\OKO$ generated by the elements $\Soib_0(\genf^\lambda_i)$ and $\Soib_0(\genv^\lambda_i)$ with $\lambda\in\dominant_{K,\rootset}$.  The gauge-invariant subalgebra is denoted by $\OXSO$.
\end{definition}

The remainder of the paper will be dedicated to describing the structure of the crystal limit $\OKO$, as well as the subalgebras $\OYSO \subset \OKO$ as above.


\section{The Cartan braiding and the crystal limit}
\label{sec:Cartan-braiding}

\subsection{Braiding and commutation relations}

At this point, we will be \linebreak obliged to enlarge our base field slightly in order to work with expressions of the form $q^{(\lambda,\lambda')}$ where $\lambda,\lambda'\in\bP$ are any integral weights.  Let $L\in\NN$ be the smallest positive integer such that $(\bP,\bP)\subseteq \frac{1}{L}\ZZ$.  Then we work over $\CC(\param)$ and put $q=s^L$.  Likewise we redefine the rings $\pid$, $\pidinfty$ and $\laurent$ with the parameter $\param$ in place of $q$.  This has essentially no effect on the crystal theory.  Rather than rewrite expressions in terms of $s$, we will write $q^k$ where $k\in\frac{1}{L}\ZZ$.

We denote by $\Rhat_{V, W}: V \otimes W \to W \otimes V$ the braiding corresponding to the finite-dimensional integrable $\Uqg$-modules $V$ and $W$.
It is a $\Uqg$-module isomorphism which generalizes the classical flip map.
The braiding is not quite unique; the choice we make can be characterized uniquely by
\begin{equation}
\label{eq:R-matrix_convention}
\begin{split}
\Rhat_{V, W}(v \otimes w) &= q^{-(\wt(v), \wt(w))} w \otimes v + \sum_i w_i \otimes v_i,\\
&
\text{
where $\wt(w_i) <\wt(w)$ and $\wt(v_i) > \wt(v)$ for every $i$.
}
\end{split}
\end{equation}
Here, $\wt$ denotes the weight of a weight vector.
In particular, in the case $V = V(\lambda)$ and with $v_\lambda$ a highest weight vector, we get $\Rhat_{V, V}(v_\lambda \otimes v_\lambda) = q^{-(\lambda, \lambda)} v_\lambda \otimes v_\lambda$.

The braiding can be used to give commutation relations for the elements of $\OqG$.
Let $V$ and $W$ be $\Uqg$-modules with bases $\{v_i\}_i$ and $\{w_i\}_i$ and denote by $\{f^i\}_i$ and $\{g^i\}_i$ the dual bases. Let us introduce the coefficients of $\Rhat_{V,W}$ by
\[
\Rhat_{V, W}(v_i \otimes w_j) = \sum_{k, l} (\Rhat_{V, W})_{i j}^{k l} w_k \otimes v_l.
\]
Then, using the fact that the braiding satisfies $\Rhat_{V, W} \Delta(X) = \Delta(X) \Rhat_{V, W}$ for any $X \in \Uqg$, it is easy to derive the relations
\begin{equation}
\label{eq:OqG-relation}
\begin{split}
c^V_{f^i, v_k} c^W_{g^j, w_l} & = \sum_{a, b, c, d} (\Rhat_{V, W}^{-1})^{i j}_{a b} (\Rhat_{V, W})^{c d}_{k l} c^W_{g^a, w_c} c^V_{f^b, v_d} \\
& = \sum_{a, b, c, d} (\Rhat_{W, V})^{i j}_{a b} (\Rhat_{W, V}^{-1})^{c d}_{k l} c^W_{g^a, w_c} c^V_{f^b, v_d}.
\end{split}
\end{equation}

\subsection{The Cartan braiding}

Before discussing the crystal limit of the braiding operators, let us make some observations about tensor products of irreducible crystals.

\begin{definition}
\label{def:Cartan_component}
A finite-dimensional integrable $\Uqg$-module $V$ will be called a \emph{product of irreducibles} if it is of the form $V(\lambda_1)\otimes\cdots\otimes V(\lambda_n)$ for some $\lambda_1, \cdots, \lambda_n \in \dominant$.  Such a module contains a unique irreducible submodule of highest weight $\lambda = \sum_i \lambda_i$, which we call the \emph{Cartan component}.  We refer to $\lambda$ as the \emph{highest weight} of $V$ (although strictly speaking it is the largest among the highest weights of all irreducible submodules of $V$).  

The same terminology will be used to refer to the associated \emph{Cartan component} of a product of irreducible crystals.
If $\cB = \cB(\lambda_1) \otimes \cdots \otimes \cB(\lambda_n)$ is a product of irreducible crystals, we denote by $\eta:\cB\sqcup\{0\}\to\{0,1\}$ the indicator function of its Cartan component: 
\[
 \eta(b)= 
 \begin{cases}
  1 & \text{if $b$ is in the Cartan component}, \\
  0 &\text{otherwise}.
 \end{cases}
\]
\end{definition}

The following well-known property of the Cartan component will occasionally be useful (we include the easy proof for completeness).

\begin{lemma}
\label{lem:tensor_fact}
Let $b_{\lambda}$ and $b_{w_0\lambda}$ denote the highest and lowest weight elements, respectively, of the irreducible crystal $\cB(\lambda)$. Then the tensor $c\otimes b_{\lambda'}$ belongs to the Cartan component of $\cB(\lambda)\otimes\cB(\lambda')$ for every $c\in\cB(\lambda)$.  Similarly, $b_{w_0\lambda}\otimes c'$ belongs to the Cartan component for every $c'\in\cB(\lambda')$.
\end{lemma}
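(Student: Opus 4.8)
The plan is to prove the two claims using the combinatorial tensor product rule for crystals, equations \eqref{eq:tensor_E} and \eqref{eq:tensor_F}, together with the characterization of the Cartan component as the connected component generated by the highest weight element $b_\lambda \otimes b_{\lambda'}$. Recall that an element lies in the Cartan component precisely when it can be obtained from $b_\lambda \otimes b_{\lambda'}$ by repeated application of the lowering operators $\kasF_i$. So for the first claim I want to show that every element of the form $c \otimes b_{\lambda'}$, with $c \in \cB(\lambda)$ arbitrary, is reachable from $b_\lambda \otimes b_{\lambda'}$ in this way.

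First I would set up the key observation about the right tensor factor. Since $b_{\lambda'}$ is the \emph{highest} weight element of $\cB(\lambda')$, we have $\varepsilon_i(b_{\lambda'}) = 0$ for every $i$ (as $\kasE_i b_{\lambda'} = 0$). Plugging this into the lowering rule \eqref{eq:tensor_F}, the condition $\varphi_i(c) > \varepsilon_i(b_{\lambda'}) = 0$ holds whenever $\varphi_i(c) \geq 1$, i.e.\ whenever $\kasF_i c \neq 0$. Hence $\kasF_i(c \otimes b_{\lambda'}) = \kasF_i c \otimes b_{\lambda'}$ precisely when $\kasF_i c \neq 0$, and otherwise the operator annihilates the element or leaves the second factor untouched. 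The upshot is that the map $c \mapsto c \otimes b_{\lambda'}$ intertwines the action of the $\kasF_i$ on $\cB(\lambda)$ with their action on $\cB(\lambda)\otimes b_{\lambda'}$. Since $\cB(\lambda)$ is irreducible, every $c$ is obtained from $b_\lambda$ by a sequence of $\kasF_i$'s; applying the same sequence to $b_\lambda \otimes b_{\lambda'}$ then yields exactly $c \otimes b_{\lambda'}$, placing it in the Cartan component.

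For the second claim I would argue symmetrically, but now using the lowest weight element $b_{w_0\lambda}$ in the \emph{left} factor. The natural dual statement is that an element lies in the Cartan component iff it is reachable from the \emph{lowest} weight element $b_{w_0\lambda} \otimes b_{w_0\lambda'}$ of the Cartan component by repeated application of the \emph{raising} operators $\kasE_i$. Since $b_{w_0\lambda}$ is the lowest weight element, $\varphi_i(b_{w_0\lambda}) = 0$ for all $i$, and the raising rule \eqref{eq:tensor_E} with $\varphi_i(b_{w_0\lambda}) = 0 < \varepsilon_i(c')$ (when $\kasE_i c' \neq 0$) gives $\kasE_i(b_{w_0\lambda}\otimes c') = b_{w_0\lambda}\otimes \kasE_i c'$. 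Thus $c' \mapsto b_{w_0\lambda}\otimes c'$ intertwines the $\kasE_i$-actions, and irreducibility of $\cB(\lambda')$ lets us reach every $b_{w_0\lambda}\otimes c'$ from the Cartan component's lowest weight element.

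The only genuine obstacle is justifying the characterization of the Cartan component in terms of reachability by $\kasF_i$ (resp.\ $\kasE_i$) from its highest (resp.\ lowest) weight element, and confirming that $b_\lambda \otimes b_{\lambda'}$ (resp.\ $b_{w_0\lambda}\otimes b_{w_0\lambda'}$) is indeed the relevant extremal element of the Cartan component. This is standard crystal theory: the Cartan component is a connected subcrystal isomorphic to $\cB(\lambda+\lambda')$, its highest weight element is necessarily $b_\lambda \otimes b_{\lambda'}$ (the unique element of weight $\lambda+\lambda'$), and any irreducible crystal is generated from its highest weight element under the $\kasF_i$. I would simply cite this from \cite{Hong-Kang}. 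Everything else is a direct, short computation with \eqref{eq:tensor_E} and \eqref{eq:tensor_F}, so no further difficulty is expected.
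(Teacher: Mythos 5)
Your proof is correct and follows essentially the same route as the paper's: the paper likewise obtains $c\otimes b_{\lambda'}$ by applying to the highest weight element $b_\lambda\otimes b_{\lambda'}$ the word in the $\kasF_i$ that produces $c$, using \eqref{eq:tensor_F}, and dually obtains $b_{w_0\lambda}\otimes c'$ from the lowest weight element $b_{w_0\lambda}\otimes b_{w_0\lambda'}$ via the $\kasE_i$ and \eqref{eq:tensor_E}. Your write-up merely makes explicit the bookkeeping ($\varepsilon_i(b_{\lambda'})=0$, $\varphi_i(b_{w_0\lambda})=0$, and the standard fact that the Cartan component is generated from its extremal elements) that the paper leaves implicit.
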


\begin{proof}
The first statement follows by repeatedly applying the Kashiwara operators $\kasF_i$ to the highest weight element $b_\lambda \otimes b_{\lambda'}$ and using the tensor product formula \eqref{eq:tensor_F}. Similarly, the second follows by applying the Kashiwara operators $\kasE_i$ to the lowest weight element $b_{w_0\lambda} \otimes b_{w_0\lambda'}$.
\end{proof}

We now return to the braiding.
Consider two simple modules $V(\lambda)$ and $V(\lambda')$.
We have crystal lattices of $V(\lambda)\otimes V(\lambda')$ and $V(\lambda')\otimes V(\lambda)$ given by
\[
\lattice_{V(\lambda) \otimes V(\lambda')} := \lattice(\lambda) \otimes_{\pid} \lattice(\lambda^\prime), \quad
\lattice_{V(\lambda') \otimes V(\lambda)} := \lattice(\lambda^\prime) \otimes_{\pid} \lattice(\lambda).
\]
The braiding $\Rhat_{V(\lambda), V(\lambda')}$ does not typically map $\lattice_{V(\lambda)\otimes V(\lambda')}$ into $\lattice_{V(\lambda')\otimes V(\lambda)}$ because of the negative powers of $q$ in \eqref{eq:R-matrix_convention}.
This deficiency can be remedied by multiplying by an appropriate power of $q$, as we show in the next theorem.

\begin{theorem}
\label{thm:braiding_limit}
Given any simple modules $V(\lambda)$ and $V(\lambda')$ we have
\[
q^{(\lambda, \lambda')} \Rhat_{V(\lambda), V(\lambda')}(\cL_{V(\lambda) \otimes V(\lambda')}) \subseteq \cL_{V(\lambda') \otimes V(\lambda)}.
\]
Moreover, the map $q^{(\lambda, \lambda')} \Rhat_{V(\lambda), V(\lambda')}$ induces a morphism of crystals
\[
\braid_{\cB(\lambda),\cB(\lambda')}:\cB(\lambda)\otimes\cB(\lambda') \to \cB(\lambda') \otimes \cB(\lambda).
\]
It is an isomorphism between the Cartan components of the two crystals, and is zero on all other components.
\end{theorem}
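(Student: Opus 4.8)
The statement bundles three assertions: lattice preservation, that the induced map on $\cL/q\cL$ is a morphism of crystals, and the Cartan-component dichotomy. The plan is to analyze the braiding one isotypic component at a time, using that $\Rhat_{V(\lambda),V(\lambda')}$ is a $\Uqg$-module isomorphism carrying the $V(\nu)$-isotypic part of $V(\lambda)\otimes V(\lambda')$ onto that of $V(\lambda')\otimes V(\lambda)$, and tracking the power of $q$ by which it rescales each component. The elementary input is a weight estimate: for any weight $\mu$ of $V(\lambda)$ and $\mu'$ of $V(\lambda')$ one has $(\mu,\mu')\leq(\lambda,\lambda')$. I would prove this by noting that $(\slot,\slot)$ is bilinear, so its maximum over the convex hulls of the weight polytopes is attained at extreme points $\mu=w\lambda$, $\mu'=w'\lambda'$, and then $(w\lambda,w'\lambda')=(\lambda,(w^{-1}w')\lambda')\leq(\lambda,\lambda')$ because $\lambda'-u\lambda'$ is a non-negative integer combination of positive roots for every $u\in W$ while $\lambda$ is dominant. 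In particular the leading term $q^{-(\wt(v),\wt(w))}w\otimes v$ of \eqref{eq:R-matrix_convention} already has non-negative $q$-order after multiplication by $q^{(\lambda,\lambda')}$.

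For the Cartan component I would start from the highest weight vectors. Since $v_\lambda$ has highest weight, the correction sum in \eqref{eq:R-matrix_convention} is empty, so $q^{(\lambda,\lambda')}\Rhat_{V(\lambda),V(\lambda')}(v_\lambda\otimes v_{\lambda'})=v_{\lambda'}\otimes v_\lambda$ exactly. As $v_\lambda\otimes v_{\lambda'}$ and $v_{\lambda'}\otimes v_\lambda$ are primitive generators of the two Cartan components, and the rescaled braiding is a module map, it identifies the two copies of $V(\lambda+\lambda')$ compatibly with their crystal lattices, which are generated over $\pid$ by Kashiwara monomials applied to the highest weight vector. Because $\Rhat_{V(\lambda),V(\lambda')}$ intertwines the $\cU_{q_i}(\lie{sl}_2)$-actions for each $i$, it commutes with every $\kasE_i,\kasF_i$; hence once lattice preservation is known the induced map on $\cL/q\cL$ automatically commutes with the Kashiwara operators, so it is a morphism of crystals, and on the Cartan component it is the claimed isomorphism.

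The mechanism that kills the remaining components, and simultaneously yields lattice preservation, is the ribbon/Casimir scaling of the double braiding. The composite $\Rhat_{V(\lambda'),V(\lambda)}\Rhat_{V(\lambda),V(\lambda')}$ is central and acts on the $V(\nu)$-isotypic part by the scalar $q^{-c(\nu)}$ with $c(\nu)=(\nu,\nu+2\rho)-(\lambda,\lambda+2\rho)-(\lambda',\lambda'+2\rho)$, the normalization being fixed by the Cartan computation where $c(\lambda+\lambda')=2(\lambda,\lambda')$. Writing $\xi=\lambda+\lambda'$ and $\beta=\xi-\nu$ for dominant $\nu<\xi$, a short computation gives $2(\lambda,\lambda')-c(\nu)=(\xi,\xi+2\rho)-(\nu,\nu+2\rho)=(\beta,\xi+\nu+2\rho)$, which is strictly positive since $\beta$ is a non-zero sum of positive roots, $\xi,\nu$ are dominant, and $(\beta,\rho)>0$. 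Thus the rescaled double braiding $q^{2(\lambda,\lambda')}\Rhat_{V(\lambda'),V(\lambda)}\Rhat_{V(\lambda),V(\lambda')}$ acts by a strictly positive power of $q$ on every non-Cartan component. Reading off the square root of this scaling for the single braiding—that $q^{(\lambda,\lambda')}\Rhat_{V(\lambda),V(\lambda')}$ rescales the $V(\nu)$-component by $q^{\frac12(\beta,\xi+\nu+2\rho)}$ times a lattice isomorphism—gives at once that the lattice is preserved, the exponent being $\geq 0$ with equality only on the Cartan component, and that $\braid_{\cB(\lambda),\cB(\lambda')}$ vanishes off the Cartan component.

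The main obstacle is precisely this last square-root step: extracting, from the scalar action of the double braiding, the exact $q$-order of the single braiding on each component and checking that it is realized by a genuine lattice isomorphism rather than merely bounded. I expect to handle the equal splitting of the exponent between $\Rhat_{V(\lambda),V(\lambda')}$ and $\Rhat_{V(\lambda'),V(\lambda)}$ using the symmetry of the normalization \eqref{eq:R-matrix_convention} under exchanging the two tensor factors, together with the weight bound above to rule out negative powers arising from the lower-order terms of the quasi-$R$-matrix. The case where $\nu$ occurs with multiplicity greater than one, so that the braiding acts by a matrix on the multiplicity space rather than by a scalar, will require the most care.
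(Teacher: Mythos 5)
Your outline is correct in its peripheral parts (the weight bound $(\mu,\mu')\leq(\lambda,\lambda')$, the exact action on $v_\lambda\otimes v_{\lambda'}$, the fact that a module map commutes with the Kashiwara operators, and the ribbon computation $2(\lambda,\lambda')-c(\nu)=(\beta,\xi+\nu+2\rho)>0$ off the Cartan component), but the load-bearing step — the ``square root'' of the double braiding — is exactly where the argument has a genuine gap, and the tools you cite do not close it. The triangular normalization \eqref{eq:R-matrix_convention} constrains only the diagonal (weight-preserving) entries of $\Rhat_{V,W}$ in a crystal-lift basis; it says nothing about the $q$-orders of the off-diagonal terms coming from the quasi-$R$-matrix, whose coefficients genuinely contain negative powers of $q$. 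So ``symmetry of the normalization under exchanging the factors'' does not force the exponent $(\beta,\xi+\nu+2\rho)$ to split equally between $\Rhat_{V(\lambda),V(\lambda')}$ and $\Rhat_{V(\lambda'),V(\lambda)}$: nothing you have written rules out a factorization of the shape $C'_\nu C_\nu=q^{-c(\nu)}\,\mathrm{Id}$ with, say, $C_\nu=\mathrm{diag}(1,q)$ on a two-dimensional multiplicity space and the imbalance absorbed into $C'_\nu$ — and note that even granting lattice preservation, positivity of the \emph{sum} of the two orders does not give positivity of each, so your vanishing claim on non-Cartan components is also unproven. To make your route work you would need an additional structural input you never identify, e.g.\ the compact-form adjointness $\Rhat_{V,W}^{*}=\Rhat_{W,V}$ with respect to the standard inner products, combined with the near-orthonormality of crystal lattices (\cref{lem:orthogonality}) and a boundedness-implies-$\pid$ argument, to force the balanced splitting also in the multiplicity-$>1$ case.

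The paper avoids all of this with one trick you missed: it never applies the braiding to honest singular vectors. Instead it applies $\Rhat_{V,W}$ to the \emph{pure tensors} $v_\lambda\otimes w$ lifting the highest-weight crystal elements $b_\lambda\otimes b'$ (these are not singular vectors of the module, but since no weight of $V(\lambda)$ exceeds $\lambda$, the correction sum in \eqref{eq:R-matrix_convention} is empty on them), giving the exact identity
\begin{equation*}
q^{(\lambda,\lambda')}\,\Rhat_{V,W}(v_\lambda\otimes w)=q^{(\lambda,\lambda'-\mu)}\,w\otimes v_\lambda,
\qquad (\lambda,\lambda'-\mu)\geq 0 .
\end{equation*}
Since the Kashiwara monomials applied to these pure tensors generate $\cL(\lambda)\otimes_\pid\cL(\lambda')$ over $\pid$, and $\Rhat$ commutes with $\kasF_i$, lattice preservation and the induced crystal morphism follow at once; vanishing off the Cartan component is then free, because a crystal morphism sends the highest-weight element $b_\lambda\otimes b'$ ($b'\neq b_{\lambda'}$) either to $0$ or to a highest-weight element, while its candidate image $b'\otimes b_\lambda$ lies in the Cartan component (\cref{lem:tensor_fact}) and has weight strictly below $\lambda+\lambda'$. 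No ribbon element, no square-root extraction, and no multiplicity analysis are needed.
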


\begin{proof}
It is a consequence of the tensor product rule \eqref{eq:tensor_E} for $\kasE_i$ that every highest weight element of the crystal $\cB(\lambda) \otimes \cB(\lambda')$ is of the form $b_\lambda \otimes b'$, where $b_\lambda$ is the highest weight element of $\cB(\lambda)$, and $b'$ is some element of $\cB(\lambda')$.  Let $v_\lambda \in \cL(\lambda)$ and $w \in \cL(\lambda')$ be lifts of $b_\lambda$ and $b'$, respectively, to weight vectors.  Write $\mu$ for the weight of $w$.
  
Let us write $V = V(\lambda)$ and $W = V(\lambda')$ for convenience.
By the nature of the braiding described in \eqref{eq:R-matrix_convention}, we have
\begin{equation}
\label{eq:rescaled_braiding1}
q^{(\lambda, \lambda')} \Rhat_{V,W}(v_\lambda \otimes w) = q^{(\lambda, \lambda') - (\lambda, \mu)} w \otimes v_\lambda.
\end{equation}
Since $\lambda \in \dominant$ and $\lambda' - \mu \in \bQ^+ = \NN \cdot \simpleroots$, we have $(\lambda,\lambda'-\mu)\geq0$.  Therefore, \eqref{eq:rescaled_braiding1} is in $\cL_{W\otimes V}$, and moreover is in $q\cL_{W\otimes V}$ unless $w$ is the highest weight vector of $W$.
  
The images of the various highest weight elements $v_\lambda\otimes w$ above, under repeated action of the Kashiwara operators $\kasF_i$, generate the crystal lattice $\cL_{V,W}$ as an $\pid$-module. Since $\Rhat_{V,W}$ is a morphism of $\Uqg$-modules, we get
\[
q^{(\lambda, \lambda')} \Rhat_{V, W}(\kasF(v_\lambda \otimes w)) =
q^{(\lambda, \lambda') - (\lambda, \mu)} \kasF (w \otimes v_\lambda)
\in \cL_{W \otimes V}
\]
for any word $\kasF = \kasF_{i_1} \cdots \kasF_{i_n}$ in the Kashiwara operators $\kasF_i$.  
This proves that the map $q^{(\lambda,\lambda')} \Rhat_{V,W}$ respects the crystal lattices and induces a map of crystals.

For the final claim, note that crystal element $b'\otimes b_\lambda$ associated to the right hand side of \eqref{eq:rescaled_braiding1} always belongs to the Cartan component by \linebreak \cref{lem:tensor_fact}.  Since the left-hand side is a highest-weight element by assumption, it belongs to the Cartan component only if $w$ is of weight $\lambda'$.  It follows that the crystal morphism $\sigma_{\cB(\lambda),\cB(\lambda')}$ must vanish on all non-Cartan components.  On the Cartan component, \eqref{eq:rescaled_braiding1} gives \linebreak $q^{(\lambda,\lambda')}\Rhat_{V,W}(v_\lambda\otimes v_{\lambda'}) = v_{\lambda'}\otimes v_\lambda$, from which we conclude that $\sigma_{\cB(\lambda),\cB(\lambda')}$ is an isomorphism here.
\end{proof}

\begin{definition}
\label{def:Cartan-braiding}
Let $V$ and $V'$ be products of irreducible modules with highest weights $\lambda$ and $\lambda'$, respectively, and let $(\cL, \cB)$ and $(\cL', \cB')$ be their crystal bases.  The \emph{Cartan braiding} is the morphism of crystals
\[
\braid = \braid_{\cB, \cB'}: \cB\otimes \cB' \to \cB' \otimes \cB
\]
induced by the morphism of crystal lattices $q^{(\lambda,\lambda')} \Rhat_{V, V'}: \cL\otimes_\pid\cL' \to \cL'\otimes_\pid\cL$. 
\end{definition}

The use of the word braiding here is a bit misleading.  Firstly, braidings are supposed to be isomorphisms, while our Cartan braiding is not, see \cref{thm:braiding_limit}.  More importantly, the Cartan braiding $\braid_{\cB,\cB'}$ is only defined when $\cB$ and $\cB'$ products of irreducible modules, not on general direct sums of irreducibles.  This is because we need a well-defined notion of highest weights $\lambda$ and $\lambda'$ for $\cB$ and $\cB'$ in order to define it, see below.

If $V$ and $V'$ are irreducible modules, then the Cartan braiding is the crystal morphism described in \cref{thm:braiding_limit}, that is, it is an isomorphism between the Cartan components of $\cB \otimes \cB'$ and $\cB' \otimes \cB$, and zero on all other irreducible components. 

If $V$ and $V'$ are merely products of irreducibles, then we need a few remarks to justify that the Cartan braiding is well-defined.  If $\mu, \mu' \in \dominant$ are the highest weights of any components of $V$ and $V'$ respectively, then we have $\lambda - \mu$ and $\lambda - \mu'$ in $\NN \cdot \simpleroots$.  Therefore
\[
(\lambda, \lambda') = (\lambda - \mu, \lambda') + (\mu, \lambda' - \mu') + (\mu, \mu') \geq (\mu, \mu'),
\]
so $q^{(\lambda, \lambda')} \Rhat_{V(\mu),V(\mu')} = q^k q^{(\mu, \mu')} \Rhat_{V(\mu), V(\mu')}$ for some $k \geq 0$.
Now \cref{thm:braiding_limit} shows that $q^{(\lambda, \lambda')} \Rhat_{V, V'}: \cL \otimes_\pid \cL' \to \cL'\otimes_\pid \cL$ is well defined and descends to a morphism of crystals.
Note though that in this case, it is possible for $\braid_{\cB, \cB'}$ to be non-zero on components other than the Cartan component, as the following example shows.

\begin{example}
  Let $\lie{g} = \lie{sl}_4$, and consider $\cB = \cB(\varpi_1)$ and $\cB' = \cB(\varpi_2)\otimes\cB(\varpi_3)$, where $\varpi_1,\varpi_2,\varpi_3$ are the fundamental weights.  Then $\cB'\cong \cB(\varpi_2+\varpi_3)\oplus\cB(\varpi_1)$.  One can calculate
  \[
    (\varpi_1,\varpi_2+\varpi_3) = {\frac34} = (\varpi_1,\varpi_1).
  \]
  It follows that $\braid_{\cB,\cB'}$ is non-zero on two distinct components in the triple product $\cB(\varpi_1)\otimes\cB(\varpi_2) \otimes\cB(\varpi_3)$.
  
  On the other hand, if we consider the map $\id_{\cB(\varpi_1)}\otimes\braid_{\cB(\varpi_2),\cB(\varpi_3)}$, this will kill $\cB(\varpi_1) \otimes \cB(\varpi_1) \subset \cB(\varpi_1) \otimes (\cB(\varpi_2)\otimes\cB(\varpi_3))$.  In fact, we will show that the Cartan component of any tensor product of irreducibles is characterized by the non-vanishing of every possible action of the Cartan braidings. See \cref{thm:longest_word} below for a precise statement.
\end{example}

The Cartan braiding is very easy to describe in one particular situation.

\begin{example}
When $V = V' = V(\lambda)$, the Cartan braiding is the ``projection'' onto the component of highest weight $2\lambda$, \emph{i.e.},
\[
 \braid_{\cB(\lambda),\cB(\lambda)} (b\otimes b') =
 \begin{cases}
  b\otimes b' & \text{if } \eta(b\otimes b')=1,\\
  0 & \text{otherwise.}
 \end{cases}
\]
\end{example}

Throughout this paper we will consider the running example of $K = \SU(3)$, with the aim of rederiving the result of Giselsson mentioned in the introduction. We begin by determining the Cartan braiding in this case.

\begin{example}
\label{ex:braiding-sl3}
For $\lie{g} = \mathfrak{sl}_3$, we denote the crystals graphs of $\cB(\varpi_1)$ and $\cB(\varpi_2)$ by
\begin{center}
\begin{tikzcd}[column sep=2em]
a_1 \arrow[r, "1"] & a_2 \arrow[r, "2"] & a_3 & & b_1 \arrow[r, "2"] & b_2 \arrow[r, "1"] & b_3
\end{tikzcd}
\end{center}
Using the tensor product rule for the Kashiwara operators, we find that the crystal graphs of $\cB(\varpi_1) \otimes \cB(\varpi_1)$ and $\cB(\varpi_2) \otimes \cB(\varpi_2)$ are given by
\begin{center}
\begin{tikzcd}[column sep=1.5em]
a_1 \otimes a_1 \arrow[r, "1"] & a_2 \otimes a_1 \arrow[r, "2"] \arrow[d, "1"] & a_3 \otimes a_1 \arrow[d, "1"] & & b_1 \otimes b_1 \arrow[r, "2"] & b_2 \otimes b_1 \arrow[r, "1"] \arrow[d, "2"] & b_3 \otimes b_1 \arrow[d, "2"] \\
a_1 \otimes a_2 \arrow[d, "2"] & a_2 \otimes a_2 \arrow[r, "2"] & a_3 \otimes a_2 \arrow[d, "2"] & & b_1 \otimes b_2 \arrow[d, "1"] & b_2 \otimes b_2 \arrow[r, "1"] & b_3 \otimes b_2 \arrow[d, "1"] \\
a_1 \otimes a_3 \arrow[r, "1"] & a_2 \otimes a_3 & a_3 \otimes a_3 & & b_1 \otimes b_3 \arrow[r, "2"] & b_2 \otimes b_3 & b_3 \otimes b_3
\end{tikzcd}
\end{center}
Similarly, for the products $\cB(\varpi_1) \otimes \cB(\varpi_2)$ and $\cB(\varpi_2) \otimes \cB(\varpi_1)$ we have
\begin{center}
\begin{tikzcd}[column sep=1.5em]
a_1 \otimes b_1 \arrow[r, "1"] \arrow[d, "2"] & a_2 \otimes b_1 \arrow[r, "2"] & a_3 \otimes b_1 \arrow[d, "2"] & & b_1 \otimes a_1 \arrow[r, "2"] \arrow[d, "1"] & b_2 \otimes a_1 \arrow[r, "1"] & b_3 \otimes a_1 \arrow[d, "1"] \\
a_1 \otimes b_2 \arrow[r, "1"] & a_2 \otimes b_2 \arrow[d, "1"] & a_3 \otimes b_2 \arrow[d, "1"] & & b_1 \otimes a_2 \arrow[r, "2"] & b_2 \otimes a_2 \arrow[d, "2"] & b_3 \otimes a_2 \arrow[d, "2"] \\
a_1 \otimes b_3 & a_2 \otimes b_3 \arrow[r, "2"] & a_3 \otimes b_3 & & b_1 \otimes a_3 & b_2 \otimes a_3 \arrow[r, "1"] & b_3 \otimes a_3
\end{tikzcd}
\end{center}
Using \cref{thm:braiding_limit}, we deduce from the latter two diagrams that the Cartan braiding $\braid = \braid_{\cB(\varpi_1), \cB(\varpi_2)}$ is given by
\begin{equation}
\label{eq:sl3-braiding}
\begin{array}{lll}
\braid(a_1 \otimes b_1)  = b_1 \otimes a_1, &
\braid(a_2 \otimes b_1)  = b_1 \otimes a_2, &
\braid(a_3 \otimes b_1)  = b_2 \otimes a_2, \\
\braid(a_1 \otimes b_2)  = b_2 \otimes a_1, &
\braid(a_2 \otimes b_2)  = b_3 \otimes a_1, &
\braid(a_3 \otimes b_2)  = b_2 \otimes a_3, \\
\braid(a_1 \otimes b_3)  = 0, &
\braid(a_2 \otimes b_3)  = b_3 \otimes a_2, &
\braid(a_3 \otimes b_3)  = b_3 \otimes a_3.
\end{array}
\end{equation}
Note that this is different from the flip map, even on the Cartan component.
\end{example}

\subsection{Crystal limit of the commutation relations}

As previously, we fix a weight basis $\{v^\lambda_i\}_i\subset \cL(\lambda)$ lifting the crystal basis $\{b^\lambda_i\}_i$ of $\cB(\lambda)$, and let $\{f^i_\lambda\}_i \subset V(\lambda)^*$ be the dual basis.  We will usually suppress the highest weight $\lambda$ in the notation and write $v_i$ and $f^i$.
We make the convention that $v^\lambda_1$ is the highest weight vector, and hence $f_\lambda^1$ is the lowest weight vector.
We also denote by $\{\tilde{v}^\lambda_i\}_i \subset V(\lambda)^{**}$ the dual basis to $\{f_\lambda^i\}_i$.

We recall the notation for the generators of $\OqAOK$ from \cref{sec:compact_form},
\begin{equation}
\label{eq:generators}
\genf^\lambda_i := c^{V(\lambda)}_{f^i, v_1}, \quad
\genv^\lambda_i := c^{V(\lambda)^*}_{\tilde{v}_i, f^1}.
\end{equation}

\begin{proposition}
\label{prop:fxf}
 Let $\lambda, \lambda' \in \dominant$ and let $\projection_{\lambda,\lambda'}:\cB(\lambda)\otimes\cB(\lambda') \to \cB(\lambda+\lambda')$ be the unique non-trivial crystal morphism.  If $b^\lambda_i \otimes b^{\lambda'}_j$ is in the Cartan component, with $\projection_{\lambda,\lambda'}(b^\lambda_i \otimes b^{\lambda'}_j) = b^{\lambda+\lambda'}_m $, then we have
  \begin{align}
  \label{eq:fxf}
    \genf^\lambda_i \genf^{\lambda'}_j &\equiv \genf^{\lambda+\lambda'}_m \mod{q \OqAOK}, \\
  \label{eq:vxv}
    \genv^{\lambda'}_j \genv^\lambda_i  &\equiv \genv^{\lambda+\lambda'}_m \mod{q \OqAOK}.
  \end{align}
Otherwise $\genf^\lambda_i \genf^{\lambda'}_j\equiv 0$ and $\genv^{\lambda'}_j \genv^\lambda_i \equiv 0 \pmod {q \OqAOK}$.
\end{proposition}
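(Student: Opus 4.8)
The plan is to prove the first relation \eqref{eq:fxf} for the generators $\genf$ by a direct computation with matrix coefficients and crystal bases, and then to deduce the second relation \eqref{eq:vxv} for the $\genv$ by applying the $*$-operation together with \cref{prop:v_f_adjoints}. The starting point is the elementary product rule for matrix coefficients, $c^{V}_{f,v}\,c^{W}_{g,w} = c^{V\otimes W}_{f\otimes g,\,v\otimes w}$, which follows directly from the definition of the product on $\OqG$ and the coproduct. Applying it gives
\[
\genf^\lambda_i\,\genf^{\lambda'}_j = c^{V(\lambda)}_{f^i,v_1^\lambda}\,c^{V(\lambda')}_{f^j,v_1^{\lambda'}} = c^{V(\lambda)\otimes V(\lambda')}_{f^i\otimes f^j,\;v_1^\lambda\otimes v_1^{\lambda'}}.
\]
The essential observation is that $v_1^\lambda\otimes v_1^{\lambda'}$ is the highest weight vector of the Cartan component $V(\lambda+\lambda')\subseteq V(\lambda)\otimes V(\lambda')$ (one checks $E_i(v_1^\lambda\otimes v_1^{\lambda'})=0$ using the coproduct \eqref{eq:coproduct}). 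Writing $\inclusion\colon V(\lambda+\lambda')\to V(\lambda)\otimes V(\lambda')$ for the $\Uqg$-module inclusion normalized so that $v_1^{\lambda+\lambda'}\mapsto v_1^\lambda\otimes v_1^{\lambda'}$, the module property of $\inclusion$ lets me rewrite the above as a matrix coefficient for $V(\lambda+\lambda')$, namely $c^{V(\lambda+\lambda')}_{\inclusion^*(f^i\otimes f^j),\,v_1^{\lambda+\lambda'}}$. Expanding $\inclusion^*(f^i\otimes f^j)$ in the dual crystal basis $\{f^m_{\lambda+\lambda'}\}$ then yields
\[
\genf^\lambda_i\genf^{\lambda'}_j = \sum_m (f^i\otimes f^j)\big(\inclusion(v^{\lambda+\lambda'}_m)\big)\,\genf^{\lambda+\lambda'}_m,
\]
with coefficients in $\pid$, because $\inclusion$ respects the crystal lattices and $f^i\otimes f^j$ is $\pid$-valued on $\cL(\lambda)\otimes_\pid\cL(\lambda')$.

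The next step is to analyse these coefficients modulo $q$. The key input is that the module map $\inclusion$ induces, at $q=0$, precisely the crystal inclusion $\inclusion_{\lambda,\lambda'}\colon\cB(\lambda+\lambda')\to\cB(\lambda)\otimes\cB(\lambda')$ onto the Cartan component. This rests on two standard facts: the Kashiwara operators commute with $\Uqg$-module morphisms (apply such a morphism to the defining decomposition $u=\sum_k F_i^{(k)}u_k$ with $E_i u_k=0$), and the crystal lattice $\cL(\lambda+\lambda')$ is generated over $\pid$ by successive Kashiwara operators applied to $v_1^{\lambda+\lambda'}$. Consequently $\inclusion(v^{\lambda+\lambda'}_m)\equiv v^\lambda_p\otimes v^{\lambda'}_q \pmod{q\,\cL(\lambda)\otimes_\pid\cL(\lambda')}$, where $b^\lambda_p\otimes b^{\lambda'}_q = \inclusion_{\lambda,\lambda'}(b^{\lambda+\lambda'}_m)$, so pairing against $f^i\otimes f^j$ gives the coefficient $\delta_{ip}\delta_{jq}\bmod q\pid$. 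This coefficient equals $1$ exactly when $\inclusion_{\lambda,\lambda'}(b^{\lambda+\lambda'}_m)=b^\lambda_i\otimes b^{\lambda'}_j$ — equivalently, since $\inclusion_{\lambda,\lambda'}$ and $\projection_{\lambda,\lambda'}$ are mutually inverse on the Cartan component, when $b^\lambda_i\otimes b^{\lambda'}_j$ lies in the Cartan component and $m=\projection_{\lambda,\lambda'}(b^\lambda_i\otimes b^{\lambda'}_j)$ — and is $0$ otherwise. There is at most one such $m$, which proves \eqref{eq:fxf} together with the asserted vanishing of $\genf^\lambda_i\genf^{\lambda'}_j$.

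Finally, I deduce the relations for the $\genv$ by applying $*$. Since $q^*=q$ and $\OqAOK$ is $*$-stable, $q\OqAOK$ is a $*$-stable two-sided ideal. Using $(ab)^*=b^*a^*$ and the congruences $(\genf^\lambda_i)^*\equiv\genv^\lambda_i$ and $(\genf^{\lambda+\lambda'}_m)^*\equiv\genv^{\lambda+\lambda'}_m\pmod{q\OqAOK}$ from \cref{prop:v_f_adjoints}, applying $*$ to \eqref{eq:fxf} gives $\genv^{\lambda'}_j\genv^\lambda_i\equiv\genv^{\lambda+\lambda'}_m$, and likewise $*$ turns the vanishing statement into $\genv^{\lambda'}_j\genv^\lambda_i\equiv 0$; note that the order reversal produced by $*$ is exactly the order appearing in \eqref{eq:vxv}. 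I expect the main obstacle to be the modular analysis in the second paragraph — specifically, verifying carefully that the module-level inclusion $\inclusion$ reduces to the crystal morphism $\inclusion_{\lambda,\lambda'}$ at $q=0$. Everything else is either the formal product rule for matrix coefficients or a direct consequence of \cref{prop:v_f_adjoints}.
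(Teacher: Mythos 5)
Your proposal is correct and takes essentially the same approach as the paper: both reduce $\genf^\lambda_i \genf^{\lambda'}_j$ to the matrix coefficient of $V(\lambda)\otimes V(\lambda')$ attached to the vector $v^\lambda_1\otimes v^{\lambda'}_1$, which lies in the Cartan component, and then evaluate $f^i\otimes f^j$ modulo $q$ using the compatibility of the crystal lattices — the paper phrases this via the decomposition into irreducible components $\cL_n$ and the image of $f^i\otimes f^j$ in $\cL_n^*$, while you phrase it via the lattice-preserving inclusion $\inclusion$ and the expansion of $\inclusion^*(f^i\otimes f^j)$ in the dual crystal basis, a cosmetic difference. Your deduction of \eqref{eq:vxv} from \eqref{eq:fxf} by applying $*$ together with \cref{prop:v_f_adjoints} is exactly the paper's argument as well.
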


\begin{proof}
  We have $\genf^\lambda_i \genf^{\lambda'}_j = c^{V(\lambda)\otimes V(\lambda')}_{f^i_\lambda \otimes f^j_{\lambda'}, v^\lambda_1\otimes v^{\lambda'}_1}$.  
  Let us write $\cB(\lambda)\otimes\cB(\lambda') = \bigsqcup_n \cB_n$ for the decomposition of the tensor product crystal into irreducible components, with $\cB_1$ being the Cartan component.  Let $V_n$ and $\cL_n$ be the corresponding irreducible components of $V(\lambda)\otimes V(\lambda')$ and $\cL(\lambda)\otimes_\pid \cL(\lambda')$.  
  
  The vector $v^\lambda_1\otimes v^{\lambda'}_1$ is the highest weight vector for the Cartan component $\cB_1$.
  If $b^\lambda_i \otimes b^{\lambda'}_j$ is contained in some component $\cB_n$ other than the Cartan component, then  $v^\lambda_i\otimes v^{\lambda'}_j \in \cL_n + q (\cL(\lambda)\otimes_{\pid}\cL(\lambda'))$, and likewise $f^i_\lambda\otimes f^j_{\lambda'}\in \cL_n^* + q (\cL(\lambda)^*\otimes_{\pid}\cL(\lambda')^*) $.  Therefore $\genf^\lambda_i \genf^{\lambda'}_j \in q \OqAOK$.  
  
  On the other hand, if $b^\lambda_i \otimes b^{\lambda'}_j$ is contained in the Cartan component, then $f^\lambda_i\otimes f^{\lambda'}_j \equiv f^{\lambda+\lambda'}_m \pmod{q(\cL(\lambda)\otimes_{\pid}\cL(\lambda'))}$, where we are identifying $f^{\lambda+\lambda'}_m$ with its image in $\cL_1^*\cong\cL(\lambda+\lambda')^*$.  The relation \eqref{eq:fxf} follows, and by applying the involution $*$ and \cref{prop:v_f_adjoints} we obtain \eqref{eq:vxv}.
\end{proof}

We note that \cref{prop:fxf} implies the exchange relation $\genf^\lambda_i \genf^{\lambda'}_j \equiv \genf^{\lambda'}_k \genf^\lambda_l$ modulo $q \OqAOK$ when $b^\lambda_i\otimes b^{\lambda'}_j$ is in the Cartan component and \linebreak $\braid_{\cB(\lambda), \cB(\lambda')}(b^\lambda_i\otimes b^{\lambda'}_j) = b^{\lambda'}_k \otimes b^{\lambda}_l$.  
The following proposition gives more precision, as well as giving exchange relations for the $\genf^\lambda_i$ with the $\genv^{\lambda'}_j$.

\begin{proposition}
\label{prop:exchange-relations}
We have the commutation relations
\[
\begin{split}
\genf^\lambda_i \genf^{\lambda^\prime}_j & = \sum_{k, l} q^{(\lambda, \lambda^\prime)} \left( \Rhat_{V(\lambda^\prime), V(\lambda)} \right)^{i j}_{k l} \genf^{\lambda^\prime}_k \genf^\lambda_l, \\
\genv^\lambda_i \genv^{\lambda^\prime}_j & = \sum_{k, l} q^{(\lambda, \lambda^\prime)} \left( \Rhat_{V(\lambda^\prime), V(\lambda)} \right)^{l k}_{j i} \genv^{\lambda^\prime}_k \genv^\lambda_l, \\
\genf^\lambda_i \genv^{\lambda^\prime}_j & = \sum_{k, l} q^{(\lambda, \lambda^\prime)} \left( \Rhat_{V(\lambda), V(\lambda^\prime)} \right)^{k i}_{l j} \genv^{\lambda^\prime}_k \genf^\lambda_l.
\end{split}
\]
Moreover we have $\sum_i \genv^\lambda_i \genf^\lambda_i = 1$.
\end{proposition}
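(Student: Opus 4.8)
The first three identities all descend from the general braided commutation relation \eqref{eq:OqG-relation}, applied to the matrix coefficients \eqref{eq:generators}. The decisive feature of the generators $\genf^\lambda_i = c^{V(\lambda)}_{f^i,v_1}$ and $\genv^\lambda_i = c^{V(\lambda)^*}_{\tilde v_i, f^1}$ is that their vector slot carries an \emph{extreme} weight vector: $v_1=v_\lambda$ is a highest weight vector of $V(\lambda)$, while $f^1=f^{-\lambda}$ is a lowest weight vector of $V(\lambda)^*$. By the defining property \eqref{eq:R-matrix_convention}, the braiding $\Rhat_{V,W}(v\otimes w)$ reduces to the single diagonal term $q^{-(\wt v,\wt w)}\,w\otimes v$ whenever $v$ is a highest weight vector of $V$ \emph{or} $w$ is a lowest weight vector of $W$, since in either case no correction term $w_i\otimes v_i$ with $\wt(w_i)<\wt(w)$ and $\wt(v_i)>\wt(v)$ can occur.

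The plan is to substitute the appropriate modules and vectors into \eqref{eq:OqG-relation} and use this observation to collapse one of its two braiding factors. For the first identity I would take $V=V(\lambda)$, $W=V(\lambda')$, $v_k=v_1^\lambda$, $w_l=v_1^{\lambda'}$ in the second line of \eqref{eq:OqG-relation}; the factor $(\Rhat_{V(\lambda'),V(\lambda)}^{-1})^{cd}_{11}$ collapses to $q^{(\lambda,\lambda')}\delta_{c1}\delta_{d1}$, leaving exactly $\genf^\lambda_i\genf^{\lambda'}_j = q^{(\lambda,\lambda')}\sum_{k,l}(\Rhat_{V(\lambda'),V(\lambda)})^{ij}_{kl}\genf^{\lambda'}_k\genf^\lambda_l$. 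For the third identity I would take $V=V(\lambda)$, $W=V(\lambda')^*$, $v_k=v_1^\lambda$, $w_l=f^1_{\lambda'}$; now the first line of \eqref{eq:OqG-relation} collapses through the highest weight vector $v_1^\lambda$, producing the scalar $q^{-(\lambda,-\lambda')}=q^{(\lambda,\lambda')}$ and a single coefficient of $\Rhat_{V(\lambda),V(\lambda')^*}^{-1}$. For the second identity I would use $V=V(\lambda)^*$, $W=V(\lambda')^*$ with the lowest weight vectors $f^1_\lambda$, $f^1_{\lambda'}$, diagonality coming this time from the lowest weight vector $f^1_{\lambda'}$ in the second slot, again carrying the scalar $q^{-(\wt f^1_\lambda,\wt f^1_{\lambda'})}=q^{(\lambda,\lambda')}$.

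At this point the second and third identities are expressed through braiding coefficients of the \emph{dual} modules $\Rhat_{V(\lambda')^*,V(\lambda)^*}$ and $\Rhat_{V(\lambda),V(\lambda')^*}$, rather than of $\Rhat_{V(\lambda'),V(\lambda)}$ and $\Rhat_{V(\lambda),V(\lambda')}$. The remaining step is to rewrite these using the standard behaviour of the braiding under dualization of a tensor factor, which exchanges the corresponding pair of indices (up to a transpose, and possibly an inverse, which must be tracked carefully). This conversion is precisely what produces the reversed index patterns $(\Rhat_{V(\lambda'),V(\lambda)})^{lk}_{ji}$ and $(\Rhat_{V(\lambda),V(\lambda')})^{ki}_{lj}$ in the statement. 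I expect this duality bookkeeping --- keeping the four indices and the two tensor orders straight --- to be the only genuine obstacle; everything else is a direct substitution and a check of the power of $q$.

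Finally, the normalization $\sum_i \genv^\lambda_i\genf^\lambda_i = 1$ is immediate from the antipode axiom. Writing $\genv^\lambda_i = S(c^{V(\lambda)}_{f^1,v_i})$ as in \eqref{eq:genf_genv} and using the matrix coproduct $\Delta(c^{V(\lambda)}_{f^1,v_1}) = \sum_i c^{V(\lambda)}_{f^1,v_i}\otimes c^{V(\lambda)}_{f^i,v_1}$, we obtain
\[
\sum_i \genv^\lambda_i\genf^\lambda_i = \sum_i S\big(c^{V(\lambda)}_{f^1,v_i}\big)\,c^{V(\lambda)}_{f^i,v_1} = m(S\otimes\id)\Delta\big(c^{V(\lambda)}_{f^1,v_1}\big) = \epsilon\big(c^{V(\lambda)}_{f^1,v_1}\big)\,1.
\]
Since $\epsilon(c^{V(\lambda)}_{f^1,v_1}) = f^1(v_1) = 1$ by duality of the chosen bases, the identity follows.
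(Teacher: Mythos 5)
Your derivation of the three exchange relations is essentially the paper's own proof: the same substitutions of extreme-weight matrix coefficients into \eqref{eq:OqG-relation}, the same collapse of one braiding factor to $q^{(\lambda,\lambda')}\delta^c_1\delta^d_1$ forced by the triangular form \eqref{eq:R-matrix_convention} at a highest (resp.\ lowest) weight vector, and the same final conversion from braidings on dual modules to the stated index patterns. The one step you defer --- the ``duality bookkeeping'' --- is exactly where the paper invokes the explicit identities \eqref{eq:braid-coefficients-identities}, namely $(\Rhat_{V^*, W})_{i j}^{k l} = (\Rhat_{V, W}^{-1})_{j l}^{i k}$ and $(\Rhat_{V, W^*}^{-1})_{i j}^{k l} = (\Rhat_{V, W})_{j l}^{i k}$, valid in any braided monoidal category (cited to \cite[Lemma A.1]{Matassa:kahler}); these yield $(\Rhat_{V(\lambda')^*, V(\lambda)^*})^{i j}_{a b} = (\Rhat_{V(\lambda'), V(\lambda)})^{b a}_{j i}$ and $(\Rhat_{V(\lambda), V(\lambda')^*}^{-1})^{i j}_{a b} = (\Rhat_{V(\lambda), V(\lambda')})^{a i}_{b j}$, so your plan goes through with no surprises, and this deferral is not a gap. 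Where you genuinely diverge is the unitality relation: the paper shows that $\sum_i \genv^\lambda_i \genf^\lambda_i$ is invariant under the right $\Uqg$-action, hence a scalar multiple of $1$, and then evaluates the scalar with the counit; you instead recognize the sum as $m(S \otimes \id)\Delta\big(c^{V(\lambda)}_{f^1, v_1}\big)$ and apply the antipode axiom directly, using $\epsilon\big(c^{V(\lambda)}_{f^1, v_1}\big) = f^1(v_1) = 1$. Your argument is correct and in fact shorter and more self-contained --- it needs only the Hopf axioms and the matrix coproduct, with no separate invariance claim --- whereas the paper's invariance-plus-counit argument is the one that would still apply to expressions not manifestly of the convolution form $m(S \otimes \id)\Delta(a)$. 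Both proofs are valid.
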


\begin{proof}
These can be derived from the commutation relations \eqref{eq:OqG-relation} which hold in $\OqG$.
Since $v_1$ is the highest weight vector we have \linebreak $(\Rhat_{V(\lambda^\prime), V(\lambda)}^{-1})^{c d}_{1 1} = q^{(\lambda, \lambda^\prime)} \delta^c_1 \delta^d_1$, which gives the relation for $\genf^\lambda_i \genf^{\lambda'}_j$ in the form
\[
c^{V(\lambda)}_{f^i, v_1} c^{V(\lambda^\prime)}_{f^j, v_1} = \sum_{a, b} q^{(\lambda, \lambda^\prime)} (\Rhat_{V(\lambda^\prime), V(\lambda)})^{i j}_{a b} c^{V(\lambda^\prime)}_{f^a, v_1} c^{V(\lambda)}_{f^b, v_1}.
\]
Similarly, since $f^1$ is the lowest weight vector we have $(\Rhat_{V(\lambda^\prime)^*, V(\lambda)^*}^{-1})^{c d}_{1 1} = q^{(\lambda, \lambda^\prime)} \delta^c_1 \delta^d_1$, which gives the relation for $\genv^\lambda_i \genv^{\lambda'}_j$ in the form
\[
c^{V(\lambda)^*}_{\tilde{v}_i, f^1} c^{V(\lambda^\prime)^*}_{\tilde{v}_j, f^1} = \sum_{a, b} q^{(\lambda, \lambda^\prime)} (\Rhat_{V(\lambda^\prime)^*, V(\lambda)^*})^{i j}_{a b} c^{V(\lambda^\prime)^*}_{\tilde{v}_a, f^1} c^{V(\lambda)^*}_{\tilde{v}_b, f^1}.
\]
It can be rewritten in terms of $\Rhat_{V(\lambda^\prime), V(\lambda)}$ by using the identities
\begin{equation}
\label{eq:braid-coefficients-identities}
(\Rhat_{V^*, W})_{i j}^{k l} = (\Rhat_{V, W}^{-1})_{j l}^{i k}, \quad
(\Rhat_{V, W^*}^{-1})_{i j}^{k l} = (\Rhat_{V, W})_{j l}^{i k}.
\end{equation}
These are valid for any $\Uqg$-modules $V, W$ (with dual bases for $V^*, W^*$) and follow from the fact that the finite-dimensional $\Uqg$-modules form a braided monoidal category (see \cite[Lemma A.1]{Matassa:kahler} for a proof). They imply that
\[
(\Rhat_{V(\lambda^\prime)^*, V(\lambda)^*})^{i j}_{a b} = (\Rhat_{V(\lambda^\prime), V(\lambda)})^{b a}_{j i},
\]
which gives the relation for $\genv^\lambda_i \genv^{\lambda'}_j$ as stated.

Proceeding as above, we have $(\Rhat_{V(\lambda), V(\lambda^\prime)})^{c d}_{1 1} = q^{(\lambda, \lambda^\prime)} \delta^c_1 \delta^d_1$ and this gives the relation for $\genf^\lambda_i \genv^{\lambda^\prime}_j$ in the form
\[
c^{V(\lambda)}_{f^i, v_1} c^{V(\lambda^\prime)^*}_{\tilde{v}_j, f^1} = \sum_{a, b} q^{(\lambda, \lambda^\prime)} (\Rhat_{V(\lambda), V(\lambda^\prime)^*}^{-1})^{i j}_{a b} c^{V(\lambda^\prime)^*}_{\tilde{v}_a, f^1} c^{V(\lambda)}_{f^b, v_1}.
\]
It can be rewritten using the identity $(\Rhat_{V(\lambda), V(\lambda^\prime)^*}^{-1})^{i j}_{a b} = (\Rhat_{V(\lambda), V(\lambda^\prime)})^{a i}_{b j}$ from \eqref{eq:braid-coefficients-identities}, which gives the relation as stated.

Finally we want to show that $\sum_i \genv^\lambda_i \genf^\lambda_i = 1$.
It is not difficult show that $\sum_i \genv^\lambda_i \genf^\lambda_i$ is invariant under the right action of $\Uqg$, which means that $\sum_i \genv^\lambda_i \genf^\lambda_i = c \cdot 1$ for some $c \in \CC$.
Applying the counit to the left-hand side we have
\[
\sum_i \varepsilon(\genv^\lambda_i \genf^\lambda_i) = \sum_i \varepsilon(c^{V(\lambda)^*}_{\tilde{v}_i, f^1}) \varepsilon(c^{V(\lambda)}_{f^i, v_1}) = \sum_i \tilde{v}_i(f^1) f^i(v_1) = 1.
\]
Therefore $c = 1$, which gives the claimed relation.
\end{proof}

Note that the third exchange relations from \cref{prop:exchange-relations} yields the following factorization for our compact $\pid$-form $\OqKAO$.

\begin{corollary}
We have $\OqKAO = \OqGAOM \cdot \OqGAOP$.
\end{corollary}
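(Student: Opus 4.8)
The inclusion $\OqGAOM\cdot\OqGAOP\subseteq\OqKAO$ is immediate, since $\OqKAO$ is by definition an $\pid$-algebra containing all the generators $\genf^\lambda_i$ and $\genv^\lambda_i$. The plan is to obtain the reverse inclusion by proving that the $\pid$-submodule $\OqGAOM\cdot\OqGAOP$ is in fact a subalgebra. It contains each $\genf^\lambda_i=1\cdot\genf^\lambda_i$ and each $\genv^\lambda_i=\genv^\lambda_i\cdot1$ (using that the trivial module contributes the unit $1$ to both $\OqGAOP$ and $\OqGAOM$), and $\OqKAO$ is the smallest $\pid$-subalgebra containing these generators; so once $\OqGAOM\cdot\OqGAOP$ is known to be an algebra, we get $\OqKAO\subseteq\OqGAOM\cdot\OqGAOP$.

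To see that $\OqGAOM\cdot\OqGAOP$ is closed under multiplication it suffices to prove the single commutation inclusion $\OqGAOP\cdot\OqGAOM\subseteq\OqGAOM\cdot\OqGAOP$, for then $(\OqGAOM\cdot\OqGAOP)(\OqGAOM\cdot\OqGAOP)=\OqGAOM\cdot(\OqGAOP\cdot\OqGAOM)\cdot\OqGAOP\subseteq\OqGAOM\cdot\OqGAOP$. The mechanism is the third exchange relation of \cref{prop:exchange-relations}, which rewrites a product $\genf^\lambda_i\genv^{\lambda'}_j$ as a combination of the reversed products $\genv^{\lambda'}_k\genf^\lambda_l$ with coefficients $q^{(\lambda,\lambda')}(\Rhat_{V(\lambda),V(\lambda')})^{ki}_{lj}$. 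The point that makes this an $\pid$-statement rather than merely a $\CC(\param)$-statement is that these coefficients lie in $\pid$, which is exactly the content of \cref{thm:braiding_limit}: the rescaled braiding $q^{(\lambda,\lambda')}\Rhat_{V(\lambda),V(\lambda')}$ maps the crystal lattice $\cL(\lambda)\otimes_\pid\cL(\lambda')$ into $\cL(\lambda')\otimes_\pid\cL(\lambda)$, so its matrix entries in the chosen weight bases are regular at $q=0$.

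For general elements I would run a normal-ordering induction. A typical spanning element of $\OqGAOP\cdot\OqGAOM$, and more generally any word in the generators, is an $\pid$-combination of monomials in the $\genf^\lambda_i$ and $\genv^{\lambda'}_j$ that need to be straightened into the form ``all $\genv$'s, then all $\genf$'s''. I would induct on the number of \emph{inversions} of a monomial, where an inversion is a pair consisting of a $\genf$-factor sitting to the left of a $\genv$-factor. Any monomial with a positive number of inversions contains an adjacent pair $\genf^\lambda_i\genv^{\lambda'}_j$ (look at the leftmost $\genv$-factor having a $\genf$ to its left); applying the exchange relation replaces it by an $\pid$-combination of monomials each with one fewer inversion, the weights $\lambda,\lambda'$ being unchanged and only the indices moving. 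The induction terminates at monomials with no inversions, which lie in $\OqGAOM\cdot\OqGAOP$, and all coefficients produced remain in $\pid$ by the previous paragraph.

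The only genuine content is the $\pid$-integrality of the straightening coefficients, already furnished by \cref{thm:braiding_limit}; the remainder is the standard inversion induction, whose sole delicate point—that a monomial with an inversion always has an \emph{adjacent} inverted pair—is settled by the leftmost-$\genv$ argument above. I therefore expect no serious obstacle beyond bookkeeping.
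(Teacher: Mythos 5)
Your proposal is correct and follows exactly the route the paper intends: the corollary is stated as an immediate consequence of the third exchange relation in \cref{prop:exchange-relations}, with the $\pid$-integrality of the straightening coefficients $q^{(\lambda,\lambda')}(\Rhat_{V(\lambda),V(\lambda')})^{ki}_{lj}$ supplied by \cref{thm:braiding_limit}, just as you argue. Your inversion-counting induction merely makes explicit the routine normal-ordering step that the paper leaves to the reader.
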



The key consequence of \cref{prop:exchange-relations} is that, thanks to \cref{thm:braiding_limit}, the stated commutation relations also make sense when we specialize at $q = 0$. This allows us to deduce a set of commutation relations for the analytic limits $\SoibT_0(\genf^\lambda_i)$ and $\SoibT_0(\genv^\lambda_i)$ in $\OKO$.  
Combining all the above results, we get relations at $q=0$ as follows.

\begin{theorem}
\label{thm:pi0_relations}
The following relations hold in $\cO[K_0] = \SoibT_0(\OqAOK)$. 
\begin{enumerate}
 \item[1)] For any $\lambda, \lambda' \in \dominant$ and any $i,j$ we have
\begin{align}
\label{eq:pi0_relations1}
  \SoibT_0(\genf^\lambda_i) \SoibT_0(\genf^{\lambda^\prime}_j) 
  &= \eta(b^\lambda_i\otimes b^{\lambda'}_j) \SoibT_0(\genf^{\lambda+\lambda'}_m) ,
\\
\label{eq:pi0_relations2}
  \SoibT_0(\genv^{\lambda'}_j) \SoibT_0(\genv^{\lambda}_i) 
  &= \eta(b^\lambda_i\otimes b^{\lambda'}_j) \SoibT_0(\genv^{\lambda+\lambda'}_m) ,
\end{align}
where we write $b^\lambda_i\otimes b^{\lambda'}_j \mapsto b^{\lambda+\lambda'}_m$ under the unique surjective crystal morphism $\cB(\lambda)\otimes\cB(\lambda')\to\cB(\lambda+\lambda')$.

\item[2)] For any $\lambda, \lambda' \in \dominant$ and any $i, j$ we have 
\begin{align}
\label{eq:pi0_relations3}
\SoibT_0(\genf^\lambda_i) \SoibT_0(\genv^{\lambda^\prime}_j) &= 
\sum_{k, l}
\SoibT_0(\genv^{\lambda^\prime}_k) \SoibT_0(\genf^\lambda_l),
\end{align}
where the sum is over all pairs $(k,l)$ such that $\braid_{\cB(\lambda),\cB(\lambda')}(b^\lambda_l \otimes b^{\lambda^\prime}_j) = b^{\lambda^\prime}_k \otimes b^\lambda_i$.

\item[3)] For any $\lambda \in \dominant$ we have
 \begin{equation}
 \sum_i \SoibT_0(\genv^\lambda_i) \SoibT_0(\genf^\lambda_i) = 1.
 \end{equation}
 
 \item[4)]
 For any $\lambda \in \dominant$ and any $i$ we have $\SoibT_0(\genf^\lambda_i)^* = \SoibT_0(\genv^\lambda_i)$.
 
\end{enumerate}
\end{theorem}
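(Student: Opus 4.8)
The plan is to obtain all four items as formal consequences of three general properties of the limit map $\SoibT_0$, combined with the identities already established in $\OqAOK$ by \cref{prop:fxf}, \cref{prop:exchange-relations}, \cref{prop:v_f_adjoints} and \cref{thm:braiding_limit}. First I would record the basic properties of $\SoibT_0$. By \cref{cor:K_limit} the norm limit $\SoibT_0(u) = \lim_{q\to0}\SoibT_q(u)$ exists for every $u\in\OqAOK$, so in particular the operators $\SoibT_q(u)$ stay norm-bounded as $q\to0$. Since each $\SoibT_q$ with $q\in\qrange$ is an algebra homomorphism and multiplication is jointly norm-continuous on bounded sets, passing to the limit gives $\SoibT_0(uu') = \SoibT_0(u)\SoibT_0(u')$, so $\SoibT_0$ is multiplicative. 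Likewise, for real $q$ the specialization is a $*$-morphism and $\SoibT_q$ is a $*$-representation, so $\SoibT_q(u^*) = \SoibT_q(u)^*$, and norm-continuity of the adjoint yields $\SoibT_0(u^*) = \SoibT_0(u)^*$. Finally I would isolate the key congruence lemma: if $a = b + qc$ with $c\in\OqAOK$, then $\SoibT_q(a) = \SoibT_q(b) + q\,\SoibT_q(c)$, and since $\SoibT_q(c)$ stays bounded the term $q\,\SoibT_q(c)$ vanishes in norm as $q\to0$; hence $\SoibT_0(a) = \SoibT_0(b)$.

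With these tools, items 1) and 4) are immediate. For \eqref{eq:pi0_relations1} and \eqref{eq:pi0_relations2} I apply $\SoibT_0$ to the congruences of \cref{prop:fxf}: multiplicativity turns the left-hand sides into $\SoibT_0(\genf^\lambda_i)\SoibT_0(\genf^{\lambda'}_j)$ and $\SoibT_0(\genv^{\lambda'}_j)\SoibT_0(\genv^\lambda_i)$, while the congruence lemma replaces the right-hand sides by $\eta(b^\lambda_i\otimes b^{\lambda'}_j)\SoibT_0(\genf^{\lambda+\lambda'}_m)$ and $\eta(b^\lambda_i\otimes b^{\lambda'}_j)\SoibT_0(\genv^{\lambda+\lambda'}_m)$. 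For item 4) I note that $\OqGAOM\subseteq\OqAOK$, so \cref{prop:v_f_adjoints} gives $(\genf^\lambda_i)^*\equiv\genv^\lambda_i\pmod{q\OqAOK}$; applying the $*$-property and then the congruence lemma gives $\SoibT_0(\genf^\lambda_i)^* = \SoibT_0\big((\genf^\lambda_i)^*\big) = \SoibT_0(\genv^\lambda_i)$. Item 3) is simpler still, since $\sum_i\genv^\lambda_i\genf^\lambda_i = 1$ holds exactly in $\OqG$ by \cref{prop:exchange-relations}; applying the homomorphism $\SoibT_q$ and letting $q\to0$ through the finite sum yields $\sum_i\SoibT_0(\genv^\lambda_i)\SoibT_0(\genf^\lambda_i) = 1$.

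The one item requiring genuine care is the exchange relation \eqref{eq:pi0_relations3} of item 2), and this is where I expect the main obstacle. I start from the exact identity $\genf^\lambda_i\genv^{\lambda'}_j = \sum_{k,l} q^{(\lambda,\lambda')}(\Rhat_{V(\lambda),V(\lambda')})^{ki}_{lj}\,\genv^{\lambda'}_k\genf^\lambda_l$ of \cref{prop:exchange-relations}, apply $\SoibT_q$, and let $q\to0$ term by term. The operator factors $\SoibT_q(\genv^{\lambda'}_k)\SoibT_q(\genf^\lambda_l)$ converge in norm by multiplicativity and \cref{cor:K_limit}, so the only point to verify is that each scalar coefficient $q^{(\lambda,\lambda')}(\Rhat_{V(\lambda),V(\lambda')})^{ki}_{lj}$ is regular at $q=0$ and converges to the correct limit. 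This is exactly the content of \cref{thm:braiding_limit}: the rescaled braiding $q^{(\lambda,\lambda')}\Rhat_{V(\lambda),V(\lambda')}$ preserves the crystal lattices, so its matrix coefficients in the chosen weight bases lie in $\pid$, and at $q=0$ they reduce to the matrix coefficients of the Cartan braiding $\braid_{\cB(\lambda),\cB(\lambda')}$, which take the value $1$ precisely when $\braid_{\cB(\lambda),\cB(\lambda')}(b^\lambda_l\otimes b^{\lambda'}_j) = b^{\lambda'}_k\otimes b^\lambda_i$ and $0$ otherwise.

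Passing to the limit in the finite sum then leaves exactly those terms and produces \eqref{eq:pi0_relations3}. I expect the delicate part here to be purely notational, namely matching the index conventions of the braiding coefficients in \cref{prop:exchange-relations} with those of the Cartan braiding in \cref{thm:braiding_limit}, rather than mathematical, since all the analytic convergence is supplied by the uniform boundedness furnished by \cref{cor:K_limit}. In short, the theorem is a careful but essentially routine $q\to0$ passage, with \cref{thm:braiding_limit} doing the only substantive work.
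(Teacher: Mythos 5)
Your proposal is correct and takes essentially the same route as the paper: items 1), 3) and 4) are obtained by applying the limit representation $\SoibT_0$ to \cref{prop:fxf}, \cref{prop:exchange-relations} and \cref{prop:v_f_adjoints}, and item 2) from the third exchange relation of \cref{prop:exchange-relations} together with the $q\to0$ limit of the rescaled braiding coefficients furnished by \cref{thm:braiding_limit}. The only difference is that you make explicit the analytic bookkeeping (multiplicativity of $\SoibT_0$, $*$-compatibility along real $q$, and the vanishing of terms in $q\,\OqAOK$ via uniform boundedness) that the paper's proof leaves implicit.
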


\begin{proof}
1) and 4) follow from \cref{prop:fxf} and \cref{prop:v_f_adjoints}, while 3) follows from \cref{prop:exchange-relations}.
The relation 2) follows from the third relation of \cref{prop:exchange-relations} using the following consequence of \cref{thm:braiding_limit}:
\[
\lim_{q \to 0} q^{(\lambda, \lambda')} \left( \Rhat_{V(\lambda), V(\lambda')} \right)_{i j}^{k l} = 
\begin{cases}
 1 & \text{if }\braid_{\cB(\lambda),\cB(\lambda')}(b^\lambda_i \otimes b^{\lambda'}_j) = b^{\lambda'}_k \otimes b^{\lambda'}_l, \\
 0 & \text{if } \eta(b^\lambda_i\otimes b^{\lambda'}_j)=0.
\end{cases} \qedhere
\]
\end{proof}

Ultimately, we will show that \cref{thm:pi0_relations} gives a complete set of generators and relations for $\cO[K_0]$.

\begin{remark}
\label{rmk:exchange_relations}
Note that relations \eqref{eq:pi0_relations1} and \eqref{eq:pi0_relations2} imply the exchange relations
\begin{align}
\label{eq:pi0_relations1b}
  \SoibT_0(\genf^\lambda_i) \SoibT_0(\genf^{\lambda^\prime}_j) &= \SoibT_0(\genf^{\lambda^\prime}_k) \SoibT_0(\genf^\lambda_l), 
\\
\label{eq:pi0_relations2b}
  \SoibT_0(\genv^{\lambda'}_j) \SoibT_0(\genv^{\lambda}_i) 
  &= \SoibT_0(\genv^{\lambda}_l) \SoibT_0(\genv^{\lambda'}_k),  
\end{align}
whenever $\braid_{\cB(\lambda),\cB(\lambda')}(b^\lambda_i\otimes b^{\lambda'}_j) = b^{\lambda'}_k \otimes b^\lambda_l$.  These also follow from the $q \to 0$ limit of the relations in \cref{prop:exchange-relations}.
\end{remark}

\section{Properties of the Cartan braiding}
\label{sec:properties-Cartan}

\subsection{Hexagon and braid relations}

The Cartan braiding of \cref{def:Cartan-braiding} inherits properties from the braiding operators $\Rhat_{V,V'}$ as follows.

\begin{theorem}
Let $V$, $V'$ and $V''$ be products of irreducibles with crystal bases $(\cL, \cB)$, $(\cL', \cB')$ and $(\cL'', \cB'')$, respectively. Then the crystal braiding maps satisfy the hexagon relations
\begin{align}
  \braid_{\cB,\cB'\otimes\cB''} &= (\id_{\cB'}\otimes\braid_{\cB,\cB''})(\braid_{\cB,\cB'}\otimes\id_{\cB''}),
  \label{eq:crystal_hexagon1} \\
  \braid_{\cB\otimes\cB',\cB''} &= (\braid_{\cB,\cB''}\otimes\id_{\cB'})(\id_{\cB}\otimes\braid_{\cB',\cB''}),  
  \label{eq:crystal_hexagon2} 
\end{align}
and the braid relation
\begin{align}
 (\braid_{\cB',\cB''} \otimes \id_\cB) &(\id_\cB'\otimes \braid_{\cB,\cB''}) (\braid_{\cB,\cB'} \otimes \id_{\cB''}) \nonumber \\
  &= (\id_{\cB''} \otimes \braid_{\cB,\cB'}) (\braid_{\cB,\cB''}\otimes\id_{\cB'})(\id_\cB\otimes\braid_{\cB',\cB''}).
  \label{eq:Cartan_braiding} 
\end{align}
\end{theorem}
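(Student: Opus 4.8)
The plan is to lift both identities to the level of the \emph{rescaled} braidings $q^{(\slot,\slot)}\Rhat$ acting on crystal lattices, where they follow formally from the braided monoidal structure, and then to descend modulo $q$ to the Cartan braidings. Recall that the ordinary braidings $\Rhat_{V,W}$ satisfy the two hexagon identities and the Yang--Baxter (braid) relation, since they are the structure morphisms of the braided monoidal category of finite-dimensional integrable $\Uqg$-modules. My strategy is therefore to show that the scalar prefactors $q^{(\slot,\slot)}$ appearing in \cref{def:Cartan-braiding} combine \emph{identically} on the two sides of each relation, so that the equality of rescaled lattice maps is literally the equality of ordinary braidings multiplied by a common power of $q$.

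Writing $\lambda$, $\lambda'$, $\lambda''$ for the highest weights of $V$, $V'$, $V''$, the first hexagon relation \eqref{eq:crystal_hexagon1} is obtained by comparing the lattice maps $q^{(\lambda,\lambda'+\lambda'')}\Rhat_{V,V'\otimes V''}$ and $(\id_{V'}\otimes q^{(\lambda,\lambda'')}\Rhat_{V,V''})(q^{(\lambda,\lambda')}\Rhat_{V,V'}\otimes\id_{V''})$. Here I use that $V'\otimes V''$ is again a product of irreducibles with highest weight $\lambda'+\lambda''$, so that $\braid_{\cB,\cB'\otimes\cB''}$ is well-defined. The prefactor on the right is $q^{(\lambda,\lambda')+(\lambda,\lambda'')}=q^{(\lambda,\lambda'+\lambda'')}$ by bilinearity of $(\slot,\slot)$, hence agrees with the prefactor on the left, and the remaining factors coincide by the hexagon identity for $\Rhat$. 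The second hexagon \eqref{eq:crystal_hexagon2} is identical. For the braid relation \eqref{eq:Cartan_braiding} the same bookkeeping yields a common prefactor $q^{(\lambda,\lambda')+(\lambda,\lambda'')+(\lambda',\lambda'')}$ on both sides—note that each of the three elementary Cartan braidings contributes exactly one of the three pairings, independently of the order in which they appear—after which the identity reduces to the Yang--Baxter relation for $\Rhat$.

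It remains to pass from lattice maps to crystal morphisms. By \cref{thm:braiding_limit} (extended to products of irreducibles as in \cref{def:Cartan-braiding}), each rescaled braiding $q^{(\mu,\nu)}\Rhat$, and each tensor of such a map with identities, carries the relevant crystal lattice into the target crystal lattice; hence every composite above is an $\pid$-linear map between the appropriate triple-tensor lattices. Since reduction modulo $q$ is functorial—the reduction of a composite of $\pid$-linear lattice maps is the composite of their reductions—the exact equalities established at the level of lattice maps descend to equalities of the induced crystal morphisms, which are precisely the Cartan braidings. I do not expect a genuine obstacle here: the only points requiring care are that the highest weight of a tensor product of products of irreducibles is the sum of the highest weights (so that the prefactors are exactly as stated), which is immediate, and that the rescaled braidings stabilize the lattices, which is exactly \cref{thm:braiding_limit}.
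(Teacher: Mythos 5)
Your proposal is correct and follows essentially the same route as the paper's proof: rescale the braidings so that bilinearity of $(\slot,\slot)$ makes the scalar prefactors agree on both sides, invoke the hexagon and Yang--Baxter identities for $\Rhat$, and then specialize at $q=0$ via \cref{thm:braiding_limit}. Your write-up is in fact slightly more careful than the paper's, which leaves the prefactor bookkeeping for the braid relation and the functoriality of reduction modulo $q$ implicit.
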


\begin{proof}
Let $\lambda$, $\lambda'$ and $\lambda''$ be the highest weights of $V$, $V'$ and $V''$, with crystals $\cB$, $\cB'$ and $\cB''$, respectively. Then the highest weight of $\cB' \otimes \cB''$ is $\lambda' + \lambda''$.  Using the hexagon relation for $\hat{R}$, we get
  \begin{align*}
    q^{(\lambda, \lambda' + \lambda'')} \Rhat_{V, V' \otimes V''} 
     & = q^{(\lambda, \lambda'')} q^{(\lambda, \lambda')} (\id_{V'} \otimes \Rhat_{V, V''}) (\Rhat_{V, V'} \otimes \id_{V''}).
  \end{align*}
This specializes at $q = 0$ by \cref{thm:braiding_limit}, and the specialization induces the hexagon relation \eqref{eq:crystal_hexagon1}. The other hexagon relation is proven similarly, and the braid relation follows by an analogous calculation.
\end{proof}

\subsection{Partial action of the symmetric group}

Let $\cB_1, \cdots, \cB_n$ be products of irreducible crystals, see \cref{def:Cartan_component}.  For long tensor products of the form $\cB_1\otimes\cdots\otimes\cB_n$, we will denote the action of the crystal braiding on two successive terms by $\braid_i = \id_{\cB_1} \otimes \cdots \otimes \braid_{\cB_i,\cB_{i+1}} \otimes \cdots \otimes \id_{\cB_n}$, so that
\[
  \braid_i : \cB_1\otimes\cdots\cB_i\otimes\cB_{i+1}\otimes \cdots \otimes \cB_n \to \cB_1\otimes\cdots\cB_{i+1}\otimes\cB_{i}\otimes \cdots \otimes \cB_n.
\]

\begin{proposition}
  Let $\cB_1, \cdots, \cB_n$ be irreducible crystals.  Let $s\in S_n$ be a permutation and let $s = s_{i_1} \cdots s_{i_k}$ be a realization of $s$ as a reduced word in the transpositions $s_i=(i\;\;i\!+\!1)$. Then the crystal morphism 
  \[
    \braid_s \defeq \braid_{i_1} \circ \cdots \circ \braid_{i_n} : \cB_1\otimes \cdots \cB_n \to \cB_{s(1)} \otimes \cdots \otimes \cB_{s(n)}
  \]
  is independent of the choice of reduced word representing $s$. 
\end{proposition}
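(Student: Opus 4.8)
The plan is to recognize this statement as a direct application of the Matsumoto--Tits theorem, which asserts that a product of Coxeter generators taken along a reduced word depends only on the underlying group element, provided the generators are sent to elements of a monoid satisfying the braid relations. The feature that makes this applicable is that Matsumoto's theorem uses \emph{only} the braid relations, and never the quadratic relations $s_i^2=\id$. This is fortunate, since the Cartan braidings are not invertible (see \cref{thm:braiding_limit}), so the operators $\braid_i$ certainly do not satisfy $\braid_i^2=\id$; a representation of the symmetric group, or of the braid group, is therefore not available, whereas the monoid version of the theorem is.

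First I would record the two families of local relations satisfied by the maps $\braid_i$. The braid relation
\[
  \braid_i\braid_{i+1}\braid_i = \braid_{i+1}\braid_i\braid_{i+1}
\]
is exactly relation \eqref{eq:Cartan_braiding}, applied to the three successive factors $\cB_i,\cB_{i+1},\cB_{i+2}$ and tensored with the identity on the remaining factors; a short check of which crystal sits in which position at each stage confirms that the two sides of \eqref{eq:Cartan_braiding} are precisely $\braid_i\circ\braid_{i+1}\circ\braid_i$ and $\braid_{i+1}\circ\braid_i\circ\braid_{i+1}$. The commutation relation $\braid_i\braid_j=\braid_j\braid_i$ for $|i-j|\geq 2$ is immediate, since $\braid_i$ and $\braid_j$ then act on disjoint pairs of tensor factors. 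I would also note that each intermediate map is well defined: applying any sequence of the $\braid_i$ to $\cB_1\otimes\cdots\otimes\cB_n$ yields a permuted tensor product of the \emph{irreducible} crystals $\cB_1,\ldots,\cB_n$, so every adjacent pair remains a pair of irreducibles, for which the Cartan braiding is defined and which carries an unambiguous notion of highest weight.

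To put this in the exact form required by Matsumoto's theorem, which is stated for a monoid, I would assemble the operators into a single endomorphism monoid. Set
\[
  E \defeq \bigoplus_{t\in S_n} \cB_{t(1)}\otimes\cdots\otimes\cB_{t(n)},
\]
a crystal, and let $\tilde\braid_i\in\End(E)$ act on each summand by the Cartan braiding on the factors in positions $i$ and $i+1$, mapping into the appropriate adjacent summand. The two relations above show that the generators $\tilde\braid_i$ satisfy precisely the braid and commutation relations of the Coxeter presentation of $S_n$. Matsumoto's theorem, applied to the monoid $\End(E)$, then guarantees that $\tilde\braid_{i_1}\circ\cdots\circ\tilde\braid_{i_k}$ depends only on $s=s_{i_1}\cdots s_{i_k}$ and not on the chosen reduced word; restricting to the summand $\cB_1\otimes\cdots\otimes\cB_n$ recovers $\braid_s$ and yields the claim. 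The main point to be careful about is the one emphasised at the outset: the non-invertibility of the Cartan braiding forces the monoid version of the theorem, and the only other issue is the bookkeeping of source and target objects, which the direct-sum packaging resolves automatically.
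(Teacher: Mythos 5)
Your proposal is correct and follows essentially the same route as the paper, which also invokes the Matsumoto--Tits fact that any two reduced words are linked by braid moves and notes explicitly that the relation $s_i^2=\id$ is never used, so the non-invertibility of the Cartan braiding is harmless. Your additional bookkeeping --- checking the distant commutations $\braid_i\braid_j=\braid_j\braid_i$ for $|i-j|\geq 2$ and packaging the maps into the endomorphism monoid of $\bigoplus_{t\in S_n}\cB_{t(1)}\otimes\cdots\otimes\cB_{t(n)}$ to apply the monoid form of the theorem cleanly --- is a careful spelling-out of details the paper leaves implicit in its appeal to the braid relation \eqref{eq:Cartan_braiding}.
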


\begin{proof}
 Thanks to \cref{thm:braiding_limit}, this follows from the well-known fact that any two reduced words can be linked by repeated application of the braid relations $s_is_{i+1}s_i = s_{i+1}s_is_{i+1}$.  
 \end{proof} 

Note that the relation $s_i^2=\id$ is not necessary in the above proof.  This is important since $\braid_i^2\neq\id$, although $\braid_i^2$ is the identity on the Cartan component.  As a consequence, the map $s\mapsto \braid_s$ doesn't define an action of $S_n$, but it does define an action between the Cartan components, thanks to the following theorem.

\begin{theorem}
\label{thm:longest_word}
Let $\cB_1, \cdots, \cB_n$ be irreducible crystals and let $b_i \in \cB_i$ for each $i$.  The following are equivalent:
\begin{enumerate}
 \item[(i)] $b_1\otimes\cdots\otimes b_n$ is in the Cartan component of $\cB_1\otimes\cdots\otimes\cB_n$,
 \item[(ii)] $\braid_s(b_1\otimes \cdots\otimes b_n) \neq 0$ for every $s\in S_n$,
 \item[(iii)] $\braid_{s_0}(b_1\otimes\cdots\otimes b_n) \neq 0$, where $s_0 = \scriptstyle \begin{pmatrix} 1 & 2 & \cdots & n \\ n & n - 1 & \cdots & 1 \end{pmatrix}$ is the longest permutation.
\end{enumerate}
If $\cB_1, \cdots, \cB_n$ are products of irreducibles, the equivalence holds if we add to (ii) and (iii) the condition that each $b_i$ be in the Cartan component of $\cB_i$.
\end{theorem}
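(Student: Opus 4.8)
The plan is to prove the three conditions equivalent in the irreducible case first and then reduce the products-of-irreducibles statement to it, via the cycle (i) $\Rightarrow$ (ii) $\Rightarrow$ (iii) $\Rightarrow$ (i), in which (ii) $\Rightarrow$ (iii) is immediate because $s_0\in S_n$. Write $C(\cB)$ for the Cartan component of a product of irreducibles $\cB$. The technical engine is a refinement of \cref{thm:braiding_limit} that I would isolate as a lemma: for products of irreducibles $\cB,\cB'$ with highest weights $\lambda,\lambda'$, the Cartan braiding $\braid_{\cB,\cB'}$ restricts to a \emph{crystal isomorphism} $C(\cB\otimes\cB')\to C(\cB'\otimes\cB)$. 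To prove it, observe that the correction sum in \eqref{eq:R-matrix_convention} applied to the highest weight vector $v_\lambda\otimes v_{\lambda'}$ is empty, since no weight of $V$ exceeds $\lambda$; hence $q^{(\lambda,\lambda')}\Rhat_{V,V'}(v_\lambda\otimes v_{\lambda'}) = v_{\lambda'}\otimes v_\lambda$ exactly. As $q^{(\lambda,\lambda')}\Rhat_{V,V'}$ is a module map up to a central scalar, it commutes with the $\kasF_i$, so it carries the crystal lattice of $C(\cB\otimes\cB')$ --- generated from $v_\lambda\otimes v_{\lambda'}$ by the $\kasF_i$ --- bijectively onto that of $C(\cB'\otimes\cB)$, yielding the isomorphism at $q=0$.

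The implication (i) $\Rightarrow$ (ii) in the irreducible case is then immediate: each $\braid_i$ restricts to an isomorphism between the Cartan components of the two adjacent arrangements, so by the word-independence of $\braid_s$ shown above, $\braid_s$ restricts to an isomorphism $C(\cB_1\otimes\cdots\otimes\cB_n)\to C(\cB_{s(1)}\otimes\cdots\otimes\cB_{s(n)})$ and thus sends the nonzero element $b_1\otimes\cdots\otimes b_n$ to a nonzero element. I would also record the \emph{nesting property}: every element of $C(\cB\otimes\cB')$ has both tensor factors in the respective Cartan components. This follows by generating $C(\cB\otimes\cB')$ from its highest weight vector using the $\kasF_i$ and \eqref{eq:tensor_F}, since the Cartan component of each factor is a subcrystal closed under the Kashiwara operators.

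For (iii) $\Rightarrow$ (i) I would induct on $n$, with base case $n=2$ being \cref{thm:braiding_limit}. Choosing the reduced word for $s_0$ that first reverses the factors in positions $2,\dots,n$ and then carries $\cB_1$ to the far right, the hexagon relation \eqref{eq:crystal_hexagon1} factors the map as $\braid_{s_0} = \braid_{\cB_1,\,\cB_n\otimes\cdots\otimes\cB_2}\circ(\id_{\cB_1}\otimes\braid_{s_0^{(n-1)}})$. Nonvanishing of $\braid_{s_0}(b_1\otimes\cdots\otimes b_n)$ forces $\braid_{s_0^{(n-1)}}(b_2\otimes\cdots\otimes b_n)\neq0$, so the inductive hypothesis puts $b_2\otimes\cdots\otimes b_n$ in $C(\cB_2\otimes\cdots\otimes\cB_n)\cong\cB(\mu)$ with $\mu=\lambda_2+\cdots+\lambda_n$. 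By the lemma the reversal restricts to a crystal isomorphism onto $C(\cB_n\otimes\cdots\otimes\cB_2)$; since an irreducible crystal has only the trivial automorphism, this isomorphism intertwines the two canonical inclusions of $\cB(\mu)$, so $b_2\otimes\cdots\otimes b_n$ and its image $\beta$ represent the \emph{same} abstract element $g\in\cB(\mu)$. Restricting $\braid_{\cB_1,\,\cB_n\otimes\cdots\otimes\cB_2}$ to $\cB_1\otimes C(\cB_n\otimes\cdots\otimes\cB_2)$, where by naturality of $\Rhat$ it agrees with $\braid_{\cB(\lambda_1),\cB(\mu)}$, \cref{thm:braiding_limit} shows $\braid_{s_0}(b_1\otimes\cdots\otimes b_n)\neq0$ is equivalent to $b_1\otimes g$ lying in the Cartan component of $\cB(\lambda_1)\otimes\cB(\mu)$. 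By the nesting property, condition (i) is equivalent to exactly the same two statements, closing the induction.

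For the products-of-irreducibles version, the added hypothesis that each $b_i\in C(\cB_i)$ places $b_1\otimes\cdots\otimes b_n$ inside the genuine tensor product $C(\cB_1)\otimes\cdots\otimes C(\cB_n)$ of irreducible crystals, on which naturality of $\Rhat$ makes each $\braid_{\cB_i,\cB_{i+1}}$ restrict to the irreducible Cartan braiding $\braid_{C(\cB_i),C(\cB_{i+1})}$; hence every $\braid_s$ is computed inside this sub-product and the irreducible case applies verbatim (the necessity of the extra hypothesis is exactly what the $\lie{sl}_4$ example obstructs). I expect the main obstacle to be the matching step in (iii) $\Rightarrow$ (i): carefully tracking the identifications $C(\cB_2\otimes\cdots\otimes\cB_n)\cong\cB(\mu)\cong C(\cB_n\otimes\cdots\otimes\cB_2)$ and verifying that the reversal braiding intertwines them, so that the single R-matrix nonvanishing condition produced by the hexagon factorization is literally the nesting condition characterizing (i). The rigidity of irreducible crystals --- triviality of their automorphism group --- is the fact that makes this matching go through.
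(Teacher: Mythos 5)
Your proof is correct and follows essentially the same route as the paper: the same cycle (i) $\Rightarrow$ (ii) $\Rightarrow$ (iii) $\Rightarrow$ (i), with the last implication proved by induction on $n$ via a reduced-word factorization of $s_0$ through the reversal of the last $n-1$ factors, the hexagon relation identifying $\braid_{n-1}\circ\cdots\circ\braid_1$ with a single Cartan braiding of the form $\braid_{\cB_1,\,\cdot}$, an appeal to \cref{thm:braiding_limit} for the irreducible pair $\cB(\lambda_1)$, $\cB(\mu)$, and the same naturality reduction for products of irreducibles. The only divergence is cosmetic: you use the single factorization $s_0 = (s_{n-1}\cdots s_1)\,s_0''$, which obliges you to make explicit the matching of $C(\cB_2\otimes\cdots\otimes\cB_n)$ with $C(\cB_n\otimes\cdots\otimes\cB_2)$ via rigidity of irreducible crystals, whereas the paper uses both factorizations $s_0 = s_0'(s_{n-1}\cdots s_1) = (s_{n-1}\cdots s_1)\,s_0''$ and leaves that identification implicit.
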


\begin{proof}
  To begin with, assume that all the $\cB_i$ are irreducible.
  
  For (i) $\thus$ (ii), note that if $b_1\otimes \cdots \otimes b_n$ is in the Cartan component of $\cB_1\otimes\cdots\otimes\cB_n$, then necessarily $b_i\otimes b_{i+1}$ is in the Cartan component of $\cB_i\otimes\cB_{i+1}$ for every $i$.  Therefore $\braid_{\cB_i, \cB_{i + 1}}(b_i \otimes b_{i + 1})$ is non-zero and, since it corresponds to the same element of the Cartan component as $b_i\otimes b_{i+1}$, we have that $\braid_i(b_1 \otimes \cdots \otimes b_n)$ again belongs to the Cartan component.
  Inductively, any repeated composition of the $\braid_i$ is non-zero.
  
  The implication (ii) $\thus$ (iii) is obvious.  To prove (iii) $\thus$ (i), we work inductively on $n$.  When $n=1$ or $2$, the claim is immediate.  Suppose it is true for $n-1$.   If $\braid_{s_0}(b_1\otimes\cdots\otimes b_n) \neq 0$, write 
  \begin{equation}
  \label{eq:s0_words}
  s_0 = s_0' (s_{n-1} s_{n-2} \cdots s_1) = (s_{n-1} s_{n-2} \cdots s_1) s_0'',
  \end{equation} where
  \[
   s_0' = \scriptstyle \begin{pmatrix} 1&2&\cdots& n-1&n\\n-1&n-2&\cdots&1&n \end{pmatrix}, \quad
   s_0'' = \scriptstyle \begin{pmatrix} 1&2&\cdots& n-1&n\\1 & n & \cdots&3&2 \end{pmatrix}
  \] 
  are the permutations reversing the first $n-1$ and last $n-1$ entries, respectively.  Upon writing $s_0'$ and $s_0''$ as reduced words, both expressions in \eqref{eq:s0_words} give a reduced expression for $s_0$, so we have
 \begin{multline*}
   \braid_{s_0'}(\braid_{n-1} \circ \cdots \circ \braid_1(b_1 \otimes \cdots \otimes b_n)) \\
   = \braid_{n-1} \circ \cdots \circ \braid_1(\braid_{s_0''}(b_1 \otimes \cdots \otimes b_n)) 
   = \braid_{s_0}(b_1 \otimes \cdots \otimes b_n) \neq 0.
 \end{multline*}
 In particular, $\braid_{s_0''}(b_1 \otimes \cdots \otimes b_n) \neq 0$, so by the inductive hypothesis, $b_2 \otimes \cdots \otimes b_n$ is in the Cartan component of $\cB_2 \otimes \cdots \otimes \cB_n$.  But also $\braid_{n-1} \circ \cdots \circ \braid_1(b_1 \otimes \cdots \otimes b_n) \neq 0$, and repeated application of the hexagon relation \eqref{eq:crystal_hexagon1} shows that
 \[
   \braid_{n-1} \circ \braid_{n-2} \circ \cdots \circ \braid_1 = \braid_{\cB_1,(\cB_2 \otimes \cdots \otimes \cB_n)},
 \]
 so $b_1\otimes(b_2 \otimes \cdots \otimes b_n)$ is necessarily in the Cartan component of $\cB_1 \otimes (\cB_2 \otimes \cdots \otimes \cB_n)$.  
 
 This completes the proof for $\cB_i$ irreducible.  The general case where the $\cB_i$ are products of irreducibles follows readily from this by naturality of the braiding, which implies that we can restrict our attention to the Cartan components in each $\cB_i$.
\end{proof}

\subsection{Left and right ends}

We now make a definition that is going to play a major role in the rest of this work, namely the notion of right end of a crystal element.

\begin{definition}
Let $\lambda, \mu \in \dominant$ with $\lambda \geq \mu$.
Considering the crystal $\cB(\lambda)$, the \emph{left end} and \emph{right end} with respect to $\mu$ are the set-theoretic maps
\begin{align*}
&\leftend_\mu: \cB(\lambda) \to \cB(\mu), &
&\rightend_\mu: \cB(\lambda) \to \cB(\mu),
\end{align*}
defined as follows. There exist unique injective morphisms of crystals
\begin{align*}
&\cB(\lambda) \to \cB(\mu) \otimes \cB(\lambda - \mu), &
&\cB(\lambda) \to \cB(\lambda - \mu) \otimes \cB(\mu),
\end{align*}
and we define $\leftend_\mu(b)$ and $\rightend_\mu(b)$ to be the unique elements of $\cB(\mu)$ such that
\begin{align*}
&b \mapsto \leftend_\mu(b) \otimes b^\prime, &
&b \mapsto b^{\prime \prime} \otimes \rightend_\mu(b),
\end{align*}
for some $b',b'' \in \cB(\lambda - \mu)$.

More generally, let $\cB = \cB(\lambda_1) \otimes \cdots \otimes \cB(\lambda_n)$ be a product of irreducibles of highest weight $\lambda=\sum_i\lambda_i$.
If $b \in \cB$ is in the Cartan component we define $\leftend_\mu(b)$ and $\rightend_\mu(b)$ as above, under the identification of the Cartan component of $\cB$ with $\cB(\lambda)$. 
Otherwise $\leftend_\mu(b) = \rightend_\mu(b) = 0$.
\end{definition}

We will also consider the right ends with respect to a \emph{family} of dominant weights, and in particular for the family of fundamental weights $\fundweights=(\varpi_1, \cdots, \varpi_\rank)$.

\begin{definition}
Let $\Sset = (\mu_1, \cdots, \mu_n)$ be an $n$-tuple of dominant weights.
Let $\cB$ be a product of irreducible crystals of highest weight $\lambda$ with $\lambda \geq \mu_i$ for all $i$.
Then the \emph{$\Sset$-left end} and \emph{$\Sset$-right end} maps are the set-theoretic maps
\[
\leftend_\Sset, \rightend_\Sset: \cB \to 
\cB(\mu_1) \times \cdots \times \cB(\mu_n) \sqcup \{0\}
\]
defined by
\begin{align*}
&\leftend_\Sset(b) := \left( \leftend_{\mu_1}(b), \cdots, \leftend_{\mu_n}(b) \right), &
&\rightend_\Sset(b) := \left( \rightend_{\mu_1}(b), \cdots, \rightend_{\mu_n}(b) \right).
\end{align*}
\end{definition}

\begin{remark}
According to the usual conventions on crystal bases, the notations $\cB(\mu_1) \otimes \cdots \otimes \cB(\mu_n)$ and $\cB(\mu_1) \times \cdots \times \cB(\mu_n)$ denote the same object.
We use the former when we take into account the additional crystal structure, but the latter when regarded simply as a set.
This is the notation we use for left and right ends, since $\leftend_\Sset$ and $\rightend_\Sset$ are only set-theoretic maps.
\end{remark}

The rest of this section is devoted to the properties of the right end maps.  We focus on the right ends since these play the main role in our setup, but similar results can of course be obtained for the left end maps. 

\begin{proposition}
\label{prop:rightend-sigma}
Let $\lambda_1, \cdots, \lambda_n \in \dominant$ be dominant weights.  An element $b_1 \otimes \cdots \otimes b_n$ belongs to the Cartan component of $\cB(\lambda_1) \otimes \cdots \otimes \cB(\lambda_n)$ if and only if we have
\begin{equation}
\label{eq:rightend-sigma}
\braid_{n - 1} \circ \cdots \circ \braid_k (b_1 \otimes \cdots \otimes b_n) \neq 0
\end{equation}
for every $k\in\{1, \cdots, n - 1\}$.
In this case, for each $k \in \{1, \cdots, n\}$, the right end $\rightend_{\lambda_k}(b_1 \otimes \cdots \otimes b_n)$ is equal to the rightmost tensor factor of \eqref{eq:rightend-sigma}.
\end{proposition}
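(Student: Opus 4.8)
The plan is to first rewrite the iterated braiding in \eqref{eq:rightend-sigma} as a single Cartan braiding against a product of irreducibles. Repeated use of the hexagon relation \eqref{eq:crystal_hexagon1}, exactly as in the proof of \cref{thm:longest_word}, gives
\[
 \braid_{n-1}\circ\cdots\circ\braid_k = \id_{\cB(\lambda_1)\otimes\cdots\otimes\cB(\lambda_{k-1})}\otimes\braid_{\cB(\lambda_k),\,\cB(\lambda_{k+1})\otimes\cdots\otimes\cB(\lambda_n)},
\]
so the first $k-1$ tensor legs are merely carried along, and condition \eqref{eq:rightend-sigma} for a given $k$ is equivalent to $\braid_{\cB(\lambda_k),\,\cB(\lambda_{k+1})\otimes\cdots\otimes\cB(\lambda_n)}(b_k\otimes\cdots\otimes b_n)\neq0$.

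For the equivalence I would first record the elementary \emph{suffix} property: by comparing highest weights, the Cartan component of $\cB(\lambda_1)\otimes\cdots\otimes\cB(\lambda_n)$ is contained in $\cB(\lambda_1)\otimes\cdots\otimes\cB(\lambda_{k-1})\otimes\cB'_k$, where $\cB'_k$ is the Cartan component of $\cB(\lambda_k)\otimes\cdots\otimes\cB(\lambda_n)$; hence every suffix of a Cartan element is again a Cartan element. The forward implication is then immediate: if $b_1\otimes\cdots\otimes b_n$ is Cartan then $b_k\otimes\cdots\otimes b_n$ is Cartan, and \cref{thm:longest_word} (i)$\thus$(ii), applied to the product $\cB(\lambda_k)\otimes\cdots\otimes\cB(\lambda_n)$, shows that every $\braid_{s'}$ is non-zero on it, in particular the one realising the braiding above. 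For the converse I would induct on $n$. Conditions \eqref{eq:rightend-sigma} for $k=2,\dots,n-1$ are precisely the analogous conditions for $b_2\otimes\cdots\otimes b_n$ inside the $(n-1)$-fold product, so by induction $b_2\otimes\cdots\otimes b_n$ lies in the Cartan component $\cB'$ of $\cB(\lambda_2)\otimes\cdots\otimes\cB(\lambda_n)$. The condition $k=1$ reads $\braid_{\cB(\lambda_1),\,\cB(\lambda_2)\otimes\cdots\otimes\cB(\lambda_n)}(b_1\otimes\cdots\otimes b_n)\neq0$; by naturality of $\Rhat$ and the matching of the scaling exponents, this braiding restricted to $\cB(\lambda_1)\otimes\cB'$ agrees with $\braid_{\cB(\lambda_1),\cB'}$, which by \cref{thm:braiding_limit} is non-zero only on the Cartan component of $\cB(\lambda_1)\otimes\cB'$. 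Thus $b_1\otimes\cdots\otimes b_n$ lies in that Cartan component, which by the suffix property coincides with the Cartan component of the full product.

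For the final \emph{moreover} claim I would use that $\braid_{n-1}\circ\cdots\circ\braid_k$ equals $\braid_s$ for the permutation $s$ sending slot $k$ to the last slot, so on the Cartan component it is a crystal isomorphism onto the Cartan component of $W\otimes\cB(\lambda_k)$, where $W=\cB(\lambda_1)\otimes\cdots\otimes\widehat{\cB(\lambda_k)}\otimes\cdots\otimes\cB(\lambda_n)$. Comparing highest weights once more, this target Cartan component sits inside $\cB(\lambda-\lambda_k)\otimes\cB(\lambda_k)$ with $\cB(\lambda-\lambda_k)$ the Cartan component of $W$, and is exactly the image of the embedding $\cB(\lambda)\into\cB(\lambda-\lambda_k)\otimes\cB(\lambda_k)$ defining $\rightend_{\lambda_k}$; hence the rightmost tensor factor of any element of this target Cartan component is its $\rightend_{\lambda_k}$. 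It remains to transport this statement back along $\braid_s$. Since an irreducible crystal has a unique automorphism, namely the identity, the map $\braid_s$ read through the canonical identifications of the source and target Cartan components with $\cB(\lambda)$ must be the identity of $\cB(\lambda)$; as $\rightend_{\lambda_k}$ is intrinsic to $\cB(\lambda)$, this gives $\rightend_{\lambda_k}(b_1\otimes\cdots\otimes b_n)$ equal to the rightmost factor of $\braid_s(b_1\otimes\cdots\otimes b_n)$. I expect this last step---pinning down that the braiding acts as the identity on the abstract Cartan crystal, so that the right end is genuinely preserved---to be the main obstacle, the rest being bookkeeping with weights and the hexagon relation.
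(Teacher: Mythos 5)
Your proposal is correct and takes essentially the same route as the paper's proof: induction on $n$, with necessity read off from \cref{thm:longest_word}, sufficiency via the hexagon identity $\braid_{n-1}\circ\cdots\circ\braid_1=\braid_{\cB(\lambda_1),\,\cB(\lambda_2)\otimes\cdots\otimes\cB(\lambda_n)}$ combined with the inductive Cartan-ness of the suffix and the vanishing statement of \cref{thm:braiding_limit}, and the right-end claim from the fact that the composition is a crystal morphism onto the permuted product ending in $\cB(\lambda_k)$. The only differences are presentational: you inline the (iii)$\thus$(i) mechanism of \cref{thm:longest_word} rather than routing through the longest-word factorization $s_0 = s_0'(s_{n-1}\cdots s_1)$ as the paper does, and you make explicit (via uniqueness of morphisms of irreducible crystals) the identification that the paper compresses into ``which gives the second statement.''
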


\begin{proof}
For $n=1$ the statement is trivial, so suppose it is true for tensor products of length $n-1$.  The necessity of \eqref{eq:rightend-sigma} is obvious from \cref{thm:longest_word}, so it remains to check sufficiency.   

By the inductive hypothesis, $b_2\otimes\cdots \otimes b_n$ is in the Cartan component of $\cB(\lambda_2)\otimes\cB(\lambda_n)$, so by \cref{thm:longest_word} we have $s_0'(b_2\otimes\cdots\otimes b_n) \neq0$ where $s_0'$ denotes the longest word of $S_{n-1}$.  But now the longest word $s_0$ of $S_n$ satisfies
\[
 s_0\circ(\id\otimes s_0')(b_1\otimes\cdots\otimes b_n) = \braid_{n - 1} \circ \cdots \circ \braid_1 (b_1\otimes\cdots\otimes b_n) \neq 0
\]
by \eqref{eq:rightend-sigma}.
Therefore, by \cref{thm:longest_word}, $(\id\otimes s_0')(b_1\otimes\cdots\otimes b_n)$ is in the Cartan component of $\cB(\lambda_1)\otimes\cdots\otimes\cB(\lambda_n)$, and so $b_1\otimes\cdots\otimes b_n$ as well.   This proves the first statement. 

Now the map from \eqref{eq:rightend-sigma} gives a morphism of crystals
\[
\braid_{n - 1} \circ \cdots \circ \braid_k: \cB(\lambda_1) \otimes \cdots \otimes \cB(\lambda_n) \to \cB(\lambda_1) \otimes \cdots \otimes \widehat{\cB(\lambda_k)} \otimes \cdots \otimes \cB(\lambda_n) \otimes \cB(\lambda_k)
\]
with the hat denoting omission, which gives the second statement.
\end{proof}

\begin{remark}
We briefly mention how this connects to the theory of set-theoretic solutions to the Yang-Baxter equations, as in \cite{EtiSchSol}.
Writing the Cartan braiding as $\braid(b \otimes b^\prime) = \cpa_b(b^\prime) \otimes \cpb_{b^\prime}(b)$, \cref{prop:rightend-sigma} can be recast in the form
\[
\rightend_{\lambda_k}(b_1\otimes\cdots\otimes b_n) = \cpb_{b_n} \circ \cdots \circ \cpb_{b_{k + 1}}(b_k), \quad
k = 1, \cdots, n.
\]
This gives the components of the map $J_n$ introduced in \cite[Proposition 1.7]{EtiSchSol} (if we work with left ends instead of right ends).
\end{remark}

Next, we discuss how right ends behave with respect to tensor products.

\begin{proposition}
\label{prop:right_end_independence}
Let $\lambda, \lambda', \mu \in \dominant$
and let $b\otimes b'$ be in the Cartan component of $\cB(\lambda)\otimes\cB(\lambda')$. 
\begin{enumerate}
    \item[1)] If $\lambda'\geq\mu$ then $\rightend_\mu(b\otimes b') = \rightend_\mu(b')$.
    \item[2)] If $\lambda\geq\mu$ then $\rightend_\mu(b\otimes b') = \rightend_\mu(\rightend_\mu(b)\otimes b')$.  Consequently, if $\lambda''\geq\mu$ and $c\in\cB(\lambda'')$ has the same $\mu$-right end as $b\in\cB(\lambda)$, then $\rightend_\mu(b\otimes b')=\rightend_\mu(c\otimes b')$.
\end{enumerate}
\end{proposition}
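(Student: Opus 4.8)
The plan is to reduce everything to the uniqueness of crystal morphisms out of an irreducible crystal, together with one factorization principle. Recall that if $\cD$ is any product of irreducibles of highest weight $\nu$, then $\cB(\nu)$ occurs in $\cD$ with multiplicity one (its highest weight vector being the tensor of the highest weight vectors of the factors), so there is a \emph{unique} nonzero crystal morphism $\cB(\nu)\to\cD$, with image the Cartan component. The engine of the proof is the following: if $\cC$ is a product of irreducibles of highest weight $\nu-\mu$ and we are given any crystal embedding $\cB(\nu)\into\cC\otimes\cB(\mu)$ onto the Cartan component, then for $b\in\cB(\nu)$ the rightmost tensor factor of its image is exactly $\rightend_\mu(b)$. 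Indeed, composing the right-end embedding $\cB(\nu)\into\cB(\nu-\mu)\otimes\cB(\mu)$ with $\iota_\cC\otimes\id_{\cB(\mu)}$, where $\iota_\cC:\cB(\nu-\mu)\into\cC$ is the embedding onto the Cartan component, produces a nonzero crystal morphism $\cB(\nu)\to\cC\otimes\cB(\mu)$; by uniqueness it agrees with the given one, and its rightmost factor is visibly $\rightend_\mu(b)$.

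For part 1), I would apply this with $\nu=\lambda+\lambda'$ and $\cC=\cB(\lambda)\otimes\cB(\lambda'-\mu)$, which has highest weight $\lambda+\lambda'-\mu=\nu-\mu$. Feeding the right-end embedding $\cB(\lambda')\into\cB(\lambda'-\mu)\otimes\cB(\mu)$, $b'\mapsto b''\otimes\rightend_\mu(b')$, into $\id_{\cB(\lambda)}\otimes(\slot)$ gives an embedding $\cB(\lambda)\otimes\cB(\lambda')\into\cB(\lambda)\otimes\cB(\lambda'-\mu)\otimes\cB(\mu)$ which, by multiplicity one, restricts to an isomorphism of the two Cartan components (both $\cong\cB(\nu)$) and sends $b\otimes b'\mapsto b\otimes b''\otimes\rightend_\mu(b')$. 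By the factorization principle this rightmost factor $\rightend_\mu(b')$ equals $\rightend_\mu(b\otimes b')$, which is the claim. (The same argument works verbatim when the second tensor factor is itself a product of irreducibles, a mild generalization I will use below.)

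For part 2), I would instead embed the left factor: using $\cB(\lambda)\into\cB(\lambda-\mu)\otimes\cB(\mu)$, $b\mapsto\bar b\otimes\rightend_\mu(b)$, the map $(\slot)\otimes\id_{\cB(\lambda')}$ carries the Cartan component of $\cB(\lambda)\otimes\cB(\lambda')$ isomorphically onto that of $\cB(\lambda-\mu)\otimes\cB(\mu)\otimes\cB(\lambda')$, sending $b\otimes b'\mapsto\bar b\otimes\rightend_\mu(b)\otimes b'$. Since $\rightend_\mu$ of an element of a product of irreducibles depends only on the associated element of the Cartan component $\cB(\nu)$, this identification yields $\rightend_\mu(b\otimes b')=\rightend_\mu(\bar b\otimes\rightend_\mu(b)\otimes b')$. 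Reading the triple product as $\cB(\lambda-\mu)\otimes(\cB(\mu)\otimes\cB(\lambda'))$, whose second slot has highest weight $\mu+\lambda'\geq\mu$, the generalized part 1) gives $\rightend_\mu(\bar b\otimes\rightend_\mu(b)\otimes b')=\rightend_\mu(\rightend_\mu(b)\otimes b')$; here I use that $\rightend_\mu(b)\otimes b'$ does lie in the Cartan component of $\cB(\mu)\otimes\cB(\lambda')$, because a necessary condition for $\bar b\otimes\rightend_\mu(b)\otimes b'$ to be in the Cartan component is that its last two factors already be (\cref{prop:rightend-sigma}, equivalently \cref{thm:longest_word}). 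This proves the displayed identity. The ``consequently'' statement is then immediate: applying the identity to both $b\otimes b'$ and $c\otimes b'$ gives $\rightend_\mu(b\otimes b')=\rightend_\mu(\rightend_\mu(b)\otimes b')$ and $\rightend_\mu(c\otimes b')=\rightend_\mu(\rightend_\mu(c)\otimes b')$, and the two right-hand sides coincide since $\rightend_\mu(b)=\rightend_\mu(c)$.

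The bookkeeping of tensor factors and the multiplicity-one statement are routine. The one point demanding genuine care — and the likely main obstacle — is checking that the abstractly defined right end on products of irreducibles is transported \emph{consistently} across the various Cartan-component isomorphisms, i.e.\ that each embedding in sight restricts to the same identification of the relevant Cartan component with $\cB(\nu)$. This compatibility is exactly what makes the factorization principle applicable, and it is where uniqueness of crystal morphisms does all the work.
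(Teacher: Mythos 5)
Your proposal is correct, and while it shares the paper's skeleton, it finishes the key step by a genuinely different mechanism. For part 2) both arguments begin identically, applying $\inclusion\otimes\id_{\cB(\lambda')}$ with $\inclusion:\cB(\lambda)\into\cB(\lambda-\mu)\otimes\cB(\mu)$ the Cartan inclusion, so that $b\otimes b'\mapsto \bar b\otimes\rightend_\mu(b)\otimes b'$. The paper then applies the Cartan braiding $\braid_2$ to this triple tensor and concludes via \cref{prop:rightend-sigma}, which identifies the $\mu$-right end with the rightmost factor after braiding the $\cB(\mu)$-slot to the end; that one step simultaneously certifies the needed Cartan-component membership and computes both sides. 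You instead re-bracket the triple product as $\cB(\lambda-\mu)\otimes\bigl(\cB(\mu)\otimes\cB(\lambda')\bigr)$ and invoke a generalized part 1), which you derive purely from the multiplicity-one occurrence of $\cB(\nu)$ and the resulting uniqueness of nonzero crystal morphisms out of $\cB(\nu)$ (your ``factorization principle''); your only appeal to the braiding machinery is the use of \cref{thm:longest_word} (equivalently \cref{prop:rightend-sigma}) to check that $\rightend_\mu(b)\otimes b'$ lies in the Cartan component of $\cB(\mu)\otimes\cB(\lambda')$ — a hypothesis that is indeed required for the generalized part 1) to apply, and which you correctly supply. For part 1) the paper says only that it is immediate from the definitions; your uniqueness argument is precisely the omitted content, so there is no divergence there. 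As for what each route buys: the paper's proof is shorter because \cref{prop:rightend-sigma} is already available and does double duty; yours is more elementary in that the actual computation rests only on associativity of the crystal tensor product and uniqueness of morphisms from an irreducible crystal, it makes explicit the consistency of the right-end maps across the various Cartan-component identifications (which the paper leaves implicit), and it yields for free the mild strengthening of part 1) to a second tensor factor that is itself a product of irreducibles.
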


\begin{proof}
1) is almost immediate from the definitions.  For 2), consider the inclusion of the Cartan component $\inclusion: \cB(\lambda) \to \cB(\lambda-\mu)\otimes\cB(\mu)$.  We have
  \[
  \xymatrix{
   b\otimes b' \ar@{|->}[r]^-{\inclusion \otimes \id} &
   b''\otimes \rightend_\mu(b) \otimes b' \ar@{|->}[r]^-{\braid_2} &
   b'' \otimes b''' \otimes \rightend_\mu(\rightend_\mu(b)\otimes b'),
   }
  \]
  for some $b''\in\cB(\lambda-\mu)$ and $b'''\in\cB(\lambda')$. The result then follows from \cref{prop:rightend-sigma}. The final statement is now immediate.
\end{proof}

The next result shows that we obtain the same set of non-zero $\Sset$-right ends from any appropriate crystal.

\begin{proposition}
\label{prop:set_of_ends}
Let $\Sset = (\mu_1, \cdots, \mu_n)$ be a family of dominant weights.  Let $\cB$ be a product of irreducible crystals.  Then the set of (non-zero) $\Sset$-right ends of elements of $\cB$, namely
\begin{equation}
 \label{eq:right_end_set}
 \{ \rightend_\Sset(b) \mid b\in \cB\} \setminus \{0\} 
 \quad 
 \subseteq \prod_{i=1}^\NC\cB(\mu_i),
\end{equation}
is independent of the choice of $\cB$, as long as the highest weight $\lambda$ of $\cB$ satisfies $\lambda\geq\mu_i$ for every $i$.
\end{proposition}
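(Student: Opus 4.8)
The plan is first to reduce to the case of irreducible crystals, and then to prove that the set of right ends does not depend on the highest weight $\lambda$ of $\cB$, so long as $\lambda\geq\mu_i$ for all $i$. For the reduction, recall that by definition $\rightend_\Sset(b)=0$ unless $b$ lies in the Cartan component of $\cB$, and that on the Cartan component the right ends are computed through the crystal isomorphism with $\cB(\lambda)$. Hence, for any product of irreducibles $\cB$ with highest weight $\lambda$,
\[
  \{\rightend_\Sset(b)\mid b\in\cB\}\setminus\{0\} \;=\; R(\lambda)\defeq \{\rightend_\Sset(b)\mid b\in\cB(\lambda)\}\setminus\{0\},
\]
so it remains only to show that $R(\lambda)=R(\lambda')$ whenever $\lambda,\lambda'\geq\mu_i$ for all $i$.

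The crux of the matter is the identity $R(\lambda)=R(\lambda+\nu)$, valid for every dominant $\nu$ as soon as $\lambda\geq\mu_i$ for all $i$. Granting this, the proposition follows by symmetry, namely $R(\lambda)=R(\lambda+\lambda')=R(\lambda'+\lambda)=R(\lambda')$. To prove $R(\lambda)=R(\lambda+\nu)$ I would work inside $\cB(\nu)\otimes\cB(\lambda)$, whose Cartan component is identified with $\cB(\lambda+\nu)$ (note $\lambda+\nu\geq\mu_i$, so all right ends involved are defined). Writing $b_{w_0\nu}$ for the lowest weight element of $\cB(\nu)$, \cref{lem:tensor_fact} shows that $b_{w_0\nu}\otimes a$ belongs to the Cartan component for every $a\in\cB(\lambda)$, and \cref{prop:right_end_independence}(1)---applicable precisely because $\lambda\geq\mu_i$---gives $\rightend_{\mu_i}(b_{w_0\nu}\otimes a)=\rightend_{\mu_i}(a)$ for each $i$, hence $\rightend_\Sset(b_{w_0\nu}\otimes a)=\rightend_\Sset(a)$. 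This exhibits every element of $R(\lambda)$ as an $\Sset$-right end in $\cB(\lambda+\nu)$, so $R(\lambda)\subseteq R(\lambda+\nu)$.

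For the reverse inclusion I would use the inclusion of the Cartan component $\inclusion\colon\cB(\lambda+\nu)\into\cB(\nu)\otimes\cB(\lambda)$. Since the underlying set of a tensor product crystal consists of pure tensors, $\inclusion(b)=c\otimes a$ for some $c\in\cB(\nu)$ and $a\in\cB(\lambda)$, and this element lies in the Cartan component by construction. Applying \cref{prop:right_end_independence}(1) again (still using $\lambda\geq\mu_i$) yields $\rightend_{\mu_i}(b)=\rightend_{\mu_i}(c\otimes a)=\rightend_{\mu_i}(a)$, so that $\rightend_\Sset(b)=\rightend_\Sset(a)\in R(\lambda)$. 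This gives $R(\lambda+\nu)\subseteq R(\lambda)$ and completes the argument.

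I do not anticipate a genuine obstacle here: the entire argument is driven by \cref{prop:right_end_independence}(1) together with \cref{lem:tensor_fact}. The only points demanding care are the bookkeeping of the identification of the Cartan component of $\cB(\nu)\otimes\cB(\lambda)$ with $\cB(\lambda+\nu)$, and the observation that the hypothesis $\lambda\geq\mu_i$ is exactly what licenses the two applications of \cref{prop:right_end_independence}(1).
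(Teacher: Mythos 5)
Your proof is correct and follows essentially the same route as the paper's: reduce to the irreducible case $\cB(\lambda)$, then compare the right ends of $\cB(\lambda)$ and $\cB(\lambda+\nu)$ inside $\cB(\nu)\otimes\cB(\lambda)$ using \cref{prop:right_end_independence}(1). The only difference is one of detail: you spell out both inclusions (invoking $b_{w_0\nu}$ and \cref{lem:tensor_fact} for the direction the paper leaves implicit) and pass through the common upper bound $\lambda+\lambda'$ explicitly, where the paper compresses the same argument into a single sentence.
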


\begin{proof}
  We may assume without loss of generality that $\cB=\cB(\lambda)$ is irredudible, with $\lambda\geq\mu_i$ for all $i$, since if $\cB$ has highest weight $\lambda$ then by definition the right ends of $\cB$ are the same as those of $\cB(\lambda)$.  
  
  Now suppose $\lambda'\geq\lambda$.  \cref{prop:right_end_independence}(1) shows that the $\mu_i$-right ends of $\cB(\lambda)$ are the same as the $\mu_i$-right ends of $\cB(\lambda'-\lambda)\otimes\cB(\lambda)$, which by definition are the same as the $\mu_i$-right ends of $\cB(\lambda')$.
  To complete the proof, just note that for any two $\lambda_1,\lambda_2$ there is $\lambda'$ which is greater than both of them.
\end{proof}


\section{The higher-rank graph algebra}
\label{sec:k-graph_algebra}

\subsection{Higher-rank graphs}
We begin by recalling the definition of a \linebreak $k$-graph from \cite{KumPas}.  For this, we need to regard the abelian monoid $\NN^k$ as a category with one object, in which composition is given by addition.

\begin{definition}
A \emph{higher-rank graph of rank $k$} (or $k$-graph) is a small category $\Lambda$ equipped with a functor $\dmap: \Lambda \to \NN^k$ satisfying the \emph{factorization property}: for every $e \in \Lambda$, and any pair of multi-indices $m, n \in \NN^k$ with $\dmap(e) = m + n$, there are unique elements $e_1, e_2 \in \Lambda$ with $\dmap(e_1) = m$, $\dmap(e_2) = n$ such that $e = e_1 \cdot e_2$.
\end{definition}

The elements of $\Lambda$ are called \emph{paths}, the indices $\{ 1, \cdots, k \}$ are referred to as \emph{colours}, and $\dmap(e)$ is called the \emph{degree} or \emph{coloured length} of $e$.  We write $\Lambda^n$ for the set of paths of length $n\in\NN^k$.  Paths of length $0$ are called \emph{vertices}, naturally enough.  Paths of length $\delta_i = (0, \cdots, 1, \cdots, 0)$, with $1$ in the $i$th slot, are called \emph{edges of colour $i$}. We use the notation $\Lambda^{\neq0}$ for the set of paths of non-zero length.

The factorization property implies that for every path $e\in\Lambda$, there are unique vertices $\rmap(e)$ and $\smap(e)$ in $\Lambda^0$, with the property that
\[
 e = \rmap(e) \cdot e \cdot \smap(e).
\]
This defines the \emph{range} and \emph{source} maps $\rmap,\smap: \Lambda \to \Lambda^0$.
Given a vertex $v \in \Lambda^0$ and a multi-index $n \in \NN^k$, we write
\begin{align*}
& v \Lambda^n := \{ e \in \Lambda^n: \rmap(e) = v \}, &
& \Lambda^n v := \{ e \in \Lambda^n: \smap(e) = v \}.
\end{align*}

It is common to impose the following additional hypotheses on a higher-rank graph, which will apply in our case, see \cref{sec:graph-properties}.

\begin{definition}
\label{def:row_finite_and_no_sources}
A $k$-graph $\Lambda$ is \emph{row-finite} if $v\Lambda^n$ is finite for every $v\in\Lambda^0$ and  $n\in \NN^k$.  It has \emph{no sources} if $v\Lambda^n\neq\emptyset$ for every $v\in\Lambda^0$ and $n \in \NN^k$, and \emph{no sinks} if $\Lambda^nv\neq\emptyset$ for every $v\in\Lambda^0$ and $n \in \NN^k$.
\end{definition}

\subsection{Higher-rank graph algebras}

Higher-rank graph algebras were first defined as $C^*$-algebras, by Kumjian and Pask \cite{KumPas}.  We will work initially with the algebraic version, which is due to Aranda Pino, Clark, an Huef and Raeburn \cite{Aranda:Kumjian-Pask}, although we will modify their definition slightly to take advantage of the fact that we only work over the field $\CC$ rather than an arbitrary commutative ring with unit.

\begin{definition}
\label{def:graph-algebra}
Let $\Lambda$ be a row-finite $k$-graph without sources.
The \linebreak \emph{Kumjian-Pask algebra} $\mathrm{KP}_\CC(\Lambda)$ is the $*$-algebra over $\CC$ generated by the elements $\{ p_v \}_{v \in \Lambda^0}$ and $\{ s_e \}_{e \in \Lambda^{\neq 0}}$ with the following relations:
\begin{enumerate}
\item[(KP1)] $\{ p_v \}_{v \in \Lambda^0}$ is a set of mutually orthogonal projections, that is $p_v^* = p_v = p_v^2$ and $p_v p_{v^\prime} = \delta_{v, v^\prime} p_v$,
\item[(KP2)] for all $e, e^\prime \in \Lambda^{\neq 0}$ with $\smap(e) = \rmap(e^\prime)$ we have
\[
s_e s_{e^\prime} = s_{e \cdot e^\prime}, \quad
p_{\rmap(e)} s_e = s_e = s_e p_{\smap(e)},
\]
\item[(KP3)] for all $e, e^\prime \in \Lambda^{\neq 0}$ with $\dmap(e) = \dmap(e^\prime)$ we have
\[
s_e^* s_{e'} = \delta_{e, e^\prime} p_{\smap(e)},
\]
\item[(KP4)] for all $v \in \Lambda^0$ and $n \in \NN^k \backslash \{0\}$ we have
\[
p_v = \sum_{e \in v \Lambda^n} s_e s_e^*.
\]
\end{enumerate}
\end{definition}

\begin{remark}
The definition in \cite{Aranda:Kumjian-Pask} is given in terms of paths $e \in \Lambda^{\neq 0}$ and ghost paths $e^* \in G(\Lambda^{\neq 0})$, corresponding to generators $s_e$ and $s_{e^*}$.
When working over $\CC$ there is a $*$-structure uniquely defined by $p_v^* = p_v$ and $s_e^* = s_{e^*}$, so we prefer to give the definition in this way.
\end{remark}

For the definition of the $C^*$-algebra of a higher-rank graph, see \cite{KumPas}.

\subsection{Higher-rank graphs associated to quantum groups}
\label{sec:crystal_N-graph}

Let $K$ be a connected, simply connected compact semisimple Lie group of rank $\rank$ with Lie algebra $\lie{k}$ and complexification $\lie{g}=\lie{k}_\CC$.
Our goal in this section is to define a higher-rank graph $\hgraphstd$ of rank $r$ associated to $\lie{g}$.    
In fact, we will make a more general construction, in anticipation of the results required for torus bundles over flag varieties, compare \cref{sec:flag_manifolds} and in particular \cref{thm:flag_generators2}.  

Consider any $N$-tuple $\Cset = (\vartheta_1, \cdots, \vartheta_\NC)$ of linearly independent dominant weights.
We refer to $\Cset$ as the set of \emph{colours} and we blur the distinction between $\Cset$ and the index set $\{1, \cdots, \NC\}$.
Then to any such choice of colours we will associate a higher-rank graph $\hgraph$ of rank $\NC$.
The basic case $\hgraphstd$ is obtained when $\Cset = \fundweights = (\varpi_1, \cdots, \varpi_r)$ is the $\rank$-tuple fundamental weights.

Write $\dominant_\Cset := \NN \cdot \Cset$ for the submonoid of $\dominant$ generated by the colours $\Cset$.  We identify $\dominant_\Cset$ with the monoid $\NN^\NC$ via the map
\begin{equation}
    \label{eq:monoid}
\NN^\NC \to \dominant_\Cset, \qquad (n_1, \cdots, n_\NC) \mapsto \sum_{i = 1}^\NC n_i \vartheta_i.
\end{equation}
In this way, the degree of a path in our higher rank graph will be given by a weight in $\dominant_\Cset$.
We also fix the dominant weight
\( \ds
\rho_\Cset := \sum_{i = 1}^\NC \vartheta_i \in \dominant_\Cset.
\)

\begin{definition}
\label{def:vertices_and_paths}
Let $\lie{g}$ be a complex semisimple Lie algebra and fix a set of colours $\Cset = (\vartheta_1, \cdots, \vartheta_\NC)$ as above.
We define the pair $(\Vset, \hgraph)$ as follows.
\begin{itemize}
\item The \emph{set of vertices} consists of the $\Cset$-right ends of $\cB(\rho_\Cset)$, that is
\[
\Vset := \left\{ \rightend_\Cset(b) \mid b \in \cB(\rho_\Cset) \right\} \subset \cB(\vartheta_1) \times \cdots \times \cB(\vartheta_\NC).
\]
\item The \emph{set of paths} $\hgraph$ is the set of pairs $(v, b)$, where $v = (c_1, \cdots, c_\NC) \in \Vset$ and $b \in \cB(\lambda)$ for some $\lambda \in \dominant_\Cset$ such that $c_i\otimes b$ is in the Cartan component of $\cB(\vartheta_i)\otimes\cB(\lambda)$ for every colour $i$.
\end{itemize}
We define the \emph{degree} of the path $e = (v, b)$ above to be $\dmap(e) = \lambda$, where we use the identification $\dominant_\Cset\cong\NN^\NC$ from \eqref{eq:monoid}.
\end{definition}

Some remarks are in order. Firstly, we can identify the vertex set $\Vset$ with the set $\hgraph^0$ of paths of degree $0$ via the map $v\mapsto(v, b_0)$, where $b_0 \in \cB(0)$ is the unique element of the trivial crystal.  
Secondly, note that in the definition of the vertex set $\Vset$, we could equally well use the set of $\Cset$-right ends of any crystal $\cB(\mu)$ with highest weight $\mu \geq \rho_\Cset$, thanks to \cref{prop:set_of_ends}.

\begin{lemma}
\label{lem:right_end_of_a_path}
Let $v \in \Vset$ and fix $c \in \cB(\mu)$
with $\rightend_\Cset(c) = (c_1,\cdots,c_\NC)=v$, for some $\mu \in\dominant_\Cset$ with $\mu \geq \rho_\Cset$.  Let also $b\in\cB(\lambda)$ for some $\lambda\in\dominant_\Cset$.  Then $(v,b)\in\hgraph$ if and only if $c\otimes b$ is in the Cartan component of $\cB(\mu)\otimes\cB(\lambda)$.  In this case, for every  $\vartheta_i\in\Cset$, we have $\rightend_{\vartheta_i}(c_i\otimes b) = \rightend_{\vartheta_i}(c\otimes b)$.
\end{lemma}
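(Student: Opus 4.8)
The plan is to prove the two directions of the equivalence separately, extracting the final displayed identity as a byproduct of the easy direction. Throughout I note that $\mu\geq\rho_\Cset\geq\vartheta_i$ for every colour $i$ (since $\rho_\Cset-\vartheta_i=\sum_{l\neq i}\vartheta_l\in\dominant$), so that $\rightend_{\vartheta_i}$ is defined both on $\cB(\mu)$ and on the Cartan part of $\cB(\mu)\otimes\cB(\lambda)$.

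First I would dispatch the implication ``$c\otimes b$ Cartan $\Rightarrow$ $(v,b)\in\hgraph$'' together with the identity. Assuming $c\otimes b$ lies in the Cartan component, I would apply \cref{prop:right_end_independence}(2) with the weight triple $(\mu,\lambda,\vartheta_i)$ to obtain
\[
 \rightend_{\vartheta_i}(c\otimes b)=\rightend_{\vartheta_i}(\rightend_{\vartheta_i}(c)\otimes b)=\rightend_{\vartheta_i}(c_i\otimes b),
\]
which is exactly the asserted ``in this case'' identity. Moreover a Cartan element of a product of irreducibles has nonzero $\vartheta_i$-right end, since it is computed through the identification with $\cB(\mu+\lambda)$ and the injective map $\cB(\mu+\lambda)\to\cB(\mu+\lambda-\vartheta_i)\otimes\cB(\vartheta_i)$; hence $\rightend_{\vartheta_i}(c_i\otimes b)\neq0$. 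By the convention that the right end vanishes off the Cartan component, this forces $c_i\otimes b$ into the Cartan component of $\cB(\vartheta_i)\otimes\cB(\lambda)$ for every $i$, that is $(v,b)\in\hgraph$.

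The substance of the lemma is the reverse implication ``$(v,b)\in\hgraph\Rightarrow c\otimes b$ Cartan'', and this is where I expect the real work. Writing $\mu=\vartheta_{j_1}+\cdots+\vartheta_{j_L}$ as a sum of colours, I would fix the embedding $\iota\colon\cB(\mu)\hookrightarrow\cB(\vartheta_{j_1})\otimes\cdots\otimes\cB(\vartheta_{j_L})$ onto the Cartan component and set $d_1\otimes\cdots\otimes d_L=\iota(c)$. Because $\iota\otimes\id_{\cB(\lambda)}$ is an injective crystal morphism sending the highest weight element $b_\mu\otimes b_\lambda$ to the highest weight element of the big product (both products having highest weight $\mu+\lambda$), connectedness shows that $c\otimes b$ is Cartan in $\cB(\mu)\otimes\cB(\lambda)$ if and only if $d_1\otimes\cdots\otimes d_L\otimes b$ is Cartan in $\cB(\vartheta_{j_1})\otimes\cdots\otimes\cB(\vartheta_{j_L})\otimes\cB(\lambda)$. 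I would then test the latter with \cref{prop:rightend-sigma} applied to $n=L+1$ factors: it suffices to show $\braid_L\circ\cdots\circ\braid_k(d_1\otimes\cdots\otimes d_L\otimes b)\neq0$ for each $k\in\{1,\dots,L\}$.

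The key observation making this close is that, since $d_1\otimes\cdots\otimes d_L$ is itself Cartan, \cref{prop:rightend-sigma} identifies the rightmost factor of $\braid_{L-1}\circ\cdots\circ\braid_k(d_1\otimes\cdots\otimes d_L)$ with $\rightend_{\vartheta_{j_k}}(\iota(c))=\rightend_{\vartheta_{j_k}}(c)=c_{j_k}$. As these braidings touch only the first $L$ slots, after applying $\braid_{L-1}\circ\cdots\circ\braid_k$ the last two slots read $c_{j_k}\otimes b$, and the final $\braid_L$ acts there as $\braid_{\cB(\vartheta_{j_k}),\cB(\lambda)}$. By hypothesis $c_{j_k}\otimes b$ is in the Cartan component, so this braiding is nonzero by \cref{thm:braiding_limit}, whence the whole composite is nonzero for every $k$; thus $d_1\otimes\cdots\otimes d_L\otimes b$, and therefore $c\otimes b$, is Cartan. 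The delicate points I anticipate are verifying that $\iota\otimes\id$ identifies Cartan components and carefully matching indices in \cref{prop:rightend-sigma}; note also that invoking the hypothesis for \emph{every} colour (not merely those appearing in the decomposition of $\mu$) is what guarantees the relevant braiding is nonzero for each $k$.
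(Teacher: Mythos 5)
Your proposal is correct and takes essentially the same route as the paper's proof: the easy direction and the identity $\rightend_{\vartheta_i}(c_i\otimes b)=\rightend_{\vartheta_i}(c\otimes b)$ via \cref{prop:right_end_independence}(2), and the hard direction by embedding $\cB(\mu)$ as the Cartan component of a tensor product of colour crystals and verifying the nonvanishing criterion of \cref{prop:rightend-sigma}, where the braidings on the first $L$ slots place $c_{j_k}$ next to $b$ and the final $\braid_L$ is nonzero by the hypothesis that $c_{j_k}\otimes b$ is Cartan. The only cosmetic difference is that you derive the displayed identity from the easy direction, while the paper also reads it off the rightmost factor in the braiding computation of the hard direction.
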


\begin{proof}
  First, suppose that $c\otimes b$ is in the Cartan component of $\cB(\mu) \otimes \cB(\lambda)$.  Then \cref{prop:right_end_independence}(2) shows that $\rightend_{\vartheta_i}(c_i \otimes b) = \rightend_{\vartheta_i}(c \otimes b) \neq 0$ for all $i$, so $(v,b) \in \hgraph$.

  Conversely, suppose that $(v,b) \in \hgraph$, so that $c_i \otimes b$ is in the Cartan component of $\cB(\vartheta_i)\otimes\cB(\lambda)$ for every $i$.
  We can write $\mu = \sum_{i=1}^\NC n_i \vartheta_i$ with $n_i > 0$ for every $i$, and we put $|\mu| = \sum_i n_i$.  We have an inclusion of crystals 
  \[
  \cB(\mu) \into \cB(\vartheta_1)^{\otimes n_1} \otimes \cdots \otimes \cB(\vartheta_\NC)^{\otimes n_{\NC}}
  \]
  as the Cartan component on the right-hand side.  Let us identify $c$ with its image $c\mapsto a_{1}\otimes\cdots\otimes a_{|\mu|}$ in the Cartan component.   We can then identify $c\otimes b$ with its image $a_1 \otimes\cdots\otimes a_{|\mu|} \otimes b$ in $\cB(\vartheta_1)^{\otimes n_1} \otimes \cdots \otimes \cB(\vartheta_\NC)^{\otimes n_{\NC}} \otimes \cB(\lambda)$.  For each $k \in \{1, \cdots, |\mu|\}$, if the $k$-th factor in the tensor product above is $\cB(\vartheta_i)$, then using \cref{prop:rightend-sigma} we have
  \begin{align*}
   \braid_{|\mu|} \circ & \cdots \circ \braid_k (a_1\otimes\cdots\otimes a_{|\mu|} \otimes b) \\
    &= \braid_{|\mu|}(a_1\otimes\cdots a_{k-1} \otimes a_k' \otimes \cdots \otimes a'_{|\mu|-1}\otimes c_i \otimes b) \\
    &= a_1\otimes\cdots a_{k-1} \otimes a_k' \otimes \cdots \otimes a'_{|\mu|-1}\otimes b' \otimes \rightend_{\vartheta_i}(c_i\otimes b) \neq 0
  \end{align*}
  for some $a'_k, \cdots, a'_{|\mu|-1}$ and some $b'$.  By \cref{prop:rightend-sigma} again, we deduce that $c\otimes b$ is in the Cartan component.  Moreover, $\rightend_{\vartheta_i}(c\otimes b) = \rightend_{\vartheta_i}(c_i\otimes b)$, which proves the final statement.
\end{proof}

In particular, \cref{lem:right_end_of_a_path} shows that $\rightend_\Cset(c\otimes b)$ depends only on the right end $v = \rightend_\Cset(c) \in \Vset$ of $c$, and not on the choice of the element $c$ representing it.  Therefore, the following definition makes sense.

\begin{definition}
\label{def:range_and_source}
We define the \emph{source} and \emph{range} maps $\smap, \rmap: \hgraph \to \Vset$ as follows. 
Let $e = (v, b)$ be a path with $v = (c_1, \cdots, c_\NC)$. Then we define
\[
\smap(e) := v, \qquad
\rmap(e) := \left( \rightend_{\vartheta_1}(c_1 \otimes b) , \cdots, \rightend_{\vartheta_\NC}(c_\NC \otimes b) \right).
\]
Equivalently, choosing $c \in \cB(\rho_\Cset)$ such that $\rightend_\Cset(c) = v$, we have $\rmap(e) = \rightend_\Cset(c \otimes b)$.
\end{definition}

As before, in the equality $\rmap(e) = \rightend_\Cset(c \otimes b)$ we can replace $c \in \cB(\rho_\Cset)$ with any crystal element $c \in \cB(\mu)$, where $\mu \in \dominant_\Cset$ and $\mu \geq \rho_\Cset$, such that $\rightend_\Cset(c) = v$.

\begin{theorem}
\label{thm:hgraph}
The set $\hgraph$ becomes a higher-rank graph of rank $\NC$ with:
\begin{itemize}
\item degree map $\dmap$ as in \cref{def:vertices_and_paths},
\item source and range maps as in \cref{def:range_and_source},
\item composition of paths $(v, b), (v', b') \in \hgraph$ with $\rmap(v, b) = \smap(v', b')$ defined by
\begin{equation}
    \label{eq:composition}
    (v',b') \cdot (v,b) = (v, \projection(b \otimes b')),
\end{equation}
where $\projection: \cB(\lambda) \otimes \cB(\lambda') \to \cB(\lambda + \lambda')$ denotes the projection onto the Cartan component.
\end{itemize}
\end{theorem}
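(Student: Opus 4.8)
The plan is to verify, in turn, that $\hgraph$ is a small category under the stated composition, that $\dmap$ is a functor to $\NN^\NC$, and finally that the factorization property holds. Throughout, the central device is the compatibility of the Cartan inclusion and projection with tensoring. For dominant weights $\mu,\nu,\tau\in\dominant_\Cset$, the inclusion $\inclusion_{\nu,\tau}:\cB(\nu+\tau)\into\cB(\nu)\otimes\cB(\tau)$ is a morphism of crystals, hence sends each element to a \emph{single} tensor, and it sends the highest weight element to $b_\nu\otimes b_\tau$. Consequently $\id_{\cB(\mu)}\otimes\inclusion_{\nu,\tau}$ restricts to an isomorphism of the Cartan component of $\cB(\mu)\otimes\cB(\nu+\tau)$ onto that of $\cB(\mu)\otimes\cB(\nu)\otimes\cB(\tau)$, and likewise for $\inclusion_{\mu,\nu}\otimes\id_{\cB(\tau)}$. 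Dually, the two iterated projections $\projection(\projection(x\otimes y)\otimes z)$ and $\projection(x\otimes\projection(y\otimes z))$ coincide, both being the projection of $x\otimes y\otimes z$ onto the Cartan component of the triple product; this is the associativity of $\projection$, which I will invoke repeatedly.

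First I would check that composition is well defined, i.e.\ that $(v,\projection(b\otimes b'))$ really lies in $\hgraph$. Fix a representative $c\in\cB(\mu)$ of $v$ with $\mu\geq\rho_\Cset$ and $\mu\in\dominant_\Cset$, and write $b\in\cB(\lambda)$, $b'\in\cB(\lambda')$. The composability hypothesis $\rmap(v,b)=v'$, together with \cref{lem:right_end_of_a_path}, shows that $c\otimes b$ lies in the Cartan component and that $\projection(c\otimes b)\in\cB(\mu+\lambda)$ represents $v'$; since $(v',b')\in\hgraph$, \cref{lem:right_end_of_a_path} gives that $\projection(c\otimes b)\otimes b'$ lies in the Cartan component of $\cB(\mu+\lambda)\otimes\cB(\lambda')$. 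Transporting through $\inclusion_{\mu,\lambda}\otimes\id$ places $c\otimes b\otimes b'$ in the Cartan component of the triple product, and applying $\id\otimes\projection$ then shows that $c\otimes\projection(b\otimes b')$ lies in the Cartan component of $\cB(\mu)\otimes\cB(\lambda+\lambda')$; by \cref{lem:right_end_of_a_path} this is precisely the assertion $(v,\projection(b\otimes b'))\in\hgraph$. The same triple-product computation shows that both $\rmap(v,\projection(b\otimes b'))$ and $\rmap(v',b')$ equal the $\Cset$-right end of the single element of $\cB(\mu+\lambda+\lambda')$ representing $c\otimes b\otimes b'$, since right ends on a Cartan component are read off from that element and $\projection$ is associative; hence the range and source of the composite are as required. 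Units are immediate, the vertex $v$ being the path $(v,b_0)$ with $b_0\in\cB(0)$: under $\cB(0)\otimes\cB(\lambda)\cong\cB(\lambda)$ we have $\projection(b_0\otimes b)=b=\projection(b\otimes b_0)$, so $(v,b_0)$ is a two-sided identity. Associativity of composition reduces exactly to the associativity of $\projection$, and functoriality of $\dmap$ is immediate from $\projection(b\otimes b')\in\cB(\lambda+\lambda')$.

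The main point is the factorization property. Given $e=(v,b)$ with $b\in\cB(\lambda)$ and a splitting $\dmap(e)=m+n$ corresponding to $\lambda=\lambda_m+\lambda_n$ in $\dominant_\Cset$, the inclusion $\inclusion_{\lambda_n,\lambda_m}:\cB(\lambda)\into\cB(\lambda_n)\otimes\cB(\lambda_m)$ sends $b$ to a unique tensor $b_2\otimes b_1$. I would therefore set $e_2=(v,b_2)$ and $e_1=(\rmap(e_2),b_1)$; uniqueness of the splitting $b=\projection(b_2\otimes b_1)$ is just the injectivity of $\inclusion_{\lambda_n,\lambda_m}$. It remains to see $e_1,e_2\in\hgraph$. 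Applying $\id\otimes\inclusion_{\lambda_n,\lambda_m}$ to $c\otimes b$ places $c\otimes b_2\otimes b_1$ in the Cartan component of the triple product; \cref{prop:rightend-sigma} then yields that $c\otimes b_2$ lies in the Cartan component of $\cB(\mu)\otimes\cB(\lambda_n)$, so $e_2\in\hgraph$, and, passing through $\projection_{\mu,\lambda_n}\otimes\id$, that $\projection(c\otimes b_2)\otimes b_1$ lies in the Cartan component of $\cB(\mu+\lambda_n)\otimes\cB(\lambda_m)$, so $e_1\in\hgraph$. Finally $e_1\cdot e_2=(v,\projection(b_2\otimes b_1))=(v,b)=e$, giving existence, and the uniqueness of the tensor factorization of $b$ together with the forced values $\smap(e_2)=\smap(e)=v$ and $\rmap(e_2)$ of the vertices gives uniqueness of the pair $(e_1,e_2)$.

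I expect the only real obstacle to be the bookkeeping: one must consistently express every identity among right ends and projections through the single element of $\cB(\mu+\lambda+\lambda')$ that represents the relevant triple product, so that well-definedness of composition, the range computation, and associativity all collapse onto the one associativity statement for $\projection$, used in tandem with \cref{prop:rightend-sigma}, \cref{prop:right_end_independence} and \cref{lem:right_end_of_a_path}. No crystal-theoretic input beyond \cref{thm:longest_word} and its corollaries should be required.
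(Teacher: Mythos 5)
Your proposal is correct and follows essentially the same route as the paper's proof: well-definedness of composition via \cref{lem:right_end_of_a_path} and passage to the Cartan component of the triple product $\cB(\mu)\otimes\cB(\lambda)\otimes\cB(\lambda')$, associativity inherited from the crystal tensor product, and factorization via the unique Cartan-component preimage $b_2\otimes b_1$ of $b$ under the projection. Your only departures are cosmetic — you allow a representative $c\in\cB(\mu)$ for any $\mu\geq\rho_\Cset$ where the paper fixes $\mu=\rho_\Cset$ (a flexibility the paper itself notes), and you spell out the check $\rmap(e_1\cdot e_2)=\rmap(e_1)$ that the paper leaves implicit.
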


\begin{proof}
First we need to show that the composition law is well-defined.  Let $(v,b)$ and $(v',b')\in\hgraph$ with $\rmap(v,b) = \smap(v',b')$.  By \cref{lem:right_end_of_a_path}, this means that if we fix any $c \in \cB(\rho_\Cset)$ with $\rightend_\Cset(c) = v$, then $c\otimes b$ is in the Cartan component of $\cB(\rho_\Cset) \otimes \cB(\lambda)$ and $\rightend_\Cset(c \otimes b) = v'$. Again using \cref{lem:right_end_of_a_path}, we get that $c \otimes b \otimes b'$ is in the Cartan component of $\cB(\rho_\Cset) \otimes \cB(\lambda) \otimes \cB(\lambda')$, and hence $c \otimes \projection(b \otimes b')$ is in the Cartan component of $\cB(\rho_\Cset) \otimes \cB(\lambda + \lambda')$.  This proves that $(v, \projection(b\otimes b')) \in \hgraph$.  

Associativity follows from the associativity of the tensor product of crystals.  The identity arrows are the paths of length $0$, see the remarks immediately after \cref{def:vertices_and_paths}.  Thus $\hgraph$ is a (small) category.

If $\dmap(v, b) = \lambda$ and $\dmap(v',b') = \lambda'$ then clearly $\dmap(v,\projection(b\otimes b')) = \lambda + \lambda'$.  Conversely, if $\dmap(v,b'') = \lambda + \lambda'$, then $b''\in\cB(\lambda\otimes\lambda'')$ and there is a unique element $b\otimes b'$ in the Cartan component of $\cB(\lambda)\otimes\cB(\lambda')$ with $\projection(b\otimes b')=b''$.  Fix any $c\in\cB(\rho_\Cset)$ with $\rightend_\Cset(c)=v$.  Since $(v,b'')\in\hgraph$, we have that $c\otimes b \otimes b'$ is in the Cartan component of $\cB(\rho_\Cset)\otimes\cB(\lambda)\otimes\cB(\lambda')$, hence both $(\rightend_\Cset(c),b)=(v,b)$ and $(\rightend_\Cset(c\otimes b),b')$ are in $\hgraph$.  If we put $v'=\rightend_\Cset(c\otimes b)$ then $\rmap(v,b) = v'$, and we get a factorization $(v,b'') = (v',b') \cdot (v,b)$.  This factorization is unique, by the uniqueness of $b$ and $b'$.
\end{proof}

We describe this structure in some detail in the case of our running example.

\begin{example}
\label{ex:graph-su3}
Consider $\lie{g} = \lie{sl}_3$ with the colours $\Cset = (\varpi_1, \varpi_2)$.
We want to determine the $2$-graph corresponding to this case.

We can identify the crystal $\cB(\rho) = \cB(\varpi_1 + \varpi_2)$ with the Cartan component of the product $\cB(\varpi_1) \otimes \cB(\varpi_2)$, which consists of $8$ elements.
Then using the Cartan braiding \eqref{eq:sl3-braiding} from \cref{ex:braiding-sl3}, \cref{prop:rightend-sigma} gives
\[
\begin{array}{lll}
\rightend_\Cset(a_1 \otimes b_1) = (a_1, b_1), \quad &
\rightend_\Cset(a_2 \otimes b_1) = (a_2, b_1), \quad &
\rightend_\Cset(a_3 \otimes b_1) = (a_2, b_1), \\
\rightend_\Cset(a_1 \otimes b_2) = (a_1, b_2), &
\rightend_\Cset(a_2 \otimes b_2) = (a_1, b_2), &
\rightend_\Cset(a_3 \otimes b_2) = (a_3, b_2), \\
&
\rightend_\Cset(a_2 \otimes b_3) = (a_2, b_3), &
\rightend_\Cset(a_3 \otimes b_3) = (a_3, b_3).
\end{array}
\]
From these $8$ elements we obtain $6$ distinct right ends, namely
\begin{align*}
v_1 & = (a_1, b_1), &
v_2 & = (a_1, b_2), & 
v_3 & = (a_2, b_1), \\
v_4 & = (a_2, b_3), &
v_5 & = (a_3, b_2), & 
v_6 & = (a_3, b_3).
\end{align*}

Next, we determine the edges (paths of length $1$) of colour $\varpi_1$.
Recall that these consists of pairs $((a_i, b_j), a_k)$ such that $a_i \otimes a_k$ and $b_j \otimes a_k$ are in their respective Cartan components. Using the graphs from \cref{ex:braiding-sl3} we find
\begin{align*}
e_1 & = (v_1, a_1), &
e_2 & = (v_2, a_1), &
e_3 & = (v_3, a_1), &
e_4 & = (v_3, a_2), \\
e_5 & = (v_4, a_1), &
e_6 & = (v_4, a_2), &
e_7 & = (v_5, a_1), &
e_8 & = (v_5, a_2), \\
e_9 & = (v_5, a_3), &
e_{10} & = (v_6, a_1), &
e_{11} & = (v_6, a_2), &
e_{12} & = (v_6, a_3).
\end{align*}
The sources and ranges $(\smap(e_i),\rmap(e_i))$ of these edges are given by
\begin{align*}
e_1 & : (v_1, v_1), &
e_2 & : (v_2, v_2), &
e_3 & : (v_3, v_1), &
e_4 & : (v_3, v_3), \\
e_5 & : (v_4, v_2), &
e_6 & : (v_4, v_4), &
e_7 & : (v_5, v_2), &
e_8 & : (v_5, v_3), \\
e_9 & : (v_5, v_5), &
e_{10} & : (v_6, v_2), &
e_{11} & : (v_6, v_4), &
e_{12} & : (v_6, v_6).
\end{align*}
This gives the following portion of the graph, which we depict in red
\begin{center}
\begin{tikzpicture}[
vertex/.style = {align=center, inner sep=2pt},
Rarr/.style = {->, red},
Barr/.style = {->, blue, dotted},
shadow/.style = {white, line width=3pt},
Rloop/.style = {->, red, out=165, in=195, loop},
Bloop/.style = {->, blue, out=15, in=-15, loop, dotted}
]
\node (v1) at ( 0, 0) [vertex] {$v_1$};
\node (v2) at (-2,-1) [vertex] {$v_2$};
\node (v3) at ( 2,-1) [vertex] {$v_3$};
\node (v4) at (-2,-2) [vertex] {$v_4$};
\node (v5) at ( 2,-2) [vertex] {$v_5$};
\node (v6) at ( 0,-3) [vertex] {$v_6$};

\draw [Rloop] (v1) edge (v1);
\draw [Rloop] (v2) edge (v2);
\draw [Rarr]  (v3) edge (v1);
\draw [Rloop] (v3) edge (v3);
\draw [Rarr]  (v4) edge (v2);
\draw [Rloop] (v4) edge (v4);
\draw [Rarr]  (v5) edge (v2);
\draw [Rarr]  (v5) edge (v3);
\draw [Rloop] (v5) edge (v5);
\draw [Rarr]  (v6) edge (v2);
\draw [Rarr]  (v6) edge (v4);
\draw [Rloop] (v6) edge (v6);
\end{tikzpicture}
\end{center}
Similar computations can be made for the colour $\varpi_2$, or alternatively we can employ the obvious symmetry coming from the Dynkin diagram automorphism. This leads to the $2$-graph presented in the introduction.
\end{example}

\subsection{Properties of the higher-rank graph}
\label{sec:graph-properties}

We now prove that our graphs satisfy the properties from \cref{def:row_finite_and_no_sources}.

\begin{proposition}
\label{prop:nice_graph}
The $\NC$-graph $\hgraph$ is row-finite and has no sources and sinks.
\end{proposition}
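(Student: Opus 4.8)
The plan is to check the three conditions of \cref{def:row_finite_and_no_sources} separately, in increasing order of difficulty. \emph{Row-finiteness} is essentially immediate from the construction: for a degree $n \in \NN^\NC$ corresponding to $\lambda \in \dominant_\Cset$, the set $\hgraph^n$ of paths of degree $\lambda$ is by \cref{def:vertices_and_paths} a subset of $\Vset \times \cB(\lambda)$. Both factors are finite---$\Vset$ because it is a subset of the finite product $\cB(\vartheta_1) \times \cdots \times \cB(\vartheta_\NC)$, and $\cB(\lambda)$ because $V(\lambda)$ is finite-dimensional---so $\hgraph^n$ is finite, and a fortiori so is $v\hgraph^n$ for every $v \in \Vset$.

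For the \emph{absence of sinks}, I would fix a vertex $v = \rightend_\Cset(c)$ with $c \in \cB(\rho_\Cset)$ and a degree $\lambda \in \dominant_\Cset$, and simply take $b = b_\lambda$, the highest weight element of $\cB(\lambda)$. By \cref{lem:tensor_fact}, the tensor $c \otimes b_\lambda$ lies in the Cartan component of $\cB(\rho_\Cset) \otimes \cB(\lambda)$, which by \cref{lem:right_end_of_a_path} is exactly the condition guaranteeing $(v, b_\lambda) \in \hgraph$. This produces a path of degree $\lambda$ with $\smap(v, b_\lambda) = v$, so $\hgraph^\lambda v \neq \emptyset$.

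The \emph{absence of sources} is the step I expect to require the most care, since it asks me to build a path \emph{ending} at a prescribed vertex. Given a vertex $v$ and a degree $\lambda$, I must find an incoming path $(w, b) \in \hgraph$ of degree $\lambda$ with $\rmap(w, b) = v$. The idea is to realize $v$ as a $\Cset$-right end coming from the larger crystal $\cB(\rho_\Cset + \lambda)$. Since $\rho_\Cset + \lambda \geq \rho_\Cset \geq \vartheta_i$ for every $i$, \cref{prop:set_of_ends} allows me to choose $d \in \cB(\rho_\Cset + \lambda)$ with $\rightend_\Cset(d) = v$. I would then apply the unique crystal inclusion $\inclusion: \cB(\rho_\Cset + \lambda) \to \cB(\rho_\Cset) \otimes \cB(\lambda)$ onto the Cartan component and write $\inclusion(d) = c' \otimes b$, which is a genuine pure tensor because crystal elements of a tensor product crystal are pure tensors. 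By construction $c' \otimes b$ lies in the Cartan component, so setting $w = \rightend_\Cset(c')$ gives $(w, b) \in \hgraph$ by \cref{lem:right_end_of_a_path}.

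The crucial point making this work is the way right ends are defined on a product of irreducibles: by identifying its Cartan component with $\cB(\rho_\Cset + \lambda)$. Under this identification the element $c' \otimes b = \inclusion(d)$ corresponds precisely to $d$, so from \cref{def:range_and_source} I obtain $\rmap(w, b) = \rightend_\Cset(c' \otimes b) = \rightend_\Cset(d) = v$. This gives a path of degree $\lambda$ ranging to $v$, hence $v\hgraph^\lambda \neq \emptyset$, and completes the verification of all three properties.
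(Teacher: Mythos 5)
Your proof is correct, and while the row-finiteness and no-sinks steps coincide with the paper's argument (the paper also takes the highest weight element $b_\lambda$ and invokes \cref{lem:tensor_fact}), your treatment of the no-sources condition takes a genuinely different route. The paper starts from the tensor $b_{w_0\lambda}\otimes c$ with $c\in\cB(\rho_\Cset)$ representing $v$, which lies in the Cartan component by the second half of \cref{lem:tensor_fact}, applies the Cartan braiding to get $c'\otimes b = \braid_{\cB(\lambda),\cB(\rho_\Cset)}(b_{w_0\lambda}\otimes c)$, and then uses \cref{prop:right_end_independence} to conclude $\rightend_\Cset(c'\otimes b)=\rightend_\Cset(c)=v$, so that $(\rightend_\Cset(c'),b)\in v\hgraph^\lambda$. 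You instead lift $v$ to an element $d$ of the larger irreducible crystal $\cB(\rho_\Cset+\lambda)$ via \cref{prop:set_of_ends} and decompose $d$ under the Cartan inclusion $\cB(\rho_\Cset+\lambda)\into\cB(\rho_\Cset)\otimes\cB(\lambda)$; the identity $\rmap(w,b)=\rightend_\Cset(c'\otimes b)=\rightend_\Cset(d)=v$ is then essentially tautological, since right ends on a product of irreducibles are \emph{defined} through exactly this identification of the Cartan component with $\cB(\rho_\Cset+\lambda)$. Each approach has its merits: the paper's construction is explicit (the incoming path is manufactured from the lowest weight element and the braiding, with no choices beyond $c$), whereas yours is shorter and more conceptual, trading the braiding computation for the structural fact that the right-end sets of $\cB(\rho_\Cset)$ and $\cB(\rho_\Cset+\lambda)$ coincide; on the other hand it is non-constructive in that \cref{prop:set_of_ends} only guarantees the existence of $d$, and all your supporting steps (purity of $\inclusion(d)$ as a tensor, applicability of \cref{lem:right_end_of_a_path} with $\mu=\rho_\Cset$) check out.
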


\begin{proof}
Row-finiteness is clear, because $v\hgraph^\lambda \subseteq \{v\}\otimes\cB(\lambda)$, which is finite.

Now let $v\in\Vset$ and $\lambda\in\dominant_\Cset$.  Pick $c\in\cB(\rho_\Cset)$ with $\rightend_\Cset(c)=v$.  Let $b_\lambda$ and $b_{w_0\lambda}$ denote the highest and lowest weight elements of $\cB(\lambda)$, respectively. By \cref{lem:tensor_fact}, $c\otimes b_\lambda$ belongs to the Cartan component of $\cB(\rho_\Cset)\otimes\cB(\lambda)$ and so $(v,b_\lambda)\in \hgraph^\lambda v$.  

Likewise, $b_{w_0\lambda}\otimes c$ is in the Cartan component of $\cB(\lambda)\otimes\cB(\rho_\Cset)$.  Let $c'\otimes b = \braid_{\cB(\lambda),\cB(\rho_\Cset)}(b_{w_0\lambda}\otimes c)$.  Then $\rightend_\Cset(c'\otimes b) = \rightend_\Cset(b_{w_0\lambda}\otimes c) = \rightend_\Cset(c) = v$ by \cref{prop:right_end_independence}, so putting $v'=\rightend_\Cset(c')$ we have $(v',b)\in v\hgraph^\lambda$.
\end{proof}

The higher rank graph $\hgraph$ is compatible with a partial ordering on the vertices, as we now describe. 
Firstly, every irreducible crystal $\cB(\lambda)$ admits a partial ordering by declaring that $b\leq b'$ if and only if $b=\kasF b'$ where $\kasF = \kasF_{i_1} \cdots \kasF_{i_m}$ is some product of the Kashiwara operators $\kasF_i$.  Then for $N$-tuples $v = ( b_1, \cdots, b_\NC )$ and $v^\prime = (b_1^\prime, \cdots, b_\NC^\prime)$ in $\Vset$, we write $v\leq v'$ if and only if $b_i\leq b_i'$ for every $i$.

\begin{proposition}
Consider the partial order $\leq$ on $\Vset$ defined above. Then:
\begin{enumerate}
\item[1)] for any $e \in \hgraph$ we have $\smap(e) \leq \rmap(e)$,
\item[2)] there are unique maximal and minimal elements in $\Vset$, namely
\begin{align*}
   & v_{\mathrm{max}} =(b_{\vartheta_1}, \cdots, b_{\vartheta_\NC}), && v_{\mathrm{min}} = (b_{w_0\vartheta_1}, \cdots, b_{w_0\vartheta_\NC}),
\end{align*}
where $b_\lambda$ and $b_{w_0\lambda}$ denote the highest and lowest weight elements of $\cB(\lambda)$, respectively. 
\end{enumerate}
\end{proposition}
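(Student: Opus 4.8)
The plan is to treat the two statements separately: part 2) follows cleanly from the order structure on each individual crystal $\cB(\vartheta_i)$, whereas part 1) is the substantial one, which I would reduce to a comparison between left and right ends.

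For part 2), I would first check that $v_{\mathrm{max}}$ and $v_{\mathrm{min}}$ actually lie in $\Vset$ by exhibiting them as $\Cset$-right ends. Applying $\rightend_{\vartheta_i}$ to the highest weight element $b_{\rho_\Cset}\in\cB(\rho_\Cset)$ via the inclusion $\cB(\rho_\Cset)\into\cB(\rho_\Cset-\vartheta_i)\otimes\cB(\vartheta_i)$, which sends $b_{\rho_\Cset}\mapsto b_{\rho_\Cset-\vartheta_i}\otimes b_{\vartheta_i}$, gives $\rightend_{\vartheta_i}(b_{\rho_\Cset})=b_{\vartheta_i}$, so $\rightend_\Cset(b_{\rho_\Cset})=v_{\mathrm{max}}$; the lowest weight element $b_{w_0\rho_\Cset}$ gives $\rightend_\Cset(b_{w_0\rho_\Cset})=v_{\mathrm{min}}$ in the same way. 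Next I would show that in every irreducible crystal $\cB(\vartheta_i)$ the highest weight element dominates and the lowest weight element is dominated by every element: starting from an arbitrary $c\in\cB(\vartheta_i)$ and applying raising operators $\kasE_j$ greedily, finiteness of the crystal together with uniqueness of the source forces us to reach $b_{\vartheta_i}$, which exhibits $c=\kasF_{j_1}\cdots\kasF_{j_m}b_{\vartheta_i}$ and hence $c\le b_{\vartheta_i}$; dually, applying $\kasF_j$ greedily reaches the unique sink $b_{w_0\vartheta_i}$, giving $b_{w_0\vartheta_i}\le c$ (the order-reversing Lusztig involution transports the first statement to the second). Componentwise this yields $v_{\mathrm{min}}\le v\le v_{\mathrm{max}}$ for every $v\in\Vset$, so these are the greatest and least elements, hence the unique maximal and minimal ones.

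For part 1), since $\smap(e)\le\rmap(e)$ is transitive along compositions and every path factors into single-coloured edges, it suffices to treat one colour of $b$ at a time; in any case, for a path $e=(v,b)$ with $v=(c_1,\cdots,c_\NC)$ I must show $c_i\le\rightend_{\vartheta_i}(c_i\otimes b)$ for each $i$, where $c_i\otimes b$ lies in the Cartan component of $\cB(\vartheta_i)\otimes\cB(\lambda)$. Writing $\mu=\vartheta_i$ and identifying this Cartan component with $\cB(\mu+\lambda)$, the element $c_i\otimes b$ corresponds to some $x\in\cB(\mu+\lambda)$ with $\leftend_\mu(x)=c_i$ (the leftmost factor under $\cB(\mu+\lambda)\into\cB(\mu)\otimes\cB(\lambda)$) and $\rightend_\mu(x)=\rightend_{\vartheta_i}(c_i\otimes b)$ (the rightmost factor under $\cB(\mu+\lambda)\into\cB(\lambda)\otimes\cB(\mu)$, by \cref{prop:rightend-sigma}). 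Thus all of part 1) reduces to the single clean assertion: for any $x\in\cB(\nu)$ with $\nu\ge\mu$ one has $\leftend_\mu(x)\le\rightend_\mu(x)$. A preliminary observation, immediate from the tensor product rules \eqref{eq:tensor_E}--\eqref{eq:tensor_F}, is that both $\leftend_\mu$ and $\rightend_\mu$ are monotone on $\cB(\nu)$: applying a single $\kasF_j$ either fixes the relevant boundary factor or replaces it by its image under $\kasF_j$, so $x'\le x$ forces $\leftend_\mu(x')\le\leftend_\mu(x)$ and likewise for the right end.

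The hard part is the comparison $\leftend_\mu(x)\le\rightend_\mu(x)$ itself. The governing heuristic is that the $\cB(\mu)$-factor is lowered with higher priority when it sits on the left (the tensor rule lowers the left factor already when $\varphi_j>\varepsilon_j$ of the right one) than when it sits on the right, so its left-hand incarnation $\leftend_\mu(x)$ ought to end up no higher than its right-hand incarnation $\rightend_\mu(x)$. I would try to make this rigorous by induction down from $b_\nu$ (where both ends equal $b_\mu$), tracking the full pair of factors in both embeddings at once, i.e. maintaining the coupled invariant $\leftend_\mu(x)\le\rightend_\mu(x)$ together with $\leftend_\lambda(x)\le\rightend_\lambda(x)$ for the complementary weight $\lambda=\nu-\mu$, and exploiting that the intrinsic quantities $\varepsilon_j(x),\varphi_j(x)$ computed from the two embeddings agree. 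The obstacle I expect to be decisive is that individual Kashiwara operators do not preserve the crystal order, so the naive inductive step stalls exactly in the case where $\kasF_j$ lowers both $\cB(\mu)$-factors simultaneously; overcoming this is the crux. I anticipate needing either a sharper, quantitative invariant recording how far each boundary factor has been lowered, or the order-reversing Lusztig involution (under which $\leftend_\mu$ and $\rightend_\mu$ are interchanged for the dual data $\mu^*=-w_0\mu$, rendering the claim self-dual and anchored at the extreme weights). I would first validate the whole scheme on $\lie{sl}_2$-strings, where each end is governed by explicit $\min/\max$ index formulas and the inequality becomes the transparent $\min(k,m)\ge\max(0,k-(\nu-\mu))$, and then bootstrap to the general case.
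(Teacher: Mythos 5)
Part 2) of your proposal is correct and follows the paper's route: you exhibit $v_{\mathrm{max}}$ and $v_{\mathrm{min}}$ as $\rightend_\Cset(b_{\rho_\Cset})$ and $\rightend_\Cset(b_{w_0\rho_\Cset})$ respectively, and then use the standard fact that in an irreducible crystal every element lies below the highest weight element and above the lowest weight element in the Kashiwara order. This matches the paper's (terser) argument, and your extra detail on reaching the unique source/sink is harmless.

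Part 1), however, has a genuine gap, and you flag it yourself. Your reduction of $\smap(e)\leq\rmap(e)$ to the single inequality $\leftend_\mu(x)\leq\rightend_\mu(x)$ for $x\in\cB(\nu)$, $\nu\geq\mu$, is a correct reformulation, and the monotonicity of $\leftend_\mu$ and $\rightend_\mu$ along the Kashiwara order is fine; but the comparison between the two ends is the entire content of the statement, and your proposal leaves it unresolved: the induction stalls (as you note) precisely because individual Kashiwara operators do not respect the coupled invariant, and the proposed rescues (a quantitative invariant, the Lusztig involution, $\lie{sl}_2$-string checks) are only gestured at, not executed. The missing idea is that one should not argue through Kashiwara-operator induction at all but through the Cartan braiding: by \cref{prop:rightend-sigma} one has $\braid_{\cB(\vartheta_i),\cB(\lambda)}(c_i\otimes b)=c\otimes b_i'$ with $b_i'=\rightend_{\vartheta_i}(c_i\otimes b)$, and since this crystal morphism is the $q\to0$ limit of $q^{(\vartheta_i,\lambda)}\Rhat_{V(\vartheta_i),V(\lambda)}$ (\cref{thm:braiding_limit}), the triangular normalization \eqref{eq:R-matrix_convention} --- in which $\Rhat(v\otimes w)$ consists of the flip term $w\otimes v$ plus corrections $w_j\otimes v_j$ with $\wt(v_j)>\wt(v)$ --- forces the $\cB(\vartheta_i)$-factor to move weakly upward, giving $c_i\leq b_i'$ in one stroke. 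Your scheme never invokes the braiding, which is exactly the tool the paper uses to bypass the obstruction you identified; as written, the proposal establishes part 2) but not part 1).
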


\begin{proof}
1) Consider a path $e = (v, b)$ with $b \in \cB(\lambda)$.
Write $v = ( b_1, \cdots, b_\NC )$ for $\smap(e)$ and $v'=(b_1', \cdots, b_\NC')$ for $\rmap(e)$.  Fixing $i$, we have $b'_i = \rightend_{\vartheta_i}(b_i\otimes b)$, so by Proposition \ref{prop:rightend-sigma} we have $\sigma_{\cB(\vartheta_i),\cB(\lambda)} : b_i\otimes b \mapsto c \otimes b'_i$ for some $c\in\cB(\lambda)$.  Considering the structure of the braiding operators $\Rhat_{\cB(\vartheta_i),\cB(\lambda)}$ fixed in \eqref{eq:R-matrix_convention}, we obtain that $b_i\leq b_i'$. The result follows.

2) Consider any $\lambda, \mu \in \dominant$ with $\lambda \geq \mu$.
The inclusion $\cB(\lambda) \into \cB(\lambda - \mu) \otimes \cB(\mu)$ maps $b_\lambda$ to $b_{\lambda-\mu}\otimes b_\mu$, from which we see that $\rightend_\mu(b_\lambda)=b_\mu$.  It follows that $\rightend_\Cset(b_{\rho_\Cset}) = (b_{\vartheta_1}, \cdots, b_{\vartheta_\NC}) \in \Vset$, and this element is clearly maximal.  The result for $v_{\mathrm{min}}$ is obtained in a similar way.
\end{proof}


\section{The crystal algebra}
\label{sec:crystal-algebra}

\subsection{Definition of the algebra}

In this section we introduce an abstract $*$-algebra $\CalgKstd$ which is universal for the generators and relations for the crystal limit $\OKO$ given in \cref{thm:pi0_relations}, so that we get a surjective $*$-homomorphism $\CalgKstd \to \OKO$.
It provides a convenient bridge between $\OKO$ and the higher-rank graph algebra $\KPalgstd$.
Ultimately, we are going to prove that these three algebras are all $*$-isomorphic.

As in \cref{sec:crystal_N-graph}, we will anticipate the case of flag varieties by fixing a linearly independent family of dominant weights $\Cset = (\vartheta_1, \cdots, \vartheta_\NC)$ and writing $\dominant_\Cset = \NN \cdot \Cset$ for the monoid generated by the colours.

\begin{definition}
\label{def:crystal_algebra}
We define the \emph{crystal algebra} $\CalgG$ (with respect to the Lie algebra $\lie{g}$ and the colours $\Cset$) to be the unital $\CC$-algebra generated by elements $\{\genf_b, \genv_b \mid \lambda\in\dominant_\Cset, b \in \cB(\lambda)\}$ with the following relations:
\begin{enumerate}
\item for any $\lambda, \lambda' \in \dominant_\Cset$ and $b \in \cB(\lambda)$, $b' \in \cB(\lambda')$, 
\begin{align}
\label{eq:multiplication-rules}
&\genf_b \genf_{b^\prime} = \Cartan(b \otimes b^\prime) \genf_{b^{\prime \prime}}, &
&\genv_{b^\prime} \genv_b = \Cartan(b \otimes b^\prime) \genv_{b^{\prime \prime}},
\end{align}
where we write $b \otimes b^\prime \mapsto b^{\prime \prime}$ under the unique surjective crystal morphism $\cB(\lambda) \otimes \cB(\lambda^\prime) \to \cB(\lambda + \lambda^\prime)$;
\item for any $\lambda, \lambda' \in \dominant_\Cset$ and $b \in \cB(\lambda)$, $b' \in \cB(\lambda')$, 
\begin{equation}
\label{eq:cross-relations}
\genf_b \genv_{b^\prime} = \sum_{(c, c')} \genv_{c^\prime} \genf_c,
\end{equation}
where the sum is over all pairs $(c, c') \in \cB(\lambda) \times \cB(\lambda')$ such that the condition $\braid(c \otimes b^\prime) = c^{\prime} \otimes b$ holds;
\item for any $\lambda \in \dominant_\Cset$ we have
\begin{equation}
\label{eq:unitality}
\sum_{b \in \cB(\lambda)} \genv_b \genf_b = 1.
\end{equation}
\end{enumerate}
For the fundamental weights $\Cset = \fundweights$ we simply write $\CalgGstd$ instead of $\CalgGpi$.
\end{definition}

\begin{remark}
Note that, by the first relation, we could reduce the set of generators to those $\genf_b$, $\genv_b$ with $b\in\cB(\vartheta_i)$ for $\vartheta_i\in\Cset$.
Hence the algebra is finitely generated. 
\end{remark}

As in Remark \ref{rmk:exchange_relations}, the relations \eqref{eq:multiplication-rules} imply the exchange relations
\begin{align}
\label{eq:exchange_relations}
& \genf_b \genf_{b'} = \genf_{c'} \genf_c, &
& \genv_{b'} \genv_b = \genv_c \genv_{c'},
\end{align}
whenever $b\otimes b'$ is in the Cartan component of $\cB(\lambda) \otimes \cB(\lambda')$ and $\braid(b \otimes b') = c'\otimes c$.  On the other hand, $\genf_b \genf_{b'} = \genv_{b'} \genv_b = 0$ if $b\otimes b'$ is not in the Cartan component.

In the case $\lambda = \lambda'$, the relations \eqref{eq:exchange_relations} and \eqref{eq:cross-relations} simplify further: for any elements $b, b' \in \cB(\lambda)$ we have
\begin{equation}
\label{eq:lambda-lambda_relations}
\begin{gathered}
\genf_b \genf_{b^\prime} = \Cartan(b \otimes b^\prime) \genf_b \genf_{b^\prime}, \qquad
\genv_{b^\prime} \genv_b = \Cartan(b \otimes b^\prime) \genv_{b^\prime} \genv_b, \\
\genf_b \genv_{b^\prime} = \delta_{b, b^\prime} \sum_{c \in \cB(\lambda)} \Cartan(c \otimes b) \genv_c \genf_c.
\end{gathered}
\end{equation}

\begin{proposition}
We have a $*$-structure on $\CalgG$ given by $\genf^*_b = \genv_b$.
\end{proposition}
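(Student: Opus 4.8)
The plan is to exhibit $*$ as the descent to $\CalgG$ of an explicit conjugate-linear anti-involution on the free algebra, and then to verify that this map preserves the defining ideal. Let $\widetilde{\CalgG}$ be the free unital $\CC$-algebra on the symbols $\{\genf_b, \genv_b\}$, and define a conjugate-linear, anti-multiplicative, unit-preserving map $J$ by $J(\genf_b) = \genv_b$ and $J(\genv_b) = \genf_b$ on generators. Since $J$ is conjugate-linear and anti-multiplicative, $J^2$ is linear and multiplicative, and as it fixes every generator it is the identity; hence $J$ is a conjugate-linear anti-involution of $\widetilde{\CalgG}$. Writing $\CalgG = \widetilde{\CalgG}/I$, where $I$ is the two-sided ideal generated by the relations \eqref{eq:multiplication-rules}, \eqref{eq:cross-relations}, \eqref{eq:unitality}, it then suffices to show $J(I)\subseteq I$; and since $J(I)$ is again a two-sided ideal, I only need to check that $J$ sends each generating relation to a consequence of the defining relations.

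The two multiplicative families are handled immediately. Applying $J$ to the first identity in \eqref{eq:multiplication-rules} gives $\genv_{b'}\genv_b = \Cartan(b\otimes b')\genv_{b''}$ (using that $\Cartan$ is $\{0,1\}$-valued, hence real), which is exactly the second identity; and conversely, so $J$ interchanges the two halves of \eqref{eq:multiplication-rules}. For the unitality relation \eqref{eq:unitality}, each summand is fixed, since $J(\genv_b\genf_b) = J(\genf_b)J(\genv_b) = \genv_b\genf_b$ and $J(1)=1$; thus $J$ fixes \eqref{eq:unitality}.

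The real work is the cross-relation \eqref{eq:cross-relations}. Applying $J$ to the relation for the pair $(b,b')$ yields $\genf_{b'}\genv_b = \sum_{(c,c')}\genv_c\genf_{c'}$, the sum still running over pairs with $\braid_{\cB(\lambda),\cB(\lambda')}(c\otimes b') = c'\otimes b$. I would compare this with the defining cross-relation for the swapped pair $(b',b)$, namely $\genf_{b'}\genv_b = \sum_{(d,d')}\genv_{d'}\genf_d$ with $\braid_{\cB(\lambda'),\cB(\lambda)}(d\otimes b) = d'\otimes b'$. Setting $c = d'$ and $c' = d$ makes the two families of summands coincide term by term, so the identities agree once the index conditions are equivalent, i.e. once
\[
\braid_{\cB(\lambda),\cB(\lambda')}(d'\otimes b') = d\otimes b \iff \braid_{\cB(\lambda'),\cB(\lambda)}(d\otimes b) = d'\otimes b'.
\]
This is precisely the statement that the two Cartan braidings are mutually inverse on their Cartan components, and it is the only genuine obstacle. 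I would prove it by noting that, by \cref{thm:braiding_limit}, each of $\braid_{\cB(\lambda),\cB(\lambda')}$ and $\braid_{\cB(\lambda'),\cB(\lambda)}$ restricts to a crystal isomorphism of the Cartan component $\cB(\lambda+\lambda')$ and vanishes off it, so their composite is a crystal endomorphism of the irreducible crystal $\cB(\lambda+\lambda')$. By the highest-weight computation in the proof of \cref{thm:braiding_limit} it sends $v_\lambda\otimes v_{\lambda'}$ to $v_{\lambda'}\otimes v_\lambda$ and back, hence fixes the highest weight element; since a crystal endomorphism of a connected crystal is determined by its value there, the composite is the identity on the Cartan component. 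This gives the displayed equivalence.

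With all three families checked, $J(I)\subseteq I$, so $J$ descends to a conjugate-linear anti-involution $*$ on $\CalgG$ satisfying $\genf_b^* = \genv_b$, which is the asserted $*$-structure; in particular $\genv_b^* = \genf_b$. The same argument applies verbatim to $\CalgGstd$.
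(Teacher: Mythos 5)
Your proof is correct and follows essentially the same route as the paper: the multiplication and unitality relations are immediate under the candidate involution, and the only genuine check is the cross-relation \eqref{eq:cross-relations}, which both you and the paper resolve via the equivalence $\braid(c\otimes b') = c'\otimes b \iff \braid(c'\otimes b) = c\otimes b'$. The one difference is to your credit: the paper simply cites this as a "property of the Cartan braiding," whereas you actually prove it (mutual inverses via \cref{thm:braiding_limit} plus the fact that a crystal endomorphism of the connected Cartan component fixing the highest weight element is the identity), and your free-algebra descent scaffolding is a correct, if routine, formalization of what the paper leaves implicit.
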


\begin{proof}
The only relation of $\CalgG$ which needs some checking is \eqref{eq:cross-relations}.
By the properties of the Cartan braiding $\braid$, the relation $\braid(b \otimes b') = c' \otimes c$ is equivalent to $\braid(c' \otimes c) = b \otimes b'$. Then we have 
\[
\left(\sum_{(c, c')} \genv_{c'} \genf_{c}\right)^* = \sum_{(c, c')} \genv_c \genf_{c'},
\]
where on both sides the sum is over pairs $(c,c')$ as in \eqref{eq:cross-relations}.  The result follows.
\end{proof}

We write $\CalgK$ for the crystal algebra $\CalgG$ equipped with this $*$-structure.
When $\Cset = \fundweights$ we simply write $\CalgKstd$.

The following statement now follows immediately from \cref{thm:pi0_relations}.

\begin{proposition}
\label{prop:AC_universal_map}
There is a unique surjective morphism of $*$-algebras $\CalgKstd \to \OKO$ defined on generators as follows: for any $\lambda \in \dominant$ and any $b_i \in \cB(\lambda)$ we have
\begin{equation*}
    \genf_{b_i} \mapsto \SoibT_0(\genf_i^\lambda), \quad
    \genv_{b_i} \mapsto \SoibT_0(\genv_i^\lambda),
\end{equation*}
where $\genf_i^\lambda$ and $\genv_i^\lambda$ are as in Equation \eqref{eq:genf_genv}.

With notation as in \cref{sec:flag_manifolds},
if $K$ is a compact connected semisimple Lie group, not necessarily simply connected, with complexified Lie algebra $\lie{k}_\CC = \lie{g}$, $\rootset \subset \simpleroots$ is a set of simple roots, and $Y_\rootset = K/K^0_\rootset$ is the associated torus bundle over the flag manifold $X_\rootset = K / K_\rootset$, then the above morphism restricts to a surjective $*$-morphism $\CalgK \to \cO[Y_{\rootset,0}]$ where $\Cset = (\vartheta_1, \cdots, \vartheta_\NC)$ is the family of linearly independent dominant weights which generate $\dominant_{K,S}$.
\end{proposition}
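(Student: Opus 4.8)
The plan is to present this as a direct application of the universal property of the crystal algebra. By \cref{def:crystal_algebra}, $\CalgKstd$ is the unital $*$-algebra freely generated by symbols $\genf_b, \genv_b$ subject to relations \eqref{eq:multiplication-rules}, \eqref{eq:cross-relations} and \eqref{eq:unitality}, with $*$-structure $\genf_b^* = \genv_b$. To construct a $*$-homomorphism out of it, it is enough to exhibit elements of $\OKO$ satisfying all of these relations; the assignment then extends uniquely to a unital $*$-morphism. Having fixed once and for all a weight-basis lift of each crystal $\cB(\lambda) = \{b_1^\lambda, b_2^\lambda, \dots\}$, the correspondence $b_i^\lambda \leftrightarrow i$ is a bijection, so the rule $\genf_{b_i^\lambda} \mapsto \SoibT_0(\genf_i^\lambda)$, $\genv_{b_i^\lambda} \mapsto \SoibT_0(\genv_i^\lambda)$ is unambiguously defined on generators.

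The verification is a line-by-line comparison with \cref{thm:pi0_relations}: relation \eqref{eq:multiplication-rules} is exactly \eqref{eq:pi0_relations1}--\eqref{eq:pi0_relations2}; the cross relation \eqref{eq:cross-relations} is \eqref{eq:pi0_relations3}, once the summation indices are matched by $c = b_l^\lambda$, $c' = b_k^{\lambda'}$ so that the two conditions on $\braid_{\cB(\lambda),\cB(\lambda')}$ coincide; relation \eqref{eq:unitality} is part 3) of \cref{thm:pi0_relations}; and $*$-compatibility, $\SoibT_0(\genf_i^\lambda)^* = \SoibT_0(\genv_i^\lambda)$, is part 4). This gives a well-defined unital $*$-homomorphism. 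Uniqueness is automatic because the $\genf_b, \genv_b$ generate $\CalgKstd$, and surjectivity follows since $\OKO = \SoibT_0(\OqAOK)$ while, by \cref{def:OqAOK}, $\OqAOK$ is generated by the $\genf_i^\lambda, \genv_i^\lambda$, so every generator of $\OKO$ lies in the image.

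For the general statement I would run the identical argument for $\CalgK$, which by \cref{def:crystal_algebra} is presented by the same relations but only for $\lambda, \lambda' \in \dominant_\Cset = \dominant_{K,\rootset}$. Since \cref{thm:pi0_relations} holds for all dominant weights, it holds a fortiori for $\lambda \in \dominant_{K,\rootset}$, so the elements $\SoibT_0(\genf_i^\lambda), \SoibT_0(\genv_i^\lambda)$ with $\lambda \in \dominant_{K,\rootset}$ satisfy precisely the defining relations of $\CalgK$. The universal property then yields a unital $*$-homomorphism $\CalgK \to \cB(\SoibH)$; by \cref{def:OXO} its image is exactly $\OYSO$, so the map is surjective. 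When $K$ is simply connected this morphism is the composite of the natural map $\CalgK \to \CalgKstd$ (induced by the inclusion $\dominant_\Cset \subseteq \dominant$ of generators and relations) with the first morphism $\CalgKstd \to \OKO$, which is the sense in which ``the above morphism restricts''; in the non-simply-connected case the same computation is carried out using the inclusion $\OqK \subseteq \OqKtilde$ of \eqref{eq:OqKtilde}, and is otherwise unchanged.

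I do not expect a genuinely hard step, since all the substance lies in \cref{thm:pi0_relations}. The only points demanding care are bookkeeping: matching the summation ranges and the $\braid$-conditions in \eqref{eq:cross-relations} and \eqref{eq:pi0_relations3} under the index/crystal-element dictionary, and being explicit that ``restriction'' here refers to precomposition with the natural morphism $\CalgK \to \CalgKstd$, which need not be injective, so that only surjectivity of $\CalgK \to \OYSO$ is being asserted, not an identification of $\CalgK$ with a subalgebra of $\CalgKstd$.
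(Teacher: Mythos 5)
Your proposal is correct and is essentially the paper's own argument: the paper gives no separate proof, stating only that the proposition ``follows immediately from \cref{thm:pi0_relations}'', and your write-up is exactly the spelled-out version of that — the universal property of the presentation in \cref{def:crystal_algebra}, a relation-by-relation match against parts 1)--4) of \cref{thm:pi0_relations}, and surjectivity from \cref{def:OK0} together with \cref{def:OqAOK} (resp.\ \cref{def:OXO} for the flag-variety case). Your closing caveats — matching the $\braid$-conditions under the index dictionary, and reading ``restricts'' as precomposition with the natural, not necessarily injective, map $\CalgK \to \CalgKstd$ — are sound and consistent with the paper's intent.
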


The following notation is useful.

\begin{definition}
\label{def:f_f_b}
Given $b = b_1 \otimes \cdots \otimes b_n \in \cB(\lambda_1) \otimes \cdots \otimes \cB(\lambda_n)$, we write
\[
\genf_b := \genf_{b_1} \cdots \genf_{b_n}, \quad
\genv_b := \genv_{b_n} \cdots \genv_{b_1}.
\]
Note that $\genf_b^* = \genv_b$. We also adopt the useful convention $\genf_0 = \genv_0 := 0$.
\end{definition}

\begin{proposition}
\label{prop:commutation-relations}
Let $b \in \cB(\lambda_1) \otimes \cdots \otimes \cB(\lambda_n)$, and let $\projection:\cB(\lambda_1)\otimes\cdots\otimes\cB(\lambda_n) \to \cB(\lambda_1+\cdots+\lambda_n)$ denote the projection onto the Cartan component. Then
\[
\genf_b = \genf_{\projection(b)}, \quad
\genv_b = \genv_{\projection(b)}.
\]
In particular $\genf_b = \genv_b = 0$ if $b$ is not in the Cartan component.
\end{proposition}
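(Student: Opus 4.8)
The plan is to induct on the number $n$ of tensor factors. For $n=1$ the claim is vacuous since $\projection$ is the identity on $\cB(\lambda_1)$. For $n=2$ it is precisely the first multiplication rule \eqref{eq:multiplication-rules}: reading that relation together with the convention $\genf_0=0$ and the understanding that $\projection(b\otimes b')=0$ exactly when $b\otimes b'$ lies outside the Cartan component, one has $\genf_{b}\genf_{b'}=\Cartan(b\otimes b')\genf_{b''}=\genf_{\projection(b\otimes b')}$ in all cases. The same holds for the $\genv$'s, so it suffices to treat the $\genf$'s and recover the $\genv$ statement by applying $*$, using $\genf_b^*=\genv_b$ from \cref{def:f_f_b}.

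The inductive step rests on a coherence property of the Cartan projection which I would isolate as a lemma. Writing $\mu=\lambda_1+\cdots+\lambda_{n-1}$, let $\projection_1\colon \cB(\lambda_1)\otimes\cdots\otimes\cB(\lambda_{n-1})\to\cB(\mu)$ and $\projection_2\colon\cB(\mu)\otimes\cB(\lambda_n)\to\cB(\mu+\lambda_n)$ be the projections onto the respective Cartan components. The claim is the identity of set maps
\[
  \projection \;=\; \projection_2\circ(\projection_1\otimes\id_{\cB(\lambda_n)}).
\]
To prove it, I would observe that each Cartan projection is a strict morphism of crystals: the Cartan component is a sub-crystal, so its complement is a union of connected components and is therefore stable under the Kashiwara operators, which makes the projection commute with $\kasE_i,\kasF_i$. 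Since a tensor product of strict morphisms is again strict, the right-hand side is a strict morphism, as is $\projection$; both land in the irreducible crystal $\cB(\lambda_1+\cdots+\lambda_n)$ and both send the top element $b_{\lambda_1}\otimes\cdots\otimes b_{\lambda_n}$ to the highest weight element of the target.

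The heart of the matter --- and the step I expect to require the most care --- is the uniqueness assertion that a strict crystal morphism into the irreducible crystal $\cB(\nu)$, with $\nu=\lambda_1+\cdots+\lambda_n$ the top highest weight, is determined by its value on $b_{\lambda_1}\otimes\cdots\otimes b_{\lambda_n}$. On the Cartan component this is clear, since that component is generated from its highest weight by the operators $\kasF_i$ and strict morphisms intertwine them. Off the Cartan component one argues that the morphism must vanish: climbing up with the $\kasE_i$ (which a strict morphism respects, since $\varepsilon_i$ is preserved) would produce a nonzero image of a highest weight element of weight strictly below $\nu$, whereas the only $\kasE_i$-annihilated element of $\cB(\nu)$ has weight exactly $\nu$ --- a contradiction. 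Hence any two such morphisms agreeing at the top agree everywhere, which gives the coherence identity.

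With the lemma in hand the induction closes immediately. Writing $b=b_1\otimes\cdots\otimes b_n$ and $c=\projection_1(b_1\otimes\cdots\otimes b_{n-1})$, the inductive hypothesis gives $\genf_{b_1}\cdots\genf_{b_{n-1}}=\genf_{c}$, so that $\genf_b=\genf_c\genf_{b_n}$; the $n=2$ case then yields $\genf_c\genf_{b_n}=\genf_{\projection_2(c\otimes b_n)}$, and the coherence lemma identifies $\projection_2(c\otimes b_n)$ with $\projection(b)$ (this covers the degenerate case $c=0$ uniformly, where both sides are $0$). Thus $\genf_b=\genf_{\projection(b)}$, and dually $\genv_b=\genv_{\projection(b)}$. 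In particular, if $b$ is not in the Cartan component then $\projection(b)=0$ and both products vanish.
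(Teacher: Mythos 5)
Your proof is correct and follows essentially the same route as the paper, whose proof is simply the one-line remark that the statement ``follows by an inductive argument using the relations \eqref{eq:multiplication-rules}''. The coherence identity $\projection = \projection_2\circ(\projection_1\otimes\id)$ that you isolate and prove (via strictness of the Cartan projections and vanishing of strict morphisms into $\cB(\lambda_1+\cdots+\lambda_n)$ off the Cartan component) is exactly the detail the paper leaves implicit, and your verification of it is sound.
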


\begin{proof}
This follows by an inductive argument using the relations \eqref{eq:multiplication-rules}.
\end{proof}

\begin{remark}
\cref{prop:commutation-relations} implies the following generalization of \eqref{eq:exchange_relations}: with $b$ as above, we have $\genf_b = \genf_{\braid_s(b)}$ and $\genv_b = \genv_{\braid_s(b)}$ for every $s\in S_n$.
\end{remark}


\subsection{Projections associated to crystal elements}

Fix $\lambda\in\dominant_\Cset$.  For any $b \in \cB(\lambda)$ we define
\[P_b := \genv_b \genf_b \in \CalgK.
\]
As we now show, for each fixed $\lambda \in \dominant_\Cset$,  $\{P_b\}_{b \in \cB(\lambda)}$ is a set of mutually orthogonal projections summing to $1$.

\begin{proposition}
\label{prop:projections-sameweight}
For any $b, b^\prime \in \cB(\lambda)$ we have
\[
P_b^* = P_b, \quad
P_b P_{b^\prime} = \delta_{b, b^\prime} P_b, \quad
\sum_{b \in \cB(\lambda)} P_b = 1.
\]
\end{proposition}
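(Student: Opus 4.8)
The plan is to handle the three assertions in an order that sidesteps the obvious but awkward route: self-adjointness and the completeness relation $\sum_b P_b = 1$ are essentially free, the orthogonality $P_b P_{b'} = 0$ for $b \neq b'$ follows from a single defining relation, and then idempotency $P_b^2 = P_b$ is \emph{deduced} from these two rather than checked by a direct computation. To begin, self-adjointness is immediate from the $*$-structure $\genf_b^* = \genv_b$ (whence $\genv_b^* = \genf_b$):
\[
P_b^* = (\genv_b \genf_b)^* = \genf_b^* \genv_b^* = \genv_b \genf_b = P_b.
\]
Likewise, the completeness relation $\sum_{b \in \cB(\lambda)} P_b = 1$ is nothing other than the defining relation \eqref{eq:unitality}, so there is nothing to prove.

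Next I would establish orthogonality. Specializing the cross-relation to the case $\lambda = \lambda'$, the Cartan braiding $\braid_{\cB(\lambda),\cB(\lambda)}$ reduces to the projection onto the Cartan component, so the sum in \eqref{eq:cross-relations} collapses to the diagonal; this is precisely the recorded relation \eqref{eq:lambda-lambda_relations}, namely $\genf_b \genv_{b'} = \delta_{b,b'} \sum_{c} \eta(c \otimes b)\, \genv_c \genf_c$. In particular $\genf_b \genv_{b'} = 0$ whenever $b \neq b'$, and therefore
\[
P_b P_{b'} = \genv_b (\genf_b \genv_{b'}) \genf_{b'} = 0 \qquad (b \neq b').
\]

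Finally, idempotency drops out by combining the two facts already in hand. Using $1 = \sum_{b' \in \cB(\lambda)} P_{b'}$ together with the orthogonality just proved,
\[
P_b = P_b \cdot 1 = \sum_{b' \in \cB(\lambda)} P_b P_{b'} = P_b^2,
\]
since every cross term with $b' \neq b$ vanishes. This settles all three claims. The step worth flagging is exactly this last one: attacking $P_b^2 = P_b$ head-on is the wrong move. Expanding $P_b^2 = \genv_b(\genf_b \genv_b)\genf_b$ via \eqref{eq:lambda-lambda_relations} and \eqref{eq:multiplication-rules} rewrites it as a sum of projections $P_d$ indexed by elements $d \in \cB(2\lambda)$ with $\rightend_\lambda(d) = b$, and the fact that this sum collapses back to $P_b$ is not transparent at this level --- it is really a Kumjian--Pask type relation in disguise. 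Routing idempotency through the completeness relation and orthogonality avoids the issue entirely.
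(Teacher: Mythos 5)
Your proof is correct, and the idempotency step takes a genuinely different (and slicker) route than the paper's. The paper attacks $P_b^2$ head-on: it expands $P_b^2 = \genv_b(\genf_b\genv_b)\genf_b$ via \eqref{eq:lambda-lambda_relations} to get $\sum_c \Cartan(c\otimes b)\,\genv_b\genv_c\genf_c\genf_b$, drops the factor $\Cartan(c\otimes b)$ because the terms with $\Cartan(c\otimes b)=0$ vanish anyway (since then $\genf_c\genf_b = 0$ by \eqref{eq:multiplication-rules}), and then collapses the sum by inserting unitality \emph{in the middle of the word}: $\genv_b\bigl(\sum_c \genv_c\genf_c\bigr)\genf_b = \genv_b\genf_b = P_b$. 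You instead deduce idempotency formally from completeness plus orthogonality, writing $P_b = P_b\cdot 1 = \sum_{b'} P_bP_{b'} = P_b^2$; this needs only the off-diagonal vanishing $\genf_b\genv_{b'} = 0$ for $b\neq b'$, never the diagonal case of the cross-relation, so it is arguably the more economical argument --- a purely order-theoretic trick that works for any self-adjoint family with $P_bP_{b'}=0$ ($b\neq b'$) summing to $1$. One correction to your closing remark, though: the direct computation is not actually the trap you describe. You claim that expanding $P_b^2$ forces a sum over $d\in\cB(2\lambda)$ with $\rightend_\lambda(d)=b$ whose collapse is opaque, but that only happens if you contract $\genv_b\genv_c$ and $\genf_c\genf_b$ into elements of $\cB(2\lambda)$ prematurely; the paper avoids this entirely by leaving the word uncontracted and applying $\sum_c\genv_c\genf_c = 1$ directly, so the head-on route is three lines and transparent. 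Both proofs ultimately rest on the same two relations, just deployed in a different order.
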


\begin{proof}
The first relation follows from the $*$-structure $\genf^*_b = \genv_b$ and the third relation is just the unitality relation \eqref{eq:unitality}. 
For the second we use the relations \eqref{eq:lambda-lambda_relations}.
They immediately imply $P_b P_b^\prime = 0$ for $b \neq b^\prime$, while for $b = b^\prime$ we compute
\[
\begin{split}
P_b^2 & = \genv_b \genf_b \genv_b \genf_b
= \sum_{c \in \cB(\lambda)} \Cartan(c \otimes b) \genv_b \genv_c \genf_c \genf_b \\
& = \sum_{c \in \cB(\lambda)} \genv_b \genv_c \genf_c \genf_b = \genv_b \genf_b = P_b.
\qedhere
\end{split}
\]
\end{proof}

Now consider arbitrary projections $P_b$ and $P_{b^\prime}$ with $b \in \cB(\lambda)$, $b^\prime \in \cB(\lambda^\prime)$. Our next goal is to show that they commute even when $\lambda \neq \lambda^\prime$.
First we need the following result, which will also be useful elsewhere.

\begin{lemma}
\label{lem:relationPv}
Let $b \in \cB(\lambda)$ and $b^\prime \in \cB(\lambda^\prime)$. Then we have
\[
P_b \genv_{b^\prime} = \sum_{\substack{c \in \cB(\lambda) \\ \rightend_\lambda(c \otimes b^\prime) = b}} \genv_{b^\prime} P_c.
\]
\end{lemma}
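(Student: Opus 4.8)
The plan is to expand the left-hand side directly using the defining relations of $\CalgK$ and then reorganize the resulting sum. Starting from $P_b = \genv_b\genf_b$, I would write $P_b\genv_{b'} = \genv_b\,(\genf_b\genv_{b'})$ and apply the cross relation \eqref{eq:cross-relations} to the bracketed product, obtaining
\[
P_b\genv_{b'} = \sum_{(c,c')}\genv_b\genv_{c'}\genf_c,
\]
where the sum runs over all pairs $(c,c')\in\cB(\lambda)\times\cB(\lambda')$ with $\braid(c\otimes b')=c'\otimes b$.

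The next step is to simplify $\genv_b\genv_{c'}$. Since $\braid(c\otimes b')=c'\otimes b$, the exchange relation \eqref{eq:exchange_relations} for the $\genv$-generators gives exactly $\genv_b\genv_{c'}=\genv_{b'}\genv_c$; equivalently, one may combine $\genv_b\genv_{c'}=\genv_{c'\otimes b}$ from \cref{def:f_f_b} with the identity $\genv_{c'\otimes b}=\genv_{c\otimes b'}$ furnished by the remark following \cref{prop:commutation-relations}, since $c'\otimes b=\braid(c\otimes b')$. Substituting this, each summand becomes $\genv_{b'}\genv_c\genf_c=\genv_{b'}P_c$, so that
\[
P_b\genv_{b'} = \sum_{(c,c')}\genv_{b'}P_c.
\]

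Finally I would reindex the sum. For fixed $b$ and $b'$, the constraint $\braid(c\otimes b')=c'\otimes b$ determines $c'$ uniquely once $c$ is chosen, and by \cref{prop:rightend-sigma} (applied to the two-fold tensor product, with $k=1$) the rightmost tensor factor of $\braid(c\otimes b')$ is precisely $\rightend_\lambda(c\otimes b')$. Hence the pairs $(c,c')$ appearing above are in bijection with those $c\in\cB(\lambda)$ satisfying $\rightend_\lambda(c\otimes b')=b$, and each contributes the single term $\genv_{b'}P_c$. This yields the claimed identity.

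The computation is essentially formal once these two ingredients are in place, so I do not anticipate a genuine obstacle; the one point requiring care is the bookkeeping in the reindexing, namely recognizing that the condition ``$\braid(c\otimes b')=c'\otimes b$ for some $c'$'' is equivalent to ``$\rightend_\lambda(c\otimes b')=b$'', which is exactly the content of \cref{prop:rightend-sigma}. I would also note in passing that any such $c$ automatically has $c\otimes b'$ in the Cartan component (otherwise the right end would vanish), which is consistent with the cross relation summing only over pairs where $\braid$ is non-zero.
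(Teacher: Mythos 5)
Your proposal is correct and follows essentially the same route as the paper's proof: expand $P_b = \genv_b\genf_b$, apply the cross relation \eqref{eq:cross-relations}, rewrite $\genv_b\genv_{c'} = \genv_{b'}\genv_c$ via the exchange relation, and reindex using the equivalence of ``$\braid(c\otimes b') = c'\otimes b$ for some $c'$'' with ``$\rightend_\lambda(c\otimes b') = b$''. Your extra remarks (uniqueness of $c'$, the alternative derivation through \cref{def:f_f_b} and the remark after \cref{prop:commutation-relations}, and the Cartan-component observation) are accurate refinements of steps the paper states more tersely.
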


\begin{proof}
Using the cross-relations \eqref{eq:cross-relations} we have
\[
P_b \genv_{b^\prime} = \genv_b \genf_b \genv_{b^\prime} = \sum_{(c, c')} \genv_b \genv_{c^\prime} \genf_c,
\]
where the sum is over all pairs $(c, c') \in \cB(\lambda) \times \cB(\lambda')$ such that $\braid(c \otimes b^\prime) = c^\prime \otimes b$.
For such pairs we have the relation $\genv_b \genv_{c^\prime} = \genv_{b^\prime} \genv_c$. This gives
\[
P_b \genv_{b^\prime} = \sum_c \genv_{b^\prime} \genv_c \genf_c = \sum_c \genv_{b^\prime} P_c.
\]
Finally note that the condition $\braid(c \otimes b^\prime) = c^\prime \otimes b$ is equivalent to $\rightend_\lambda(c \otimes b^\prime) = b$.
\end{proof}

Now we can show that the various projections commute.

\begin{proposition}
\label{prop:projections-commute}
Let $b \in \cB(\lambda)$ and $b^\prime \in \cB(\lambda^\prime)$. Then $P_b P_{b^\prime} = P_{b^\prime} P_b$.
\end{proposition}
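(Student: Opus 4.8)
The plan is to reduce both products $P_b P_{b'}$ and $P_{b'}P_b$ to a single common expression, using \cref{lem:relationPv} together with its adjoint. First I would expand $P_b P_{b'} = P_b\genv_{b'}\genf_{b'}$ and apply \cref{lem:relationPv} to the left factor $P_b\genv_{b'}$, which yields
\[
P_b P_{b'} = \sum_{\substack{c\in\cB(\lambda)\\ \rightend_\lambda(c\otimes b')=b}} \genv_{b'}\,P_c\,\genf_{b'}.
\]

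Next, to handle $P_{b'}P_b = \genv_{b'}\genf_{b'}P_b$, I would take the adjoint of the identity in \cref{lem:relationPv}. Since $P_b^*=P_b$ and $P_c^*=P_c$ by \cref{prop:projections-sameweight}, and $\genv_{b'}^*=\genf_{b'}$ by the $*$-structure, the lemma dualizes to
\[
\genf_{b'}P_b = \sum_{\substack{c\in\cB(\lambda)\\ \rightend_\lambda(c\otimes b')=b}} P_c\,\genf_{b'},
\]
where crucially the index set is unchanged, since the condition $\rightend_\lambda(c\otimes b')=b$ is unaffected by taking adjoints. Substituting this into $P_{b'}P_b = \genv_{b'}\,\genf_{b'}P_b$ gives
\[
P_{b'}P_b = \sum_{\substack{c\in\cB(\lambda)\\ \rightend_\lambda(c\otimes b')=b}} \genv_{b'}\,P_c\,\genf_{b'},
\]
which is exactly the expression already obtained for $P_b P_{b'}$. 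Hence the two products coincide, proving commutativity.

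There is essentially no serious obstacle here once one notices that \cref{lem:relationPv} and its adjoint produce matching summations indexed by the same set. The only points requiring a line of care are confirming that the adjoint of the lemma has precisely the stated form — invoking $P_c^*=P_c$ and $\genf_{b'}=\genv_{b'}^*$ — and verifying that the summation index is symmetric under taking adjoints, which holds because the right-end condition is phrased purely combinatorially on crystal elements and does not involve the algebra operations. I would present the two one-line computations side by side so that the equality of the resulting sums is manifest.
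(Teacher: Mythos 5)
Your proof is correct and is essentially the paper's own argument: both expand $P_b P_{b'}$ via \cref{lem:relationPv} and then use the adjoint of that same identity (which holds with the same index set, since $P_c^*=P_c$ and $\genf_{b'}=\genv_{b'}^*$) to recognize the resulting sum as $\genv_{b'}\genf_{b'}P_b = P_{b'}P_b$. Your write-up just spells out the adjoint step slightly more explicitly than the paper does.
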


\begin{proof}
We need to show that $P_b P_{b^\prime} = P_b \genv_{b^\prime} \genf_{b^\prime}$ and $P_{b^\prime} P_b = \genv_{b^\prime} \genf_{b^\prime} P_b$ are equal.
Using the relation from \cref{lem:relationPv} and its adjoint we have
\[
P_b \genv_{b^\prime} \genf_{b^\prime} = \sum_{\substack{c \in \cB(\lambda) \\ \rightend_\lambda(c \otimes b^\prime) = b}}  \genv_{b^\prime} P_c \genf_{b^\prime} = \genv_{b^\prime} \genf_{b^\prime} P_b. \qedhere
\]
\end{proof}


\subsection{Projections associated to sets of crystal elements}

Knowing that the projections $P_b$ all commute, we can associate projections to $n$-tuples of crystal elements, as in the following definition.

\begin{definition}
\label{def:P_B}
Let $\Sset = (\mu_1, \cdots, \mu_n)$ be any family of dominant weights and let $B = (b_1, \cdots, b_n)$ be an $n$-tuple with $b_i \in \cB(\mu_i)$ for each $i$. We define the projection
\[
P_B := P_{b_1} \cdots P_{b_n}.
\]
\end{definition}

We will be particularly interested in the case where $\Sset = \Cset = (\vartheta_1, \cdots, \vartheta_\NC)$ is a linearly independent family of dominant weights as in \cref{sec:flag_manifolds}, and $B=v\in\Vset$ is a vertex of the higher-rank graph of \cref{sec:crystal_N-graph}.

The next result generalizes \cref{lem:relationPv}.

\begin{lemma}
\label{lem:commutation-projection-v}
Let $\Sset$ be as above and let $B = (b_1, \cdots, b_n)$ with $b_i \in \cB(\mu_i)$.  Fix any $\lambda \in \dominant$.  
Then for any $b \in \cB(\lambda)$ we have
\[
P_B \genv_{b} = \sum_{B'} \genv_{b} P_{B^\prime},
\]
where the sum is over all families $B' = ( b_1^\prime, \cdots, b_n^\prime )$ with $b_i^\prime \in \cB(\mu_i)$ satisfying
\[
 \rightend_{\mu_i}(b_i'\otimes b) = b_i
\]
for every $i = 1, \cdots, n$.
\end{lemma}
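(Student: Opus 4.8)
The plan is to prove this by induction on the length $n$ of the family $\Sset = (\mu_1, \cdots, \mu_n)$, using \cref{lem:relationPv} both as the base case ($n=1$) and as the engine of the inductive step. The underlying idea is that \cref{lem:relationPv} already tells us how to commute a single $\genv_b$ past a single projection $P_{b_i}$, at the cost of replacing $b_i$ by any $c_i$ whose $\mu_i$-right end after tensoring with $b$ returns $b_i$; the only real work is to iterate this through the product $P_B = P_{b_1}\cdots P_{b_n}$ and verify that the bookkeeping assembles into the asserted index set.

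Concretely, I would write $P_B = P_{B''}\,P_{b_n}$ with $B'' = (b_1, \cdots, b_{n-1})$, and first apply \cref{lem:relationPv} to the rightmost factor:
\[
 P_{b_n}\genv_b = \sum_{\substack{c_n \in \cB(\mu_n)\\ \rightend_{\mu_n}(c_n \otimes b) = b_n}} \genv_b\, P_{c_n}.
\]
Substituting and pulling the sum out gives $P_B\genv_b = \sum_{c_n}(P_{B''}\genv_b)\,P_{c_n}$. Applying the inductive hypothesis to the shorter family $B''$ rewrites $P_{B''}\genv_b = \sum_{B'''}\genv_b\,P_{B'''}$, where $B''' = (c_1, \cdots, c_{n-1})$ ranges over tuples with $\rightend_{\mu_i}(c_i\otimes b) = b_i$ for $i < n$. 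Combining the two sums and observing that $P_{B'''}\,P_{c_n} = P_{c_1}\cdots P_{c_{n-1}}P_{c_n} = P_{B'}$ for $B' = (c_1, \cdots, c_n)$ by \cref{def:P_B}, the two constraints merge into the single condition $\rightend_{\mu_i}(c_i\otimes b) = b_i$ for all $i = 1, \cdots, n$, which is exactly the claimed range of summation.

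The computation is genuinely routine, so I do not expect a serious obstacle; the only points requiring care are organisational. First, I must peel projections off from the right, so that the freshly produced $P_{c_n}$ sits to the right of $\genv_b$ and does not interfere with applying the inductive hypothesis to $P_{B''}$. Second, I should note that the resulting $P_{B'}$ is unambiguous: although the factors are generated in a particular order, \cref{prop:projections-commute} guarantees that all the $P_{c_i}$ commute, so $P_{B'}$ depends only on the tuple $B'$. Finally, I would remark that no use is made of the linear independence of $\Cset$, so the statement in fact holds for an arbitrary family $\Sset$ of dominant weights, consistent with \cref{lem:relationPv}.
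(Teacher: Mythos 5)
Your proof is correct and is essentially the paper's own argument: the paper also obtains the identity by iterated application of \cref{lem:relationPv} to commute $\genv_b$ past each factor of $P_B = P_{b_1}\cdots P_{b_n}$ in turn, merely writing the repeated application as a single nested sum rather than organising it as a formal induction. Your additional remarks (peeling from the right, well-definedness of $P_{B'}$ via \cref{prop:projections-commute}, and that linear independence of the weights is not used) are accurate but not needed beyond what the paper states.
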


\begin{proof}
We have $P_B = P_{b_1} \cdots P_{b_n}$ and using \cref{lem:relationPv} we compute
\[
P_B \genv_{b} = \sum_{b_1^\prime: \rightend_{\mu_1}(b_1^\prime \otimes b) = b_1} \cdots \sum_{b_n^\prime: \rightend_{\mu_n}(b_n^\prime \otimes b) = b_n} \genv_{b} P_{b_1^\prime} \cdots P_{b_n^\prime}.
\]
This gives the result.
\end{proof}

We now show how to rewrite the projections $P_B$ in a normal form in terms of the generators $\genv_b$ and $\genf_b$.

\begin{proposition}
\label{prop:P_B_sum}
Let $\Sset = (\mu_1, \cdots, \mu_n)$ and write $\cB = \cB(\mu_1) \otimes \cdots \otimes \cB(\mu_n)$.  For any  $B = (b_1, \cdots, b_n)$ with $b_i\in\cB(\mu_i)$ for each $i$, we have
\[
  P_B = \sum_{\substack{b\in\cB \\ \rightend_\Sset(b) = B}} \genv_b \genf_b.
\]
\end{proposition}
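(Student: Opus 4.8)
The plan is to argue by induction on the length $n$ of the family $\Sset = (\mu_1, \cdots, \mu_n)$. For $n = 1$ the map $\rightend_{\mu_1} : \cB(\mu_1) \to \cB(\mu_1)$ is the identity, so the only $b \in \cB(\mu_1)$ with $\rightend_{\mu_1}(b) = b_1$ is $b = b_1$, and the right-hand side collapses to $\genv_{b_1}\genf_{b_1} = P_{b_1} = P_B$.

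For the inductive step I would peel off the last factor. Write $\tilde\Sset = (\mu_1, \cdots, \mu_{n-1})$, $\tilde B = (b_1, \cdots, b_{n-1})$ and $\tilde\cB = \cB(\mu_1) \otimes \cdots \otimes \cB(\mu_{n-1})$, so that $P_B = P_{\tilde B}\,\genv_{b_n}\genf_{b_n}$ and $\cB = \tilde\cB \otimes \cB(\mu_n)$. The key commutation is supplied by \cref{lem:commutation-projection-v}, applied to the family $\tilde\Sset$ and the element $b_n \in \cB(\mu_n)$: it gives $P_{\tilde B}\genv_{b_n} = \sum_{\tilde B'}\genv_{b_n} P_{\tilde B'}$, the sum being over tuples $\tilde B' = (b_1', \cdots, b_{n-1}')$ with $\rightend_{\mu_i}(b_i' \otimes b_n) = b_i$ for every $i < n$. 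Substituting the inductive hypothesis $P_{\tilde B'} = \sum_{\rightend_{\tilde\Sset}(b') = \tilde B'}\genv_{b'}\genf_{b'}$ and using $\genv_{b_n}\genv_{b'}\genf_{b'}\genf_{b_n} = \genv_{b'\otimes b_n}\genf_{b'\otimes b_n}$ (immediate from \cref{def:f_f_b}), I obtain $P_B = \sum_{b' \in A}\genv_{b'\otimes b_n}\genf_{b'\otimes b_n}$, where $A$ is the set of $b' \in \tilde\cB$ such that, writing $b_i' = \rightend_{\mu_i}(b')$, one has $\rightend_{\mu_i}(b_i'\otimes b_n) = b_i$ for all $i < n$.

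It then remains to identify this with $\sum_{\rightend_\Sset(b) = B}\genv_b\genf_b$ under the injection $b' \mapsto b' \otimes b_n$. Here I would use two facts. First, any term in which $b' \otimes b_n$ lies outside the Cartan component of $\cB$ vanishes by \cref{prop:commutation-relations}, so only Cartan-component terms contribute on either side. Second, for $b = b' \otimes b_n$ in the Cartan component, \cref{prop:right_end_independence}(1) gives $\rightend_{\mu_n}(b) = \rightend_{\mu_n}(b_n) = b_n$ automatically (the latter since $\rightend_{\mu_n}$ is the identity on $\cB(\mu_n)$), while \cref{prop:right_end_independence}(2) gives $\rightend_{\mu_i}(b) = \rightend_{\mu_i}(\rightend_{\mu_i}(b')\otimes b_n) = \rightend_{\mu_i}(b_i'\otimes b_n)$ for $i < n$. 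Thus, among Cartan-component elements, membership $b' \in A$ is exactly equivalent to $\rightend_\Sset(b) = B$; conversely any $b$ with $\rightend_\Sset(b) = B$ is forced to be in the Cartan component and to have last tensor factor $b_n$ (again by part (1)), hence lies in the image of $A$. Matching the index sets, modulo the vanishing of non-Cartan terms, completes the induction.

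The step I expect to demand the most care is this final reindexing, since right ends are being computed in three different places — in the long product crystal $\cB$, in the shorter product $\tilde\cB$, and in the length-two products $\cB(\mu_i)\otimes\cB(\mu_n)$ — and one must check that \cref{prop:right_end_independence} translates between them compatibly. What makes this painless is precisely the vanishing of the terms indexed by non-Cartan elements: I never have to establish the delicate implication ``$b' \in A \Rightarrow b'\otimes b_n$ is in the Cartan component'', because the bijection only needs to be checked on Cartan-component elements, where \cref{prop:right_end_independence} applies verbatim. I would also remark that no linear independence of $\Sset$ is used anywhere, consistent with the full generality of the statement.
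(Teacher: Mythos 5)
Your proof is correct and follows essentially the same route as the paper's: induction on $n$, peeling off the last tensor factor and commuting $P_{B_*}$ past $\genv_{b_n}$ via \cref{lem:commutation-projection-v}, substituting the inductive hypothesis, and reindexing the sum via \cref{prop:right_end_independence}. Your explicit handling of the non-Cartan terms (which vanish by \cref{prop:commutation-relations}, so the index-set comparison need only be made on Cartan-component elements) is a detail the paper leaves implicit, but the argument is the same.
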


\begin{proof}
We work by induction on the size $n$ of $\Sset$.  For $n=1$, the statement degenerates to the definition of $P_{b_1}$. 

Now suppose it holds for all sets with $n - 1$ elements.
Write \linebreak $\Sset_* = (\mu_1, \cdots, \mu_{n - 1} )$, $\cB_* = \cB(\mu_1) \otimes \cdots \otimes \cB(\mu_{n-1})$ and $B_* = ( b_1, \cdots, b_{n - 1} )$.
Using this notation and \cref{lem:commutation-projection-v} we obtain
\[
P_B = P_{B_*} \genv_{b_n} \genf_{b_n} = \sum_{B'_*} \genv_{b_n} P_{B'_*} \genf_{b_n},
\]
where the sum is over all $B'_* = (b'_1,\cdots,b'_{n-1})$ with $b'_i\in\cB(\mu_i)$ satisfying $\rightend_{\mu_i}(b_i'\otimes b_n) = b_i$ for all $i=1,\cdots,n-1$.  
By the inductive assumption we know that 
\[
P_{B'_*} = \sum_{\substack{b'\in\cB_* \\ \rightend_{\Sset_*}(b')=B'_*}}  \genv_{b'} \genf_{b'},
\]
and hence 
\[
P_B = \sum_{b'} \genv_{b'\otimes b_n}  \genf_{b'\otimes b_n},
\]
where now the sum is over all $b' \in \cB_*$ such that $\rightend_{\mu_i}(\rightend_{\mu_i}(b') \otimes b_n) = b_i$ for all $i=1,\cdots, n-1$.  But by \cref{prop:right_end_independence} this is equivalent to $\rightend_\Sset(b'\otimes b_n) = B$, so we are done.
\end{proof}

\begin{corollary}
\label{cor:projection-rightend}
Let $\Cset = (\vartheta_1,\cdots,\vartheta_\NC)$ be a family of linearly independent dominant weights.  Fix $\lambda\in\dominant_\Cset$ with $\lambda\geq\rho_\Cset$. Then for every $v = (b_1,\cdots,b_\NC)$ with $b_i\in\cB(\vartheta_i)$, we have
\[
  P_v = \sum_{\substack{b\in \cB(\lambda) \\ \rightend_\Cset(b)=v}} \genv_b \genf_b.
\]
In particular, we have $P_v = 0$  if $v\notin\Vset$.
\end{corollary}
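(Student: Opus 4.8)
The plan is to reduce the identity to a single-colour computation describing how the projections $P_{b_i}$, $b_i\in\cB(\vartheta_i)$, interact with the projection $P_b=\genv_b\genf_b$ attached to an element $b\in\cB(\lambda)$, and then to sum over $b$. Note first that $\lambda\geq\rho_\Cset$ forces $\lambda\geq\vartheta_i$ for every $i$, since $\rho_\Cset-\vartheta_i=\sum_{j\neq i}\vartheta_j$ is dominant; in particular each $\rightend_{\vartheta_i}$ is defined on $\cB(\lambda)$. By \cref{prop:projections-sameweight} the family $\{P_b\}_{b\in\cB(\lambda)}$ consists of orthogonal projections with $\sum_{b\in\cB(\lambda)}P_b=1$, and by \cref{prop:projections-commute} every $P_{b_i}$ commutes with every $P_b$. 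Hence I would write
\[
 P_v=P_v\sum_{b\in\cB(\lambda)}P_b=\sum_{b\in\cB(\lambda)}P_vP_b,
\]
so the whole statement reduces to evaluating the products $P_vP_b=P_{b_1}\cdots P_{b_\NC}P_b$.

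The key lemma I would isolate is that $P_aP_b=\delta_{a,\rightend_{\vartheta_i}(b)}\,P_b$ for $a\in\cB(\vartheta_i)$ and $b\in\cB(\lambda)$. To prove it I apply \cref{lem:relationPv} with the two weights taken to be $\vartheta_i$ and $\lambda$, obtaining $P_a\genv_b=\sum_c\genv_bP_c$, where $c$ runs over the elements of $\cB(\vartheta_i)$ with $\rightend_{\vartheta_i}(c\otimes b)=a$. Since $\lambda\geq\vartheta_i$, \cref{prop:right_end_independence}(1) collapses $\rightend_{\vartheta_i}(c\otimes b)$ to $\rightend_{\vartheta_i}(b)$ whenever $c\otimes b$ lies in the Cartan component (and to $0$ otherwise). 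Thus the index set is empty when $\rightend_{\vartheta_i}(b)\neq a$, giving $P_a\genv_b=0$ and hence $P_aP_b=0$. When $\rightend_{\vartheta_i}(b)=a$, I multiply on the right by $\genf_b$ and enlarge the sum to all $c\in\cB(\vartheta_i)$: the added terms vanish because $\genf_c\genf_b=0$ off the Cartan component by \eqref{eq:multiplication-rules}, so $P_c\genf_b=0$ there. The enlarged sum equals $\genv_b\bigl(\sum_{c\in\cB(\vartheta_i)}P_c\bigr)\genf_b=\genv_b\genf_b=P_b$ by the unitality relation \eqref{eq:unitality}.

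With the key lemma available, I use that all the projections commute to slide $P_b$ successively past $P_{b_\NC},\dots,P_{b_1}$, which yields
\[
 P_vP_b=\Bigl(\prod_{i=1}^\NC\delta_{b_i,\rightend_{\vartheta_i}(b)}\Bigr)P_b.
\]
This equals $P_b$ exactly when $(b_1,\dots,b_\NC)=\rightend_\Cset(b)$, that is when $v=\rightend_\Cset(b)$, and vanishes otherwise. Substituting into the resolution of the identity above gives $P_v=\sum_{b\in\cB(\lambda),\,\rightend_\Cset(b)=v}P_b=\sum_{b\in\cB(\lambda),\,\rightend_\Cset(b)=v}\genv_b\genf_b$, as claimed. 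For the final assertion I invoke \cref{prop:set_of_ends} together with \cref{def:vertices_and_paths}: the set of nonzero $\Cset$-right ends of $\cB(\lambda)$ equals $\Vset$ for every $\lambda\geq\rho_\Cset$, so if $v\notin\Vset$ no $b$ satisfies $\rightend_\Cset(b)=v$, the sum is empty, and $P_v=0$.

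The step I expect to be the main obstacle is the treatment of general $\lambda$ rather than just $\lambda=\rho_\Cset$. The $\rho_\Cset$ case could instead be read off directly from \cref{prop:P_B_sum} (with $\Sset=\Cset$, using \cref{prop:commutation-relations} to pass from $\cB(\vartheta_1)\otimes\cdots\otimes\cB(\vartheta_\NC)$ to its Cartan component $\cB(\rho_\Cset)$), but propagating it to larger weights is awkward: $\lambda\geq\rho_\Cset$ only says that $\lambda-\rho_\Cset$ is dominant, not that it belongs to $\dominant_\Cset$, so there is no tensor factorization of $\cB(\lambda)$ having $\cB(\rho_\Cset)$ as a factor inside $\dominant_\Cset$ to exploit. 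The argument above avoids this difficulty, because the key lemma holds for every $\lambda\geq\vartheta_i$ and therefore produces the identity uniformly, using no comparison between weights beyond what \cref{prop:right_end_independence}(1) already supplies.
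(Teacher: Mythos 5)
Your proof is correct, and it takes a genuinely different route from the paper's. The paper obtains the corollary as a short specialization of its normal form \cref{prop:P_B_sum}: it writes $\lambda=\sum_i n_i\vartheta_i$, takes for $\Sset$ the tuple in which each colour $\vartheta_i$ is repeated $n_i$ times and for $B$ the tuple of the $b_i$ with the same multiplicities, identifies $P_v=P_B$ via idempotency $(P_{b_i})^{n_i}=P_{b_i}$, and then rewrites the resulting sum over the tensor-product crystal $\cB(\vartheta_{i_1})\otimes\cdots\otimes\cB(\vartheta_{i_{|\lambda|}})$ as a sum over $\cB(\lambda)$ by restricting to the Cartan component and invoking \cref{prop:commutation-relations}. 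You instead prove the single-colour absorption identity $P_aP_b=\delta_{a,\rightend_{\vartheta_i}(b)}\,P_b$ for $a\in\cB(\vartheta_i)$, $b\in\cB(\lambda)$ --- your derivation of it from \cref{lem:relationPv}, \cref{prop:right_end_independence}(1), \eqref{eq:multiplication-rules} and \eqref{eq:unitality} is sound --- and conclude from the resolution of the identity $\sum_{b\in\cB(\lambda)}P_b=1$ of \cref{prop:projections-sameweight}, with the final assertion handled via \cref{prop:set_of_ends} exactly as in the paper. Beyond saving multiplicity bookkeeping, your route is more robust: the identification $P_v=P_B$ tacitly needs every $n_i\geq1$, which is automatic for $\Cset=\fundweights$ (there $\lambda\geq\rho$ is equivalent to $n_i\geq1$ for all $i$) but does not follow from $\lambda\geq\rho_\Cset$ for arbitrary linearly independent colours: in rank two, with $\vartheta_1=\varpi_1+2\varpi_2$ and $\vartheta_2=\varpi_2$, the weight $\lambda=2\vartheta_1$ lies in $\dominant_\Cset$ and satisfies $\lambda-\rho_\Cset=\varpi_1+\varpi_2\in\dominant$, yet $n_2=0$. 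Your closing remark correctly identifies this as the delicate point, and your argument, which only ever uses $\lambda\geq\vartheta_i$, proves the corollary in the stated generality. What the paper's route buys in exchange is economy, reusing \cref{prop:P_B_sum} rather than introducing a new lemma. (A minor simplification: commutativity of the projections, \cref{prop:projections-commute}, is not actually needed in your last step, since the absorption identity lets you contract $P_{b_i}P_b$ from the right one factor at a time.)
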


\begin{proof}
  Fix $v = (b_1, \cdots, b_\NC)$ as in the statement.  Write $\lambda = \sum_i n_i \vartheta_i$ and put $\Sset = (\vartheta_{i_1}, \cdots, \vartheta_{i_{|\lambda|}})$, where each $\vartheta_i \in \Cset$ appears with multiplicity $n_i$.  Put also $\cB = \cB(\vartheta_{i_1}) \otimes \cdots \otimes \cB(\vartheta_{i_{|\lambda|}})$ and $B=(b_{i_1},\cdots,b_{i_{|\lambda|}})$ with the same multiplicities.  

  Applying \cref{prop:P_B_sum}, and using the fact that $(P_{b_i})^{n_i} = P_{b_i}$, we get
  \[
   P_v = P_B = \sum_{\substack{\tilde{b}\in\cB \\ \rightend_\Sset(\tilde{b})=B}} \genv_{\tilde{b}} \genf_{\tilde{b}}.
  \]
  But $\rightend_\Sset(\tilde{b}) \neq 0$ if and only if $\tilde{b}$ is in the Cartan component, in which case its image $b$ under the projection to $\cB(\lambda)$ satisfies $\rightend_\Sset(b) = B$ if and only if $\rightend_\Cset(b) = v$.  The result follows.
\end{proof}

In \cref{cor:projections_nonzero} we will show that all $P_v$ with $v\in\Vset$ are non-zero.


\subsection{Graph algebra relations}

Let $\Cset = (\vartheta_1, \cdots, \vartheta_\NC)$ be a family of linearly independent dominant weights and consider the corresponding higher-rank graph $\hgraph$, as in \cref{def:vertices_and_paths}.
For any vertex $v = (b_1, \cdots, b_\NC) \in \Vset$ with $b_i \in \cB(\vartheta_i)$, we have defined the projection $P_v = P_{b_1} \cdots P_{b_\NC}$.
Next, given a path $e \in \hgraph$ of the form $e = (v, b)$, we define
\[
\edge_e := \genv_b P_v, \quad
\edge_e^* := P_v \genf_b.
\]

Our goal is to show that the elements $\{P_v, \edge_e\}$ satisfy the relations of a Kumjian-Pask algebra, as in \cref{def:graph-algebra}.

\begin{remark}
\label{rmk:not_a_path}
If $(v,b) \in \Vset \times \cB(\lambda)$ does not define an element of $\hgraph$, then we get $v_b P_v = 0$.  To see this, note that if $v = (c_1, \cdots, c_\NC)$ then by hypothesis $c_i \otimes b$ is not in the Cartan component of $\cB(\lambda) \otimes \cB(\vartheta_i)$ for some $i$.  Therefore $\genv_b P_{c_i} = \genv_b \genv_{c_i} \genf_{c_i} = 0$, and the claim follows.
\end{remark}

We begin with the condition (KP1) concerning the elements $P_v$.

\begin{lemma}
The set $\{P_v\}_{\Vset}$ consists of mutually orthogonal projections.
Moreover we have $\sum_{v \in \Vset} P_v = 1$.
\end{lemma}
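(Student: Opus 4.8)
The plan is to deduce everything formally from two facts already in hand: that the individual $P_b$ are self-adjoint idempotents (\cref{prop:projections-sameweight}), and that they all commute with one another, even across different weights (\cref{prop:projections-commute}). With these, the lemma is essentially bookkeeping, apart from one genuine input at the end.

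First I would check that each $P_v = P_{b_1} \cdots P_{b_\NC}$ is a projection. Since the factors pairwise commute, $P_v$ is self-adjoint, using $P_v^* = P_{b_\NC} \cdots P_{b_1} = P_{b_1} \cdots P_{b_\NC}$, and idempotent, since $P_v^2 = P_{b_1}^2 \cdots P_{b_\NC}^2 = P_v$. Next, for mutual orthogonality, take distinct $v = (b_1, \cdots, b_\NC)$ and $v' = (b_1', \cdots, b_\NC')$ in $\Vset$; they differ in some coordinate $i$, so $b_i \neq b_i'$. Commuting all the factors past one another, the product $P_v P_{v'}$ can be rearranged to isolate $P_{b_i} P_{b_i'}$, and since $b_i, b_i'$ lie in the same crystal $\cB(\vartheta_i)$, \cref{prop:projections-sameweight} gives $P_{b_i} P_{b_i'} = \delta_{b_i, b_i'} P_{b_i} = 0$. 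Hence $P_v P_{v'} = 0$.

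For the resolution of the identity I would use the single-weight completeness relation $\sum_{b \in \cB(\vartheta_i)} P_b = 1$ from \cref{prop:projections-sameweight} for each colour $i$, and expand by distributivity:
\[
  1 = \prod_{i=1}^\NC \Big( \sum_{b_i \in \cB(\vartheta_i)} P_{b_i} \Big)
    = \sum_{(b_1, \cdots, b_\NC)} P_{b_1} \cdots P_{b_\NC}
    = \sum_{(b_1, \cdots, b_\NC)} P_{(b_1, \cdots, b_\NC)},
\]
where the sum runs over all tuples in $\cB(\vartheta_1) \times \cdots \times \cB(\vartheta_\NC)$. The crucial observation is that only tuples lying in $\Vset$ contribute: by the last assertion of \cref{cor:projection-rightend}, $P_v = 0$ whenever $v \notin \Vset$. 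Discarding the vanishing terms leaves exactly $\sum_{v \in \Vset} P_v = 1$.

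I expect the genuine content to sit entirely in this final step, specifically the appeal to \cref{cor:projection-rightend} to kill the non-vertex tuples; this is where the combinatorics of right ends and the definition of $\Vset$ actually enter. The projection and orthogonality statements, by contrast, are purely formal consequences of commutativity together with the single-weight relations, so I would treat them briefly and concentrate the writing on justifying the vanishing of $P_v$ off $\Vset$.
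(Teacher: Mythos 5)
Your proof is correct and follows the same route as the paper: the projection and orthogonality statements are deduced formally from \cref{prop:projections-sameweight} together with the commutativity of \cref{prop:projections-commute}, and the completeness relation is obtained by expanding $\prod_i\bigl(\sum_{b_i\in\cB(\vartheta_i)}P_{b_i}\bigr)=1$ and invoking the vanishing $P_v=0$ for $v\notin\Vset$ from \cref{cor:projection-rightend}. Your closing remark correctly identifies where the real content lies; nothing is missing.
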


\begin{proof}
Given a vertex $v = (b_1, \cdots, b_\NC) \in \Vset$, we have the corresponding projection $P_v = P_{b_1} \cdots P_{b_\NC}$.
It easily follows from commutativity of the projections and \cref{prop:projections-sameweight} that $P_v$ is an orthogonal projection. In the same way one shows that they are mutually orthogonal, that is $P_v P_{v^\prime} = \delta_{v, v^\prime} P_v$.

It remains to show that $\sum_{v \in \Vset} P_v = 1$. It follows from \cref{prop:projections-sameweight} that
\[
\sum_{b_1 \in \cB(\vartheta_1)} \cdots \sum_{b_\NC \in \cB(\vartheta_\NC)} P_{b_1} \cdots P_{b_\NC} = 1.
\]
By \cref{cor:projection-rightend} we have $P_{b_1} \cdots P_{b_\NC} = 0$ unless $(b_1, \cdots, b_\NC) = \rightend_\Cset(b_1^\prime \otimes \cdots \otimes b_N^\prime)$ for some $b_1^\prime \otimes \cdots \otimes b_N^\prime$ in the Cartan component of $\cB(\vartheta_1) \otimes \cdots \otimes \cB(\vartheta_\NC)$.
Since the latter can be identified with $\cB(\rho_\Cset)$ we obtain the result.
\end{proof}

Next we look at the condition (KP2), which we divide into two parts for convenience.

\begin{lemma}
\label{lem:vertex-edge-relations}
For any $e \in \hgraph$ we have
\[
P_{\rmap(e)} \edge_e = \edge_e = \edge_e P_{\smap(e)}.
\]
\end{lemma}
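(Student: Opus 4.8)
The plan is to treat the two equalities separately: the source relation is essentially formal, while the range relation is where the commutation lemma does the work.

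Write $e = (v,b)$ with $v = (c_1, \cdots, c_\NC)$, $c_i \in \cB(\vartheta_i)$, and $b \in \cB(\lambda)$ for some $\lambda \in \dominant_\Cset$. By definition $\edge_e = \genv_b P_v$ and $\smap(e) = v$, so the source relation reads $\genv_b P_v P_v = \genv_b P_v$. This is immediate once we know that $P_v = P_{c_1} \cdots P_{c_\NC}$ is itself an idempotent, which follows from commutativity of the factors $P_{c_i}$ (\cref{prop:projections-commute}) together with \cref{prop:projections-sameweight}.

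For the range relation, recall from \cref{def:range_and_source} that $\rmap(e) = (b_1', \cdots, b_\NC')$ with $b_i' = \rightend_{\vartheta_i}(c_i \otimes b)$. I would apply \cref{lem:commutation-projection-v} with $\Sset = \Cset$ and $B = \rmap(e)$ to rewrite
\[
 P_{\rmap(e)} \genv_b = \sum_{B'} \genv_b P_{B'},
\]
where the sum runs over all $B' = (c_1', \cdots, c_\NC')$ with $c_i' \in \cB(\vartheta_i)$ satisfying $\rightend_{\vartheta_i}(c_i' \otimes b) = b_i'$ for every $i$. Multiplying on the right by $P_v$ then gives $P_{\rmap(e)} \edge_e = \sum_{B'} \genv_b P_{B'} P_v$.

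The final step is to show that only the term $B' = v$ survives this sum. Since all the projections commute (\cref{prop:projections-commute}), I may pair up $P_{c_i'}$ with $P_{c_i}$ factor by factor; as $c_i', c_i \in \cB(\vartheta_i)$ lie in the same crystal, \cref{prop:projections-sameweight} yields $P_{c_i'} P_{c_i} = \delta_{c_i', c_i} P_{c_i}$, whence $P_{B'} P_v = 0$ unless $B' = v$, in which case $P_{B'} P_v = P_v$. Crucially, the tuple $v$ does occur among the indices, because $\rightend_{\vartheta_i}(c_i \otimes b) = b_i'$ holds by the very definition of $\rmap(e)$. Therefore $P_{\rmap(e)} \edge_e = \genv_b P_v = \edge_e$, as desired. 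The only point requiring care — hardly a genuine obstacle — is verifying that $v$ really indexes a term of the commutation sum, which is precisely the defining property of the range map.
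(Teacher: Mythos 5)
Your proof is correct and follows essentially the same route as the paper: both apply \cref{lem:commutation-projection-v} to expand $P_{\rmap(e)}\genv_b$, observe that $v$ occurs in the resulting sum by the very definition of the range map, and collapse the sum via $P_{B'}P_v = \delta_{B',v}P_v$, with the source relation reduced to idempotency of $P_v$. Your explicit factor-by-factor justification of the orthogonality $P_{B'}P_v = \delta_{B',v}P_v$ for arbitrary tuples $B'$ (not only vertices in $\Vset$) is a point the paper leaves implicit, but it is the same argument, not a different one.
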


\begin{proof}
Consider a path $e = (v, b)$ with range $\rmap(e) = (c_1, \cdots, c_\NC)$ where $c_i \in \cB(\vartheta_i)$.
Then using \cref{lem:commutation-projection-v} we obtain
\[
P_{\rmap(e)} \edge_e = P_{\rmap(e)} v_b P_v = \sum_{v'} v_b P_{v^\prime} P_v,
\]
where the sum is over all $v' = (c_1', \cdots, c_\NC') \in \cB(\vartheta_1) \times \cdots \times \cB(\vartheta_\NC)$ satisfying the condition $\rightend_{\vartheta_i}(c_i' \otimes b) = c_i$ for $i = 1, \cdots, \NC$.
Note that $v' = v$ satisfies this condition, by definition of the range map.
Then using the relation $P_{v'} P_v = \delta_{v', v} P_v$ we obtain $P_{\rmap(e)} \edge_e = v_b P_v = \edge_e$.
The second identity follows immediately from the fact that $\smap(e) = v$, since we have $\edge_e P_{\smap(e)} = v_b P_v P_v = \edge_e$.
\end{proof}

\begin{lemma}
Let $e_1, e_2 \in \hgraph$ be composable paths. Then $\edge_{e_1 \cdot e_2} = \edge_{e_1} \edge_{e_2}$.
\end{lemma}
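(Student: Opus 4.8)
The plan is to reduce the statement to two facts already in hand: the multiplicative behaviour of the $\genv$'s under projection onto the Cartan component (\cref{prop:commutation-relations} together with \cref{def:f_f_b}), and the absorption of an edge generator by the projection at its range (\cref{lem:vertex-edge-relations}). Once the conventions are matched up, both sides collapse to the same normal-form expression.

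First I would fix notation compatible with the composition law of \cref{thm:hgraph}. Write $e_1 = (v', b')$ and $e_2 = (v, b)$ with $b \in \cB(\lambda)$ and $b' \in \cB(\lambda')$, imposing the composability condition $\smap(e_1) = \rmap(e_2)$, i.e. $v' = \rmap(v,b)$. Then the composite is $e_1 \cdot e_2 = (v, \projection(b \otimes b'))$, whose source vertex is $v$, so by the definition of the edge generators
\[
\edge_{e_1 \cdot e_2} = \genv_{\projection(b \otimes b')}\, P_v.
\]
To rewrite this, note that $b \otimes b'$ lies in the Cartan component of $\cB(\lambda)\otimes\cB(\lambda')$ precisely because the paths are composable (this is exactly the well-definedness argument in the proof of \cref{thm:hgraph}). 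Hence \cref{def:f_f_b} gives $\genv_{b \otimes b'} = \genv_{b'}\genv_b$, and \cref{prop:commutation-relations} gives $\genv_{b \otimes b'} = \genv_{\projection(b\otimes b')}$, so that $\edge_{e_1 \cdot e_2} = \genv_{b'} \genv_b P_v$.

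Next I would compute the other side directly from the definitions:
\[
\edge_{e_1} \edge_{e_2} = \big(\genv_{b'} P_{v'}\big)\big(\genv_b P_v\big) = \genv_{b'} \big( P_{\rmap(e_2)}\, \edge_{e_2} \big),
\]
using $v' = \rmap(e_2)$ and $\edge_{e_2} = \genv_b P_v$. By \cref{lem:vertex-edge-relations} we have $P_{\rmap(e_2)} \edge_{e_2} = \edge_{e_2} = \genv_b P_v$, whence $\edge_{e_1}\edge_{e_2} = \genv_{b'} \genv_b P_v = \edge_{e_1 \cdot e_2}$, as required.

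I do not expect any genuine obstacle here beyond careful bookkeeping. The two points that need attention are the order-reversal convention $\genv_{b\otimes b'} = \genv_{b'}\genv_b$ from \cref{def:f_f_b} and the range-on-the-left convention in the composition formula of \cref{thm:hgraph}; both must be tracked so that the two normal forms line up. The essential content is simply to recognise that the interior factor $P_{v'}$ sitting between $\genv_{b'}$ and $\edge_{e_2}$ is exactly the range projection of $e_2$, so it is absorbed by \cref{lem:vertex-edge-relations}.
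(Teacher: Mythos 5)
Your proof is correct and follows essentially the same route as the paper's: both arguments absorb the interior projection $P_{v'} = P_{\rmap(e_2)}$ via \cref{lem:vertex-edge-relations} and then identify $\genv_{b'}\genv_b = \genv_{b\otimes b'} = \genv_{\projection(b\otimes b')}$ using \cref{def:f_f_b} and \cref{prop:commutation-relations}. The only difference is presentational — you compute both sides and meet at the common normal form $\genv_{b'}\genv_b P_v$, whereas the paper transforms $\edge_{e_1}\edge_{e_2}$ directly into $\edge_{e_1\cdot e_2}$ — and your bookkeeping of the order-reversal and range-on-the-left conventions is accurate.
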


\begin{proof}
Write $e_i = (v_i, b_i)$ for $i = 1, 2$.
We have $\smap(e_1) = v_1 = \rmap(e_2)$, since $e_1$ and $e_2$ are composable paths. Then we compute
\[
\edge_{e_1} \edge_{e_2} = \genv_{b_1} P_{v_1} \edge_{e_2} = \genv_{b_1} P_{\rmap(e_2)} \edge_{e_2} = \genv_{b_1} \edge_{e_2},
\]
where we have used \cref{lem:vertex-edge-relations}. Then we obtain $\edge_{e_1} \edge_{e_2} = \genv_{b_1} \genv_{b_2} P_{v_2}$.
Recall that the composition of paths is given by $e_1 \cdot e_2 = (v_2, \projection(b_2 \otimes b_1))$, where $\projection: \cB(\lambda) \otimes \cB(\lambda') \to \cB(\lambda + \lambda')$ denotes the projection onto the Cartan component.
Then $\genv_{b_1} \genv_{b_2} = \genv_{b_2 \otimes b_1} = \genv_{\projection(b_2\otimes b_1)}$ and we get $\edge_{e_1} \edge_{e_2} = \edge_{e_1 \cdot e_2}$.
\end{proof}

Next we consider the condition (KP3), concerning source projections.

\begin{lemma}
For any $e, e' \in \hgraph$ with $\dmap(e) = \dmap(e')$ we have
\[
\edge_e^* \edge_{e'} = \delta_{e, e'} P_{\smap(e)}.
\]
\end{lemma}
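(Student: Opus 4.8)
The plan is to reduce the product $\edge_e^*\edge_{e'} = P_v\genf_b\genv_{b'}P_{v'}$ to the source projection in three stages. Writing $e=(v,b)$ and $e'=(v',b')$, the hypothesis $\dmap(e)=\dmap(e')=\lambda$ means $b,b'\in\cB(\lambda)$ for the \emph{same} $\lambda\in\dominant_\Cset$. First I would invoke the same-weight cross relation \eqref{eq:lambda-lambda_relations}, namely $\genf_b\genv_{b'} = \delta_{b,b'}\sum_{c\in\cB(\lambda)}\Cartan(c\otimes b)\genv_c\genf_c = \delta_{b,b'}\sum_{c\,:\,\Cartan(c\otimes b)=1}P_c$, which kills the expression unless $b=b'$ and so accounts for the ``edge-label'' part of $\delta_{e,e'}$.

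In the remaining case $b=b'$ I would use that all projections commute (\cref{prop:projections-commute}) together with the mutual orthogonality $P_vP_{v'}=\delta_{v,v'}P_v$ of the vertex projections established above to pull out the vertex factor:
\begin{equation*}
  \edge_e^*\edge_{e'} = \sum_{c\,:\,\Cartan(c\otimes b)=1}P_vP_cP_{v'} = P_vP_{v'}\sum_{c\,:\,\Cartan(c\otimes b)=1}P_c = \delta_{v,v'}\,P_v\genf_b\genv_b.
\end{equation*}
Combining the two steps gives $\edge_e^*\edge_{e'}=\delta_{e,e'}\,P_v\genf_b\genv_b$, so everything comes down to the normalization identity $P_v\genf_b\genv_b = P_v$ for a genuine path $e=(v,b)\in\hgraph$.

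This normalization is the only real content, and I expect it to be the main obstacle. Since $\sum_{c\in\cB(\lambda)}P_c=1$ by unitality \eqref{eq:unitality} and $\genf_b\genv_b=\sum_{c\,:\,\Cartan(c\otimes b)=1}P_c$, the identity is equivalent to $P_vP_c=0$ for every $c\in\cB(\lambda)$ with $c\otimes b$ \emph{not} in the Cartan component. To handle this I would expand $P_vP_c$ via \cref{prop:P_B_sum} applied to the family $(\vartheta_1,\dots,\vartheta_\NC,\lambda)$, obtaining
\begin{equation*}
  P_vP_c = \sum_{\substack{d\in\cB(\rho_\Cset+\lambda)\\ \rightend_\Cset(d)=v,\ \rightend_\lambda(d)=c}} P_d,
\end{equation*}
a sum of pairwise orthogonal projections, hence zero precisely when its index set is empty.

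The crux is then purely combinatorial: no such $d$ can exist. Given a hypothetical $d$, its image in the Cartan component of $\cB(\rho_\Cset)\otimes\cB(\lambda)$ has the form $d_1\otimes c$ with $d_1\in\cB(\rho_\Cset)$, using \cref{prop:right_end_independence}(1) to identify the second factor as $c=\rightend_\lambda(d)$. Setting $v''=\rightend_\Cset(d_1)\in\Vset$, \cref{lem:right_end_of_a_path} shows $(v'',c)\in\hgraph$ with $\rmap(v'',c)=\rightend_\Cset(d_1\otimes c)=v$. Since also $(v,b)\in\hgraph$ with $\smap(v,b)=v$, the two paths are composable, and by \cref{thm:hgraph} the composite $(v,b)\cdot(v'',c)=(v'',\projection(c\otimes b))$ again lies in $\hgraph$; this forces $\projection(c\otimes b)\neq 0$, i.e.\ $c\otimes b$ \emph{is} in the Cartan component, contradicting the hypothesis. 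Hence the index set is empty, $P_vP_c=0$, and the normalization follows. The subtlety to keep straight is that $\lambda=\dmap(e)$ need not satisfy $\lambda\geq\rho_\Cset$, so one cannot apply \cref{cor:projection-rightend} directly to $\cB(\lambda)$; routing the argument through $\cB(\rho_\Cset+\lambda)$ and the composition law of $\hgraph$ is exactly what circumvents this.
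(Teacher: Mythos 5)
Your proof is correct, and the first half coincides with the paper's: both reduce via \eqref{eq:lambda-lambda_relations} to $\edge_e^*\edge_{e'} = \delta_{b,b'}\sum_{c}\Cartan(c\otimes b)\,P_vP_cP_{v'}$ and then use commutativity of projections and $P_vP_{v'}=\delta_{v,v'}P_v$ to extract $\delta_{e,e'}$. Where you genuinely diverge is the remaining case $e=e'$. The paper argues by induction on the total length $|\dmap(e)|$, factoring $e=e''\cdot e'$ with $e''$ an edge and using the already-established relations $\edge_{e_1\cdot e_2}=\edge_{e_1}\edge_{e_2}$ and $P_{\rmap(e')}\edge_{e'}=\edge_{e'}$ from \cref{lem:vertex-edge-relations} to get $\edge_e^*\edge_e=\edge_{e'}^*\edge_{e''}^*\edge_{e''}\edge_{e'}=\edge_{e'}^*\edge_{e'}=P_{\smap(e)}$. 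You instead prove directly the combinatorial fact that underlies everything: $P_vP_c=0$ whenever $(v,b)\in\hgraph$ and $c\otimes b$ is outside the Cartan component, by expanding $P_vP_c$ through \cref{prop:P_B_sum} as a sum of $P_d$ over $d\in\cB(\rho_\Cset+\lambda)$ with $\rightend_\Cset(d)=v$ and $\rightend_\lambda(d)=c$, and showing via \cref{lem:right_end_of_a_path} and the composition law of \cref{thm:hgraph} that this index set is empty. Each route buys something: the paper's induction is shorter given the machinery already in place, but read literally its inductive step at total length $1$ applies the lemma to the length-one factor $e''=e$ itself, so the case of edges still requires a direct argument of exactly the kind you give --- your computation covers all degrees $\lambda$ uniformly, and you are right that since $\lambda\geq\rho_\Cset$ may fail one cannot simply invoke \cref{cor:projection-rightend}, which is why routing through $\cB(\rho_\Cset+\lambda)$ is the correct move. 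One small overstatement: your sum of mutually orthogonal projections is ``zero precisely when the index set is empty'' only in the direction you actually use (empty implies zero); a priori some $P_d$ could vanish in the abstract algebra $\CalgK$, but this does not affect the argument.
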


\begin{proof}
Write $e = (v, b)$ and $e' = (v', b')$ with $\dmap(e) = \dmap(e') = \lambda$, that is $b, b' \in \cB(\lambda)$.
By \eqref{eq:lambda-lambda_relations} we have $f_b v_{b'} = \delta_{b, b'} \sum_{c \in \cB(\lambda)} \Cartan(c \otimes b') P_c$ and so
\begin{equation}
\label{eq:pathstar-path-relation}
\edge_e^* \edge_{e'} = P_v f_b v_{b'} P_{v'} = \delta_{b, b'} \sum_{c \in \cB(\lambda)} \Cartan(c \otimes b') P_v P_c P_{v'}.
\end{equation}
Recall that the projections commute and we have $P_v P_{v'} = \delta_{v, v'} P_v$ for $v, v' \in \Vset$.
Then we find that $\edge_e^* \edge_{e'} = \delta_{e, e'} \sum_{c \in \cB(\lambda)} \Cartan(c \otimes b) P_c P_v$.
If $e\neq e'$ we are done, so it remains to deal with the case $e = e'$.  

We work by induction on the total length $|\lambda| = \sum_i n_i$ where $\lambda = \sum_{i = 1}^\NC n_i \vartheta_i$.  If $|\lambda| = 0$, then we have $\edge^*_e \edge_e = P_v^*P_v = P_v$ so the lemma holds.
Suppose the lemma holds for any $e'$ of degree $\lambda'$ with $|\lambda'|< n$.  Consider $e = e'' \cdot e'$ with $e''$ of length $1$ and $e'$ of length $n - 1$.
Note that $\smap(e'') = \rmap(e')$ since they are composable. Then we compute
\[
\begin{split}
\edge_e^* \edge_e & = \edge_{e'}^* \edge_{e''}^* \edge_{e''} \edge_{e'} = \edge_{e'}^* P_{\smap(e'')} \edge_{e'} = \edge_{e'}^* P_{\rmap(e')} \edge_{e'} \\
& = \edge_{e'}^* \edge_{e'} = P_{\smap(e')} = P_{\smap(e)},
\end{split}
\]
where we use the relation $P_{\rmap(e)} \edge_e = \edge_e$ from \cref{lem:vertex-edge-relations}.
\end{proof}

Finally we look at the condition (KP4), concerning range projections.
Recall that $v \hgraph^n$ denotes the set of paths of degree $n$ and range $v$.

\begin{lemma}
For any $v \in \Vset$ and $n \in \NN^\NC$ we have
\[
P_v = \sum_{e \in v \hgraph^n} \edge_e \edge_e^*.
\]
\end{lemma}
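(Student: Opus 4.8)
The plan is to start from the resolution of the identity $\sum_{b\in\cB(\lambda)}P_b=1$ supplied by \cref{prop:projections-sameweight}, insert the fixed projection $P_v$, and then commute $P_v$ through each $\genv_b$ using \cref{lem:commutation-projection-v}. The range-$v$ paths of degree $\lambda$ will then emerge automatically as the index set of the resulting sum.

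Identify $n\in\NN^\NC$ with the weight $\lambda=\sum_i n_i\vartheta_i\in\dominant_\Cset$, so that $v\hgraph^\lambda$ consists exactly of the paths $e=(v',b)$ with $b\in\cB(\lambda)$, $v'\in\Vset$ and $\rmap(e)=v$. Writing $v=(c_1,\dots,c_\NC)$ and $v'=(c_1',\dots,c_\NC')$, the condition $\rmap((v',b))=v$ unwinds to $\rightend_{\vartheta_i}(c_i'\otimes b)=c_i$ for every $i$.

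First I would write, using $\sum_{b\in\cB(\lambda)}P_b=1$ and $P_b=\genv_b\genf_b$,
\[
P_v=P_v\cdot 1=\sum_{b\in\cB(\lambda)}P_vP_b=\sum_{b\in\cB(\lambda)}P_v\genv_b\genf_b.
\]
Next I would apply \cref{lem:commutation-projection-v} with $B=v$ and $\mu_i=\vartheta_i$ to move $P_v$ past $\genv_b$:
\[
P_v\genv_b=\sum_{v'}\genv_bP_{v'},
\]
the sum being over all tuples $v'=(c_1',\dots,c_\NC')$ with $c_i'\in\cB(\vartheta_i)$ satisfying $\rightend_{\vartheta_i}(c_i'\otimes b)=c_i$ for all $i$. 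By \cref{cor:projection-rightend}, $P_{v'}=0$ whenever $v'\notin\Vset$, so only the vertices $v'\in\Vset$ contribute; and for such $v'$ the conditions say precisely that $c_i'\otimes b$ lies in the Cartan component of $\cB(\vartheta_i)\otimes\cB(\lambda)$ for every $i$ (that is, $(v',b)\in\hgraph$) and that $\rmap((v',b))=v$. Multiplying on the right by $\genf_b$ and recognising $\genv_bP_{v'}\genf_b=\edge_{(v',b)}\edge_{(v',b)}^*$, I obtain
\[
P_v\genv_b\genf_b=\sum_{\substack{v'\in\Vset\\ (v',b)\in v\hgraph^\lambda}}\edge_{(v',b)}\edge_{(v',b)}^*.
\]
Summing over $b\in\cB(\lambda)$, the pairs $(v',b)$ range without repetition over all of $v\hgraph^\lambda$, which yields $P_v=\sum_{e\in v\hgraph^\lambda}\edge_e\edge_e^*$.

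The only delicate point is the bookkeeping in the middle step: one must check that the index set of the sum produced by \cref{lem:commutation-projection-v} coincides exactly with $v\hgraph^\lambda$. This rests on two facts already available, namely that a nonzero right end $\rightend_{\vartheta_i}(c_i'\otimes b)$ forces $c_i'\otimes b$ into the Cartan component (which is exactly the path condition of \cref{def:vertices_and_paths}), and that \cref{cor:projection-rightend} annihilates the spurious tuples $v'\notin\Vset$. The degenerate case $n=0$, i.e.\ $\lambda=0$, is trivial: then $v\hgraph^0=\{(v,b_0)\}$ with $\genv_{b_0}=1$, and the asserted identity reduces to $P_v=P_v$.
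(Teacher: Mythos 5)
Your proof is correct, but it runs in the opposite direction from the paper's. You start from the left-hand side, writing $P_v = \sum_{b\in\cB(\lambda)} P_v\genv_b\genf_b$ via the resolution of identity from \cref{prop:projections-sameweight}, and then push $P_v$ through each $\genv_b$ with \cref{lem:commutation-projection-v}, discarding the tuples $v'\notin\Vset$ by the vanishing statement in \cref{cor:projection-rightend}; the surviving index set is then visibly $v\hgraph^n$. The paper instead starts from the right-hand side: it expands each $\edge_e\edge_e^* = \genv_b P_{v'}\genf_b$ using the normal form $P_{v'} = \sum_{b'\in\cB(\rho_\Cset),\,\rightend_\Cset(b')=v'}\genv_{b'}\genf_{b'}$ of \cref{cor:projection-rightend}, sums over $e\in v\hgraph^n$, merges $\genf_{b'}\genf_b$ into a single generator $\genf_c$ with $c\in\cB(\lambda+\rho_\Cset)$ via the multiplication rule, and applies \cref{cor:projection-rightend} a second time to recognize $P_v$. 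The two arguments ultimately rest on the same machinery (\cref{lem:commutation-projection-v} is what drives \cref{prop:P_B_sum} and hence the corollary), but yours avoids the second normal-form expansion and the bookkeeping in $\cB(\rho_\Cset)\otimes\cB(\lambda)$, making the bijection with paths of range $v$ explicit at the cost of having to argue separately that the spurious tuples vanish; the paper's version buys a shorter computation in which the path conditions are absorbed into the single condition $\rightend_\Cset(b'\otimes b)=v$. Your bookkeeping is sound: the condition $\rightend_{\vartheta_i}(c_i'\otimes b)=c_i\neq 0$ does force $c_i'\otimes b$ into the Cartan component, which together with $v'\in\Vset$ is exactly the path condition of \cref{def:vertices_and_paths}, and $\genv_bP_{v'}\genf_b=\edge_{(v',b)}\edge_{(v',b)}^*$ follows from $P_{v'}^2=P_{v'}$.
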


\begin{proof}
We identify $n \in \NN^\NC$ with the corresponding dominant weight $\lambda = \sum_i n_i\vartheta_i$.
Then we consider the sum $\sum_e \edge_e \edge_e^*$ over all paths of degree $\lambda$ and range $v$.

Let $e = (v', b) \in v \hgraph^n$.
From \cref{cor:projection-rightend} we have
\[
\edge_e \edge_e^* = \genv_b P_{v'} \genf_b = 
  \sum_{\substack{b' \in \cB(\rho_\Cset) \\ \rightend_\Cset(b') = v'}} \genv_b \genv_{b'} \genf_{b'} \genf_b.
\]
Summing this over all such $e = (v', b) \in v \hgraph^n$, we get
\[
\sum_{e = (v', b)\in v \hgraph^n} \edge_e \edge_e^*
 = \sum_{\substack{b \in \cB(\lambda),~ b' \in \cB(\rho_\Cset) \\ \rightend_\Cset(b' \otimes b) = v}}
 \genv_b \genv_{b'} \genf_{b'} \genf_b
\]
Recall that $\genf_{b'} \genf_b =0$ unless $b' \otimes b$ is in the Cartan component of $\cB(\rho_\Cset) \otimes \cB(\lambda)$, in which case it is equal to $\genf_c$, where $c$ is the image of $b' \otimes b$ under the projection $\cB(\rho_\Cset) \otimes \cB(\lambda) \to \cB(\lambda + \rho_\Cset)$.  Thus the above sum becomes
\[
 \sum_{e = (v', b)\in v \hgraph^n} \edge_e \edge_e^*
  = \sum_{\substack{c \in \cB(\lambda + \rho_\Cset) \\ \rightend_\Cset(c) = v}} \genv_c \genf_c = P_v,
\]
again by \cref{cor:projection-rightend}.
\end{proof}

Collecting all the results above, we obtain the following.

\begin{proposition}
\label{prop:KP_universal_map}
There is a surjective $*$-homomorphism $\KPalg \to \CalgK$, which on the generators is given by
\[
p_v \mapsto P_v, \quad
s_e \mapsto \edge_e.
\]
\end{proposition}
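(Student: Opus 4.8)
The plan is to build the map from the universal property of the Kumjian-Pask algebra and then check surjectivity directly. By \cref{def:graph-algebra}, $\KPalg$ is the universal $*$-algebra generated by symbols $\{p_v\}_{v\in\Vset}$ and $\{s_e\}_{e\in\Eset}$ subject to (KP1)--(KP4), so to produce a $*$-homomorphism sending $p_v\mapsto P_v$ and $s_e\mapsto\edge_e$ it suffices to verify that the elements $P_v,\edge_e\in\CalgK$ satisfy these four relations and are compatible with the $*$-operation. Each relation has already been established in the lemmas of this subsection: (KP1) is the statement that the $\{P_v\}$ are mutually orthogonal projections summing to $1$; (KP2) combines \cref{lem:vertex-edge-relations} with the lemma $\edge_{e_1\cdot e_2}=\edge_{e_1}\edge_{e_2}$; (KP3) is the identity $\edge_e^*\edge_{e'}=\delta_{e,e'}P_{\smap(e)}$; and (KP4) is the identity $P_v=\sum_{e\in v\hgraph^n}\edge_e\edge_e^*$. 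Compatibility with $*$ is immediate, since $P_v^*=P_v$ and $\edge_e^*=P_v\genf_b$ exactly match the relations $p_v^*=p_v$, $s_e^*=s_{e^*}$ defining the $*$-structure on $\KPalg$. Thus the universal property yields a $*$-homomorphism $\KPalg\to\CalgK$.

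For surjectivity, I will show that every generator of $\CalgK$ lies in the image. By the remark following \cref{def:crystal_algebra}, $\CalgK$ is generated as a $*$-algebra by the elements $\genv_b$ (together with their adjoints $\genf_b$). Using the partition of unity $1=\sum_{v\in\Vset}P_v$ from (KP1) together with \cref{rmk:not_a_path}, which gives $\genv_bP_v=0$ whenever $(v,b)\notin\hgraph$, I compute for any $b\in\cB(\lambda)$
\[
  \genv_b=\sum_{v\in\Vset}\genv_bP_v=\sum_{v:\,(v,b)\in\hgraph}\edge_{(v,b)},
\]
which lies in the image. Taking adjoints gives $\genf_b=\sum_v\edge_{(v,b)}^*$ in the image as well. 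Hence all generators are hit and the homomorphism is surjective.

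Since the four graph relations were the substance of the preceding lemmas, this proposition is essentially an assembly step, and I expect no serious obstacle. The only genuinely new point is the surjectivity computation, whose key input is \cref{rmk:not_a_path}: the product $\genv_bP_v$ vanishes off the path set and restricts to $\edge_{(v,b)}$ on it, so the sum recovering $\genv_b$ ranges precisely over those vertices $v$ for which $(v,b)$ is an actual path of $\hgraph$. Care is only needed to confirm this indexing and the finiteness of the sum, the latter following from finiteness of $\Vset$.
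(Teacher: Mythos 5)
Your proposal is correct and follows essentially the same route as the paper's proof: both obtain the homomorphism from the universal property by citing the lemmas verifying (KP1)--(KP4), and both prove surjectivity through the identity $\genv_b = \sum_{v \in \Vset} \genv_b P_v = \sum_{v :\, (v,b) \in \hgraph} \edge_{(v,b)}$ using the partition of unity and \cref{rmk:not_a_path}, with adjoints handling the $\genf_b$'s. The only (inessential) difference is that the paper also notes, via \cref{lem:tensor_fact}, that at least one path $(v',b)$ exists so that $\edge_{(v',b)} \neq 0$; your argument does not need this, since even an empty sum would place $\genv_b$ in the image.
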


\begin{proof}
The existence of the $*$-homomorphism follows from the fact that the elements $P_v, \edge_e \in \CalgK$ satisfy the relations (KP1)--(KP4), as we have shown above.

To prove surjectivity, we must show that $\CalgK$ is generated by the elements $\edge_e$ with $e \in \hgraph$ and their adjoints.
Let $b \in \cB(\lambda)$ for any $\lambda \in \dominant_\Cset$. From \cref{lem:tensor_fact}, we have that $b_{w_0 \rho_\Cset} \otimes b$ is in the Cartan component of $\cB(\rho_\Cset) \otimes \cB(\lambda)$, so putting $v' = \rightend_\Cset(b_{w_0 \rho_\Cset})$, we have that $e = (v',b)$ defines a path in $\hgraph$. Therefore $S_e = \genv_b P_{v'}$ is non-zero.  Using \cref{prop:projections-sameweight} and \cref{rmk:not_a_path}, we get
\[
 v_b = \sum_{v \in \Vset} v_b P_v 
 = \sum_{\substack{v\in\Vset \\ (v, b) \in \hgraph}} S_{(v, b)}.
\]
Since $\CalgK$ is generated by the $\genv_b$'s and their adjoints, the result follows.
\end{proof}

In the next section we will show that this map is an isomorphism.


\section{Crystal limits are higher-rank graph algebras}
\label{sec:crystal-limit}

The universal maps from \cref{prop:KP_universal_map} and \cref{prop:AC_universal_map} yield a pair of surjective $*$-homomorphisms
\begin{equation}
\label{eq:KPiso}
  \KPalgstd \to \CalgKstd \to \OKO.
\end{equation}
In this section, we will show that these maps are isomorphisms, as well as their restrictions to the subalgebras
\begin{equation}
\label{eq:KPCiso}
  \KPalg \to \CalgK \to \cO[Y_{\rootset,0}]
\end{equation}
corresponding to the principal torus bundle over any flag variety of a connected compact semisimple group $K$.

We use the same notation as in \cref{sec:compact_form}, namely, for any $\lambda\in\dominant$ we fix a weight basis $\{v^\lambda_i\}_i$, sometimes denoted simply $\{v_i\}_i$, lifting the crystal $\cB(\lambda)=\{b^\lambda_i\}_i$, with $v^\lambda_1$ being the highest weight vector, and we let $\{f^\lambda_i\}_i$ be the dual basis.  We define the generating matrix coefficients $\genf^\lambda_i$ and $\genv^\lambda_i$ for $\OqAOK$ as in \eqref{eq:genf_genv}.  

\begin{lemma}
\label{lem:f_v_nonzero}
The image of every one of the generators $\genf^\lambda_i$ and $\genv^\lambda_i$ of $\OqAOK$ from \cref{def:OqAOK} under the representation $\SoibT_0$ is non-zero.
\end{lemma}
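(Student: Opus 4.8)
The plan is to reduce to the generators $\genf^\lambda_i$ and then, for each $b^\lambda_i\in\cB(\lambda)$, to exhibit a single non-zero matrix entry of $\SoibT_0(\genf^\lambda_i)$ in the standard basis of $\SoibH=\ell^2(\NN)^{\otimes l}\otimes L^2(T)$. First, by part 4) of \cref{thm:pi0_relations} we have $\SoibT_0(\genf^\lambda_i)^*=\SoibT_0(\genv^\lambda_i)$, so the two operators have equal norm and it suffices to treat $\genf^\lambda_i$.

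Next I would establish a single-site formula at $q=0$. Writing $\genf^\lambda_i=c^{V(\lambda)}_{f^i,v_1}$ and expanding with the iterated matrix coproduct,
\[
 \Delta^{(l)}(c^{V(\lambda)}_{f^i,v_1})=\sum_{k_1,\dots,k_l} c^{V(\lambda)}_{f^i,v_{k_1}}\otimes c^{V(\lambda)}_{f^{k_1},v_{k_2}}\otimes\cdots\otimes c^{V(\lambda)}_{f^{k_{l-1}},v_{k_l}}\otimes c^{V(\lambda)}_{f^{k_l},v_1},
\]
so that $\SoibT_0(\genf^\lambda_i)$ is a sum over chains $(k_1,\dots,k_l)$ of tensor products of the single-site operators $\Soib_{i_j,0}(c^{V(\lambda)}_{f^{k_{j-1}},v_{k_j}})$ (with $k_0=i$), followed by $\repT$ on the last factor. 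As noted in the proof of \cref{thm:G_limit}, a crystal lattice for $V(\lambda)$ is also a crystal lattice for each restriction to $S_{i_j}\cong\SU_{q_{i_j}}(2)$; the basis $\{v_c\}$ decomposes $V(\lambda)|_{S_{i_j}}$ into $\kasE_{i_j}$-$\kasF_{i_j}$ strings, and within the string of $b^\lambda_c$ the vector $v_c$ sits in $SU(2)$-position $\varepsilon_{i_j}(b^\lambda_c)$. Using \cref{lem:orthogonality}, at $q=0$ the restriction of $c^{V(\lambda)}_{f^a,v_b}$ to $S_{i_j}$ becomes exactly the $SU(2)$-matrix coefficient $u^{M}_{\varepsilon_{i_j}(a),\varepsilon_{i_j}(b)}$ when $b^\lambda_a$ and $b^\lambda_b$ lie in the same $i_j$-string of length $M$, and vanishes otherwise. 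Feeding this into the explicit formulas of \cref{thm:SL2_limit} describes each single-site factor in terms of $T$, $T^*$ and $P_0$, all of which have entries in $\{0,1\}$ in the standard basis.

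The key structural observation is then that, for fixed input and output basis vectors, the chain $(k_1,\dots,k_l)$ is uniquely determined by forward propagation. Indeed, at site $j$ the cases $\varepsilon_{i_j}(k_{j-1})>\varepsilon_{i_j}(k_j)$ and $\varepsilon_{i_j}(k_{j-1})=\varepsilon_{i_j}(k_j)$ of \cref{thm:SL2_limit} send the input basis vector to outputs in disjoint ranges, so knowing $k_{j-1}$ together with the input and output indices at site $j$ pins down $k_j$. Hence every matrix entry of $\SoibT_0(\genf^\lambda_i)$ is a single product of $\{0,1\}$-factors, and to prove non-vanishing it suffices to produce one chain from $k_0=i$ along which every factor equals $1$. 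For this I would take the chain dictated by the string parametrization adapted to the reduced word $\mathbf i=(i_1,\dots,i_l)$: set $b^\lambda_{k_j}:=\kasE_{i_j}^{\varepsilon_{i_j}(b^\lambda_{k_{j-1}})}b^\lambda_{k_{j-1}}$, the top of the $i_j$-string of $b^\lambda_{k_{j-1}}$. Since $\mathbf i$ is reduced for the longest element $w_0$, this procedure terminates at the highest weight element $b^\lambda_{k_l}=b_\lambda$, i.e. $k_l=1$, so the final $\repT$-factor is multiplication by the character $z^\lambda$ and is unitary. Taking the input vector $e_{m_1}\otimes\cdots\otimes e_{m_l}\otimes\omega$ with $m_j=\varphi_{i_j}(b^\lambda_{k_{j-1}})$ and reading off the output $e_0^{\otimes l}\otimes z^\lambda\omega$ makes every single-site factor equal to $1$, so this entry of $\SoibT_0(\genf^\lambda_i)$ is non-zero.

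The main obstacle is the clean single-site formula at $q=0$: one must verify that the $q\to0$ limit of $\Soib_{i_j,q}(c^{V(\lambda)}_{f^a,v_b})$ is exactly the string matrix coefficient $u^{M}_{\varepsilon_{i_j}(a),\varepsilon_{i_j}(b)}$ with coefficient $1$, and that all cross-string contributions genuinely vanish in the limit. This is where the orthonormality of the crystal basis modulo $q$ (\cref{lem:orthogonality}) and the compatibility of crystal lattices with restriction to the $S_{i_j}$ do the real work; once this is in place, the remaining combinatorics — uniqueness of the chain and the fact that the string-parametrized chain reaches $b_\lambda$ — is standard crystal theory.
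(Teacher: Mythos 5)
Your proof is correct and follows essentially the same route as the paper: expand $\genf^\lambda_i$ through the iterated coproduct, observe via \cref{thm:SL2_limit} that each single-site $q=0$ limit is an operator with matrix entries in $\{0,1\}$, and use the string parametrization adapted to the reduced word for $w_0$ to exhibit one chain terminating at the highest weight element along which every factor acts by $1$ (with the torus factor a unitary character). The only deviation is your forward-propagation uniqueness argument for chains, which is unnecessary: since all entries of all summands are non-negative, no cancellation can occur, and this positivity observation is exactly how the paper reduces the claim to the non-vanishing of a single term.
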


\begin{proof}
Let $w_0 = s_{i_1} \cdots s_{i_l}$ be the chosen reduced decomposition of the \linebreak longest word of the Weyl group. The (partially defined) Soibelman representation 
\[
   \SoibT_q = (\Soib_{i_1, q} \otimes \Soib_{i_2, q} \otimes \cdots \otimes \Soib_{i_l, q} \otimes \repT) \circ \Delta^{(l)}: \OqK \to \cB(\SoibH).
\]
maps $\OqAOK$ to $\cB(\ell^2\NN)^{\otimes l}\otimes C(T)$. 
Since $\genf^\lambda_j = c^{V(\lambda)}_{f^j, v_1}$ we have
\[
  \Delta^{(l)}(\genf^\lambda_j) = \sum_{k_1, \cdots, k_l}
    c^{V(\lambda)}_{f^j, v_{k_1}} \otimes c^{V(\lambda)}_{f^{k_1}, v_{k_2}} \otimes \cdots \otimes c^{V(\lambda)}_{f^{k_l}, v_1}.
\]
Let $\counit_T: C(T) \to \CC$ denote the evaluation at the identity in $T$. Then
\begin{equation}
    \label{eq:Soib_nonvanishing}
  (\id^{\otimes l} \otimes \counit_T) \circ \SoibT_0 (\genf_{b_j})  = \sum_{k_1, \cdots, k_{l - 1}}
    \Soib_{i_1, 0}(c^{V(\lambda)}_{f^j, v_{k_1}}) \otimes \Soib_{i_2, 0}(c^{V(\lambda)}_{f^{k_1}, v_{k_2}}) \otimes \cdots \otimes \Soib_{i_l, 0}(c^{V(\lambda)}_{f^{k_{l-1}}, v_1}).
\end{equation}
With respect to the standard basis for $\ell^2(\NN)$, the operators $\Soib_0(c^{V(\lambda)}_{f^i, v_j})$ have all coefficients equal to $0$ or $1$, see \cref{thm:SL2_limit}, so there can be no cancellation in the sum \eqref{eq:Soib_nonvanishing}. Therefore it suffices to check that at least one of the terms in the sum is non-zero.  

For this, we use the notion of \emph{string patterns}, see for instance \cite[Chapter 11]{BumpSchilling}, which shows that if $b_\lambda\in\cB(\lambda)$ is the highest weight element and $b_j\in\cB(\lambda)$ is any other element, then
there exists an $l$-tuple $(a_1, \cdots, a_l)$ of non-negative integers such that $b_\lambda = \kasE_{i_l}^{a_l} \cdots \kasE_{i_1}^{a_1} b_j$. 
Explicitly, these are given by $a_1 = \varepsilon_{i_1}(b)$, $a_2 = \varepsilon_{i_2}(\kasE_{i_1}^{a_1} b)$, \emph{etc}.
Therefore $b_{j_m} := \kasE_{i_m}^{a_l} \cdots \kasE_{i_1}^{a_1} b$
for $m=1,\cdots,l$ is a highest weight crystal element for the restriction to $\cU_{q_{i_m}}(\lie{su}(2))$ associated to the simple root $\alpha_{i_m}$.  Therefore, by \cref{thm:SL2_limit} we have
\begin{equation*}
    \Soib_{i_1,0}(c^{V(\lambda)}_{f^j,v_{j_1}}) \otimes \Soib_{i_2,0}(c^{V(\lambda)}_{f^{j_1},v_{j_2}}) \otimes \cdots \otimes \Soib_{i_l,0}(c^{V(\lambda)}_{f^{j_{l-1}},v_{j_l}}) \neq 0,
\end{equation*}
and so $\SoibT_0(f^\lambda_j)\neq0$.

The result for $\SoibT_0(\genv_b)$ follows by taking adjoints.
\end{proof}

Now let $\KPiso:\KPalgstd\to\OKO$ denote the $*$-homomorphism of \eqref{eq:KPiso}.

\begin{corollary}
\label{cor:projections_nonzero}
For any $v \in \Vset$ we have $\KPiso(p_v) \neq 0$.
\end{corollary}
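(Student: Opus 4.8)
The plan is to write $\KPiso(p_v)$ as a sum of positive operators, exhibit one nonzero summand, and invoke positivity to rule out cancellation.

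First I would unwind the composition $\KPiso = (\CalgKstd \to \OKO)\circ(\KPalgstd \to \CalgKstd)$ from \eqref{eq:KPiso}. By \cref{prop:KP_universal_map} the first arrow sends $p_v \mapsto P_v$, and by \cref{prop:AC_universal_map} the second arrow sends $\genf_b \mapsto \SoibT_0(\genf^\lambda_b)$ and $\genv_b \mapsto \SoibT_0(\genv^\lambda_b)$ for $b \in \cB(\lambda)$.

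Next I would put $P_v$ into normal form. Applying \cref{cor:projection-rightend} with $\lambda = \rho_\Cset$ gives
\[
  P_v = \sum_{\substack{b \in \cB(\rho_\Cset) \\ \rightend_\Cset(b) = v}} \genv_b \genf_b,
\]
and since $v \in \Vset = \{\rightend_\Cset(b) \mid b \in \cB(\rho_\Cset)\}$ the index set is nonempty. Pushing this identity through the $*$-homomorphism $\CalgKstd \to \OKO$ and using part~4) of \cref{thm:pi0_relations}, namely $\SoibT_0(\genv^\lambda_b) = \SoibT_0(\genf^\lambda_b)^*$, I obtain
\[
  \KPiso(p_v) = \sum_{\substack{b \in \cB(\rho_\Cset) \\ \rightend_\Cset(b) = v}} \SoibT_0(\genf^{\rho_\Cset}_b)^*\,\SoibT_0(\genf^{\rho_\Cset}_b).
\]

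Each summand is a positive operator of the form $A^*A$, and it is nonzero because $\|A^*A\| = \|A\|^2$ while $A = \SoibT_0(\genf^{\rho_\Cset}_b) \neq 0$ by \cref{lem:f_v_nonzero}. Finally, a sum of positive operators dominates each of its summands, so for any fixed $b_0$ in the (nonempty) index set one has $\KPiso(p_v) \geq \SoibT_0(\genf^{\rho_\Cset}_{b_0})^* \SoibT_0(\genf^{\rho_\Cset}_{b_0}) \neq 0$, whence $\KPiso(p_v) \neq 0$. The only point that requires care is that the sum defining $P_v$ might a priori collapse through cancellation; this is exactly what positivity of the summands prevents, so there is no real obstacle once \cref{lem:f_v_nonzero} and \cref{cor:projection-rightend} are in hand.
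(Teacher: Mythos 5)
Your proof is correct and follows essentially the same route as the paper: both apply \cref{cor:projection-rightend} (with $\lambda = \rho_\Cset$) together with \cref{prop:AC_universal_map} to express $\KPiso(p_v)$ as a sum of operators of the form $\SoibT_0(\genf^{\rho_\Cset}_b)^*\,\SoibT_0(\genf^{\rho_\Cset}_b)$ over the nonempty set of $b$ with $\rightend_\Cset(b)=v$, and then conclude via \cref{lem:f_v_nonzero}. Your write-up merely makes explicit the positivity argument (no cancellation among terms $A^*A$, using $\|A^*A\|=\|A\|^2$) that the paper leaves implicit.
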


\begin{proof}
  By \cref{cor:projection-rightend}, and \cref{prop:AC_universal_map}, we have
  \[
   \KPiso(p_v)=\sum_i \SoibT_0(\genf^\lambda_{i})^* \SoibT_0(\genf^\lambda_{i}),
  \]
  where the sum is over all $i\in\{1,\ldots,\dim V(\lambda)$ such that $\rightend_\Cset(b_i) = v$.  The result now follows from \cref{lem:f_v_nonzero}.
\end{proof}

We are now in a position to apply the gauge invariant uniqueness theorem \cite{KumPas}, or more precisely the algebraic version due to Aranda Pino, Clark, an Huef and Raeburn \cite{Aranda:Kumjian-Pask}.
Recall that the Kumjian-Pask algebra $\KPalg$ is equipped with a natural $\ZZ^\rank$-grading, see \cite[Theorem 3.4]{Aranda:Kumjian-Pask}.  Explicitly, each generator $s_e$ is given degree $d(e)$, $s_e^*$ is given degree $-d(e)$ and the vertex projections $p_v$ are given degree $0$.  

Meanwhile, the $\bP$-grading on $\OqK$ from \cref{def:Zr-grading} is such that $\genf^\lambda_i = c^{V(\lambda)}_{f^i,v_1}$ has degree $\lambda$ and $\genv^\lambda_i$ has degree $-\lambda$.  Here, as previously, we are identifying $\ZZ^\rank$ with $\bP$ via $(n_1,\cdots,n_\rank) \mapsto \sum_i n_i\varpi_i$.

Moreover, the Soibelman representations $\SoibT_q:\OqK \to \cB(\ell^2(\NN))^{\otimes l}) \otimes C(T) \subset \cB(\SoibH)$ are all grading-preserving, in the following sense.  By Pontrjagin duality, we have $\cO[T] \cong \CC[\bP]$, which is a $\bP$-graded algebra in the obvious way.  Examining the formula for $\SoibT_q$ in \cref{def:big_cell_rep}, we see that $\SoibT_q:\OqK_\mu \mapsto \cB(\ell^2(\NN)^{\otimes l}) \otimes \cO[T]_\mu$.
Considering the limit as $q\to0$, this makes $\OKO$ into a $\bP$-graded algebra.

Now let $e=(v,b)\in\hgraphstd$ with degree $\dmap(e)=\lambda$, so that $b=b^\lambda_i\in\cB(\lambda)$ for some $i$.  The image of $s_e$ in $\CalgKstd$ is $S_e=\genv^\lambda_{b_i} P_v$.  Under the universal map $\CalgKstd\to\OKO$, the element $\genv^\lambda_{b_i}$ maps to $\SoibT_0(\genv^\lambda_i)$, which has degree $-\lambda$, while the projection $P_v$ maps to degree $0$.  It follows that the $*$-homomorphism $\KPiso:\KPalgstd\to\OKO$ is grading-reversing.

Thus, equipping $\OKO$ with the opposite grading and taking into account \cref{cor:projections_nonzero}, we can immediately apply the graded-uniqueness theorem as appearing in \cite[Theorem 4.1]{Aranda:Kumjian-Pask}.  We get the following result.

\begin{theorem}
\label{thm:Soibelman_faithful}
Let $K$ be a compact, connected, simply connected semisimple Lie group.
The map $\KPiso: \KPalgstd \to\cO[K_0]$ is an isomorphism of $*$-algebras.  Consequently, we have a $C^*$-isomorphism $C^*(\hgraphstd) \cong C(K_0)$.
\end{theorem}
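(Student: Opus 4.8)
The plan is to deduce injectivity of $\KPiso$ from the graded-uniqueness theorem for Kumjian--Pask algebras, since surjectivity comes for free: $\KPiso$ is the composite of the two surjections furnished by \cref{prop:KP_universal_map} and \cref{prop:AC_universal_map}, and is therefore surjective. So the only real content at the algebraic level is faithfulness.

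First I would assemble the hypotheses of \cite[Theorem 4.1]{Aranda:Kumjian-Pask}. By \cref{prop:nice_graph} the $\rank$-graph $\hgraphstd$ is row-finite with no sources, so $\KPalgstd$ carries its canonical $\ZZ^\rank$-grading in which $s_e$ has degree $\dmap(e)$, $s_e^*$ has degree $-\dmap(e)$, and each $p_v$ has degree $0$. As spelled out in the paragraph preceding the theorem, the Soibelman limit makes $\OKO$ into a $\bP\cong\ZZ^\rank$-graded algebra and $\KPiso$ is grading-reversing; replacing the grading on $\OKO$ by its opposite turns $\KPiso$ into a genuine graded $*$-homomorphism. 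The last hypothesis, non-vanishing of the vertex projections $\KPiso(p_v)$ for every $v\in\Vsetstd$, is precisely \cref{cor:projections_nonzero}. The graded-uniqueness theorem then forces $\KPiso$ to be injective, so $\KPiso$ is a $*$-isomorphism $\KPalgstd\cong\OKO=\cO[K_0]$.

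For the $C^*$-statement I would pass to completions. By the universal property of $C^*(\hgraphstd)$ from \cite{KumPas}, the $*$-representation $\KPalgstd\cong\OKO\subset\cB(\SoibH)$ extends to a $*$-homomorphism $\KPiso:C^*(\hgraphstd)\to C(K_0)$ whose range is a $C^*$-subalgebra containing the dense $*$-subalgebra $\OKO$, hence all of $C(K_0)$; so $\KPiso$ is surjective on completions. To obtain injectivity I would invoke the gauge-invariant uniqueness theorem of \cite{KumPas}: the universal gauge action of $\TT^\rank$ on $C^*(\hgraphstd)$ must be intertwined by $\KPiso$ with the gauge action of the maximal torus $T$ on $C(K_0)$ inherited from $\OqK$ as in \cref{sec:gauge_action}, and one needs $\KPiso(p_v)\neq0$, which is again \cref{cor:projections_nonzero}. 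Granting these, the gauge-invariant uniqueness theorem yields faithfulness of $\KPiso$, giving the desired $C^*$-isomorphism $C^*(\hgraphstd)\cong C(K_0)$.

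The main obstacle I anticipate lies in the $C^*$-level verification rather than the algebraic one. Concretely, one must check that the $T$-gauge action on $C(K_0)$ is a strongly continuous action of $\TT^\rank$ that implements the $\bP$-grading and that is matched, under $\KPiso$, with the canonical gauge action on $C^*(\hgraphstd)$. This amounts to confirming that the Soibelman representations are $T$-equivariant, that equivariance persists in the $q\to0$ norm limit, and that the action extends continuously to the norm closure $C(K_0)$ rather than living only on the dense graded pieces. Once strong continuity and equivariance are established at the $C^*$-level, the gauge-invariant uniqueness theorem applies verbatim and the proof is complete.
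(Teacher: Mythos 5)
Your proposal is correct and follows essentially the same route as the paper: surjectivity is the composite of \cref{prop:KP_universal_map} and \cref{prop:AC_universal_map}, and injectivity comes from the graded-uniqueness theorem \cite[Theorem 4.1]{Aranda:Kumjian-Pask} applied to the opposite $\bP$-grading on $\OKO$ together with the non-vanishing of the vertex projections from \cref{cor:projections_nonzero}. The $C^*$-statement is handled exactly as you outline via the gauge-invariant uniqueness theorem of \cite{KumPas}, and the strong continuity you flag as the main obstacle is in fact automatic, since the gauge action on $C(K_0)$ is spatially implemented by the strongly continuous translation unitaries on the $L^2(T)$ leg of $\SoibH$.
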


By restricting to the subalgebras spanned by elements of degree $\mu\in\hgraph$ for the $\NC$-tuple of dominant weights $\Cset = (\vartheta_1,\cdots,\vartheta_\NC)$ described in \cref{sec:flag_manifolds}, we deduce the following more general result.

\begin{theorem}
\label{thm:Soibelman_faithful_flag}
Let $K$ be a compact, connected, semisimple Lie group, not necessarily simply connected.
Let $\rootset \subseteq \simpleroots$ be a set of simple roots, with $X_\rootset$ be the associated flag variety and $Y_\rootset$ the torus bundle over $X_\rootset$, as described in \cref{sec:flag_manifolds}.  Let $\Cset$ be the $\NC$-tuple of dominant weights generating the submonoid $\dominant_{K,\rootset}$ from  \cref{thm:flag_generators2}.

The map $\KPiso$ above restricts to an isomorphism of $*$-algebras $\KPalg \to \cO[Y_{\rootset,0}]$.  The degree $0$ part $\KPalg_0$ is isomorphic to $\cO[X_{\rootset,0}]$.

Taking $C^*$-completions gives an isomorphism $C^*(\hgraph) \cong C(Y_{\rootset,0})$ with gauge-invariant subalgebra $(C^*(\hgraph))_0 \cong C(X_{\rootset,0})$.
\end{theorem}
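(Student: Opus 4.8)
The plan is to argue exactly as in the proof of \cref{thm:Soibelman_faithful}, but applied to the restricted pair of surjective $*$-homomorphisms $\KPalg \to \CalgK \to \OYSO$ from \eqref{eq:KPCiso}, which are supplied by \cref{prop:KP_universal_map} and \cref{prop:AC_universal_map} for the colour set $\Cset$ generating $\dominant_{K,\rootset}$. Since both maps are surjective, it suffices to prove that the composite $\KPiso : \KPalg \to \OYSO$ is injective; the algebraic graded-uniqueness theorem \cite[Theorem 4.1]{Aranda:Kumjian-Pask} delivers this once its two hypotheses — nonvanishing of the vertex projections and equivariance for a $\ZZ^\NC$-grading — are checked.

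First I would verify $\KPiso(p_v)\neq0$ for every $v\in\Vset$, reproducing \cref{cor:projections_nonzero}. Choosing any $\lambda\in\dominant_\Cset$ with $\lambda\geq\rho_\Cset$, \cref{cor:projection-rightend} combined with \cref{prop:AC_universal_map} gives $\KPiso(p_v)=\sum_i\SoibT_0(\genf^\lambda_i)^*\SoibT_0(\genf^\lambda_i)$, the sum running over those $i$ with $\rightend_\Cset(b^\lambda_i)=v$. Each summand is a nonzero positive operator by \cref{lem:f_v_nonzero}, whose proof via string patterns and the Soibelman representation is valid for any dominant weight, in particular for $\lambda\in\dominant_{K,\rootset}$; positivity forbids cancellation, so the sum is nonzero.

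Next I would set up the grading. The $\NC$-graph $\hgraph$ equips $\KPalg$ with its natural $\ZZ^\NC$-grading from \cite[Theorem 3.4]{Aranda:Kumjian-Pask}, in which $s_e$ has degree $\dmap(e)\in\NN^\NC\cong\dominant_\Cset$. On the other side, the $\bP$-grading on $\OKO$ restricts to $\OYSO$, and since $\OYSO$ is generated by the $\SoibT_0(\genf^\lambda_i),\SoibT_0(\genv^\lambda_i)$ with $\lambda\in\dominant_{K,\rootset}$, all its homogeneous degrees lie in the subgroup of $\bP$ generated by $\Cset$; linear independence of $\Cset$ identifies this subgroup with $\ZZ^\NC$, and the corresponding gauge action is precisely the $\TT^\NC$-action noted in the remark after \cref{thm:flag_generators}. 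As in the simply connected case, $\KPiso$ carries $s_e$ (with $\dmap(e)=\lambda$) to the image of $\genv_b P_v$, of degree $-\lambda$, so it is grading-reversing; equipping $\OYSO$ with the opposite grading makes $\KPiso$ a graded $*$-homomorphism, and the graded-uniqueness theorem yields injectivity, hence $\KPalg\cong\OYSO$. Restricting to degree $0$, and using that $\OXSO$ is by \cref{def:OXO} the gauge-invariant (degree-zero) subalgebra of $\OYSO$, gives $\KPalg_0\cong\OXSO$.

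Finally, for the $C^*$-statement I would pass to norm closures inside $\cB(\SoibH)$. The algebra $\OYSO$ is dense in $C(Y_{\rootset,0})$, and the gauge action of $\TT^\NC$ extends to a strongly continuous action on this $C^*$-algebra, inherited from the compact quantum groups $C(K_q)$. Since every vertex projection maps to a nonzero projection, the $C^*$-algebraic gauge-invariant uniqueness theorem of Kumjian and Pask \cite{KumPas} shows the induced surjection $C^*(\hgraph)\to C(Y_{\rootset,0})$ is faithful, giving $C^*(\hgraph)\cong C(Y_{\rootset,0})$; taking fixed points of the gauge action identifies $(C^*(\hgraph))_0\cong C(X_{\rootset,0})$. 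The points requiring genuine care — and hence the \emph{main obstacle} — are confirming that the linear independence of $\Cset$ makes the relevant grading group free of rank $\NC$, so that \cite[Theorem 4.1]{Aranda:Kumjian-Pask} genuinely applies, and that the gauge action is strongly continuous at the $C^*$-level, so that the $C^*$ uniqueness theorem is available; everything else is a formal consequence of the machinery already developed for general colours $\Cset$.
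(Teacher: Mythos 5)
Your proposal is correct, and it uses the same underlying machinery as the paper; the difference is purely in how the theorem is deduced. The paper proves \cref{thm:Soibelman_faithful} for the simply connected group with $\Cset=\fundweights$ and then obtains \cref{thm:Soibelman_faithful_flag} in one line, ``by restricting to the subalgebras'' of the isomorphism $\KPalgstd\cong\OKO$ -- a step which implicitly requires identifying $\KPalg$ with the appropriate subalgebra of $\KPalgstd$ and $\OYSO$ with its image, not entirely trivial when $K$ is not simply connected (where one works inside $\OqKtilde$). You instead re-run the graded-uniqueness argument directly for the pair \eqref{eq:KPCiso}: this is legitimate because every supporting result is already stated and proved in the paper for an arbitrary linearly independent colour set $\Cset$ and non-simply-connected $K$ (\cref{prop:KP_universal_map}, \cref{prop:AC_universal_map}, \cref{cor:projection-rightend}, \cref{prop:nice_graph}), and \cref{lem:f_v_nonzero} applies to any $\lambda\in\dominant$, in particular to $\lambda\in\dominant_{K,\rootset}$; your explicit appeal to positivity to rule out cancellation in $\KPiso(p_v)=\sum_i\SoibT_0(\genf^\lambda_i)^*\SoibT_0(\genf^\lambda_i)$ is a small but genuine improvement on the paper's terse \cref{cor:projections_nonzero}. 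Your grading setup is also sound: linear independence of $\Cset$ makes $\ZZ\cdot\Cset\subseteq\bP$ free of rank $\NC$, so \cite[Theorem 4.1]{Aranda:Kumjian-Pask} applies to the grading-reversed map, and degree zero in this grading agrees with gauge invariance as in \cref{def:OXO}, giving $\KPalg_0\cong\OXSO$. At the $C^*$-level you invoke the gauge-invariant uniqueness theorem of \cite{KumPas}, with strong continuity of the $\TT^\NC$-action inherited from the unitary implementation on $L^2(T)$; the paper leaves this step implicit even in the simply connected case, so here too your argument fills in detail rather than diverging. In short: your direct route buys a self-contained proof that sidesteps the embedding $\KPalg\into\KPalgstd$, at the cost of repeating the uniqueness argument the paper proves once and restricts.
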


It is well-known that the family of $C^*$-algebras $C(K_q)$, with $q\in(0,\infty)$ form a continuous field of $C^*$-algebras.  The matrix coefficients $c^V_{f,v}$, with $v\in V(\lambda)$, $f\in V(\lambda)^*$,
form a generating family of continuous sections.  It follows immediately from \cref{thm:Soibelman_faithful} that this continuous field can be extended to a continuous field over $[0,\infty)$, with fibre at $0$ being $\OKO\cong\KPalgstd$.  A generating set of continuous sections is given locally near $q=0$ by the matrix coefficients belonging to the compact $\pid$-form $\OqAOK$.

Moreover, it is well-known that we have an isomorphism \linebreak  $\OKq\cong\cO[K_{q^{-1}}]$, see for instance \cite[Lemma 2.4.2]{NesTus:book}.  Therefore, we can also extend the continuous field to $q=\infty$, with fibre $\cO[K_\infty]\cong\OKO\cong\KPalgstd$. Restricting these continuous fields to the fields subalgebras generated by the matrix coefficients of appropriate simple modules, we obtain \cref{thm:continuous_field}.


\section{Further properties and examples}
\label{sec:further-properties}

We conclude with some additional remarks on the structure of the \linebreak higher-rank graphs $\hgraph$, in particular in relation to the role of the Weyl group.
First we need the following property of the Cartan braiding, which is a consequence of a similar property for the ordinary braiding $\Rhat$.

\begin{lemma}
\label{lem:commutation-weyl}
Let $\cB(\lambda)$ and $\cB(\lambda')$ be irreducible crystals.
Then the condition
\[
\braid_{\cB(\lambda), \cB(\lambda')}(b \otimes b') = b' \otimes b
\]
is equivalent to $b \otimes b'$ being in the Cartan component and $(\wt(b), \wt(b')) = (\lambda, \lambda')$.
\end{lemma}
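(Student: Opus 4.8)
The plan is to read the flip coefficient directly off the defining formula for the Cartan braiding. First I would set $V = V(\lambda)$, $W = V(\lambda')$ and fix weight-vector lifts $v \in \cL(\lambda)$ of $b$ and $w \in \cL(\lambda')$ of $b'$, writing $\beta = \wt(v) = \wt(b)$ and $\beta' = \wt(w) = \wt(b')$. Applying the normalization \eqref{eq:R-matrix_convention} and multiplying by $q^{(\lambda,\lambda')}$ gives
\[
 q^{(\lambda,\lambda')}\Rhat_{V,W}(v\otimes w) = q^{(\lambda,\lambda')-(\beta,\beta')}\, w\otimes v + q^{(\lambda,\lambda')}\sum_i w_i\otimes v_i,
\]
where each correction term satisfies $\wt(w_i) < \beta'$ and $\wt(v_i) > \beta$. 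The point I would stress is that the flip term $w\otimes v$ is the \emph{unique} summand lying in the bidegree $\cL(\lambda')_{\beta'}\otimes\cL(\lambda)_\beta$, so its $q$-power is not contaminated by the lower-order corrections.

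Next I would establish the numerical input $(\lambda,\lambda') \geq (\beta,\beta')$. This is where I would lean on \cref{thm:braiding_limit}: it guarantees that the entire left-hand side lies in $\cL_{W\otimes V} = \cL(\lambda')\otimes_\pid\cL(\lambda)$, hence so does its bidegree-$(\beta',\beta)$ component $q^{(\lambda,\lambda')-(\beta,\beta')}\,w\otimes v$. Since $w\otimes v$ is one of the free $\pid$-basis generators of that bigraded piece, the exponent must be non-negative. Reducing modulo $q$, I would then conclude that the coefficient of the crystal basis element $b'\otimes b$ in $\braid_{\cB(\lambda),\cB(\lambda')}(b\otimes b')$ is exactly $\lim_{q\to0} q^{(\lambda,\lambda')-(\beta,\beta')}$, equal to $1$ when $(\beta,\beta') = (\lambda,\lambda')$ and $0$ otherwise.

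Finally I would close both implications at once. By \cref{thm:braiding_limit} the image $\braid_{\cB(\lambda),\cB(\lambda')}(b\otimes b')$ is either $0$ or a single crystal basis element of $\cB(\lambda')\otimes\cB(\lambda)$; it therefore equals $b'\otimes b$ precisely when the coefficient just computed is $1$, i.e.\ precisely when $(\wt(b),\wt(b')) = (\lambda,\lambda')$. In that case it is nonzero, so $b\otimes b'$ must lie in the Cartan component, again because the Cartan braiding between irreducibles vanishes off the Cartan component. (I would remark that the weight identity alone already forces membership in the Cartan component, so the two conditions on the right-hand side are mutually consistent.) The only delicate step is the bidegree bookkeeping of the middle paragraph---that the flip term is the unique contribution in its bidegree and is a genuine lattice generator, so that after reduction its power of $q$ can be read off without interference; everything else is immediate from \eqref{eq:R-matrix_convention} and \cref{thm:braiding_limit}.
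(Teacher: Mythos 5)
Your proof is correct and follows essentially the same route as the paper's: both read off the rescaled flip coefficient $q^{(\lambda,\lambda')-(\wt(b),\wt(b'))}$ from the normalization \eqref{eq:R-matrix_convention}, observe that it survives at $q=0$ exactly when $(\wt(b),\wt(b'))=(\lambda,\lambda')$, and combine this with the fact from \cref{thm:braiding_limit} that $\braid$ sends a crystal element to a single crystal element or to zero, vanishing off the Cartan component. The only difference is cosmetic: the paper simply quotes the standard inequality $(\wt(b),\wt(b'))\leq(\lambda,\lambda')$ for weights of highest weight modules, whereas you rederive it from lattice preservation via your bidegree bookkeeping---a valid and non-circular variant, since \cref{thm:braiding_limit} is proved independently of that inequality.
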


\begin{proof}
Consider $V = V(\lambda)$ and $W = V(\lambda')$.
Using the definition of the braiding from \eqref{eq:R-matrix_convention} we have $(\Rhat_{V, W})_{i j}^{j i} = q^{-(\wt(v_i), \wt(w_j))}$ for any $v_i \in V$ and $w_j \in W$.
Taking into account that $(\wt(v_i), \wt(w_j)) \leq (\lambda, \lambda')$, we get
\begin{equation}
\label{eq:rescaled-weyl}
\lim_{q \to 0} q^{(\lambda, \lambda')} (\Rhat_{V, W})_{i j}^{j i} =
\begin{cases}
1 & (\wt(v_i), \wt(w_j)) = (\lambda, \lambda'), \\
0 & \mathrm{otherwise}.
\end{cases}
\end{equation}
Now denote by $b_i \in \cB(\lambda)$ and $c_j \in \cB(\lambda')$ the crystal elements corresponding to $v_i$ and $w_j$.
According to \cref{thm:braiding_limit}, we have $\braid(b_i \otimes c_j) = 0$ if $b_i \otimes c_j$ is not in the Cartan component, otherwise $\braid(b_i \otimes c_j) = c_k \otimes b_l$ for some $k, l$.
Then \eqref{eq:rescaled-weyl} shows that, in the case $(\wt(v_i), \wt(w_j)) = (\lambda, \lambda')$, we have $\braid(b_i \otimes c_j) = c_j \otimes b_i$.
The converse also follows in a similar way from \eqref{eq:rescaled-weyl}.
\end{proof}

\begin{remark}
Suppose $b$ and $b'$ are such that $\wt(b) = w \lambda$ and $\wt(b') = w \lambda'$ for some $w \in W$, where $W$ denotes the Weyl group. Then $W$-invariance of the inner product gives $(\wt(b), \wt(b')) = (\lambda, \lambda')$.
\end{remark}

As usual, let $\Cset=(\vartheta_1,\ldots,\vartheta_\NC)$ be a family of dominant weights.  For $w\in W$, let us write  $b_w$, for the unique element of $\cB(\rho_\Cset)$ with extremal weight $w\rho_\Cset$.

\begin{proposition}
Consider the map
\[
W \to \Vset, \quad
w \mapsto \rightend_\Cset(b_w).
\]
Write $W_{\rho_\Cset} \subset W$ for the stabilizer of the dominant weight $\rho_\Cset$.
\begin{enumerate}
\item[(1)] We have an embedding of $W / W_{\rho_\Cset}$ into $\Vset$.
\item[(2)] When $\Cset = \fundweights$ this is an embedding of $W$.
\end{enumerate}
\end{proposition}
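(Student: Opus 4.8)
The plan is to compute $\rightend_\Cset(b_w)$ explicitly and then read off both claims from elementary Weyl-group combinatorics. Throughout I would write $\cS_i$ for the reflection operators giving the Weyl-group action on crystals, namely $\cS_i b = \kasF_i^{\langle\wt(b),\alpha_i^\vee\rangle}b$ when $\langle\wt(b),\alpha_i^\vee\rangle\geq0$ and $\cS_i b=\kasE_i^{-\langle\wt(b),\alpha_i^\vee\rangle}b$ otherwise, with $\cS_w=\cS_{i_1}\cdots\cS_{i_k}$ for a reduced word $w=s_{i_1}\cdots s_{i_k}$. On an irreducible crystal $\cB(\mu)$ the element $\cS_w b_\mu$ is exactly the unique element of extremal weight $w\mu$, the weights in the $W$-orbit of $\mu$ having multiplicity one; in particular $b_w=\cS_w b_{\rho_\Cset}$.

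The key step, and the main work, is the identity $\rightend_{\vartheta_i}(b_w)=\cS_w b_{\vartheta_i}$ for every colour $\vartheta_i$. To prove it I would use the defining embedding $\inclusion:\cB(\rho_\Cset)\into\cB(\rho_\Cset-\vartheta_i)\otimes\cB(\vartheta_i)$ onto the Cartan component, which sends $b_{\rho_\Cset}$ to $b_{\rho_\Cset-\vartheta_i}\otimes b_{\vartheta_i}$ and, being a morphism of crystals, commutes with $\kasE_i,\kasF_i$ and hence with $\cS_w$. Thus $\inclusion(b_w)=\cS_w(b_{\rho_\Cset-\vartheta_i}\otimes b_{\vartheta_i})$. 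A direct application of the tensor product rule \eqref{eq:tensor_F} shows that on a tensor of two elements that are extremal of weights $w(\rho_\Cset-\vartheta_i)$ and $w\vartheta_i$ with $\ell(s_iw)>\ell(w)$, the operator $\cS_i$ factorizes as $\cS_i(c\otimes c')=\cS_i c\otimes\cS_i c'$; iterating along a reduced word for $w$ gives $\cS_w(b_{\rho_\Cset-\vartheta_i}\otimes b_{\vartheta_i})=\cS_w b_{\rho_\Cset-\vartheta_i}\otimes\cS_w b_{\vartheta_i}$. Reading off the right-hand factor yields $\rightend_{\vartheta_i}(b_w)=\cS_w b_{\vartheta_i}$, and therefore $\rightend_\Cset(b_w)=(\cS_w b_{\vartheta_1},\dots,\cS_w b_{\vartheta_\NC})$.

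With this in hand both parts are short. Since an extremal element of $\cB(\vartheta_i)$ is determined by its weight, we have $\rightend_\Cset(b_w)=\rightend_\Cset(b_{w'})$ if and only if $w\vartheta_i=w'\vartheta_i$ for all $i$, i.e. $w^{-1}w'\in\bigcap_i W_{\vartheta_i}$. I would then show $\bigcap_i W_{\vartheta_i}=W_{\rho_\Cset}$: one inclusion is immediate, and for the reverse note that $W_{\rho_\Cset}$ is generated by the simple reflections $s_j$ with $\langle\rho_\Cset,\alpha_j^\vee\rangle=0$; since $\langle\rho_\Cset,\alpha_j^\vee\rangle=\sum_i\langle\vartheta_i,\alpha_j^\vee\rangle$ is a sum of non-negative integers, it vanishes precisely when $\langle\vartheta_i,\alpha_j^\vee\rangle=0$ for every $i$, i.e. when $s_j$ fixes each $\vartheta_i$. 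Hence the fibres of $w\mapsto\rightend_\Cset(b_w)$ are exactly the left cosets of $W_{\rho_\Cset}$, which gives the injection $W/W_{\rho_\Cset}\into\Vset$ of part (1). For part (2), when $\Cset=\fundweights$ we have $\rho_\Cset=\rho=\sum_i\varpi_i$, which is regular since $\langle\rho,\alpha_j^\vee\rangle=1$ for all $j$; thus $W_\rho=\{e\}$ and the map is an embedding of all of $W$.

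The only genuinely delicate point is the factorization $\cS_i(c\otimes c')=\cS_i c\otimes\cS_i c'$ on extremal tensors. Here one must check that the hypothesis $\ell(s_iw)>\ell(w)$, equivalently $w^{-1}\alpha_i>0$, guarantees $\langle w\vartheta,\alpha_i^\vee\rangle\geq0$ for dominant $\vartheta$, so that $\varepsilon_i$ vanishes and $\varphi_i=\langle w\vartheta,\alpha_i^\vee\rangle$ on each extremal factor, and then track \eqref{eq:tensor_F} through the entire string $\kasF_i^{\langle w\rho_\Cset,\alpha_i^\vee\rangle}$ (the string first exhausts the left factor, then passes to the right). This is a finite, if slightly fiddly, crystal computation; everything after it is routine Weyl-group bookkeeping.
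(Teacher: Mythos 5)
Your proposal is correct, but it takes a genuinely different route from the paper. The paper computes $\rightend_\Cset(b_w)$ with its own Cartan-braiding machinery: it identifies $b_w$ with the unique tensor $b_1\otimes\cdots\otimes b_\NC$ in the Cartan component of $\cB(\vartheta_1)\otimes\cdots\otimes\cB(\vartheta_\NC)$, observes that multiplicity one of the extremal weight forces $\wt(b_i)=w\vartheta_i$ and hence $(\wt(b_i),\wt(b_j))=(\vartheta_i,\vartheta_j)$, and then invokes \cref{lem:commutation-weyl} (the Cartan braiding acts as the honest flip on such pairs) together with \cref{prop:rightend-sigma} to conclude $\rightend_{\vartheta_k}(b_w)=b_k$; injectivity then follows from the uniqueness of the tensor representing $b_w$. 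You instead import Kashiwara's Weyl-group action $\cS_i$ on crystals, note that the Cartan-component embedding $\cB(\rho_\Cset)\into\cB(\rho_\Cset-\vartheta_i)\otimes\cB(\vartheta_i)$ intertwines the $\cS_i$, and prove the factorization $\cS_i(c\otimes c')=\cS_i c\otimes\cS_i c'$ on extremal tensors under the length condition $\ell(s_iw)>\ell(w)$ directly from the tensor product rule \eqref{eq:tensor_F} — arriving at the same key identity, in the equivariant form $\rightend_{\vartheta_i}(b_w)=\cS_w b_{\vartheta_i}$. Your delicate step checks out: for extremal $c$ with $\langle\wt(c),\alpha_i^\vee\rangle\geq0$ one has $\varepsilon_i(c)=0$ and $\varphi_i(c)=\langle\wt(c),\alpha_i^\vee\rangle$, so the $\kasF_i$-string exhausts the left factor before passing to the right, exactly as you describe; and the induction along a reduced word is sound since each $\cS_i$ sends extremal elements to extremal elements (so one can even fix a single reduced word and avoid appealing to the braid relations for the $\cS_i$). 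Your handling of injectivity is also slightly sharper than the paper's: you compute the fibres exactly via $\bigcap_i W_{\vartheta_i}=W_{\rho_\Cset}$ (using that the stabilizer of a dominant weight is generated by the simple reflections fixing it), which simultaneously gives well-definedness on $W/W_{\rho_\Cset}$ and injectivity, whereas the paper argues via uniqueness of the representing tensor. What each approach buys: the paper's stays entirely within the machinery it has already built (no Weyl-group action on crystals is introduced anywhere in the paper), while yours is self-contained given standard crystal theory, avoids the braiding altogether, and yields the cleaner structural statement that the vertex map is $W$-equivariant on extremal elements.
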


\begin{proof}
(1) We have a map $W / W_{\rho_\Cset} \to \Vset$, since $w \rho_\Cset = w' \rho_\Cset$ if and only if $w^{-1} w' \in W_{\rho_\Cset}$.
Hence it suffices to show that $\rightend_\Cset(b_w) \neq \rightend_\Cset(b_{w'})$ whenever $b_w \neq b_{w'}$.

Identify $\cB(\rho_\Cset) = \cB(\vartheta_1 + \cdots \vartheta_\NC)$ with the Cartan component of $\cB(\vartheta_1) \otimes \cdots \otimes \cB(\vartheta_\NC)$. Then $b_w \in \cB(\rho_\Cset)$ corresponds to a unique element $b_1 \otimes \cdots \otimes b_\NC \in \cB(\vartheta_1) \otimes \cdots \otimes \cB(\vartheta_\NC)$.
Since $w \rho_\Cset = w \vartheta_1 + \cdots + w \vartheta_\NC$ has multiplicity one, being in the Weyl group orbit of $\rho_\Cset$, we find that $\wt(b_i) = w \vartheta_i$ for $i = 1, \cdots, \NC$.
It follows that $(\wt(b_i), \wt(b_j)) = (\vartheta_i, \vartheta_j)$ for any $i$ and $j$.

According to \cref{prop:rightend-sigma}, the right end $\rightend_{\vartheta_k}(b_w)$ is equal to the rightmost factor of $\braid_{\NC - 1} \circ \cdots \circ \braid_k(b_1 \otimes \cdots \otimes b_\NC)$.
Then using \cref{lem:commutation-weyl} we obtain
\[
\braid_{\NC - 1} \circ \cdots \circ \braid_k(b_1 \otimes \cdots \otimes b_\NC) = b_1 \otimes \cdots \otimes \widehat{b_k} \otimes \cdots \otimes b_\NC \otimes b_k.
\]
It follows that $\rightend_{\vartheta_k}(b_w) = b_k$ for any $k = 1, \cdots, \NC$.
Since $b_1 \otimes \cdots \otimes b_\NC$ is the unique element corresponding to $b_w$, we find that $\rightend_\Cset(b_w) \neq \rightend_\Cset(b_{w'})$ when $b_w \neq b_{w'}$.

(2) In this case the stabilizer of $\rho = \varpi_1 + \cdots + \varpi_r$ is trivial.
\end{proof}

\begin{remark}
A bit less formally we could write $\rightend_\Cset(b_w) = b_w$, where on the left-hand side $b_w$ is identified with $b_1 \otimes \cdots \otimes b_\NC \in \cB(\vartheta_1) \otimes \cdots \otimes \cB(\vartheta_\NC)$ and on the right-hand side with the $\NC$-tuple $(b_1, \cdots, b_\NC)$.
\end{remark}

In the case of the $2$-graph for $K = SU(3)$ given in \cref{ex:graph-su3}, the number of vertices coincides with the number of elements of the Weyl group.
This can be seen to hold more generally for $SU(n)$.  But it is not true in general.

\begin{example}
In \cref{ex:graph-su3} we saw that the vertices for $\lie{g} = \mathfrak{sl}_3$ are given by
\begin{align*}
v_1 & = (a_1, b_1), &
v_2 & = (a_1, b_2), & 
v_3 & = (a_2, b_1), \\
v_4 & = (a_2, b_3), &
v_5 & = (a_3, b_2), & 
v_6 & = (a_3, b_3).
\end{align*}
These correspond to the $6$ elements of the Weyl group of $\mathfrak{sl}_3$, which is the symmetric group $S_3$.
More precisely, they correspond to the elements
\[
w_1 = 1, \quad
w_2 = s_2, \quad
w_3 = s_1, \quad
w_4 = s_1 s_2, \quad
w_5 = s_2 s_1, \quad
w_6 = s_1 s_2 s_1.
\]
This can be checked by computing their action on the weights of $a_i$ and $b_j$.
\end{example}

What is special about $\lie{g} = \mathfrak{sl}_n$ is that every fundamental representation is \emph{minuscule}, that is the Weyl group acts transitively on the weights of the representation.
To show that this does not always hold, we consider the example of $\lie{g} = C_2$, the symplectic Lie algebra of rank two.

\begin{example}
We consider the Lie algebra $\lie{g} = C_2$ of rank two.
The crystal graphs of the fundamental representations are
\[
\begin{split}
\cB(\varpi_1) : \quad & a_1 \xrightarrow{1} a_2 \xrightarrow{2} a_3 \xrightarrow{1} a_4, \\
\cB(\varpi_2) : \quad & b_1 \xrightarrow{2} b_2 \xrightarrow{1} b_3 \xrightarrow{1} b_4 \xrightarrow{2} b_5.
\end{split}
\]
Here $\cB(\varpi_1)$ corresponds to a minuscule representation, while $\cB(\varpi_2)$ does not.
Indeed, the weight $\wt(b_3) = 0$ is not in the orbit of $\varpi_2$ under the Weyl group.

The crystal graph of $\cB(\varpi_1) \otimes \cB(\varpi_1)$ is given by
\begin{center}
\begin{tikzcd}[column sep=1.5em, row sep=1em]
a_1 \otimes a_1 \arrow[r, "1"] & a_2 \otimes a_1 \arrow[r, "2"] \arrow[d, "1"] & a_3 \otimes a_1 \arrow[r, "1"] & a_4 \otimes a_1 \arrow[d, "1"] \\
a_1 \otimes a_2 \arrow[d, "2"] & a_2 \otimes a_2 \arrow[r, "2"] & a_3 \otimes a_2 \arrow[d, "2"] & a_4 \otimes a_2 \arrow[d, "2"] \\
a_1 \otimes a_3 \arrow[r, "1"] & a_2 \otimes a_3 \arrow[d, "1"] & a_3 \otimes a_3 \arrow[r, "1"] & a_4 \otimes a_3 \arrow[d, "1"] \\
a_1 \otimes a_4 & a_2 \otimes a_4 \arrow[r, "2"] & a_3 \otimes a_4 & a_4 \otimes a_4
\end{tikzcd}
\end{center}
with connected components corresponding to $\cB(2 \varpi_1)$, $\cB(\varpi_2)$ and $\cB(0)$.
The crystal graph of $\cB(\varpi_2) \otimes \cB(\varpi_2)$ is given by
\begin{center}
\begin{tikzcd}[column sep=1.5em, row sep=1em]
b_1 \otimes b_1 \arrow[r, "2"] & b_2 \otimes b_1 \arrow[r, "1"] \arrow[d, "2"] & b_3 \otimes b_1 \arrow[r, "1"] \arrow[d, "2"] & b_4 \otimes b_1 \arrow[r, "2"] & b_5 \otimes b_1 \arrow[d, "2"] \\
b_1 \otimes b_2 \arrow[d, "1"] & b_2 \otimes b_2 \arrow[r, "1"] & b_3 \otimes b_2 \arrow[r, "1"] & b_4 \otimes b_2 \arrow[d, "1"] & b_5 \otimes b_2 \arrow[d, "1"] \\
b_1 \otimes b_3 \arrow[r, "2"] \arrow[d, "1"] & b_2 \otimes b_3 \arrow[r, "1"] & b_3 \otimes b_3 \arrow[d, "1"] & b_4 \otimes b_3 \arrow[r, "2"] \arrow[d, "1"] & b_5 \otimes b_3 \arrow[d, "1"] \\
b_1 \otimes b_4 \arrow[r, "2"] & b_2 \otimes b_4 \arrow[d, "2"] & b_3 \otimes b_4 \arrow[d, "2"] & b_4 \otimes b_4 \arrow[r, "2"] & b_5 \otimes b_4 \arrow[d, "2"] \\
b_1 \otimes b_5 & b_2 \otimes b_5 \arrow[r, "1"] & b_3 \otimes b_5 \arrow[r, "1"] & b_4 \otimes b_5 & b_5 \otimes b_5
\end{tikzcd}
\end{center}
with connected components $\cB(2 \varpi_2)$, $\cB(2 \varpi_1)$ and $\cB(0)$.

Next, the crystal graph of $\cB(\varpi_1) \otimes \cB(\varpi_2)$ is given by
\begin{center}
\begin{tikzcd}[column sep=1.5em, row sep=1em]
a_1 \otimes b_1 \arrow[r, "1"] \arrow[d, "2"] & a_2 \otimes b_1 \arrow[r, "2"] & a_3 \otimes b_1 \arrow[r, "1"] \arrow[d, "2"] & a_4 \otimes b_1 \arrow[d, "2"] \\
a_1 \otimes b_2 \arrow[r, "1"] & a_2 \otimes b_2 \arrow[d, "1"] & a_3 \otimes b_2 \arrow[r, "1"] & a_4 \otimes b_2 \arrow[d, "1"] \\
a_1 \otimes b_3 \arrow[d, "1"] & a_2 \otimes b_3 \arrow[r, "2"] \arrow[d, "1"] & a_3 \otimes b_3 \arrow[d, "1"] & a_4 \otimes b_3 \arrow[d, "1"] \\
a_1 \otimes b_4 \arrow[d, "2"] & a_2 \otimes b_4 \arrow[r, "2"] & a_3 \otimes b_4 \arrow[d, "2"] & a_4 \otimes b_4 \arrow[d, "2"] \\
a_1 \otimes b_5 \arrow[r, "1"] & a_2 \otimes b_5 & a_3 \otimes b_5 \arrow[r, "1"] & a_4 \otimes b_5 \\
\end{tikzcd}
\end{center}
and the crystal graph of $\cB(\varpi_2) \otimes \cB(\varpi_1)$ is given by
\begin{center}
\begin{tikzcd}[column sep=1.5em, row sep=1em]
b_1 \otimes a_1 \arrow[r, "2"] \arrow[d, "1"] & b_2 \otimes a_1 \arrow[r, "1"] & b_3 \otimes a_1 \arrow[r, "1"] & 4_1 \otimes a_1 \arrow[r, "2"] \arrow[d, "1"] & b_5 \otimes a_1 \arrow[d, "1"] \\
b_1 \otimes a_2 \arrow[r, "2"] & b_2 \otimes a_2 \arrow[r, "1"] \arrow[d, "2"] & b_3 \otimes a_2 \arrow[d, "2"] & b_4 \otimes a_2 \arrow[r, "2"] & b_5 \otimes a_2 \arrow[d, "2"] \\
b_1 \otimes a_3 \arrow[d, "1"] & b_2 \otimes a_3 \arrow[r, "1"] & b_3 \otimes a_3 \arrow[r, "1"] & b_4 \otimes a_3 \arrow[d, "1"] & b_5 \otimes a_3 \arrow[d, "1"] \\
b_1 \otimes a_4 \arrow[r, "2"] & b_2 \otimes a_4 \arrow[r, "1"] & b_3 \otimes a_4 & b_4 \otimes a_4 \arrow[r, "2"] & b_5 \otimes a_4
\end{tikzcd}
\end{center}
They have connected components $\cB(\varpi_1 + \varpi_2)$ and $\cB(\varpi_1)$.

Examining the Cartan component of the latter two diagrams, we see that the Cartan braiding $\braid: \cB(\varpi_1) \otimes \cB(\varpi_2) \to \cB(\varpi_2) \otimes \cB(\varpi_1)$ restricted to the Cartan component is given by
\begin{small}
\begin{align*}
\braid(a_1 \otimes b_1) & = b_1 \otimes a_1, &
\braid(a_1 \otimes b_2) & = b_2 \otimes a_1, &
\braid(a_2 \otimes b_1) & = b_1 \otimes a_2, &
\braid(a_2 \otimes b_2) & = b_3 \otimes a_1, \\
\braid(a_2 \otimes b_3) & = b_4 \otimes a_1, &
\braid(a_2 \otimes b_4) & = b_4 \otimes a_2, &
\braid(a_3 \otimes b_1) & = b_2 \otimes a_2, &
\braid(a_3 \otimes b_2) & = b_2 \otimes a_3, \\
\braid(a_3 \otimes b_3) & = b_5 \otimes a_1, &
\braid(a_3 \otimes b_4) & = b_5 \otimes a_2, &
\braid(a_3 \otimes b_5) & = b_5 \otimes a_3, &
\braid(a_4 \otimes b_1) & = b_3 \otimes a_2, \\
\braid(a_4 \otimes b_2) & = b_3 \otimes a_3, &
\braid(a_4 \otimes b_3) & = b_4 \otimes a_3, &
\braid(a_4 \otimes b_4) & = b_4 \otimes a_4, &
\braid(a_4 \otimes b_5) & = b_5 \otimes a_4.
\end{align*}
\end{small}
Choosing the colours $\Cset = (\varpi_1, \varpi_2)$, we compute the right ends
\begin{align*}
v_1 & = (a_1, b_1), &
v_2 & = (a_1, b_2), &
v_3 & = (a_2, b_1), &
v_4 & = (a_1, b_3), &
v_5 & = (a_2, b_4), \\
v_6 & = (a_3, b_2), &
v_7 & = (a_3, b_3), &
v_8 & = (a_3, b_5), &
v_9 & = (a_4, b_4), &
v_{10} & = (a_4, b_5), 
\end{align*}
We see that $2$ of these vertices do not come from the Weyl group of $C_2$, since the Weyl group consists of $8$ elements.
The additional vertices are $v_4 = (a_1, b_3)$ and $v_7 = (a_3, b_3)$, both of which feature the non-extremal element $b_3 \in \cB(\varpi_2)$.
Note that $a_1 \otimes b_3$ does not even belong to the Cartan component of $\cB(\varpi_1) \otimes \cB(\varpi_2)$.

We omit the computations for the edges and simply report the results in \cref{fig:graph-C2}, where we split the $2$-graph with respect to the two colours for readability.
Notice that the two vertices which do not come from the Weyl group, namely $v_4$ and $v_7$, do not have any loops.

\begin{figure}[h]
\centering

\begin{tikzpicture}[
vertex/.style = {align=center, inner sep=2pt},
Rarr/.style = {->, red},
Barr/.style = {->, blue, dashed},
Rloop/.style = {->, red, out=165, in=195, loop},
Bloop/.style = {->, blue, out=15, in=-15, loop, dashed}
]
\node (v1)  at ( 0, 0) [vertex] {$v_1$};
\node (v2)  at (-2,-1) [vertex] {$v_2$};
\node (v3)  at ( 2,-1) [vertex] {$v_3$};
\node (v4)  at ( 0,-2) [vertex] {$v_4$};
\node (v5)  at (-2,-3) [vertex] {$v_5$};
\node (v6)  at ( 2,-3) [vertex] {$v_6$};
\node (v7)  at ( 0,-4) [vertex] {$v_7$};
\node (v8)  at (-2,-5) [vertex] {$v_8$};
\node (v9)  at ( 2,-5) [vertex] {$v_9$};
\node (v10) at ( 0,-6) [vertex] {$v_{10}$};

\draw [Rloop] (v1) edge (v1);
\draw [Rloop] (v2) edge (v2);
\draw [Rarr]  (v3) edge (v1);
\draw [Rloop] (v3) edge (v3);
\draw [Rarr]  (v4) edge (v2);
\draw [Rarr]  (v5) edge (v4);
\draw [Rloop] (v5) edge (v5);
\draw [Rarr]  (v6) edge[bend right=20] (v2);
\draw [Rarr]  (v6) edge (v3);
\draw [Rloop] (v6) edge (v6);
\draw [Rarr]  (v7) edge (v2);
\draw [Rarr]  (v7) edge (v3);
\draw [Rarr]  (v7) edge (v6);
\draw [Rarr]  (v8) edge (v4);
\draw [Rarr]  (v8) edge (v5);
\draw [Rloop] (v8) edge (v8);
\draw [Rarr]  (v9) edge (v4);
\draw [Rarr]  (v9) edge[bend left=20] (v5);
\draw [Rarr]  (v9) edge (v7);
\draw [Rloop] (v9) edge (v9);
\draw [Rarr]  (v10) edge[bend left] (v4);
\draw [Rarr]  (v10) edge (v5);
\draw [Rarr]  (v10) edge (v8);
\draw [Rloop] (v10) edge (v10);

\node (vb1)  at (6, 0) [vertex] {$v_1$};
\node (vb2)  at (4,-1) [vertex] {$v_2$};
\node (vb3)  at (8,-1) [vertex] {$v_3$};
\node (vb4)  at (6,-2) [vertex] {$v_4$};
\node (vb5)  at (4,-3) [vertex] {$v_5$};
\node (vb6)  at (8,-3) [vertex] {$v_6$};
\node (vb7)  at (6,-4) [vertex] {$v_7$};
\node (vb8)  at (4,-5) [vertex] {$v_8$};
\node (vb9)  at (8,-5) [vertex] {$v_9$};
\node (vb10) at (6,-6) [vertex] {$v_{10}$};

\draw [Bloop] (vb1) edge (vb1);
\draw [Barr]  (vb2) edge (vb1);
\draw [Bloop] (vb2) edge (vb2);
\draw [Bloop] (vb3) edge (vb3);
\draw [Barr]  (vb4) edge (vb1);
\draw [Barr]  (vb4) edge (vb2);
\draw [Barr]  (vb5) edge[bend left=20] (vb3);
\draw [Barr]  (vb5) edge (vb2);
\draw [Barr]  (vb5) edge (vb4);
\draw [Bloop] (vb5) edge (vb5);
\draw [Barr]  (vb6) edge (vb3);
\draw [Bloop] (vb6) edge (vb6);
\draw [Barr]  (vb7) edge (vb3);
\draw [Barr]  (vb7) edge (vb6);
\draw [Barr]  (vb8) edge (vb3);
\draw [Barr]  (vb8) edge[bend left=20] (vb6);
\draw [Barr]  (vb8) edge (vb4);
\draw [Barr]  (vb8) edge (vb5);
\draw [Bloop] (vb8) edge (vb8);
\draw [Barr]  (vb9) edge[bend right=20] (vb3);
\draw [Barr]  (vb9) edge (vb6);
\draw [Barr]  (vb9) edge (vb7);
\draw [Bloop] (vb9) edge (vb9);
\draw [Barr]  (vb10) edge (vb3);
\draw [Barr]  (vb10) edge (vb6);
\draw [Barr]  (vb10) edge (vb7);
\draw [Barr]  (vb10) edge (vb9);
\draw [Bloop] (vb10) edge (vb10);
\end{tikzpicture}

\caption{The $2$-graph for $\lie{g} = C_2$. On the left are the edges for the colour $\varpi_1$ and on the right for the colour $\varpi_2$.}
\label{fig:graph-C2}
\end{figure}
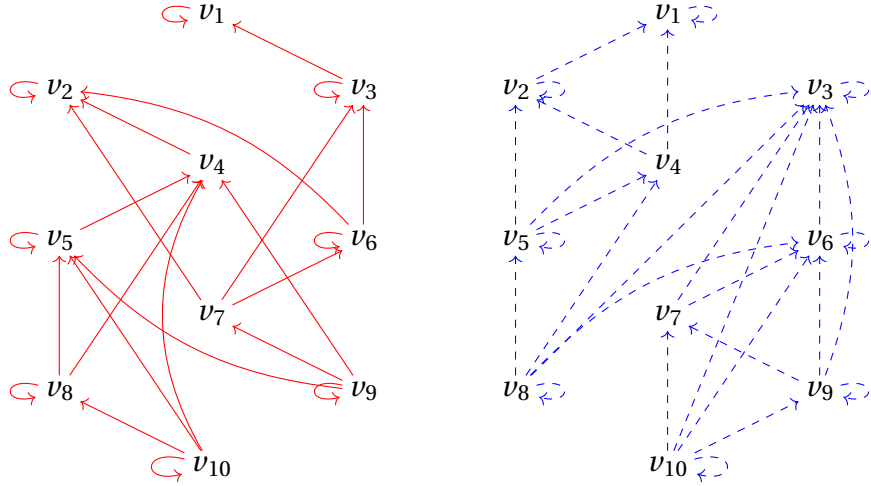

\end{example}

We conclude with one more example corresponding to a quantum homogeneous spaces, to connect with some existing literature.

\begin{example}
Consider the Lie algebra $\lie{g} = \mathfrak{sl}_n$ and the single colour $\Cset = (\varpi_1)$.
Geometrically this example corresponds to $Y_S = S^{2 n - 1}$ being an odd-dimensional sphere and $X_S = \CC P^{n - 1}$, with notation as in \cref{sec:flag_manifolds}.

In this case we have $\rho_\Cset = \varpi_1$, which obviously implies that $\rightend_\Cset(b) = b$ for every $b \in \cB(\rho_\Cset)$.
Therefore the vertex set of $\hgraph$ can be identified with the weights of the minuscule representation $V(\varpi_1)$.

Let us label the crystal basis by $\{b_1, \cdots, b_n\}$, where $b_{i + 1} = \kasF_i b_i$.
It is easy to see that $b_i \otimes b_j$ is in the Cartan component if and only if $i \geq j$ (in fact, one can generalize this to any minuscule representation).
Therefore there is a single edge from $b_i$ to $b_j$ for every $i\geq j$.
This graph coincides with the one given in \cite[Theorem 4.4]{HonSzy:spheres} (up to switching sources and ranges of edges, to match up with our conventions).

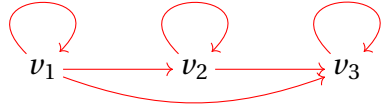
\begin{figure}[h]
\centering

\begin{tikzpicture}[
vertex/.style = {align=center, inner sep=2pt},
Rarr/.style = {->, red},
Rloop/.style = {->, red, out=135, in=45, loop},
]
\node (v1)  at (0,0) [vertex] {$v_1$};
\node (v2)  at (2,0) [vertex] {$v_2$};
\node (v3)  at (4,0) [vertex] {$v_3$};

\draw [Rloop] (v1) edge (v1);
\draw [Rloop] (v2) edge (v2);
\draw [Rloop] (v3) edge (v3);
\draw [Rarr]  (v1) edge (v2);
\draw [Rarr]  (v2) edge (v3);
\draw [Rarr]  (v1) edge[bend right=20] (v3);

\end{tikzpicture}

\caption{The graph for the case $n = 3$, corresponding to the sphere $S^5$.}
\label{fig:graph-projective}
\end{figure}
\end{example}

\bibliographystyle{alpha}
\bibliography{refs}

\end{document}